\definecolor{immi}{rgb}{0,.6,.1}
\newbox\removebox
\newcommand\remove[2]{%
\setbox\removebox=\ifmmode\hbox{$#2$}\else\hbox{#2}\fi%
\leavevmode
\rlap{\textcolor{#1}{\vrule height0.8ex depth-0.6ex width\wd\removebox}}%
\box\removebox
}
\long\def\bigremove#1{%
\par\setbox\removebox=\vbox{#1}%
\vbox{%
\vbox to0pt{\hbox{\tikz\draw[color=blue,thick] (0,0) -- (\wd\removebox,-\ht\removebox)  (\wd\removebox,0) -- (0,-\ht\removebox);}}
\box\removebox
}
}
\def\RFss@@#1{\RF^*_{\!*#1}}
\def\RFss@_#1{\RFss@@{,#1}}
\def\RFss{\@ifnextchar_{\RFss@}{\RFss@@{}}}
\newcommand{\RF}{{\rm RF}}
\def\BSing{\mathrm{BSing}}
\def\Ch{\mathrm{char}}
\def\Supp{\operatorname{Supp}}
\def\lct{\operatorname{lct}}
\def\moi{\operatorname{moi}}
\def\deg{\operatorname{deg}}
\def\Supp{\operatorname{Supp}}
\def\rank{\operatorname{rank}}
\DeclareMathOperator*{\depth}{depth}
\DeclareMathOperator*{\cond}{\mathfrak{c}\mathfrak{o}\mathfrak{n}\mathfrak{d}}
\def\Sing{\operatorname{Sing}}
\def\11{{\mathbf 1}}
\def\AA{{\mathbb A}}
\def\CC{{\mathbb C}}
\def\FF{{\mathbb F}}
\def\NN{{\mathbb N}}
\def\PP{{\mathbb P}}
\def\QQ{{\mathbb Q}}
\def\RR{{\mathbb R}}
\def\ZZ{{\mathbb Z}}
\def\cA{{\mathcal A}}
\def\cB{{\mathcal B}}
\def\cE{{\mathcal E}}
\def\cG{{\mathcal G}}
\def\cI{{\mathcal I}}
\def\cJ{{\mathcal J}}
\def\cL{{\mathcal L}}
\def\cM{{\mathcal M}}
\def\cN{{\mathcal N}}
\def\cO{{\mathcal O}}
\def\cR{{\mathcal R}}
\def\cS{{\mathcal S}}
\def\cU{{\mathcal U}}
\def\cV{{\mathcal V}}
\def\cX{{\mathcal X}}
\def\cY{{\mathcal Y}}
\def\cZ{{\mathcal Z}}
\newcommand{\spec}{\operatorname{Spec}}
\newtheorem{thm}{Theorem}[section]
\newtheorem{lem}[thm]{Lemma}
\newtheorem{lemdefn}[thm]{Lemma-Definition}
\newtheorem{cor}[thm]{Corollary}
\newtheorem{prop}[thm]{Proposition}
\newtheorem{conj}[thm]{Conjecture}
\newtheorem{Op}[thm]{Open Question}
\theoremstyle{definition}
\newtheorem{defn}[thm]{Definition}
\newtheorem{example}[thm]{Example}
\newtheorem{def-prop}[thm]{Proposition-Definition}
\newtheorem{def-theorem}[thm]{Theorem-Definition}
\newtheorem{def-lem}[thm]{Lemma-Definition}
\theoremstyle{remark}
\newtheorem{remark}[thm]{Remark}
\theoremstyle{plain}
\newtheorem*{namedthm}{\namedthmname}
\newcounter{namedthm}
\newenvironment{named}[1]
  {\def\namedthmname{#1}%
   \refstepcounter{namedthm}%
   \namedthm\def\@currentlabel{#1}}
  {\endnamedthm}
\theoremstyle{plain}
\DeclareMathOperator*{\Spec}{Spec}
\DeclareMathOperator*{\loc}{loc}
\newcommand{\ord}{\operatorname{ord}}
\newcommand{\Oi}{\operatorname{Oi}}
\newcommand{\Specm}{\operatorname{Specmax}}
\def\cI{\mathcal{I}}
\def\cJ{\mathcal{J}}
\def\cA{\mathcal{A}}
\def\cG{\mathcal{G}}
\def\cL{\mathcal{L}}
\def\cO{\mathcal{O}}
\def\cU{\mathcal{U}}
\renewcommand{\phi}{\varphi}
\renewcommand{\epsilon}{\varepsilon}
\renewcommand{\theta}{\vartheta}
\renewcommand{\and}{ \quad \text{and} \quad }
\begin{document}

\setcounter{tocdepth}{1} 

\author[K.~H.~Nguyen]
{Kien Huu Nguyen}

\address{KU Leuven, Department of Mathematics,
Celestijnenlaan 200B, B-3001 Leu\-ven, Bel\-gium}
\email{kien.nguyenhuu@kuleuven.be}
\address{Institute of Mathematics, Vietnam Academy of Science and Technology, 18 Hoang Quoc Viet, Nghia Do, Hanoi, Vietnam}
\email{nhkien@math.ac.vn}

%
%
\subjclass[2010]{Primary 11L07, 11S40; Secondary 11D79, 11P05, 14B05, 14E18, 11U09, 03C98} 
\keywords{Exponential sums modulo $p^m$ of ideals, motivic oscillation index of ideals, Igusa's conjecture for exponential sums, Igusa local zeta functions, rational singularity, Bernstein-Sato polynomial, minimal exponent, strong monodromy conjecture, $FRS$ morphisms, singular series, uniform counting solutions of systems of congruence equations, Waring type problems}

\begin{abstract}
In 2006, Budur, Musta\c{t}\v{a} and Saito introduced the notion of Bernstein-Sato polynomial of an arbitrary scheme of finite type over fields of characteristic zero. By the strong monodromy conjecture, it should have a corresponding picture on the arithmetic side of ideals in polynomial rings. In this paper, we try to address this problem. By using an idea inspired by the Hardy-Littlewood circle method, we introduce the notions of abstract exponential sums modulo $p^m$ and  motivic oscillation index of an arbitrary ideal of polynomial rings over number fields. In the arithmetic picture, abstract exponential sums modulo $p^m$ and the motivic oscillation index of an ideal should play the role of the Bernstein-Sato polynomial of the corresponding scheme and its maximal non-trivial root respectively. We will provide some properties of the motivic oscillation index of ideals in this paper. On the other hand,  based on Igusa's conjecture for exponential sums, we formulate an averaged Igusa conjecture for exponential sums of ideals. In particular, this conjecture and the motivic oscillation index of ideals will have many interesting applications. We will introduce these applications and prove a variant of this conjecture.

\end{abstract}
\title[Motivic oscillation index of arbitrary ideals]{Exponential sums and motivic oscillation index of arbitrary ideals and their applications}


\maketitle
\tableofcontents
\section{Introduction}\label{intro}
\subsection{Motivation}Let $f\in\ZZ[x_1,\dots,x_n]$ be a non-zero polynomial. One of the most important questions for number theorists is to know whether the Diophantine equation $f(x)=0$ has an integral solution. If $f$ is homogeneous, then $0$ is always a solution of $f$. Thus, we should ask about the existence of a non-zero integral solution of $f$. The local-global principle (or Hasse principle for short) is a useful tool to answer this question in many cases. More precisely, if $f$ has an integral solution, then $f$ has a real solution and the congruence equation $f(x)\equiv 0 \mod p^m$ has a solution for each prime $p$ and each positive integer $m$. The Hasse principle asks whether the reverse is also true.  In other words, it aims to find certain sufficient conditions for the reverse to
hold. Whenever the Hasse principle holds for $f$, we have many tools to check whether $f$ has an integral solution. Indeed, we can use the quantifier elimination theorem of real closed fields (Tarski-Seidenberg theorem) to verify the existence of a real solution of $f$ by looking at the coefficients of $f$. Moreover, we can use the Lang-Weil estimate and Hensel's lemma to check the existence of solutions for the congruence equations of $f$ modulo $p^m$. 


Igusa approached the Hasse principle by an adèlic method. His idea is based on an expectation that a deep enough understanding of the information about $f$ over local fields might help to study the Hasse principle of $f$. One of the most important pieces of information about $f$ is its densities over local fields. Let $p$ be a prime then the $p$-adic density $\mathfrak{d}_p(f)$ of $f$ equals to
\[\lim_{m\to +\infty} p^{-m(n-1)}\#\{x\in(\ZZ/p^m\ZZ)^n\mid f(x)=0\in \ZZ/p^m\ZZ\}.\]
Thus, we wish to know when this limit exists. We set $N_{p,m}=\#\{x\in(\ZZ/p^m\ZZ)^n|f(x)=0\in \ZZ/p^m\ZZ\}$. Then the information about $\mathfrak{d}_p(f)$ is captured by the Poincaré series
$$P_{p,f}(T)=\sum_{m\geq 0}p^{-mn}N_{p,m}T^{m}.$$
In \cite{Bosha}, Borewicz and Shafarevich conjectured that $P_{p,f}(T)$ is a rational function of $T$. Igusa proved this conjecture by using Hironaka's embedded resolution of singularities (see \cite{Igusa3}) for the hypersurface of equation $f=0$. Note that Denef gave alternative proof of this conjecture by using model theoretic tools (see \cite{Denef}). Igusa's proof is based on the fact that
\begin{equation}\label{Igu00}
P_{p,f}(p^{-s})=\frac{1-p^{-s}\cZ_{p,f}(s)}{1-p^{-s}},
\end{equation}
where $\cZ_{p,f}(s)$ is the Igusa local zeta function associated to $f$ defined by 
\[\cZ_{p,f}(s)=\int_{\ZZ_p^n}\left| f(x)\right|^s\left|dx\right|,\]
for $s\in\CC, \Re(s)>0$, $\left| .\right|$ is the $p$-adic norm and $\left|dx\right|$ is the normalized Haar measure on $\QQ_p^n$ such that the volume of $\ZZ_p^n$ is $1$ (see \cite{Igusa3,DenefBour} for more information about Igusa local zeta functions). In fact, by the definition of $\mathfrak{d}_p(f)$, the rationality of $P_{p,f}(p^{-s})$, partial fraction decomposition of rational functions and (\ref{Igu00}),  to know about the existence of $\mathfrak{d}_p(f)$, it suffices to look at the real part and the multiplicity of poles of $\cZ_{p,f}(s)$. 
On the other hand,  it is known that if $\mathfrak{d}_p(f)$ exists then one has 
\begin{equation}\label{density}
\mathfrak{d}_p(f)=\sum_{m\geq 0}p^{-nm}\sum_{\overline{a}\in(\ZZ/p^m\ZZ)^{\times}}\sum_{\overline{x}\in(\ZZ/p^m\ZZ)^n}\exp\left( \frac{2\pi i af(x)}{p^m}\right),
\end{equation}
moreover, $\mathfrak{d}_p(f)$ exists if and only if the series on the right-hand side of (\ref{density}) is convergent (see, e.g.,  \cite[Lemma 1.18]{BrowningC}). This shows the role of the exponential sums on the right hand side of (\ref{density}) in this picture. In fact, these exponential sums play a central role in the approach of Igusa. Recall that if $f$ is a quadratic form over the ring of integers in a number field then the Hasse principle is true for $f$  by the Hasse-Minkowski theorem. In order to obtain a version of  the Hasse-Minkowski theorem for a homogeneous polynomial $f$ of higher degree over $\ZZ$, Igusa hoped to establish a certain Poisson summation formula of Siegel-Weil type associated to $f$ on the ring of adeles over $\QQ$ (see \cite{Igusa3}), following the work of Weil (see \cite{Weil}) on abstract Poisson formula of continuous maps between locally compact abelian groups. Moreover, he expected that this Poisson formula might help to obtain a Siegel formula that relates a theta series with an Eisenstein series (or roughly speaking, this Siegel formula relates the number of integer solutions of $f(x)=0$ with the densities of $f$ over the completions of $\QQ$, see \cite{Weil,Igusa3,Haris}). To have this Poisson summation formula for a homogeneous polynomial $f$, Igusa needed a constant $\sigma>2$ and a constant $c>0$ such that
\begin{equation}\label{inequ}
\bigg| p^{-mn}\sum_{\overline{x}= (x \mod p^m) \in(\ZZ/p^m\ZZ)^n}\exp\left(\frac{2\pi iaf(x)}{p^m}\right)\bigg|\leq cp^{-m\sigma}
\end{equation}
for all large enough primes $p$, all integers $m\geq 1$ and all $a\in\ZZ_p^{*}$. Igusa's conjecture for exponential sums and its generalization predict that (\ref{inequ}) holds for all large enough primes $p$, all $m\geq 1$ (resp. $m\geq 2$), all $a\in\ZZ_p^{*}$ and a constant $c$ independent of $p,m$ if $f$ is homogeneous (resp. $f$ is not homogeneous) provided that $\sigma$ is less than the motivic oscillation index $\moi_\QQ(f)$ of $f$ over $\QQ$ (see \cite{Igusa3, CMN, NguyenVeys}). By (\ref{density}) and the Chinese remainder theorem, if Igusa's conjecture for exponential sums holds for $f$ then the condition $\moi_\QQ(f)>1$ implies that $\mathfrak{d}_p(f)$ exists for all large enough primes $p$ (thus for all primes $p$ as a consequence of the results in \cite{Igusa3,CMN}, which say we can provide a good embedded resolution of singularities of $\{f=0\}$ over $\QQ$ to compute $\cZ_{p,f}(s)$ and show that $N_{p,m}=\mathfrak{d}_p(f)p^{m(n-1)}+o(p^{m(n-1)})$ when $m\to+\infty$ for all $p$) and the condition $\moi_\QQ(f)>2$ implies that the singular series (see Definition \ref{defsing})
$$\mathfrak{S}(f)=\sum_{N\geq 1}N^{-n}\sum_{\overline{a}\in(\ZZ/N\ZZ)^{\times}}\sum_{\overline{x}\in(\ZZ/N\ZZ)^n}\exp\left(\frac{2\pi i af(x)}{N}\right)=\prod_{p}\mathfrak{d}_p(f)$$
converges absolutely.

In another approach to the Hasse principle of $f,$ the analytic number theorists use the Hardy-Littlewood circle method  (see, e.g.,  \cite{Birch}). This method may also help to understand the distribution of integral and rational solutions of $f$ of bounded height, i.e., one asks about the existence of  an asymptotic formula of the function $$N_{f,a}(B)=\#\{x=(x_1,...,x_n)\in \ZZ^n\mid |x_i|\leq B \hspace{0.1cm}\forall i, f(x)=a\}$$ when $B\to +\infty$ uniform in $a\in [-B^{\deg(f)}, B^{\deg(f)}]$.  In particular, the existence of the $p$-adic density $\mathfrak{d}_p(f)$ of $f$ and the convergence of $\mathfrak{S}(f)$ are also needed for this method. Therefore, although Igusa's method has a huge gap to achieve due to the absence of  metaplectic groups of Weil associated with homogeneous polynomials of degree at least $3$ (see \cite[Section 4]{Igusa3}), Igusa's conjecture for exponential sums may still be useful for the Hardy-Littlewood circle method towards the Hasse principle and the distribution of integral and rational solutions of Diophantine equations.  

As seen above, the motivic oscillation index $\moi_\QQ(f)$ is an important arithmetic threshold of $f$. Therefore, understanding $\moi_\QQ(f)$ is needed. Note that $-\moi_\QQ(f)$ is the real part of a pole of a certain Igusa local zeta function associated to $f-a$ for a critical value $a$ of $f$ (see \cite{CMN,NguyenVeys}). Igusa proposed the strong monodromy conjecture to better understand the poles of local zeta functions in terms of the geometric invariants of singularities (see \cite{DenefBour}).  More precisely, he expected that the real part of every pole of every Igusa local zeta function associated to $f$ is a root of the Bernstein-Sato polynomial $b_f(s)$ of $f$ (see Section \ref{BM} for the definition of $b_f(s)$). This means that the geometric side of singularities of $f$ provides many important information about the arithmetic side of singularities of $f$ and vice versa.  For instance,  motivated by the strong monodromy conjecture, if $f$ has $0$ as its only critical value, then we may conjecture that $-\moi_\QQ(f)$ is the largest solution $-\tilde{\alpha}_f$ of the reduced Bernstein-Sato polynomial $\tilde{b}_f(s)=b_f(s)/(s+1)$ of $f$. In fact, skipping the part of the above story related to the Hasse principle and concentrating on the mysterious relation between the arithmetic side and the geometric side of singularities of $f$ is the main aspect of this paper.

Now, it is natural to ask whether we can generalize the above story from a single polynomial $f$ (thus with its associated scheme $\Spec(\ZZ[x]/(f))$) to the case of the ideal $\cI\subset \ZZ[x_1,\dots,x_n]$ generated by non-constant polynomials $f_1,\dots,f_r$ and its associated scheme $X=\Spec(\ZZ[x]/\cI)$. In fact, many parts of this question were treated.  Firstly, note that the Hardy-Littlewood circle method towards the Hasse principle and the distribution of integral points uses an arbitrary $X$ as above (see \cite{Birch, Browning-HB}). In particular, the local density $\mathfrak{d}_p(X)$ can be defined as well. Secondly, one can define the Igusa local zeta function $\cZ_{p,\cI}(s)$ of $\cI$ over $\QQ_p$ by
$$\cZ_{p,\cI}(s):=\int_{\ZZ_p^n}p^{-s\ord_p(\cI(x))}\left|dx\right|,$$
where $\ord_p(\cI(x)):=\min_{1\leq i\leq r}\ord_p(f_i(x))=\min_{f\in\cI}\ord_p(f(x))$ depends only on $\cI$ (see \cite{VeysZ}). Thirdly, Budur, Musta\c{t}\v{a} and Saito introduced the Bernstein-Sato polynomial $b_X(s)$ associated to $X_\QQ$ in \cite{BMS} (see Section \ref{BM}). Of course, one may discuss about the strong monodromy conjecture for poles of $\cZ_{p,\cI}(s)$ and roots of $b_X(s)$. 

In another direction of generalization,  Loeser introduced the notion of multiple variables  Igusa local zeta function associated to the morphism $F=(f_1,\dots,f_r):\AA_\QQ^n\to\AA_\QQ^r$ (see \cite{Loesermulti}). On the other hand, some first attempts toward exponential sums and a Poisson summation formula of Siegel-Weil type associated to $F$ were presented in the work of Cluckers (see \cite{Cluckers-multi}) and the work of Lichtin (see \cite{LichBon,Lichtin-Ig1,Lichtin-Ig2,Lichtin-Ig3}).

\subsection{The theoretical goals and ideas}
\subsubsection{Exponential sums and motivic oscillation indexes of an arbitrary ideal}
One of the main points of this paper is to introduce the notions of exponential sums and motivic oscillation indexes of an arbitrary ideal of the ring of polynomials with integral coefficients. The key to dealing with this is Lemma \ref{eqzeta} where we establish a link between the exponential sums of such an ideal and its associated Igusa local zeta function. In particular, we can compute the exponential sums of ideals by using log resolutions defined in Section \ref{SJ}. Our work may correspond to the work of Budur, Musta\c{t}\v{a} and Saito in \cite{BMS} via the philosophy of the strong monodromy conjecture. Independently of this paper, a recent work by Chen, Dirks, Musta\c{t}\v{a} and Olano used quite an analogous idea to define the minimal exponent of locally complete intersection singularities (see \cite{CDMO}). The reader can also relate this paper to the work of Musta\c{t}\v{a} and Popa in \cite{MuPominimal}. 

Now, let us explain our ideas in this paper. Let $\cI$ be a non-zero ideal of $\ZZ[x_1,\dots,x_n]$. Suppose that $\cI$ does not contain any non-zero constant.  In a naive way, we can define the motivic oscillation index of $\cI$ by generalizing the work in \cite{CMN,NguyenVeys} as follows.  Let $p$ be a prime and $L$ be a finite extension of $\QQ_p$, then we denote by $\sigma_L^{\textnormal{naive}}(\cI)$ the minimum taken over all $\sigma\in\RR\cup\{+\infty\}$ such that either $\sigma=+\infty$ or $-\sigma$ is the real part of a pole of the function 
$$\cG_{L,\cI}(s)=\left(1-q_L^{-(s+r)}\right)\int_{\cO_L^n}q_L^{-s\ord_L(\cI(x))}\left|dx\right|,$$
where $\cO_L$ is the ring of integers in $L$, $q_L$ is the cardinality of the residue field $k_L$ of $L$, $\ord_L:L\to \ZZ\cup \{+\infty\}$ is the valuation map of $L$, $\ord_L(\cI(x))=\min_{f\in \cI}\ord_L(f(x))$ and $r=n-\dim(\spec(\QQ[x_1,\dots,x_n]/\cI\otimes\QQ))$. Then the naive motivic oscillation index $\moi_{\QQ}^{\textnormal{naive}}(\cI)$ of $\cI$ over $\QQ$ is given by 
$$\moi_{\QQ}^{{\textnormal{naive}}}(\cI)=\liminf_{p\to +\infty, \QQ_p\subset L}\sigma_L^{\textnormal{naive}}(\cI).$$
However, the value $\moi_{\QQ}^{\textnormal{naive}}(\cI)$ does not provide enough important 
information about 
the minimal number of generators of $\cI\otimes\QQ$. For instance, let $\cI$ such that $\cI\otimes\QQ$ defines a smooth affine variety $X\subset\AA_\QQ^n$ (e.g., $X$ is a hyperplane) then it is easy to show that $\moi_{\QQ}^{{\textnormal{naive}}}(\cI)=+\infty$. But it is well-known that $\cI\otimes\QQ$ is not necessary to be generated by $n-\dim(X)$ generators (see, e.g, \cite{Kuma}). That is why we want to modify our definition.

Now, we suppose that $\cI\otimes \QQ$ is generated by non-constant polynomials $f_1,\dots,f_r\in\ZZ[x_1,\dots,x_n]$.  Motivated by the definition of the singular series $\mathfrak{S}(f_1,\dots,f_r)$ appearing in \cite{Birch} (see also Definition \ref{defsing}), we associate to a prime $p$, a positive integer $m$ and polynomials $f_1,\dots,f_r$  the exponential sum
\begin{align*}
E_{f_1,\dots,f_r}(p,m)&=\int_{(y,x)\in (\ZZ_p^r\setminus p\ZZ_p^r)\times \ZZ_p^n}\exp\left(\frac{2\pi i\sum_{1\leq i\leq r}y_if_i(x)}{p^m}\right)\left|dy\wedge dx\right|\\
&=\frac{1}{p^{-m(n+r)}}\sum_{\overline{y}\in (\ZZ/p^m\ZZ)^r\setminus (p\ZZ/p^m\ZZ)^r, \overline{x}\in (\ZZ/p^m\ZZ)^n}\exp\left(\frac{2\pi i\sum_{1\leq i\leq r}y_if_i(x)}{p^m}\right).
\end{align*}
Note that this exponential sum depends only on $p,m$, the ideal $\cI$ and the number $r$ when $p$ is large enough as shown in Section \ref{exponential sum}. Thus, it is more convenient to write  $E_{\cI}^{(r)}(p,m)$ instead of  $E_{f_1,\dots,f_r}(p,m)$ and call this the $r^{\textnormal{th}}$ exponential sum modulo $p^m$ of $\cI$.
\begin{defn} The $r^{\textnormal{th}}$-motivic oscillation index  of the ideal $\cI$ over $\QQ$, denoted by $\moi_{\QQ}^{(r)}(\cI)$, is the supremum of all real numbers $\sigma$ such that for each large enough prime $p$, there is a constant $c_{\sigma,p}>0$ satisfying $|E_{\cI}^{(r)}(p,m)|\leq c_{\sigma,p}p^{-\sigma m}$ for all $m\geq 1$.
\end{defn}
In the sense of \cite{NguyenVeys}, $E_{\cI}^{(r)}(p,m)$ is the exponential sum modulo $p^m$ associated to the scheme $\mathfrak{D}_\ZZ^{n,r}:=\AA_\ZZ^n\times(\AA_\ZZ^r\setminus\spec(\ZZ[y_1,...,y_r]/(y_1,...,y_r)))$ and the polynomial $g(x,y):=\sum_{1\leq i\leq r}y_if_i(x)$ while $\moi_{\QQ}^{(r)}(\cI)$ agrees with the motivic oscillation index $\moi_{\mathfrak{D}_\ZZ^{n,r}, \QQ}(g(x,y))$ of $g(x,y)$ at the scheme $\mathfrak{D}_\ZZ^{n,r}$ over $\QQ$. We will define the motivic oscillation index $\moi_{\QQ}(\cI)$ of $\cI$ over $\QQ$ to be the $r^{\textnormal{th}}$-motivic oscillation index $\moi_{\QQ}^{(r)}(\cI)$  if moreover $\cI\otimes \QQ$  can not be generated by $r-1$ polynomials (see Lemma-Definition \ref{moidef}). Similarly, let $Z\subset \AA_\ZZ^n$ be a $\ZZ$-scheme of finite type, we can define the motivic oscillation index $\moi_{\QQ, Z}(\cI)$ of $\cI$ at $Z$ over $\QQ$ by using $Z$ instead of $\AA_\ZZ^n$ in the above definition. Suppose that $\cI\otimes \QQ$ can not be generated by $r'$ elements for some $r'<r$. It may happen that for an affine neighbourhood $U$ of $Z_\QQ$, $\cI\otimes \QQ|_U$ can be generated by $r'$ elements. Thus, it is better to have the notions of $r'^{\textnormal{th}}$-exponential sum modulo $p^m$ of $\cI$ at $Z$ and local motivic oscillation index $\moi_{\QQ, Z}^{\loc}(\cI)$ of $\cI$ at $Z$ over $\QQ$ (see Definitions \ref{moiloc1}, \ref{expodef}). More generally, if $X$ is a $\ZZ$-scheme of finite type, $P\in X(\ZZ)$ and $\tau$ is a closed embedding of an affine neighbourhood $W$ of $P$ in $X$ to an affine space $\AA_\ZZ^n$, we can define the local motivic oscillation index of $X$ at $P$ over $\QQ$ to be the local motivic oscillation index of $\tau(W)$ at $\tau(P)$ over $\QQ$ . However this definition always depends on $\tau$. To remove this dependence, we introduce the notion of absolutely local motivic oscillation index $\moi_{\QQ, P}^{\textnormal{aloc}}(X)$ of $X$ at $P$ over $\QQ$ (see Definition \ref{moiloc2}). For instance, if $X=\spec(\ZZ[x_1,\dots,x_n]/\cI)$  then $\moi_{\QQ, P}^{\textnormal{aloc}}(\cI)=\moi_{\QQ, P}^{\loc}(\cI)-n$. All details about these definitions will be given in Section \ref{exponential sum}. 

\subsubsection{Averaged Igusa conjecture for exponential sums}For the exponential sums and the motivic oscillation indexes of ideals, we formulate the following conjecture.
\begin{conj}[Averaged Igusa conjecture for exponential sums]\label{aveIgu}Let $\cI$ be a non-zero ideal of $\ZZ[x_1,\dots,x_n]$ such that $\cI\otimes \QQ\neq (1)$. Suppose that $\cI\otimes \QQ$ is generated by polynomials $f_1,\dots,f_r$. For each $\sigma<\moi_\QQ^{(r)}(\cI)$, there is a constant $c(\sigma)$ depending only on $\cI,r$ and $\sigma$ such that 
$$\left|E_{\cI}^{(r)}(p,m)\right|\leq c(\sigma)p^{-m\sigma}$$
provided that $p$ is large enough and $m\geq 2$.
\end{conj}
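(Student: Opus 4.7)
The plan is to recognise Conjecture \ref{aveIgu} as an instance of Igusa's conjecture for exponential sums of a single polynomial on a quasi-affine scheme, and then to apply the resolution-theoretic machinery developed in \cite{CMN, NguyenVeys}. Set
$$g(y,x) = \sum_{i=1}^{r} y_i f_i(x) \in \ZZ[y_1,\ldots,y_r,x_1,\ldots,x_n]$$
and $U = (\AA_\ZZ^r \setminus \{0\}) \times \AA_\ZZ^n$. By the definition of $E_\cI^{(r)}(p,m)$ and the identification $\moi_\QQ^{(r)}(\cI) = \moi_{U_\ZZ, \QQ}(g)$ recorded in the excerpt, Conjecture \ref{aveIgu} amounts to the uniform-in-$p$ form of Igusa's exponential sum conjecture for the pair $(g, U)$. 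The definitional inequality already supplies, for every $\sigma < \moi_{U_\ZZ,\QQ}(g)$, a constant $c_p$ that depends on $p$; the task is to replace $c_p$ by a single constant $c(\sigma)$ depending only on $\cI$ and $r$.

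The natural tool is a log resolution $h: Y \to \AA_\ZZ^{n+r}$ of the pair $(g, \{y=0\})$, spread out over $\ZZ[1/N]$, so that $h^{-1}(g)$ and $h^*(dy \wedge dx)$ become monomial along a simple normal crossings divisor. For every $p \nmid N$ the resolution specialises to a log resolution over $\FF_p$. Applying the Denef-Igusa change-of-variables formula and the stationary phase principle as in \cite{CMN}, one would decompose $E_\cI^{(r)}(p,m)$ into a finite family of monomial exponential integrals, indexed by strata of the normal crossings divisor lying over the critical locus of $g$ in $U$. Each stratum contributes a term of order $p^{-m\nu_E/N_E}$, where $(N_E, \nu_E)$ are the numerical data of the component $E$, with a leading constant controlled by Lang-Weil estimates for the $\FF_p$-points of the stratum and therefore uniform in $p$. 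Summing over the finitely many strata would give the required bound, with $\sigma$ arbitrarily close to $\min_E \nu_E/N_E = \moi_{U_\ZZ, \QQ}(g)$. The restriction $m \geq 2$ is intrinsic: for $m=1$ the sum degenerates to a character sum over the residue field, which is governed by Deligne's theorem rather than by stationary phase.

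The genuine obstacle is that Igusa's conjecture for exponential sums is itself known only under restrictive hypotheses (Newton non-degeneracy, certain FRS or homogeneity conditions, low-dimensional or isolated-singularity settings), and this is presumably why the statement is left as a conjecture. A difficulty specific to the present situation is that the critical locus of $g$ on all of $\AA^{n+r}$ extends along the excised stratum $\{y=0\}$, where the gradient of $g$ in the $x$-variables vanishes identically; one therefore needs a delicate boundary analysis on the log resolution to ensure that critical contributions lying outside $U$ are not reintroduced through $h$. This boundary analysis, combined with the current state of Igusa's conjecture, is where I would expect the main technical difficulty to reside. A natural first target would be the case where $(f_1,\ldots,f_r)$ forms a regular sequence, since then one can construct the resolution explicitly from the blow-up of $\spec(\ZZ[x]/\cI)$ along $\cI$, making the behaviour of the excised boundary stratum controllable uniformly in $p$.
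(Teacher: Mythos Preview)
You correctly recognise that the statement is a conjecture, not a theorem, and that the paper does not prove it; in Section~\ref{Conjexp} the paper explicitly records that the general form (Conjecture~\ref{avelocIgu}) is a special case of Igusa's conjecture (Conjecture~\ref{Iguconj}) for $g=\sum y_if_i$ on $(\AA^r\setminus\{0\})\times\AA^n$, which is exactly your reduction.

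There is, however, a substantive error in your sketch of the resolution-theoretic argument. You assert $\min_E \nu_E/N_E = \moi_{U,\QQ}(g)$, but this equality is false in general and is in fact the crux of the conjecture. The numerical data of a log resolution only furnish \emph{candidate} poles of the local zeta function; cancellations occur, and $\moi_{U,\QQ}(g)$ can be strictly larger than $\min_E \nu_E/N_E$ (Corollary~\ref{llct}(vi) says only that $\moi$ lies in the set $\{\nu_i/N_i\}$, not that it equals the minimum). The stratum-by-stratum argument you outline is precisely what the paper uses to derive the weak bound~\eqref{weakb}, namely $|E^{(r)}_{L,\cI}(m)|\le q_L^{b}\,m^{n-1}q_L^{-m\,\moi_K^{(r)}(\cI)}$ with an uncontrolled prefactor $q_L^{b}$. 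For $\sigma$ close to $\moi_K^{(r)}(\cI)$ and $m$ small (e.g.\ $m=2$), this prefactor cannot be absorbed into a constant $c(\sigma)$ uniformly in $p$. The boundary issue at $\{y=0\}$ you raise is real but secondary; the genuine obstruction is establishing the uniform bound for $\sigma$ all the way up to the true oscillation index rather than merely up to the crude resolution threshold.
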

This conjecture is a special case of Igusa's conjecture for exponential sums stated in \cite[page 2]{CMN} and \cite[Conjecture 1.1]{NguyenVeys} (see also Conjecture \ref{Iguconj}) since we are working with exponential sums associated to the special scheme $\mathfrak{D}_\ZZ^{n,r}$ and polynomials of the special form $g(x,y)=\sum_{i=1}^ry_if_i(x)$. In Section \ref{Conjexp}, we will formulate a strong and general version of Conjecture \ref{aveIgu} (see Conjectures \ref{avelocIgu}). Moreover, motivated by the work in \cite{Browning-HB}, \cite{MustPopa}  and \cite{CluckerNguyen}, a variant of Conjecture \ref{avelocIgu} will also be introduced in Section \ref{Conjexp} (see Conjecture \ref{conjeff}).
\subsubsection{Some properties of the motivic oscillation indexes of ideals}
Notice that the most important part of Conjecture \ref{aveIgu} concerns the situation where $\moi_\QQ^{(r)}(\cI)=\moi_\QQ(\cI)$. Understanding $\moi_\QQ(\cI)$ and Conjecture $\ref{aveIgu}$ better is one of the main goals of this paper. We will show some important properties of $\moi_\QQ(\cI)$. The fact is that the motivic oscillation index $\moi_{\QQ}(\cI)$ will provide some important geometric properties of the scheme $X=\spec(\ZZ[x_1,\dots,x_n]/\cI)$. In addition, some important properties of the Igusa local zeta functions of $\cI$ are also captured by $\moi_{\QQ}(\cI)$. These things are demonstrated by the following theorem which is the version of higher codimension of \cite[Proposition 3.10]{CMN}.
\begin{named}{Theorem A}\label{pole1}Let $\cI$ be the ideal of $\ZZ[x_1,\dots,x_n]$ generated by non-zero polynomials $f_1,\dots,f_r$ such that $\cI\otimes \QQ\neq (1)$. Let $X=\spec(\ZZ[x_1,\dots,x_n]/\cI)$. The following assertions hold:
\begin{itemize}
\item[i,] $\moi_{\QQ}(\cI)>r$ if and only if  $\moi_{\QQ}(\cI)=\moi_{\QQ}^{(r)}(\cI)>r$ if and only if the scheme $X_\QQ$ is a complete intersection of codimension $r$ in $\AA^n_\QQ$ and has only rational singularities.
\item[ii,] $\moi_{\QQ}(\cI)\leq r$ if and only if $\moi_{\QQ}^{(r)}(\cI)$ equals to $\lct(\cI\otimes\QQ)$ the log canonical threshold of $\cI\otimes\QQ$ (see Section \ref{SJ} for the definition of log canonical threshold).
\item[iii,] If $\moi_{\QQ}(\cI)=r$ then either $\moi_{\QQ}^{(r)}(\cI)=\moi_{\QQ}(\cI)$ or $X_\QQ$ is a complete intersection in $\AA^n_\QQ$ and has only rational singularities.
\end{itemize} 
\end{named}
\begin{cor}\label{corpol} With the notation and assumption of \ref{pole1}, if we have $\moi_{\QQ}(\cI)>r$, then for all non-Archimedean local fields $L$ of characteristic zero, the following claims hold:
\begin{itemize}
\item[(\textit{i}),] $s_0=-r$ is a pole of multiplicity at most $1$ of the Igusa local zeta function 
$$\cZ_{L,\cI}(s)=\int_{\cO_L^n}q_L^{-s\ord_L\left(\cI(x)\right)}\left|dx\right|,$$
\item[(\textit{ii}),]if $s_0$ is a pole of $\cZ_{L,\cI}(s)$ and $\Re(s_0)\neq -r$, then $\Re(s_0)<-r$.
\end{itemize}
\end{cor}

Note that the first assertion of \ref{pole1} is analogous to \cite[Corollary 1.7]{CDMO} via the philosophy of the strong monodromy conjecture. Corollary \ref{corpol} and many parts of \ref{pole1} will be proved in Proposition \ref{equcondi}. 
In Section \ref{property}, we will generalize \ref{pole1} by using the local motivic oscillation indexes of ideals (see \ref{locpole}).
\subsection{Some applications}
We introduce some first applications of the motivic osicllation indexes of ideals and Conjecture $\ref{aveIgu}$.
\subsubsection{Counting points of schemes over finite rings and the arithmetic aspect of singularities}
 Suppose that Conjecture \ref{aveIgu} holds, the following theorem enables us to improve the result in \cite[Theorem A]{AizenAvni} on counting points over finite rings of complete intersections having only rational singularities.
\begin{named}{Theorem B}\label{count}
Let $\cI$ be the ideal of $\ZZ[x_1,\dots,x_n]$ generated by non-zero polynomials $f_1,\dots,f_r$ such that $\cI\otimes \QQ\neq (1)$. Let $X=\spec(\ZZ[x_1,...,x_n]/\cI)$. Suppose that $X_{\CC}$ is a complete intersection of dimension $n-r$ in $\AA_{\CC}^n$ and has only rational singularities. Let $\sigma_0\in (r,+\infty]$. If Conjecture \ref{aveIgu} holds for $\cI,r$ and all $\sigma<\sigma_0$, then there is a positive constant $C$ such that for all non-Archimedean local fields $L$ of large enough residue field characteristic and all positive integers $m$ we have 
\begin{equation}\label{abcount}
\left|\frac{1}{q_L^{m(n-r)}}\#X(\cO_L/(\varpi_L^m))-\frac{1}{q_L^{n-r}}\# X(\cO_L/(\varpi_L))\right|\leq Cq_L^{-\lceil 2(\sigma_0-r)\rceil},
\end{equation}
where $\varpi_L$ is an arbitrary uniformizing parameter of $\cO_L$ and $\lceil a\rceil=\min \{\ell\in \ZZ, \ell\geq a\}$.
\end{named}
By Conjecture \ref{aveIgu}, we expect that one can take $\sigma_0=\moi_\QQ^{(r)}(\cI)$ in \ref{count}. However, Conjecture \ref{aveIgu} has not been proved yet, thus any progress on increasing $\sigma_0$ would improve the counting points of $X$ over finite rings. \ref{count} is also an improved absolute analogue of \cite[Theorem 4.11]{C-G-H} in which they studied the notion of $E$-smooth morphisms between $\QQ$-schemes (see Definition \ref{esm}) and a slight modification of their argument probably implies that the structure morphism from $X_\QQ$ to $\spec(\QQ)$ is $E$-smooth if and only if we can replace $\lceil 2(\sigma_0-r)\rceil$ in the right hand side of (\ref{abcount}) by $E$. In other words, \ref{count} helps to show that the structure morphism $X_\QQ\to \spec(\QQ)$ is $\lceil 2(\sigma_0-r)\rceil$-smooth whenever $\sigma_0>r$. In Section \ref{uniadd}, we will introduce a more general version of \ref{count} for locally complete intersections by using the local motivic oscillation indexes of ideals (see \ref{locCounting}). Moreover,  by using the proof of \cite[Thm 4.11]{C-G-H}, we have a version of \ref{locCounting} for flat families of locally complete intersections having only rational singularities (see Theorem \ref{improE}).

In \cite{AizenAvni}, Aizenbud and Avni characterized rational singularities by counting points over finite rings. In our approach, \ref{locpole} also allows us to have an arithmetic characterization in terms of the local motivic oscillation indexes of ideals for the property of a scheme being a locally complete intersection having only rational singularities or log canonical singularities.  Moreover, by combining \ref{locCounting} with the result in \cite{C-G-H}, we also have an arithmetic criterion in terms of the local motivic oscillation indexes of ideals for the property of a scheme being a locally complete intersection having only terminal singularities (see  Proposition \ref{FRST}). On the other hand, rational, log canonical and terminal singularities  were characterized in terms of geometric properties of jet schemes in \cite{Mustata1,EMY,Ein-Must}. Recently, the work of  Musta\c{t}\v{a} and Popa in \cite{Must-Pop-k} together with the work of Chen, Dirks and Musta\c{t}\v{a} in \cite{CDM-k} characterized $e$-rational singularities and $e$-Du Bois singularities in terms of the (local) minimal exponents for the case of locally complete intersections (see similar results in \cite{JKSY-k,FL-k1,FL-k2}). However, the work of  Aizenbud and Avni seems to be hard to extend to the case of $e$-rational/$e$-Du Bois singularities. Nevertheless, by the philosophy of the strong monodromy conjecture, the local motivic oscillation indexes of ideals should  be equal to the local minimal exponents of the corresponding schemes. Thus, the new perspective of this paper is expected to give an arithmetic characterization of $e$-rational singularities and $e$-Du Bois singularities (see Conjecture \ref{k-rati} in Section \ref{OP}).    

In order to use \ref{count}, \ref{locCounting} and Theorem \ref{improE}, we need to find a large enough number $\sigma_0$ and prove Conjecture \ref{aveIgu} for all real numbers $\sigma<\sigma_0$. When $\cI$ is a principal ideal generated by a non constant polynomial $f$ of degree $d>1$, the result in \cite{Nguyennsd} implies Conjecture \ref{aveIgu} with   $$\sigma<\sigma_0=\tilde{\sigma}_0(f):=\frac{n-s(f_d)}{2(d-1)},$$
where $f_d$ is the homogeneous part of highest degree of $f$ and $s(f_d)$ is the dimension of the singular locus of $f_d$ in $\AA^n$. In this paper, we are able to generalize this result to a very general setting. More precisely, let $w=(w_1,\dots,w_n)\in\NN_{\geq 1}^n$ and $f$ be a polynomial then we will write $f=f_{0w}+...+f_{dw}$, where $f_{iw}$ is the $w$-weighted homogeneous part of $w$-degree $i$ of $f$ and $d=\deg_w(f)$ is the $w$-degree of $f$ (see Section \ref{modp}). Let $J$ be a non-empty finite subset of $\NN_{\geq 1}$. For each $i\in J$, let $r_i$ be a positive integer. For each $1\leq j\leq r_i$, let $f_{ij}(x_1,\dots,x_n)$ be a non-constant polynomial. We put $r=\sum_{i\in J} r_i$. Suppose that $f_{ij}$ is of $w$-degree $i$ and $F_{ijw}$ is the $w$-weighted homogeneous part of highest $w$-degree of $f_{ij}$. For each $i\in J$, let $s_{wi}$ be the dimension of the Birch singular locus
$$\BSing(F_{i1w},\dots,F_{ir_iw})=\{x\in \CC^n|\rank\left(\left(\frac{\partial F_{ijw}}{\partial x_\ell}(x)\right)_{1\leq j\leq r_i, 1\leq \ell\leq n}\right)<r_i\}.$$
We set 
$$\tilde{\sigma}_{0w}\big(\left(f_{ij}\right)_{i\in J, 1\leq j\leq r_i}\big)=\min_{i\in J}\frac{n-s_{wi}}{2(i-1)},$$
with the convention  that $n/0=+\infty$ and $0/0=0$.
In Section \ref{exsums}, we will prove the following theorem.
\begin{named}{Theorem C}\label{wnsd}
Let $w=(w_1,...,w_n)\in \NN_{\geq 1}^n$. Let $J$ be a non-empty finite subset of $\NN_{\geq 1}$, let $(r_i)_{i\in J}\in \NN_{\geq 1}^J$ and put $r=\sum_{i\in J}r_i$. For each $i\in J$ and each $1\leq j\leq r_i$, let $f_{ij}(x_1,\dots,x_n)$ be a non-constant polynomial in $\ZZ[x_1,\dots,x_n]$ such that $\deg_w(f_{ij})=i$. We set $\cI=\sum_{i\in J, 1\leq j\leq r_i}(f_{ij})\subset \ZZ[x_1,\dots,x_n]$ and assume that $\cI\otimes \QQ\neq (1)$. With the above notation, we have 
$$\moi_{\QQ}^{(r)}(\cI)\geq \tilde{\sigma}_{0w}\big(\left(f_{ij}\right)_{i\in J, 1\leq j\leq r_i}\big).$$
In addition, if $\sigma<\tilde{\sigma}_{0w}\big(\left(f_{ij}\right)_{i\in J, 1\leq j\leq r_i}\big)$, then for each prime $p$ there is a constant $c(\sigma,\cI,r,p)$ depending only on $\sigma,\cI, r, p$ such that $c(\sigma,\cI,r,p)=1$ if $p$ is large enough and
\begin{equation}\label{thm}
\left|E_{\cI}^{(r)}(p,m)\right|\leq c(\sigma,\cI,r,p)p^{-m\sigma}
\end{equation}
for all primes $p$ and all $m\geq 1$.
\end{named}
To prove \ref{wnsd} for $w=(1,\dots,1)$, we also need some materials as in \cite{Nguyennsd} such as the transfer principle for exponential sums modulo $p^m$ in \cite{NguyenVeys} and bounds on exponential sums over finite fields in  \cite{CDenSperlocal,Katz}. However, the strategy in \cite{Nguyennsd} is no longer available for our new setting. In fact,  it turns out that we need to have a uniform bound on exponential sums over finite fields of polynomials sharing the weighted degree and the dimension of the singular locus of the weighted homogeneous part of highest weighted degree. We will introduce such a uniform bound in Section \ref{modp}. On the other hand, we also need to work with flexible weights of variables when using this uniform bound to relate $\tilde{\sigma}_{0w}\big(\left(f_{ij}\right)_{i\in J, 1\leq j\leq r_i}\big)$ to our bound for $\big|E_{\cI}^{(r)}(p,m)\big|$. 
 To play with an arbitrary weight $w=(w_1,\dots,w_n)$, we associate each polynomial $f_{ij}(x_1,\dots,x_n)$ with a polynomial $f_{ij}^{(w)}$ in the variables $x_{e\ell}$ for $1\leq e\leq n, 1\leq \ell\leq w_e$ by the torus transformation in \cite[Theorem 7.4]{CDenSperlocal}, namely, $f_{ij}^{(w)}=f_{ij}(x_{11}\cdot...\cdot x_{1w_1},\dots,x_{n1}\cdot...\cdot x_{nw_n})$. Then we try to relate the exponential sums of $\cI=\sum_{i\in J, 1\leq j\leq r_i}(f_{ij})$ to the exponential sums of $\cI_w=\sum_{i\in J, 1\leq j\leq r_i}(f_{ij}^{(w)})$ and run the above process for  the weight $w^{(0)}=(1,\dots,1)\in\ZZ_{>0}^{w_1+...+w_n}$ and polynomials $f_{ij}^{(w)}$ for $i\in J, 1\leq j\leq r_i$. Further details will be provided in  Section \ref{exsums}. 

The following consequence of \ref{wnsd} and Theorem \ref{improE} will give a uniform version for counting solutions of systems of congruence equations. 
\begin{cor}\label{countinguni}Let $J, (r_i)_{i\in J}, (f_{ij})_{i\in J, 1\leq j\leq r_i}, r, w, \tilde{\sigma}_{0w}\big(\left(f_{ij}\right)_{i\in J, 1\leq j\leq r_i}\big)$ be as in \ref{wnsd}. For each $\ell\in J$ and each $1\leq j\leq r_\ell$, we denote by $\cA_{n\ell j}$ the $\ZZ$-affine space of all polynomials in $n$ variables of $w$-degree at most $\ell-1$. We put $\cA=\prod_{\ell\in J, 1\leq j\leq r_\ell}\cA_{n\ell j}$. For each $m\geq 1$, let $\pi_m:\cA(\ZZ/p^m\ZZ)\to \cA(\ZZ/p\ZZ)$ be the map induced by the canonical map $\ZZ/p^m\ZZ\to \ZZ/p\ZZ$.   If $\tilde{\sigma}_{0w}\big(\left(f_{ij}\right)_{i\in J, 1\leq j\leq r_i}\big)>r$, then there is an integer $M$ and a constant $C$ such that for all $p>M$, all tuples $g=(g_{\ell j})_{\ell,j}\in \cA(\ZZ/p\ZZ)$ and all tuples $\tilde{g}=(\tilde{g}_{\ell j})_{\ell,j}\in A(\ZZ/p^m\ZZ)$, the condition $\pi_m(\tilde{g})=g$ implies
\begin{align*}
&\left|\frac{\#\{x\in (\ZZ/p^m\ZZ)^n| f_{\ell j}(x)=\tilde{g}_{\ell j}(x) \hspace{0.1cm}\forall \ell, j\}}{p^{m(n-r)}}-\frac{\#\{x\in (\ZZ/p\ZZ)^n| f_{\ell j}(x)=g_{\ell j}(x)\hspace{0.1cm}\forall \ell,j\}}{p^{n-r}}\right|\\ \leq& Cp^{-\lceil 2\left(\tilde{\sigma}_{0w}\left(\left(f_{ij}\right)_{i\in J, 1\leq j\leq r_i}\right)-r\right)\rceil}.
\end{align*}
\end{cor}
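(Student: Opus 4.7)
The plan is to deploy \ref{wnsd} together with Theorem \ref{improE} (the uniform counting theorem for fibres of $FRS$ morphisms) applied to a family of varieties parametrised by $\tilde g$. For each $\tilde g = (\tilde g_{\ell j}) \in \cA$ I fix the ideal
$$\cI_{\tilde g} := (f_{\ell j}(x) - \tilde g_{\ell j}(x))_{\ell \in J,\, 1 \leq j \leq r_\ell} \subset \ZZ[x_1,\dots,x_n].$$
Since each $\tilde g_{\ell j}$ has $w$-degree at most $\ell - 1$ while $f_{\ell j}$ has $w$-degree $\ell$, the $w$-weighted homogeneous part of highest $w$-degree of $f_{\ell j} - \tilde g_{\ell j}$ coincides with $F_{\ell j w}$. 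Consequently the Birch singular loci $\BSing(F_{\ell 1 w},\dots,F_{\ell r_\ell w})$ and their dimensions $s_{w\ell}$ are \emph{unchanged} by the perturbation, so
$$\tilde\sigma_{0w}\bigl((f_{ij} - \tilde g_{ij})_{i,j}\bigr) = \tilde\sigma_{0w}\bigl((f_{ij})_{i,j}\bigr) > r$$
independently of the choice of $\tilde g$.

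Applying \ref{wnsd} to each $\cI_{\tilde g}$ then yields $\moi_\QQ^{(r)}(\cI_{\tilde g}) \geq \tilde\sigma_{0w}((f_{ij})_{i,j})$ together with the exponential sum bound
$$\bigl|E_{\cI_{\tilde g}}^{(r)}(p,m)\bigr| \leq p^{-m\sigma}$$
for every $\sigma < \tilde\sigma_{0w}((f_{ij})_{i,j})$, every $m \geq 1$ and every sufficiently large prime $p$, with constant equal to $1$. Because both this threshold and the prime-bound depend only on the $F_{\ell j w}$, they are manifestly uniform in $\tilde g$. Combined with \ref{pole1}, the inequality $\moi_\QQ^{(r)}(\cI_{\tilde g}) > r$ further guarantees that every fibre $V(\cI_{\tilde g}) \otimes \QQ$ is a complete intersection of codimension $r$ in $\AA^n_\QQ$ with rational singularities.

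It follows that the natural projection
$$\pi \colon \bigl\{(x, g) \in \AA_\ZZ^n \times \cA : f_{\ell j}(x) = g_{\ell j}(x)\ \forall \ell, j \bigr\} \longrightarrow \cA$$
is an $FRS$ morphism after base change to $\QQ$: every non-empty fibre has rational singularities, and flatness follows from the $r$ equations $f_{\ell j} - g_{\ell j}$ cutting out the fibres as complete intersections of uniform codimension $r$. I would then invoke Theorem \ref{improE} applied to $\pi$ with $\sigma_0 = \tilde\sigma_{0w}((f_{ij})_{i,j})$, fed with the uniform exponential sum estimate above. This produces the integers $M$ and $C$ with the property that for every $p > M$, every $m \geq 1$ and every compatible pair $(g, \tilde g)$ the counting inequality of \ref{countinguni} holds.

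The main obstacle is uniformity in $\tilde g$: one must ensure that neither \ref{wnsd} nor \ref{improE} smuggles in a constant depending on the coefficients of $\tilde g$. For \ref{wnsd} this is secured by the fact that its constant may be taken equal to $1$ for $p$ large enough, a property which, together with the invariance of $\tilde\sigma_{0w}$ under subtraction of $\tilde g$, passes to the entire family. For \ref{improE} it is secured by its very formulation as a uniform counting statement over $FRS$ morphisms, which is precisely what $\pi$ is on the locus of interest.
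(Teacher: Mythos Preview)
Your approach is essentially the same as the paper's: observe that the top $w$-weighted homogeneous parts are unchanged by subtracting $\tilde g$, apply \ref{wnsd} to each fibre, deduce that the projection $\pi$ to $\cA$ is $FRS$ over $\QQ$, and then use Theorem~\ref{improE}.

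One technical point deserves attention. Theorem~\ref{improE} as stated carries the blanket hypothesis ``suppose that Conjecture~\ref{avelocIgu} holds'', which is not known in general; so it cannot be invoked as a black box. What you actually need, and what the paper does, is to \emph{repeat the proof} of Theorem~\ref{improE}: for each geometric fibre, \ref{wnsd} supplies exactly the exponential sum bound (with exponent $\sigma<\tilde\sigma_{0w}$) that plays the role of Conjecture~\ref{avelocIgu} in that proof; this yields the jet-scheme dimension inequalities, hence strong $E$-smoothness of $\pi_\QQ$ with $E=\lceil 2(\tilde\sigma_{0w}-r)\rceil$, and then Proposition~\ref{countEsmth} delivers the uniform counting bound. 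Your phrase ``fed with the uniform exponential sum estimate above'' shows you see this, but the write-up should make clear that it is the \emph{argument} of Theorem~\ref{improE}, not the theorem itself, that is being used.

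A related remark on uniformity: you argue that the threshold on $p$ in \ref{wnsd} ``depends only on the $F_{\ell jw}$''. This is morally correct but is not part of the statement of \ref{wnsd}; one would have to inspect its proof. The paper sidesteps this entirely: uniformity in $\tilde g$ is not extracted from \ref{wnsd} but comes automatically from Proposition~\ref{countEsmth}, once the geometric property of strong $E$-smoothness of the whole family $\pi$ is established. So you do not need the stronger uniform claim about \ref{wnsd}; the fibrewise statement suffices.
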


\subsubsection{FRS morphisms}Let $K$ be a field of characteristic zero. Let $\overline{K}$ be an algebraic closure of $K$. Recall that a morphism $F$ between $K$-schemes of finite  type is $FRS$ if it is flat and every non-empty fiber of $F$ has only rational singularities. As mentioned above,  Theorem \ref{improE} is a uniform version of \ref{locCounting} for fibers of $FRS$ morphisms. 

In concern to $FRS$ morphisms, Glazer and Hendel studied singularity properties  of convolutions of algebraic morphisms in \cite{G-H1, G-H2}. Let $(G,.)$ be an algebraic group and $X_1, X_2$ be varieties over $K$. If $\varphi_i: X_i \to G, 1\leq i\leq 2$ are morphisms, then the convolution $\varphi_1*\varphi_2: X_1\times X_2\to G$ is defined by $(x_1,x_2)\mapsto \varphi_1(x_1).\varphi_2(x_2)$. Glazer and Hendel showed in \cite{G-H1} that if $(G,.)=(\AA_K^r,+)$ and $X$ is a $K$-smooth variety together with a morphism $\varphi:X\to G$ such that $\varphi_{\overline{K}}(X_i)$ is not contained in any proper affine subspace of $G_{\overline{K}}$ for all irreducible components $X_i$ of $X_{\overline{K}}$, then the $\ell^{th}$ convolution power $$\varphi^{*\ell}=\underbrace{\varphi*...*\varphi}_{\ell \textnormal{ times}}$$ of $\varphi$ is an $FRS$ morphism if $\ell$ is large enough. In particular, for each $D\in\NN_{\geq r+1}$, there is a constant $N(D)$ such that if  $X_1,\dots,X_\ell$ are smooth varieties of complexity at most $D$ and $\varphi_i:X_i\to G$ is a strongly dominant morphism of complexity at most $D$ for all $1\leq i\leq \ell$, then $\varphi_1*\varphi_2*...*\varphi_\ell$ is an $FRS$ morphism if $\ell>N(D)$. Here, the notion of complexity of a $K$-variety $X$ or a $K$-morphism $\varphi$ of finite type arises when we describe $X$ or $\varphi$ by using polynomial functions on affine spaces, namely, it is bounded in terms of the number and the degree of these polynomial functions together with the number and the dimension of these affine spaces. In the same way, we can define the notion of degree complexity of $X$ or $\varphi$ if we do not restrict the number and the dimension of the affine spaces that we are using (see precise definition of (degree) complexity in Section \ref{uniadd} or a stronger definition in \cite{G-H1}). On the other hand, a morphism $\varphi:X\to Y$ is strongly dominant if the restriction of $\varphi_{\overline{K}}$ on each irreducible component of $X_{\overline{K}}$ is also dominant. 

Similarly, one can ask the same question about flat morphisms with geometrically irreducible fibers ($FGI$ morphism) or fibers having only terminal singularities ($FTS$ morphism) (see a special case in \cite{G-H3}).  Interestingly, for the situation of \cite{G-H1}, the local motivic oscillation indexes of ideals can help to make an explicit constant $N(D)$ and relax the strongly dominant condition of $\varphi_i$. Furthermore, we obtain a similar version with $FTS$  morphisms under the assumption that Conjecture \ref{avelocIgu} holds. In particular, by using the exponential sums of ideals, we also obtain a similar result for $FGI$ morphisms. Let us formulate the results. 
\begin{named}{Theorem D}\label{FRS}
Let $K$ be a field of characteristic zero . Let $r,R,D$ be positive integers. Let $X_1,\dots,X_\ell$ be non-empty $K$-varieties. For each $1\leq i\leq \ell$ we take a morphism $\varphi_i:X_i\to \AA_K^r$ such that $\varphi_{i\overline{K}}|_{X_{ij}}$ is not contained in any proper affine subspace of $\AA_{\overline{K}}^r$ for every $i$ and every irreducible component $X_{ij}$ of $X_{i}\otimes\overline{K}$. The following claims hold:
\begin{itemize}
\item[(\textit{i}),]If $X_{i}$ is geometrically irreducible and a locally complete intersection for all $i$, then the convolution $\varphi_1*\varphi_2*...*\varphi_\ell$ is an $FGI$ morphism if $\ell>2r$. 

\item[(\textit{ii}),]Suppose that $X_i$ is smooth and  $X_i, \varphi_i$ are of degree complexity at most $(R,D)$ for all $1\leq i\leq \ell$. Then $\varphi_1*\varphi_2*...*\varphi_\ell$ is an $FRS$ morphism if $\ell>N(r,R,D)=2r(D^{R+1}-1)$. In addition, suppose that Conjecture \ref{avelocIgu} holds, then $\varphi_1*\varphi_2*...*\varphi_\ell$ is an $FTS$ morphism if $\ell>N'(r,R,D)=2(r+\frac{1}{2})(D^{R+1}-1)$. In particular, if $X_i$ is an affine space for each $i\geq 1$, then we can take $N(r,R,D)=rD, N'(r,R,D)=(r+\frac{1}{2})D$.
\end{itemize}
\end{named}
By \cite[Theorem B]{G-H2} and \cite[Lemma 3.8]{G-H3}, $\varphi_1*\varphi_2*...*\varphi_\ell$ is surjective if $\ell>2r$, but it has not been proved that this morphism is $FGI$ yet. On the other hand, if $X_i=\AA_K^n$ for al $i$, then a similar statement for $FRS$ morphisms of \ref{FRS} was claimed in \cite[Proposition 5.7]{G-H3}. However, it is not clear how to relax the smoothness of $X_i$ in \ref{FRS}. But we will provide  some rough ideas in Open Question \ref{non-smooth}. We also emphasize that the dominant term $rD^{R+1}$ of $N(r,R,D)$ in the statement of \ref{FRS} is optimal. Indeed, we can look at the following example. 
\begin{example}Let $n\geq(R+1)r$. Let $X_i=X$ for all $i$ and $X$ be the subvariety of $\AA_K^n$ associated to  the ideal generated by $x_{11}-x_{12}^D, x_{12}-x_{13}^D,\dots,x_{1R}-x_{1,R+1}^D,\dots,x_{r1}-x_{r2}^D,\dots,x_{rR}-x_{r,R+1}^D$. Let $g(x)=(x_{11}^D,\dots,x_{r1}^D): X\to \AA_K^r$. Then it is easy to verify that $g^{*\ell}$ is an $FRS$ morphism if and only if $\ell>rD^{R+1}$.
\end{example}
However, the notion of (degree) complexity is not an intrinsic notion of varieties and morphisms, as it depends on how we use polynomials to describe varieties and morphisms. In particular, in the situation of \ref{FRS}, given a family $(X_i,\varphi_i)_{i\geq 1}$, the constants $N(r,R,D), N'(r,R,D)$ can be very large due to such a bad description of $(X_i,\varphi_i)_{i\geq 1}$ by polynomials. Thus, we may ask to replace $N(r,R,D), N'(r,R,D)$ by invariants of $(X_i,\varphi_i)_{i\geq 1}$.
The recent result of  Glazer, Hendel and Sodin in \cite[Corollary 1.9]{GHS} states that if $\varphi$ is a dominant morphism  from an irreducible smooth variety $X$ to a connected algebraic group $G$ then there is an invariant $N(X,\varphi)$ depending only on the isomorphism class of $(X,\varphi)$ in the category of $K$-varieties over $G$  such that $\varphi^{*\ell}$ is an $FRS$-morphism whenever $\ell>N(X,\varphi)$. Here, $N(X,\varphi)$ is computed in terms of the log canonical threshold of the Jacobian ideal of $\varphi$ (see Section \ref{SJ}). In our approach, we may also expect an analogous result of \cite[Corollary 1.9]{GHS} such that $N(X,\varphi)$ is replaced  by another invariant that can be computed in terms of the motivic oscillation index of fibers of $\varphi$. We leave it to future research.

In order to prove the first part of \ref{FRS}, we use a criterion for the geometric irreducibility of an affine variety in terms of the exponential sums modulo $p$ of the corresponding ideal (see Proposition \ref{delta} below). This leads us to estimate the exponential sums modulo $p$ of ideals defining locally the fibers of $\varphi_1*\varphi_2*...*\varphi_\ell$. The main idea to prove the second part of \ref{FRS} is to show that the local motivic oscillation indexes of fibers of $\varphi_1*\varphi_2*...*\varphi_\ell$ will tend to infinity if $\ell$ tends to infinity.

\subsubsection{Some Waring type problems}
From the point of view of additive combinatorics, the Waring problem can be expressed as follows. Let $(G,\cdot)$ be a group or a semi group. Let $A_1,\dots,A_\ell$ be non-empty subsets of $G$. The set $A_1\cdot...\cdot A_\ell$ is the set of all elements of $G$ in the form $a_1\cdot...\cdot a_\ell$ with $a_i\in A_i$ for all $i$. If $A=A_1=...=A_\ell$, we write simple $A^{.\ell}$ instead of $A_1\cdot...\cdot A_\ell$. If $(G,\cdot)$ is either an abelian group or a semi abelian group, we use the notations $(G,+)$ and $\ell A$ instead of $(G,\cdot)$ and $A^{.\ell}$. Let $A$ be a subset of $G$, then the Waring problem for $A$ is to determine the minimal positive integer $\ell$ such that $A^{.\ell}=G$. For instance, the original Waring problem works with the case $(G,\cdot)=(\NN,+)$ and $A=\{x^n| x\in \NN\}$. In this paper, let $(G,\cdot)$ be a group scheme of finite type over $\ZZ$ and $A_1,\dots,A_\ell\subset G$ be definable subsets in the language of rings. We aim to ask which conditions on $G,\ell$ and $A_1,\dots,A_\ell$ are sufficient to have $G(\ZZ/p^m\ZZ)=A_1(\ZZ/p^m\ZZ)\cdot...\cdot A_\ell(\ZZ/p^m\ZZ)$ for all large enough primes $p$ and  all $m\geq 1$. Note that if $G=\AA_\ZZ^1$ and $\sum_{1\leq i\leq \ell}\#A_i(\ZZ/p\ZZ)\geq \ell+p-1$ then $A_1(\ZZ/p\ZZ)+...+A_\ell(\ZZ/p\ZZ)=G(\ZZ/p\ZZ)$ (see \cite[Thm 2.3]{Nathan}). More generally, since $G(\ZZ/p^m\ZZ)$ is a finite group, one may try to give a condition on $\#G(\ZZ/p^m\ZZ), \#A_i(\ZZ/p^m\ZZ)$ and $\ell$ to have $G(\ZZ/p^m\ZZ)=A_1(\ZZ/p^m\ZZ)\cdot...\cdot A_\ell(\ZZ/p^m\ZZ)$. However, such a condition in terms of $\#G(\ZZ/p^m\ZZ), \#A_i(\ZZ/p^m\ZZ), \ell$ will probably depend on $p$ and $m$. In addition, such a condition is not effective if $A_1,\dots,A_\ell$ possess extra geometric properties. For instance, let $G=\AA_\ZZ^r$ and $A_i$ be the affine line of $G$ associated to the ideal $(x_1,..., x_{i-1},x_{i+1},...,x_r)$ then the claim $A_1(\ZZ/p^m\ZZ)+...+A_r(\ZZ/p^m\ZZ)=G(\ZZ/p^m\ZZ)$ holds independent of $p,m$. Thus, we may expect that if $A_1,\dots,A_\ell$ have good enough geometric properties, then one should get a geometric answer to this question. In Section \ref{OP}, we will formulate  a more detailed question about this problem (see Open Question \ref{defi}). 

In this paper, we also work with the  probabilistic Waring type problem (see \cite{G-H3}). Here, we recall some notation. If $A$ is a non-empty finite set then we denote by $U_A$ the uniform distribution of $A$, i.e. for each $x\in A$, $U_A(x)=\frac{1}{\# A}$ is the probability that $x=y$ when $y\in A$. If $f:A\to B$ is a map between two non-empty finite sets then  $f_*(U_{A})$ is the distribution of $B$ assigning $y\in B$ to the probability that $f(x)=y$, i.e., $f_*(U_{A})(y)=\frac{\#f^{-1}(y)}{\#A}$.
If $A$ is a non-empty finite set then the usual $L^a$ norm ($a\geq 1$) for functions on $A$ is given by
$$\|f\|_{L^a}=\bigg(\#A^{a-1}\sum_{x\in A}\big|f(x)\big|^a\bigg)^{1/a}$$
if $a<+\infty$ and  
 $$\|f\|_{L^{a}}=\#A \max_{x\in A}\left|f(x)\right|$$
 if $a=+\infty$. Let $X$ and $G$ be $\ZZ$-schemes of finite type such that $X_\QQ, G_\QQ$ are smooth and geometrically irreducible. Let $f:X\to G$ be a $\ZZ$-morphism. Then for each prime $p$ and each positive integer $m$, $f$ induces a map $f_{p,m}:X_{p,m}=X(\ZZ/p^m\ZZ)\to G_{p,m}=G(\ZZ/p^m\ZZ)$. One asks the probabilistic Waring type problem for $f$. Namely, when do we have
$$\lim_{p\to+\infty}\sup_{m}\|U_{G_{p,m}}-(f_{p,m})_*(U_{X_{p,m}})\|_{L^a}=0?$$

Now, let $G=\AA_\ZZ^r$ for some positive integer $r$. Let $\ell\geq 1$ and $X_1,\dots,X_\ell$ be $\ZZ$-schemes of finite type such that $X_{i}\otimes\QQ\neq \emptyset$ for all $i$. For each $1\leq i\leq \ell$, let $\varphi_i:X_i\to\AA_\ZZ^r$ be a  $\ZZ$-morphism. We put $A_i=\varphi_i(X_i)$ and try to find a certain condition of $X_i$ and $\ell$ such that for all large enough primes $p$ and all $m\geq 1$, one has $(\ZZ/p^m\ZZ)^r=\sum_{1\leq i\leq \ell}A_i(\ZZ/p^m\ZZ)$. On the other hand, we study the probabilistic Waring type problem for the convolution $f:=\varphi_1*...*\varphi_\ell$. In our setting, by using \ref{FRS}, we obtain a result as follows.
\begin{named}{Theorem E}\label{adduni}Let $\ell$ be a positive integer. For each $1\leq i\leq\ell$, let $X_i$ be a scheme over $\ZZ$ of finite type  and  $\varphi_i:X_i\to G=\AA_\ZZ^r$ be a morphism of $\ZZ$-schemes. We set $A_i=\varphi_i(X_i)$ as above.  We suppose that $X_{i}\otimes\QQ$ is geometrically irreducible and  $A_i(\CC)$ is not contained in any proper affine subspace of $\AA_\CC^r$. If $\ell>2r$, then there is an integer $M$ depending on $X_1,\dots,X_\ell, \varphi_1,\dots,\varphi_\ell$ such that 
$$(\ZZ/p^m\ZZ)^r=\sum_{1\leq i\leq \ell}A_i(\ZZ/p^m\ZZ)$$
for all $p>M$ and all $m\geq 1$. 

Additionally, if we assume moreover that $X_{i}\otimes\QQ$ is smooth of degree complexity at most $(R,D)$ for all $i$ and $\ell>N(r,R,D)=2r(D^{R+1}-1)$, then 
$$\lim_{p\to+\infty}\sup_{m}\|U_{G_{p,m}}-(f_{p,m})_*(U_{X_{p,m}})\|_{L^{\infty}}=0,$$
where $f=\varphi_1*...*\varphi_\ell: X=X_1\times...\times X_\ell\to G$.
\end{named}
Note that \ref{adduni} is of quite similar form to the results in \cite{LST1,LST} which deal with some Waring type problems of simple finite groups. If $\ell>2r$ then $\varphi_{1\overline{\QQ}}*\varphi_{2\overline{\QQ}}*...*\varphi_{\ell\overline{\QQ}}$ is surjective by using \cite[Theorem B]{G-H2} and \cite[Lemma 3.8]{G-H3} as mentioned above. However, it is not clear how to obtain the first part of \ref{adduni} for $m>1$ from this fact. While we can use  Hensel's lemma and the claim of \ref{FRS} for $FGI$-morphisms to prove this part. On the other hand, the second part of \ref{adduni} will follow immediately from \cite[Theorem 9.3 (ii)]{G-H3} and \ref{FRS}.



\section{Preliminaries}\label{nota}
This section will give an overview of the notations and concepts used in this paper.
\subsection{Non-Archimedean local fields}\label{nuloc}Let $K$ be a number field and $\cO_K$ be its ring of integers. In this paper, a \index{local field}local field $L$ over $\cO_K$ is a finite extension of $\QQ_p$ or $\FF_p((t))$ which is endowed with a ring homomorphism $\theta:\cO_K\to L$. Let $L$ be a local field over $\cO_K$, we denote by $\cO_L$ the ring of integers in $L$ and $\cM_L$ the maximal ideal of $\cO_L$. Let $k_L=\cO_L/\cM_L$ be the residue field of $L$,  $p_L$ be the characteristic of $k_L$ and $q_L$ be the cardinality of $k_L$. If $z\in\cO_L$, we denote by $\overline{z}$ the image of $z$ by the reduction map $\cO_L\to k_L$. Let $\ord_L: L\to \ZZ\cup\{+\infty\}$ be the valuation map of $L$. We fix a uniformizing element $\varpi_L$ of $\cO_L$ and denote by $ac:L \to\cO_L^*\cup\{0\}$ the angular component map given by $ac(0)=0$ and $ac(z)=z\varpi_L^{-\ord_L(z)}$. Let $i\geq j$ be positive integers, we denote by $\pi_i:\cO_L\to\cO_L/(\varpi_L^i)$ and $
\pi_{ij}:\cO_L/(\varpi_L^i)\to \cO_L/(\varpi_L^j)$ the canonical projections. If $x\in\cO_L$, we set $\overline{x}=\pi_1(x)$. For each $\cO_L$-scheme $X$ and $i\geq j\geq 1$, $\pi_{ij}$ induces the canonical projection $\pi_{ij}^X: X(\cO_L/(\varpi_L^i))\to X(\cO_L/(\varpi_L^j))$. We will write $\pi_{ij}$ instead of $\pi_{ij}^X$ if $X$ is understood clearly in context.  If $\cI$ is an ideal of $\cO_L[x_1,\dots,x_n]$ generated by $f_1,...,f_r$ and $x\in \cO_L^n$, then we set $\ord_L\left(\cI(x)\right):=\min_{f\in \cI}\ord_L(f(x))=\min_{1\leq i\leq r}\ord_L(f_i(x))$.

An \index{additive character}additive character $\psi$ of $L$ is a continuous homomorphism $\psi: L\to \CC^{*}$. Let $\psi$ be a non-trivial additive character of $L$, then the \index{conductor of additive characters}conductor $m_\psi$ of $\psi$ is the integer $m$ such that $\psi$ is trivial on $\varpi_L^m\cO_L$, but non-trivial on $\varpi_L^{m-1}\cO_L$.

Let $L$ be a local field over $\ZZ$ and $n$ be a positive integer, we denote by $\left|dx_1\wedge...\wedge dx_n\right|$ or $\mu_{L^n}$ the normalized Haar measure on $L^n$ such that the volume of $\cO_L^n$ is $1$.

Let $M$ be an integer, we denote by $\cL_{K,M}$ the set of all local fields over $\cO_K$ of characteristic zero and of residue field characteristic at least $M$. We also denote by $\cL'_{K,M}$ the set of all local fields over $\cO_K$ of characteristic at least $M$. We will put $\tilde{\cL}_{K,M}=\cL_{K,M}\cup\cL'_{K,M}$.

Let $\mathfrak{p}\in\Specm(\cO_K)$, then we denote by $K_\mathfrak{p}$  the completion of $K$ with respect to the place $\mathfrak{p}$ and set $\cN_\mathfrak{p}:=\left|N_{K/\QQ}(\mathfrak{p})\right|=\#(\cO_K/\mathfrak{p})=q_{K_\mathfrak{p}}$. 

\subsection{Schemes and varieties}\label{SV}Let $R$ be a commutative ring with $1\neq 0$, $\cI$ be an ideal of $R[x_1,\dots,x_n]$, $R'$ be an $R$-algebra, then we will write $\cI_{R'}$ instead of $\cI\otimes_R R'$. Similarly, if $X, Y$ are $R$-schemes, $\varphi:X\to Y$ is a $R$-morphism and $R'$ is a $R$-algebra then we write $\varphi_{R'}:X_{R'}=X\times_{\spec(R)}\spec(R')=X\otimes_R R'\to Y_{R'}=Y\times_{\spec(R)}\spec(R')=Y\otimes_R R'$ for the extension of scalars  to $R'$ of $\varphi$. Throughout this paper, if $R$ is understood, we will use both notations $X_{R'}$ and  $X\otimes R'$ to denote $X\otimes_R R'$ depending on the appearance of subscripts. More precisely, we will write $X_{R'}, X_i\otimes R'$. 
 We set $\mathfrak{D}_{R}^{r}:=\AA_R^r\setminus\Spec(R[y_1,...,y_r]/(y_1,...,y_r))$. Let $Z$ be a subscheme of $\AA_R^n$, we set $Z\mathfrak{D}_R^{r}:=Z\times_{\Spec(R)}(\AA_R^r\setminus\Spec(R[y_1,...,y_r]/(y_1,...,y_r)))$. We will write $\mathfrak{D}_R^{n,r}$ instead of $\AA_R^n\mathfrak{D}_R^{r}$. 

Let $\mathsf{F}$ be a field. We denote by $\overline{\mathsf{F}}$ a fixed algebraic closure of $\mathsf{F}$.  An $\mathsf{F}$-variety is a reduced and separated $\mathsf{F}$-scheme of finite type. If $X$ is an $\mathsf{F}$-scheme, we denote by $X_{red}$ the reduced scheme associated to $X$. Let $X$ be an $\mathsf{F}$-variety and $V$ be a subscheme of $X$, we denote by $\dim_V(X)$ the dimension of $X$ at $V$, i.e., $\dim_V(X)=\dim (U)$ for every small enough neighbourhood $U$ of $V$ in $X$. We say that an $\mathsf{F}$-variety $X$ is equi-dimensional if $\dim_x(X)=\dim(X)$ for all $x\in X$. If this is the case, we also say that $X$ is of pure dimension $\dim(X)$.

Let $\mathsf{F}$ be a field and $X$ be a scheme of finite type over $\mathsf{F}$. If $X$ is a closed subscheme of $\AA_\mathsf{F}^n$, we say that $X$ is a \index{complete intersection}complete intersection in $\AA^n_\mathsf{F}$ if the ideal $\cI$ associated to $X$ can be generated by $n-\dim(X)$ elements. More generally,  we say that $X$ is a complete intersection if there is an affine and smooth $\mathsf{F}$-scheme $Y$, a closed embedding $X\to Y$ and regular functions $f_1,\dots,f_r\in \cO_Y(Y)$ such that $f_i$ is not a zero divisor in $\cO_Y(Y)/(f_1,\dots,f_{i-1})$ for all $1\leq i\leq r$ and the ideal of $X$ in $Y$ (via the embedding) is generated by $f_1,\dots,f_{r}$. We say that $X$ is a \index{locally complete intersection}locally complete intersection at $x\in X$ if we can find an open subscheme $U$ of $X$ containing $x$ such that $U$ is a complete intersection. If $X$ is a locally complete intersection at every point $x\in X$, then we say that $X$ is a locally complete intersection. By \cite[Section 6.3.2, Proposition 3.20 and Remark 3.23]{LiuQ}, if $X$ is  an $\mathsf{F}$-scheme  of finite type and $\mathsf{F}'$ is a field extension of $\mathsf{F}$, then $X$ is  a locally complete intersection if and only if  $X_{\mathsf{F}'}$ is a locally complete intersection.
\subsection{Singularities and jet schemes}\label{SJ}
Let $\mathsf{F}$ be a field of characteristic zero. Let $\cI$ be a non-zero ideal of $\mathsf{F}[x_1,\dots,x_n]$ such that $\cI\neq (1)$ and $X$ be the closed subscheme of $\AA_\mathsf{F}^n$ associated to $\cI$. Recall that a projective, birational $\mathsf{F}$-morphism $\pi: Y\to \AA_{\mathsf{F}}^n$, with a smooth variety $Y$, is a \index{log resolution}log resolution of $\cI$ if the inverse image $\cI\cO_Y$ of $\cI$ is the ideal of a Cartier divisor $D$ such that $D + K_{Y/\AA_{\mathsf{F}}^n}$ is a divisor with simple normal crossings, where the relative canonical divisor $K_{Y/\AA_{\mathsf{F}}^n}$ is locally defined by the determinant of the Jacobian matrix of $\pi$. Such a log resolution of $\cI$ exists as shown by the work of Hironaka in \cite{Hir:Res}. Let $\pi:Y\to \AA_\mathsf{F}^n$ be a log resolution of $\cI$. We can write $\cI\cO_Y=\cO_Y(-D)$ with $D=\sum_{i\in I}N_iE_i$  and $K_{Y/\AA_\mathsf{F}^n}=\sum_{i\in I}(\nu_i-1)E_i$ such that $\sum_{i\in I}E_i$ is a divisor with simple normal crossings. We call $((\nu_i, N_i))_{i\in I}$ the numerical data of $\pi$. Let $V$ be a subscheme of $\AA_\mathsf{F}^n$, then the \index{log canonical threshold}log canonical threshold $\lct_{V}(\cI)$ of $\cI$ at $V$ is defined by
$$\lct_V(\cI)=\min_{i\in I, V\cap\pi(E_i)\neq\emptyset}\frac{\nu_i}{N_i}=\min_{x\in V(\overline{\mathsf{F}})}\lct_x(\cI).$$
In particular, we set $\lct_V(\cI)=+\infty$ if $V\cap\pi(E_i)=\emptyset$ for all $i\in I$. This definition does not depend on the choice of $\pi$ (see \cite{Mustata2}). Let $U$ be a small enough neighbourhood of $V$ in $\AA_\mathsf{F}^n$, one can use the blowing up of $\AA_\mathsf{F}^n$ along the smooth locus of  $\overline{X\cap U}_{red}$ (the Zariski closure of $(X\cap U)_{red}$ in $\AA_{\mathsf{F}}^n$)  or use the description of log canonical threshold in terms of Jet schemes in \cite{MustJAMS} to see that $\lct_V(\cI)\leq n-\dim_V(X)$. If $X=V$, we write simple $\lct(\cI)$ instead of $\lct_X(\cI)$. Let $R$ be a subalgebra of $\mathsf{F}$, $\cI$ be an ideal of $R[x_1,...,x_n]$ and $V$ be a subscheme of $\AA_R^n$ then we set $\lct_V(\cI)=\lct_{V_\mathsf{F}}(\cI_\mathsf{F})$ and $\lct(\cI)=\lct(\cI_\mathsf{F})$.

Let $\mathsf{F}$ be a field of characteristic zero and  $X$ be an $\mathsf{F}$-variety. Let $h:Y\to X$ be a \index{resolution of singularities}resolution of singularities of $X$, i.e. $h$ is a proper birational morphism, $Y$ is a smooth variety, $h$ is an isomorphism outside the singular locus $\Sing(X)$ of $X$ and $(h^{-1}(\Sing(X)))_{red}$ is a divisor with simple normal crossings. The existence of such a resolution of singularities of $X$ also follows  from \cite{Hir:Res}. Let us write $h^{-1}(\Sing(X))=\sum_{i\in I}N_iE_i$ and $K_{Y/X}=\sum_{i\in I} a_iE_i$, where $\{E_i|i\in I\}$ is the set of irreducible components of $(h^{-1}(\Sing(X)))_{red}$. Suppose that $X$ is normal and the canonical divisor $K_X$ of $X$ is $\QQ$-Cartier, i.e., $mK_X$ is a Cartier divisor on $X$ for some positive integer $m$. We say that
\begin{itemize}
\item[(\textit{i}),] $X$ has only \index{terminal singularity}terminal singularities if $a_i>0$ for all $i\in I$.
\item[(\textit{ii}),]$X$ has only \index{canonical singularity}canonical singularities if $a_i\geq 0$ for all $i\in I$.
\item[(\textit{iii}),]$X$ has only \index{log terminal singularity}log terminal singularities if $a_i>-1$ for all $i\in I$.
\item[(\textit{iv}),]$X$ has only \index{log canonical singularity}log canonical singularities if $a_i\geq -1$ for all $i\in I$.
\item[(\textit{v}),]$X$ has only \index{rational singularity}rational singularities if the higher direct images $\cR^jh_*(\cO_Y)=0$ for all $j>0$.
\end{itemize}
These definitions do not depend on the choice of $h$ (see, e.g.,  \cite[Definition 2.8]{Kollar1}).

Let $\mathsf{F}$ be a field. Recall that an $\mathsf{F}$-variety $X$ satisfies Serre's condition $S_m$ if $\depth(\cO_{X,x})\geq \min\{m,\dim_{x,X}\}$ for all $x\in X$, where $\depth(\cO_{X,x})$ is the maximal length of a regular sequence of $\cO_{X,x}$, i.e., a sequence of elements  $g_1,...,g_r$ in the maximal ideal of $\cO_{X,x}$ such that $g_1$ is not a zero divisor in  $\cO_{X,x}$ and $g_i$ is not a zero divisor in $\cO_{X,x}/(g_1,...,g_{i-1})$ for all $2\leq i\leq r$. An $\mathsf{F}$-variety $X$ is demi-normal if $X$ is $S_2$ and its codimension $1$ points are either regular points or nodes. By Serre's condition for normality, $X$ is normal if and only if $X$ is $S_2$ and regular in codimension 1.  Thus, if $X$ is normal then $X$ is demi-normal. If $X$ is demi-normal, we have the notion of canonical divisor $K_X$ of $X$ (see \cite[page 191]{Kollar1}).  Let $Z$ be a reduced and demi-normal $\mathsf{F}$-variety whose normalization is $\tilde{Z}$. The conductor of $Z$, denoted by $\cond(Z)$, is the largest ideal sheaf on $Z$ such that it is also an ideal sheaf on $\tilde{Z}$. We put $\tilde{D}=\spec(\cO_{\tilde{Z}}/\cond(Z))$ then $\tilde{D}$ is a reduced divisor of pure codimension $1$ on $\tilde{Z}$ (see \cite[page 189]{Kollar1}). Suppose moreover that the canonical divisor $K_{Z}$ of $Z$ is $\QQ$-Cartier. Let $h:Y\to \tilde{Z}$ be a birational morphism then we can write $K_{Y/\tilde{Z}}=h^{*}(\tilde{D})+\sum_{i\in I_h}a_iE_i$ for rational numbers $a_i$ and distinct prime divisors $E_i$ on $Y$. \index{semi-log canonical singularity}We say that $Z$ has only semi-log canonical singularities if $a_i\geq -1$ for all $i\in I_h$ and all $(Y,h)$ such that $Y$ is regular at the generic point of $E_i$ for every $i$ (see \cite[Definition-Lemma 5.10]{Kollar1}). In particular, if $Z$ has only log canonical singularities then $Z$ has only semi-log canonical singularities.


Let $S$ be a scheme and $X$ be an $S$-scheme. \index{jet schemes}We recall the definition of the \textit{jet} schemes of $X$ in \cite[Chapter 3, Section 2]{CNic}. For each $m\in\NN$, the $m^{\textnormal{th}}$ jet schemes of $X$ over $S$, denoted by $X_{m}/S$, is the $S$-scheme representing the functor from the category of $S$-schemes to the category of  sets sending an $S$-scheme $V$ to $\textnormal{Hom}_{S-\textnormal{schemes}}(V\times_{\Spec(\ZZ)}\Spec(\ZZ[t]/(t^{m+1})),X)$.  Note that $X_0/S$ is identified with $X$ in an obvious way.  If $i\geq j\geq 0$, we have a canonical morphism $\pi_X^{ij}: X_{i}/S\to X_{j}/S$ induced by the ring homomorphism $\ZZ[t]/(t^{i+1})\to \ZZ[t]/(t^{j+1})$. We call $\pi_X^{ij}$ a truncation morphism. For each $m\in \NN$, the homomorphism $\ZZ\to\ZZ[t]/(t^{m+1})$ induces a zero section $s_{X,m}:X\to X_m/S$ so that $\pi_X^{m0}\circ s_{X,m}=\textnormal{id}_X$.  If $\varphi:X\to Y$ is a morphism of $S$-schemes then $\varphi$ induces naturally an $S$-morphism $\varphi_m:X_m/S\to Y_m/S$ for each $m\geq 0$  satisfying $\pi_Y^{ij}\circ\varphi_i=\varphi_j\circ\pi_X^{ij}$ for all $i\geq j\geq 0$. We call $\varphi_m$ the $m^{\textnormal{th}}$ jet morphism of $\varphi$. Let $T$ be an $\cS$-scheme, then $X_T=X\times_S T$ is a $T$-scheme and we have an isomorphism between $(X_T)_m/T$ and $(X_m/S)\times_S T$ for all $m\in\NN$. From now on, we will write $X_m$ instead of $X_{m}/S$ if $S$ is understood.
\begin{remark}\label{jetloc} Let $K$ be a number field. Let $S=\Spec(\cO_K)$ and $X$ be an $S$-scheme of finite type. Let $k$ be a finite field endowed with a structure of $\cO_K$-algebra then $L:=k((t))\in\tilde{\cL}_{K,1}$, $\cO_L=k[[t]]$ and we can take $\varpi_L=t$. By the definition,  $X_m(k)$ can be identified with $X(k[t]/(t^{m+1}))=X(k[[t]]/(t^{m+1}))$ and if $i\geq j\geq 1$ then the morphism $\pi_X^{(i-1)(j-1)}: X_{i-1}\to X_{j-1}$ induces the map $$\pi_{ij}^X:X(k[[t]]/(t^i))=X_{i-1}(k)\to X(k[[t]]/(t^j))=X_{j-1}(k)$$ mentioned in Section \ref{nuloc}. 

\end{remark}

We sketch some results on the relation of singularities and jet schemes. 
\begin{prop}\label{singjet}
Let $\mathsf{F}$ be a field of characteristic zero and $X$ be an $\mathsf{F}$-variety of dimension $n\geq 1$. Suppose that $X$ is  equi-dimensional and a locally  complete intersection. Then the following assertions hold:
\begin{itemize}
\item[(\textit{i}),] If $X$ is normal then $X$ has only semi-log canonical singularities if and only if $X$ has only log canonical singularities.
\item[(\textit{ii}),] $X_m$ is equi-dimensional for all $m\geq 1$ if and only if $\dim(X_m)\leq n(m+1)$ for all $m\geq 1$ if and only if $X_m$ is a locally complete intersection for all $m$ if and only if $X_m$ is a locally complete intersection  of dimension $n(m+1)$ for all $m\geq 1$ if and only if $X$ has only semi-log canonical singularities. 
\item[(\textit{iii}),]The number of irreducible components of $X_m\otimes \overline{\mathsf{F}}$ equals to the number of irreducible component of $X_{\overline{\mathsf{F}}}$  for all $m\geq 1$ if and only if $X$ has only rational singularities if and only if $X$ has only canonical singularities. If this is the case, each irreducible component of $X_m\otimes \overline{\mathsf{F}}$  is of dimension $n(m+1)$.
\item[(\textit{iv}),]$X_m$ is normal for all $m\geq 1$  if and only if $X$ has only terminal singularities.
\end{itemize}
\end{prop}
\begin{proof}
By the definition, the first item is trivial. The second item follows from \cite[Theorem 1.4]{Mustata1}, \cite[Corollary 10.2.9]{Ishs} and \cite[Corollary 3.17]{Eishi}. The third item  follows from  \cite[Theorem 2.1]{Mustata1} and \cite{Elk}. The last item was proved in \cite{EMY,Ein-Must}.
\end{proof}
\begin{cor}\label{jetsi}Let $\mathsf{F}$ be a field of characteristic zero and $X$ be the closed subscheme of $\AA_\mathsf{F}^n$ of codimension $r$ associated to a non-zero ideal $\cI\neq (1)$ of $\mathsf{F}[x_1,...,x_n]$. Suppose that $X$ is equi-dimensional and a locally complete intersection. Let $h:Y\to\AA_\mathsf{F}^n$ be a log resolution of $\cI$ associated to irreducible divisors $(E_i)_{i\in I}$ and numerical data $((\nu_i,N_i))_{i\in I}$. Then the following assertions hold:
\begin{itemize}
\item[(\textit{a}),]$X$ has only semi-log canonical singularities if and only if $\nu_i\geq rN_i$ for all $i\in I$.
\item[(\textit{b}),]$X$ has only rational singularities if and only if the set $\{i\in I\mid\nu_i<rN_i\}\cup \{i\in I\mid \nu_i=rN_i>r\}\cup\{i\in I\mid (\nu_i,N_i)=(r,1),  \exists j\in I \textnormal{ such that } j\neq i, (\nu_j,N_j)=(r,1) \textnormal{ and } E_j\cap E_i\neq\emptyset\}$ is empty.
\end{itemize}
\end{cor}
\begin{proof}By \cite[Theorem 2.1]{ELM}, the condition that $\nu_i\geq rN_i$ for all $i\in I$ is equivalent to $\dim(X_m)=n(m+1)$. Thus, we can use the item (\textit{ii}) of Proposition \ref{singjet} to deduce the item (\textit{a}).

The item (\textit{b}) follows from the proof of \cite[Theorem 2.1]{Mustata1}.

\end{proof}
To finish this section, we recall the following result on the relation between log-canonical threshold and jet schemes. 
\begin{prop}[Corollary 3.4, \cite{MustJAMS}]\label{jetlog}Let $\mathsf{F}$ be a field of characteristic $0$ and $X$ be a closed subscheme of $\AA_{\mathsf{F}}^n$ associated to a non-zero ideal $\cI\neq (1)$. Let $Z$ be a closed subset of $X$. Then 
$$\lct_Z(\cI)\leq n-\frac{\dim_{s_{X,m}(Z)}(X_m)}{m+1}$$
for all $m\geq 0$ and the equality holds for infinitely many $m$.
\end{prop}
\subsection{Bernstein-Sato polynomial and minimal exponent}\label{BM}
Let $\mathsf{F}$ be a field of characteristic zero. Let $f$ be a non-constant polynomial in $\mathsf{F}[x_1,...,x_n]$. Then there is an element $P\in \mathsf{F}[x_1,\dots,x_n,\frac{\partial}{\partial x_1},\dots, \frac{\partial}{\partial x_n},s]$ and a polynomial $b(s)\in \mathsf{F}[s]\setminus\{0\}$ such that $Pf^{s+1}=b(s)f^s$ (see \cite{Bern}). The monic polynomial $b_f(s)\neq 0$ of smallest degree satisfying this property is called the \index{$b_f(s)$, Bernstein-Sato polynomial of a regular function $f$}Bernstein-Sato polynomial of $f$. It is easy to see that $(s+1)\mid b_f(s)$.  The \index{$\tilde{b}_f(s)$, reduced Bernstein-Sato polynomial of a regular function $f$}reduced Bernstein-Sato polynomial of $f$ is  $\tilde{b}_f(s)=b_f(s)/(s+1)$. More generally, if $X$ is a smooth $\mathsf{F}$-variety and $f$ is a regular function on $X$ then we also can define  $b_f(s),\tilde{b}_f(s)$ in a similar way. The fact is that every root of $\tilde{b}_f(s)$ is a negative rational number (see \cite{Kashiwara}). If $X$ is a smooth $\mathsf{F}$-variety and $f$ is a regular function on $X$, then the \index{$\tilde{\alpha}_f$, minimal exponent of a regular function $f$}minimal exponent $\tilde{\alpha}_f$ of $f$ is the smallest root of $\tilde{b}_f(-s)$. Locally, if $X$ is a smooth  $\mathsf{F}$-variety, $x_0\in X$ and $f$ is a regular function on $X$ such that $f(x_0)=0$, then the minimal exponent $\tilde{\alpha}_{x_0}(f)$ of $f$ at $x_0$ is the  minimal exponent $\tilde{\alpha}_{f\mid_V}$ for a small enough neighbourhood  $V$ of $x_0$. Moreover, one has $\tilde{\alpha}_f=\min_{x\in X, f(x)=0}\tilde{\alpha}_x(f)$ (see, e.g.,  \cite[Lemma 2.5.2]{Gyo}). We also set $\tilde{\alpha}_{x_0}(f)=+\infty$ if $f(x_0)\neq 0$. Hence, $\tilde{\alpha}_f=\min_{x\in X}\tilde{\alpha}_x(f)$. 

Let $X$ be a smooth variety over a field $\mathsf{F}$ of characteristic zero. The above notion of Bernstein-Sato polynomials has been extended to arbitrary non-zero coherent ideal sheaf $\mathfrak{a}$ in $\cO_X$ (see \cite{BMS}).  For instance, if $X=\AA_{\mathsf{F}}^n$ and $\mathfrak{a}$ is generated  by non-zero polynomials  $f_1,...,f_r$ then \index{$b_\mathfrak{a}(s)$, Bernstein-Sato polynomial of a non-zero coherent ideal sheaf $\mathfrak{a}$} the Bernstein-Sato polynomial $b_\mathfrak{a}(s)$ of $\mathfrak{a}$ is the monic polynomial of smallest degree such that
$b_\mathfrak{a}(s)f_1^{s_1}\cdots f_r^{s_r}$ belongs to $$\sum_{|u|=1}\mathsf{F}[x_1,\dots,x_n,\frac{\partial}{\partial x_1},\dots, \frac{\partial}{\partial x_n},s_1,...,s_r]\prod_{u_i<0}\binom{s_i}{-u_i}f_1^{s_1+u_1}\cdots f_r^{s_r+u_r},$$
where the sum is over all $u=(u_1,...,u_r)\in\ZZ^r$ such that $|u|:=u_1+...+u_r=1$, $s=s_1+...+s_r$ for independent variables $s_1,...,s_r$, and  $\binom{s_i}{m}:=\frac{1}{m!}\prod_{j=0}^{m-1}(s_i-j)$ for each positive integer $m$ and each variable $s_i$. By \cite[Theorem 1.1]{Mustideal}, we have $b_{\mathfrak{a}}(s)=\tilde{b}_{\sum_{i=1}^ry_if_i}(s)$ if $\mathfrak{a}=(f_1,...,f_r)\subset \mathsf{F}[x_1,...,x_n]$. On the other hand, if $\mathfrak{\mathfrak{a}}$ is a non-zero coherent ideal sheaf of $\cO_X$, the notion of \index{$\tilde{\alpha}_{\mathfrak{a}}$, minimal exponent of a non-zero coherent ideal sheaf $\mathfrak{a}$}minimal exponent $\tilde{\alpha}_{\mathfrak{a}}$ of $\mathfrak{a}$ was introduced in \cite{CDMO} by using $V$-filtration. In particular, if  $\mathfrak{a}$ is the ideal of $\mathsf{F}[x_1,...,x_n]$ generated by $f_1,...f_r$ such that the closed subscheme of $\AA_{\mathsf{F}}^n$ associated to $\mathfrak{a}$ is of dimension $n-r$ then $\tilde{\alpha}_{\mathfrak{a}}=\tilde{\alpha}_{(\sum_{i=1}^ry_if_i)|_{U}}$ as showed in \cite[Theorem 1.1]{CDMO}, where $U=(\AA_{\mathsf{F}}^r\setminus\{0\})\times \AA_{\mathsf{F}}^n$. If $\mathfrak{a}$ defines a closed subscheme $Y$ of $X$, then we also write $b_Y(s)$ and $\tilde{\alpha}_Y$ instead of $b_\mathfrak{a}(s)$ and $\tilde{\alpha}_\mathfrak{a}$ respectively. If $Y$ is a closed subscheme of pure codimension $r$ in $X$, it follows from \cite[Proposition 6.1]{CDMO} that $b_Y(-r)=0$. Thus, as with the minimal exponent of polynomials, we also expect that $\tilde{\alpha}_Y$ is the smallest root of $b_Y(-s)/(r-s)$ if $Y$ is a closed subscheme of pure codimension $r$ in $X$.
\subsection{Igusa local zeta functions and exponential sums modulo power of primes}\label{IE}
In this section, we fix a number field $K$, a positive integer $n$ and an $\cO_K$-scheme of finite type $Z\subset\AA_{\cO_K}^n$. 

If $L$ is a local field over $\cO_K$, then we set $$\phi_{L,Z}:=\textbf{1}_{\{x\in \cO_L^n\mid \overline{x}\in Z(k_L)\}}.$$ 
\begin{defn}
Let $\cI$ be an ideal of $\cO_K[x_1,...,x_n]$. Let $L\in\cL_{K,1}$. We associated  $L,Z,\cI$ to the Igusa local zeta function $\cZ_{L,Z,\cI}(s)$ defined by \index{$\cZ_{L,Z,\cI}(s)$, Igusa local zeta function of an ideal $\cI$ over a $p$-adic field $L$ at an $\cO_L$-scheme $Z$}
$$\cZ_{L,Z,\cI}(s):=\int_{\cO_L^n}\phi_{L,Z}(x)q_L^{-s\ord_L\left(\cI_{\cO_L}(x)\right)}\left|dx\right|,$$
for $\Re(s)>0$. If $\cI$ is generated by $f$, we will write $\cZ_{L,Z,f}(s)$ instead of $\cZ_{L,Z,(f)}(s)$. 
\end{defn}
We recall the following result in \cite{VeysZ}.
\begin{prop}[\cite{VeysZ}, Theorem 2.4]\label{VZ}Let $\cI$ be an ideal of $\cO_K[x_1,...,x_n]$ such that $\cI_K\neq (1)$. Let $h$ be a log resolution of $\cI_K$ and $((\nu_i,N_i))_{i\in I}$ be the numerical data of $h$. Let $L\in\cL_{K,1}$. Then $\cZ_{L,Z,\cI}(s)$ admits a meromorphic continuation to the complex plane as a rational function of $q_L^{-s}$.  Moreover $\cZ_{L,Z,\cI}(s)\prod_{i\in I}(1-q_L^{-N_is-\nu_i})$ is a holomorphic function.  In particular, $-\Re(s_0)\geq \lct_Z(\cI)$ for all poles $s_0$ of $\cZ_{L,Z,\cI}(s)$. In addition, let $K'$ be a finite extension of $K$ such that there is $i\in I$ satisfying $\lct_{Z}(\cI)=\frac{\nu_i}{N_i}$ and $E_i\otimes K'$ has a geometrically irreducible component, then there exists an integer $M$ such that $\cZ_{L,Z,\cI}(s)$ has a pole $s_0$ satisfying $\Re(s_0)=-\lct_Z(\cI)$ provided that $L\in\cL_{K',M}$.  
\end{prop}

Let $L\in\tilde{\cL}_{K,1}$, $f$ be a non-constant polynomial in $\cO_L[x_1,\dots,x_n]$ and $\psi$ be an additive character of $L$.  The exponential sum $E_{L,Z,f}(\psi)$ associated to $f,L,Z,\psi$ is given by\index{exponential sum modulo $p^m$ of polynomials}
$$E_{L,Z,f}(\psi):=\int_{\cO_L^n}\phi_{L,Z}(x)\psi(f(x))\left|dx\right|.$$
\begin{defn}\label{L-os} Let $f\in\cO_K[x_1,...,x_n]$ be a non-constant polynomial and $L\in \cL_{K,1}$. The \index{$\sigma_{L,Z}(f)$, $L$-oscillation index of a polynomial $f$ over a $p$-adic field $L$ at an $\cO_L$-scheme $Z$}$L$-oscillation index $\sigma_{L,Z}(f)$ of $f$ at $Z$ is the supremum of all positive real numbers $\sigma$ such that there exists a constant $c=c(L,f,Z,\sigma)$ satisfying $\left|E_{L,Z,f}(\psi)\right|\leq cq_L^{-m_\psi\sigma}$
for all non-trivial additive characters $\psi$ of $L$.
\end{defn} Let us recall the following definition and proposition.
\begin{defn}[\cite{CMN,NguyenVeys}]\label{dmoif}Let $f\in\cO_K[x_1,...,x_n]$ be a non-constant polynomial. The constant
\begin{equation}\label{mdef0} 
\moi_{K,Z}(f):=\liminf_{L\in\cL_{K,M},M\to +\infty} \sigma_{L,Z}(f)\leq +\infty
\end{equation}
is so called the \index{$\moi_{K,Z}(f)$, motivic oscillation index of a polynomial $f$ over a number field $K$ at an $\cO_K$-scheme $Z$} motivic oscillation index of $f$ at $Z$ over $K$.
\end{defn} 

\begin{prop}[\cite{NguyenVeys}, Corollary 2.4 and Section 2.4]\label{finic}Let $f\in\cO_K[x_1,...,x_n]$ be a non-constant polynomial and $h:Y\to\AA_K^n$ be a log resolution of $(f)$. Let $((\nu_i,N_i))_{i\in I}$ be the numerical data of $h$. Suppose that $f(Z(\CC))=0$ then $\moi_{K,Z}(f)$ and  $\sigma_{L,Z}(f)$ belong to the set $\{+\infty\}\cup\{\frac{\nu_i}{N_i}\mid i\in I\}$ for all $L\in\cL_{K,1}$.
\end{prop}

 We also recall a general form of Igusa's conjecture for exponential sums in \cite[Page 25]{CMN}.
\begin{conj}[Igusa's conjecture for exponential sums]\label{Iguconj}Let $f$ be a non-constant polynomial in $\cO_K[x_1,...,x_n]$. Then there is an integer $M$ and a positive constant $c$ depending only on $f$ such that for all local fields $L\in\tilde{\cL}_{K,M}$ and all additive characters $\psi$ of $L$ of conductor $m_{\psi}\geq 2$, we have
$$\left|E_{L,Z,f}(\psi)\right|\leq cm_{\psi}^{n-1}q_L^{-m_\psi\moi_{K,Z}(f)}.$$
\end{conj}
\section{Motivic oscillation indexes of ideals and conjectures for exponential sums}\label{exponential sum}
In this section, the story in the introduction will be formulated under a more general setting. Namely, we play with an arbitrary number field $K$ instead of the field of rational numbers and develop the ideas in Section \ref{intro} for polynomials over the ring $\cO_K$ of integers of $K$.
\subsection{Motivic oscillation indexes of ideals}\label{mideal}
\begin{lemdefn}\label{moidef}Let $\cI$ be a non-zero ideal of $\cO_K[x_1,\dots,x_n]$ such that $\cI_K\neq (1)$. Let $r$ be a positive integer and $Z\subset \AA_{\cO_K}^n$ be an $\cO_K$-scheme of finite type.  If $\cI_K$ can not be generated by $r$ elements, we put $\moi_{K,Z}^{(r)}(\cI)=0$. Otherwise, suppose that $\cI_K$ can be generated by $r$ polynomials $f_1,\dots,f_r\in \cO_K[x_1,\dots,x_n]$, with the notation of Definition \ref{dmoif}, $$\moi_{K,Z}^{(r)}(\cI):=\moi_{K, Z\mathfrak{D}_{\cO_K}^r}\left(\sum_{i=1}^r y_if_i(x)\right)$$
does not depend on the choice of $f_1,\dots,f_r$, where $Z\mathfrak{D}_{\cO_K}^r$ was defined in Section \ref{SV}. 
We will call $\moi_{K,Z}^{(r)}(\cI)$ the $r^{\textnormal{th}}$-motivic oscillation index  of $\cI$ at $Z$ over $K$.

We will set $$\moi_{K,Z}(\cI):=\sup_{r\geq 1}\moi_{K,Z}^{(r)}(\cI)$$ and call it the \index{$\moi_{K,Z}(\cI)$, motivic oscillation index of an ideal $\cI$ over a number field $K$ at an $\cO_K$-scheme $Z$}motivic oscillation index of $\cI$ at $Z$ over $K$. We will write $\moi_{K}^{(r)}(\cI)$, $\moi_{K}(\cI)$ instead of $\moi_{K,\AA_{\cO_K}^n}^{(r)}(\cI)$ and $\moi_{K,\AA_{\cO_K}^n}(\cI)$ respectively.
\end{lemdefn}
\begin{proof}
If $\cI_K=(f_1,\dots,f_r)$, then it is easy to verify that there exists an integer $M$ such that $\cI_{\cO_L}=(f_1,\dots,f_r)$ if $L\in\tilde{\cL}_{K,M}$. We put $X=\spec(\cO_K[x_1,\dots,x_n]/\cI)$ and $$g(x,y)=\sum_{i=1}^ry_if_i(x).$$  Let $Z\subset\AA_{\cO_K}^n$ be an $\cO_K$-scheme of finite type. By the definition of $\moi_{K,Z\mathfrak{D}_{\cO_K}^r}(g)$, it is sufficient to show that if $L\in\tilde{\cL}_{K,M}$ and $\psi$ is an additive character  of conductor $m\geq 1$ of $L$ then $E_{L,Z\mathfrak{D}_{\cO_K}^r,g}(\psi)$ is independent of the choice of $f_1,\dots,f_r$. We can use the Fourier transform on the finite groups $(\cO_L/(\varpi_L^m))^r$ for $m\geq 1$ to prove it. However, we present here a proof using a direct calculation to provide more intuition on exponential sums to the reader. 

By orthogonality of characters, if $\psi$ is an additive character of $L$ of conductor $m\geq 2$, we have 
\begin{equation}\label{orth1}\int_{y\in\cO_L^r\setminus \varpi_L\cO_L^r}\psi\left(\sum_{i=1}^r y_if_i(x)\right)\left|dy\right|=0
\end{equation}
if $\min_{1\leq i\leq r}\ord_L(f_i(x))<m-1$,
and 
\begin{equation}\label{ortho2}
\int_{y\in\cO_L^r}\psi\left(\sum_{i=1}^r y_if_i(x)\right)\left|dy\right|=0
\end{equation}
if $\min_{1\leq i\leq r}\ord_L(f_i(x))= m-1$.
Let $L\in\tilde{\cL}_{K,1}$ and $m\geq 0$, we set $$A_{mZL}=\{x\in\cO_L^n\mid\overline{x}\in Z(k_L), \min_{1\leq i\leq r}\ord_L(f_i(x))<m-1\},$$ 
$$B_{mZL}=\{x\in\cO_L^n\mid\overline{x}\in Z(k_L), \min_{1\leq i\leq r}\ord_L(f_i(x))=m-1\}$$ and $$C_{mZL}=\{x\in\cO_L^n\mid\overline{x}\in Z(k_L), \min_{1\leq i\leq r}\ord_L(f_i(x))\geq m\}.$$
Then $A_{mZL},B_{mZL}$ and $C_{mZL}$ are disjoint sets satisfying $$A_{mZL}\cup B_{mZL}\cup C_{mZL}=\{x\in\cO_L^n\mid\overline{x}\in Z(k_L)\},$$
$$B_{mZL}\cup C_{mZL}=C_{(m-1)ZL}$$ and  
$$\int_{x\in C_{(m-1)ZL}}|dx|=q_L^{-(m-1)n}\#\pi_{(m-1)1}^{-1}(X\cap Z)(k_L)$$
for all $L\in\tilde{\cL}_{K,1}$ and all $m\geq 2$, where if $i\geq j\geq 0$ then the map $\pi_{ij}:X(\cO_L/(\varpi_L)^i)\to X(\cO_L/(\varpi_L)^j)$ was defined in Section \ref{nuloc}.
These together with  (\ref{orth1}), (\ref{ortho2}) and Fubini's theorem imply

\begin{align}
&E_{L,Z\mathfrak{D}_{\cO_K}^r,g}(\psi)\nonumber\\=&\int_{y\in\cO_L^r\setminus \varpi_L\cO_L^r}\int_{x\in\cO_L^n}\phi_{L,Z}(x)\psi\left(\sum_{i=1}^r y_if_i(x)\right)\left|dx\right|\left|dy\right|\nonumber\\
=&\int_{\{x\in\cO_L^n\mid\overline{x}\in Z(k_L)\}}\int_{y\in\cO_L^r\setminus \varpi_L\cO_L^r}\psi\left(\sum_{i=1}^r y_if_i(x)\right)\left|dy\right|\left|dx\right|\nonumber\\
=&\int_{x\in A_{mZL}}\underbrace{\left(\int_{y\in\cO_L^r\setminus \varpi_L\cO_L^r}\psi\left(\sum_{i=1}^r y_if_i(x)\right)\left|dy\right|\right)}_{=0}\left|dx\right|+\int_{x\in B_{mZL}}\underbrace{\left(\int_{y\in\cO_L^r}\psi\left(\sum_{i=1}^r y_if_i(x)\right)\left|dy\right|\right)}_{=0}\left|dx\right|\nonumber\\
&-\int_{x\in C_{(m-1)ZL}}\int_{y\in \varpi_L\cO_L^r}\underbrace{\psi\left(\sum_{i=1}^r y_if_i(x)\right)}_{=1}\left|dy\right|\left|dx\right|+\int_{x\in C_{mZL}}\int_{y\in\cO_L^r}\underbrace{\psi\left(\sum_{i=1}^r y_if_i(x)\right)}_{=1}\left|dy\right|\left|dx\right|\nonumber\\
=&\int_{x\in C_{mZL}}\left|dx\right|-q_L^{-r}\int_{x\in C_{(m-1)ZL}}\left|dx\right|\nonumber\\
=&q_L^{-mn}\left(\#\pi_{m1}^{-1}(X\cap Z)(k_L)-q_L^{n-r}\#\pi_{(m-1)1}^{-1}(X\cap Z)(k_L)\right)\label{exdm}
\end{align}if $L\in\tilde{\cL}_{K,M}$ and $\psi$ is an additive character of conductor $m\geq 2$ of $L$. Similarly, if $L\in\cL_{K,M}$ and $\psi$ is an additive character of conductor $1$ of $L$, then we have 
\begin{align}
E_{L,Z\mathfrak{D}_{\cO_K}^r,g}(\psi)=&\int_{x\in C_{1mL}}\left|dx\right|-q_L^{-r}\int_{x\in C_{0mL}}\left|dx\right|
=q_L^{-n}\left(\#(X\cap Z)(k_L)-q_L^{-r}\#Z(k_L)\right).\label{exd1}
\end{align}
 Therefore, $E_{L,Z\mathfrak{D}_{\cO_K}^r,g}(\psi)$ is independent of the choice of polynomials $f_1,\dots,f_r$.  
\end{proof}
By the proof of Lemma-Definition \ref{moidef}, it is easy to verify the following corollary.
\begin{cor}
Let $\cI, Z$ be as in Lemma-Definition \ref{moidef} and $X$ be the $\cO_K$-scheme associated to $\cI$, then  we have 
$$\moi_{K,Z}^{(r)}(\cI)=\moi_{K,Z\cap X}^{(r)}(\cI)$$
and
$$\moi_{K}^{(r)}(\cI)=\moi_{K,\AA_{\cO_K}^n}^{(r)}(\cI)= \moi_{K,X}^{(r)}(\cI).$$

\end{cor}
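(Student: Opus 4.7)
The plan is to exploit the explicit formulas for the exponential sum $E_{L,(\AA_{\cO_K}^{r}\setminus\{0\})\times Z,g}(\psi)$ derived inside the proof of Lemma-Definition \ref{moidef}. First, if $\cI_K$ cannot be generated by $r$ elements, then both sides of the first equality vanish by definition, so we may assume $\cI_K=(f_1,\dots,f_r)$ for some non-constant $f_1,\dots,f_r\in\cO_K[x_1,\dots,x_n]$ and set $g=\sum_{i=1}^{r}y_if_i(x)$. Choose $M$ large enough so that $\cI_{\cO_L}=(f_1,\dots,f_r)$ for all $L\in\cL_{K,M}$, and recall from Lemma-Definition \ref{moidef} that for such $L$ and every additive character $\psi$ of conductor $m\geq 2$ one has
\[
E_{L,(\AA_{\cO_K}^{r}\setminus\{0\})\times Z,g}(\psi)
= q_L^{-mn}\bigl(A_m(Z)-q_L^{n-r}A_{m-1}(Z)\bigr),
\]
where $A_j(Z):=\#\{x\in X(\cO_L/(\varpi_L^{j})):\pi_{j1}(x)\in Z(k_L)\}$.

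The key observation is that every $x\in X(\cO_L/(\varpi_L^{j}))$ already satisfies $\pi_{j1}(x)\in X(k_L)$, so the condition $\pi_{j1}(x)\in Z(k_L)$ is equivalent to $\pi_{j1}(x)\in Z(k_L)\cap X(k_L)=(Z\cap X)(k_L)$. Therefore $A_j(Z)=A_j(Z\cap X)$ for every $j\geq 1$, which gives
\[
E_{L,(\AA_{\cO_K}^{r}\setminus\{0\})\times Z,g}(\psi)
=E_{L,(\AA_{\cO_K}^{r}\setminus\{0\})\times (Z\cap X),g}(\psi)
\]
for every character of conductor $m\geq 2$ and every $L\in\cL_{K,M}$. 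For conductor $m=1$ the two exponential sums may differ, but the difference
\[
q_L^{-n-r}\bigl(\#Z(k_L)-\#(Z\cap X)(k_L)\bigr)
\]
is a bounded quantity (depending only on $L$), and it enters only through finitely many characters of conductor $1$. Since the constant $c(L,g,\cdot,\sigma)$ in the definition (\ref{osci}) of $\sigma_{L,\cdot}(g)$ is allowed to depend on $L$, these finitely many values can be absorbed into $c$ without changing the admissible decay rate. Hence $\sigma_{L,(\AA_{\cO_K}^{r}\setminus\{0\})\times Z}(g)=\sigma_{L,(\AA_{\cO_K}^{r}\setminus\{0\})\times (Z\cap X)}(g)$ for every $L\in\cL_{K,M}$.

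Taking $\liminf$ over $L\in\cL_{K,M}$ as $M\to+\infty$ gives
$\moi_{K,(\AA_{\cO_K}^{r}\setminus\{0\})\times Z}(g)=\moi_{K,(\AA_{\cO_K}^{r}\setminus\{0\})\times (Z\cap X)}(g)$, which by Lemma-Definition \ref{moidef} is exactly the first equality $\moi_{K,Z}^{(r)}(\cI)=\moi_{K,Z\cap X}^{(r)}(\cI)$. For the second displayed formula, the equality $\moi_K^{(r)}(\cI)=\moi_{K,\AA_{\cO_K}^{n}}^{(r)}(\cI)$ is immediate from the definition since $(\AA_{\cO_K}^{r}\setminus\{0\})\times\AA_{\cO_K}^{n}$ is the locus appearing in the definition of $\moi_K^{(r)}(\cI)$, and applying the first part with $Z=\AA_{\cO_K}^{n}$ yields $\moi_{K,\AA_{\cO_K}^{n}}^{(r)}(\cI)=\moi_{K,\AA_{\cO_K}^{n}\cap X}^{(r)}(\cI)=\moi_{K,X}^{(r)}(\cI)$. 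The only subtle point, already flagged above, is to verify that the conductor-$1$ discrepancy does not affect $\sigma_L$; this is the main (and only) obstacle, and it is handled by the fact that $c$ may depend on $L$.
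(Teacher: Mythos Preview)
Your proof is correct and follows precisely the approach the paper has in mind: the paper does not give any argument beyond ``Because of the proof of Lemma-Definition \ref{moidef}, it is easy to verify the following corollary,'' and your write-up spells out exactly that verification using the explicit formulas for $E_{L,(\AA_{\cO_K}^r\setminus\{0\})\times Z,g}(\psi)$ derived there. One cosmetic remark: when you speak of ``finitely many characters of conductor $1$,'' note that those formulas show the exponential sum depends only on the conductor, so there is really a single value to absorb; but this does not affect the argument.
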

As mentioned in Section \ref{intro}, we will define the \index{$\moi_{K,Z}^{\textnormal{naive}}(\cI)$, naive motivic oscillation index of an ideal $\cI$ over a number field $K$ at an $\cO_K$-scheme $Z$}naive motivic oscillation indexes of a non-zero ideal $\cI$ of $\cO_K[x_1,\dots,x_n]$ as follows.
\begin{defn}\label{naiv}
Let $\cI$ be a non-zero ideal of $\cO_K[x_1,\dots,x_n]$ such that $\cI_K\neq (1)$. Let $X=\spec(\cO_K[x_1,\dots,x_n]/\cI)$ and $Z\subset\AA_{\cO_K}^n$ be an $\cO_K$-scheme of finite type. Let $r$ be a positive integer.  For each local field $L\in\cL_{K,1}$, let $\sigma_{L,Z}^{r,\textnormal{naive}}(\cI)$ be the minimum taken over all real numbers $\sigma$  such that $-\sigma$ is the real part of a pole of the function (see Proposition \ref{VZ})
$$F_{r,L,Z,\cI}(s)=\left(1-q_L^{-(s+r)}\right)\int_{\cO_L^n}\phi_{L,Z}(x)q_L^{-s\ord_L\left(\cI_{\cO_L}(x)\right)}\left|dx\right|.$$
We use the convention that $\sigma_{L,Z}^{r,\textnormal{naive}}(\cI)=+\infty$ if $F_{r,L,Z,\cI}(s)$ is an entire function. We call $\sigma_{L,Z}^{r,\textnormal{naive}}(\cI)$ the  $r^{\textnormal{th}}$-naive $L$-oscillation index of $\cI$ at $Z$.

The $r^{\textnormal{th}}$-naive motivic oscillation index $\moi_{K,Z}^{\textnormal{naive}}(\cI)$ of $\cI$ at $Z$ over $K$ is defined by
$$\moi_{K,Z}^{r,\textnormal{naive}}(\cI):=\liminf_{L\in\cL_{K,M}, M\to +\infty}\sigma_{L,Z}^{r,\textnormal{naive}}(\cI).$$
The naive motivic  oscillation index  of $\cI$ at $Z$ over $K$ (resp. the naive \index{$\sigma_{L,Z}^{\textnormal{naive}}(\cI)$, naive $L$-oscillation index of an ideal $\cI$ over a $p$-adic field $L$ at an $\cO_L$-scheme $Z$}$L$-oscillation index of $\cI$ at $Z$ for $L\in \cL_{K,1}$), denoted by $\moi_{K,Z}^{\textnormal{naive}}(\cI)$ (resp. $\sigma_{L,Z}^{\textnormal{naive}}(\cI)$), is  defined to be  $\moi_{K,Z}^{r_0,\textnormal{naive}}(\cI)$ (resp. $\sigma_{L,Z}^{r_0,\textnormal{naive}}(\cI)$), where $r_0=n-\dim_{Z_K}(X_K)$.
We will write $\moi_{K}^{\textnormal{naive}}(\cI)$ instead of $\moi_{K,\AA_{\cO_K}^n}^{\textnormal{naive}}(\cI)$  and call it the naive motivic  oscillation index of $\cI$ over $K$. 
\end{defn}
Now, we introduce a local version of Lemma-Definition \ref{moidef}. Let $\cI$ be a non-zero ideal of $\cO_K[x_1,\dots,x_n]$ such that $\cI_K\neq (1)$.  Let $Z$ be an $\cO_K$-scheme of finite type such that  $Z\subset X=\spec(\cO_K[x_1,\dots,x_n]/\cI)$ and $Z_K\neq \emptyset$. Let $U$ be an affine open subset of $\AA_K^n$ such that $Z_K\subset U$. Then we can find $f_1,\dots,f_r\in \cO_K[x_1,\dots,x_n]$ for some positive integer $r$ such that  $\cI\cO_U=(f_1,\dots,f_r)\cO_U$. Let $\cA(Z_K,X_K)$ be the set consisting of all tuples $(U,r,f_1,\dots,f_r)$ such that $U$ is an affine open subset of $\AA_K^n$,  $f_1,\dots,f_r$ are elements in $\cO_K[x_1,\dots,x_n]$, $Z_K\subset U$ and $\cI_K\cO_U=(f_1,\dots,f_r)\cO_U$.
\begin{defn}\label{moiloc1}
Let $\cI$ be a non-zero ideal of $\cO_K[x_1,\dots,x_n]$ such that $\cI_K\neq (1)$.  Let $Z$ be an $\cO_K$-scheme of finite type such that $Z\subset X=\spec(\cO_K[x_1,\dots,x_n]/\cI)$. The \index{$\moi_{K,Z}^{\textnormal{loc}}(\cI)$, local motivic oscillation index of an ideal $\cI$ over a number field $K$ at an $\cO_K$-scheme $Z$}local motivic oscillation index of $\cI$ at $Z$ over $K$, denoted by $\moi_{K,Z}^{\textnormal{loc}}(\cI)$ or $\moi_{K,Z}^{\textnormal{loc}}(X_K)$ is defined by
\begin{equation*}
\moi_{K,Z}^{\textnormal{loc}}(\cI)=\moi_{K,Z}^{\textnormal{loc}}(X_K):=\sup_{(U,r,f_1,\dots,f_r)\in\cA(Z_K,X_K)}\moi_{K,Z \mathfrak{D}_{\cO_K}^r}\left(\sum_{i=1}^r y_if_i(x)\right).
\end{equation*}

\end{defn}
\begin{remark}\label{well}
Let $Y$ be the closed subscheme of $\AA_K^n$ associated to an ideal $\cI\neq (1)$ of $K[x_1,...,x_n]$. For each subscheme $W$ of $Y$, we denote by $\cA(W,Y)$ the set consisting of all tuples $(U,r,f_1,\dots,f_r)$ such that $U$ is an affine open subset of $\AA_K^n$,  $f_1,\dots,f_r$ are elements in $\cO_K[x_1,\dots,x_n]$, $W\subset U$ and $\cI\cO_U=(f_1,\dots,f_r)\cO_U$. 

Let $Z$ be an $\cO_K$-subscheme of $\AA_{\cO_K}^n$ of finite type such that $Z_K\subset Y$.  We can use the idea of Definition \ref{moiloc1} to define the local motivic oscillation index $\moi_{K,Z}^{\textnormal{loc}}(Y)$ of $Y$ at $Z$ over $K$. 
More precisely, we only need to  set $$\moi_{K,Z}^{\textnormal{loc}}(Y):=\sup_{(U,r,f_1,\dots,f_r)\in\cA(Z_K,Y)}\moi_{K,Z \mathfrak{D}_{\cO_K}^r}\left(\sum_{i=1}^r y_if_i(x)\right).$$

In addition, if $\tilde{Z}$ is a subscheme of $Y$, then we can also define $\moi_{K,\tilde{Z}}^{\textnormal{loc}}(Y)$. Indeed, we observe that there is an $\cO_K$-scheme  $\mathsf{Z}$ of finite type such that $\tilde{Z}=\mathsf{Z}_K$. Moreover, suppose that $\mathsf{Z}_1,\mathsf{Z}_2\subset \AA_{\cO_K}^n$ are $\cO_K$-schemes of finite type such that $\mathsf{Z}_1\otimes K=\tilde{Z}=\mathsf{Z}_2\otimes K$, then there exists a non-zero integer $M$ such that $\mathsf{Z}_1\otimes \cO_K[1/M]=\mathsf{Z}_2\otimes \cO_K[1/M]$. Thus, $\phi_{L,\mathsf{Z}_1\mathfrak{D}_{\cO_K}^r}=\phi_{L,\mathsf{Z}_2\mathfrak{D}_{\cO_K}^r}$ if $L\in\cL_{K,M}$. Hence, by Definitions \ref{L-os} and \ref{dmoif}, if $f_1,\dots,f_r\in \cO_K[x_1,...,x_n]$ then $$\moi_{K,\mathsf{Z}_1\mathfrak{D}_{\cO_K}^r}\left(\sum_{i=1}^r y_if_i(x)\right)=\moi_{K,\mathsf{Z}_2\mathfrak{D}_{\cO_K}^r}\left(\sum_{i=1}^r y_if_i(x)\right).$$
Therefore, we only need to define $\moi_{K,\tilde{Z}}^{\textnormal{loc}}(Y)$ to be $\moi_{K,\mathsf{Z}}^{\textnormal{loc}}(Y)$ for an arbitrary $\cO_K$-scheme  $\mathsf{Z}\subset\AA_{\cO_K}^n$ of finite type such that $\tilde{Z}=\mathsf{Z}_K$.
\end{remark}

More generally, if $X$ is an $\cO_K$-scheme of finite type and $Z$ is a subscheme of $X_K$ such that $Z\neq \emptyset$ and there is an affine open subset $V$ of $X_K$ satisfying $Z\subset V$, then we may define the local motivic oscillation index $\moi_{K,Z}^{\textnormal{loc}}(X)$ of $X$ at $Z$ over $K$ by using a closed embedding of $V$ to a $K$-affine space. However, by this way,  $\moi_{K,Z}^{\textnormal{loc}}(X)$ depends on the choice of such a closed embedding. Therefore, we will need to avoid this dependence. In order to do this, we denote by $\cB(X,Z)$ the set consisting of all triples $(\iota,V,n)$, where $V$ is an affine open subset of $X_K$ containing $Z$ and $\iota:V\to  \AA_{K}^n$ is a closed embedding. 

\begin{defn}\label{moiloc2}Let $X$ be an $\cO_K$-scheme of finite type and $Z$ be a subscheme of $X_K$ such that $Z\neq \emptyset$ and there is an affine open subset $V$ of $X_K$ with $Z\subset V$. The \index{$\moi_{K,Z}^{\textnormal{aloc}}(X)$, absolutely local motivic oscillation index of a scheme $X$ over a number field $K$ at a subscheme $Z$ of $X$}absolutely local motivic oscillation index $\moi_{K,Z}^{\textnormal{aloc}}(X)$ of $X$ at $Z$ over $K$ is defined by
$$\moi_{K,Z}^{\textnormal{aloc}}(X):=\sup_{(\iota,V,n)\in \cB(X,Z)}\left(\moi_{K,\iota(Z)}^{\textnormal{loc}}(\iota(V))-n\right),$$
where $\moi_{K,\iota(Z)}^{\textnormal{loc}}(\iota(V))$ was defined in Definition \ref{moiloc1} and Remark \ref{well}. Note that we need the existence of $V$ to make sure that $\cB(X,Z)\neq\emptyset$. If $U$ is an open subscheme of $X$ of finite type over $\cO_K$ and $Z\subset U_K$, then Definition \ref{moiloc1} implies that $\moi_{K,Z}^{\textnormal{aloc}}(X)=\moi_{K,Z}^{\textnormal{aloc}}(U)$.

When $Z\subset X$ is an $\cO_K$-scheme of finite type such that $\emptyset\neq Z_K$ contained in an affine open subscheme of $X_K$, we define $$\moi_{K,Z}^{\textnormal{aloc}}(X):=\moi_{K,Z_K}^{\textnormal{aloc}}(X)$$
in an obvious way. 
\end{defn}
\begin{remark}\label{variety}In Definition \ref{moiloc2}, we can play with $K$-schemes of finite type instead of $\cO_K$-schemes of finite type. Indeed, if $X$ is a $K$-scheme  of finite type, we only need to choose $\tilde{X}$ to be an $\cO_K$-scheme of finite type  such that $X=\tilde{X}_K$ and define the absolutely motivic oscillation index of $X$ at $Z$ to be the absolutely motivic oscillation index of $\tilde{X}$ at $Z$. The discussion in Remark \ref{well} guarantees that our definition is well-defined.
\end{remark}
On the other hand,  Equalities (\ref{exdm}) and (\ref{exd1}) in the proof of Lemma-Definition \ref{moidef} suggest defining exponential sums of an ideal or a scheme as follows.

\begin{defn}\label{expodef}\index{exponential sum modulo $p^m$ of ideals}
Let $\cI$ be a non-zero ideal of $\cO_K[x_1,\dots,x_n]$ such that $\cI_K\neq (1)$. We set $X=\spec(\cO_K[x_1,\dots,x_n]/\cI)$. Let $Z\subset \AA_{\cO_K}^n$ be an $\cO_K$-scheme of finite type and $r$ be a positive integer. Let $L$ be a local field over $\cO_K$ and $m$ be a positive integer, then the $r^{\textnormal{th}}$ exponential sum modulo $\varpi_L^m$ of $\cI$ at $Z$ over $L$ is defined by 
\begin{align*}
&E_{L,Z,\cI}^{(r)}(m):=q_L^{-mn}\left(\#\pi_{m1}^{-1}(X\cap Z)(k_L)-q_L^{n-r}\#\pi_{(m-1)1}^{-1}(X\cap Z)(k_L)\right)
\end{align*}
if $m\geq 2$ and 
$$E_{L,Z,\cI}^{(r)}(m):=q_L^{-n}\left(\#(X\cap Z)(k_L)-q_L^{-r}\#Z(k_L)\right)$$
if $m=1$,  where if $i\geq j\geq 0$ then the map $\pi_{ij}:X(\cO_L/(\varpi_L)^i)\to X(\cO_L/(\varpi_L)^j)$ was defined in Section \ref{nuloc}. If $Z=\AA_{\cO_K}^n$, we will write $E_{L,\cI}^{(r)}(m)$ instead of $E_{L,Z,\cI}^{(r)}(m)$.


\end{defn}
\begin{remark}\label{absexp}
Definition \ref{expodef} is an abstract definition since the usual form of exponential sums was absent. However, we can find an exponential sum form of $E_{L,Z,\cI}^{(r)}(m)$ whenever there is  an affine neighbourhood $U$ of $Z_K$ in $\AA_K^n$ such that $\cI\cO_U$ can be generated by polynomials $f_1,\dots,f_r\in \cO_K[x_1,...,x_n]$ (e.g.,    $r$ is large enough) by observing that  $$E_{L,Z,\cI}^{(r)}(m)=E_{L,Z\mathfrak{D}_{\cO_K}^r,\sum_{1\leq i\leq r}y_if_i(x)}(\psi)$$ for all additive characters $\psi$ of $L$ of conductor $m$ provided that $L\in\tilde{\cL}_{K,M}$ for a large enough integer $M$ independent of $Z$  as seen in the proof of Lemma-Definition \ref{moidef}. Note that we can take $M=1$ if $\cI$ can be generated by $r$ elements.
\end{remark}


Let us relate $\moi_{K,Z}(\cI), \moi_{K,Z}^{\textnormal{naive}}(\cI)$ and $\moi_{K,Z}^{\textnormal{loc}}(\cI)$.
\begin{prop}\label{compa}Let $\cI$ be a non-zero ideal of $\cO_K[x_1,\dots,x_n]$ such that $\cI_K\neq (1)$. Let $X=\spec(\cO_K[x_1,\dots,x_n]/\cI)$ and $Z\subset X$ be an $\cO_K$-scheme of finite type.  We have
$$\moi_{K,Z}^{\textnormal{naive}}(\cI)\geq \moi_{K,Z}^{\textnormal{loc}}(\cI)\geq \moi_{K,Z}(\cI).$$
Moreover, the following claims hold:
\begin{itemize}
\item[(\textit{i}),] $\moi_{K,Z}^{(r)}(\cI)=0$ if $r<n-\dim(X_K)$.
\item[(\textit{ii}),] $\moi_{K,Z}^{\textnormal{naive}}(\cI)=\moi_{K,Z}^{\textnormal{loc}}(\cI)$ if and only if either there is an affine open neighbourhood $V$ of $Z_K$ in $\AA_K^n$ such that $\cI\cO_{\AA_K^n}(V)$ can be generated by $n-\dim_{Z_K}(X_K)$ elements or $\moi_{K,Z}^{\textnormal{naive}}(\cI)=\moi_{K,Z}^{\textnormal{loc}}(\cI)=\lct_Z(\cI)$.
\item[(\textit{iii}),]$\moi_{K,X}^{\textnormal{naive}}(\cI)=\moi_{K,X}(\cI)$ if and only if either  $\cI_K$ can be generated by $n-\dim(X_K)$ elements or $\moi_{K,X}^{\textnormal{naive}}(\cI)=\moi_{K,X}(\cI)=\lct(\cI)$. 
\end{itemize}
\end{prop}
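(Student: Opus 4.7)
The backbone of the proof is an explicit identity, implicit in the proof of Lemma-Definition~\ref{moidef}, that trades decay of the exponential sums $E_{L,Z,\cI}^{(r)}(m)$ for the location of poles of $F_{r,L,Z,\cI}(s)$. Setting
$$V_m = \mu_{L^n}\!\left(\{x\in\cO_L^n : \overline{x}\in Z(k_L),\ \ord_L(\cI_{\cO_L}(x))\geq m\}\right),$$
one checks directly from Definition~\ref{expodef} that $E_{L,Z,\cI}^{(r)}(m) = V_m - q_L^{-r}V_{m-1}$ for all $m\geq 1$, so the Poincar\'e series $P(t) = \sum_{m\geq 0}V_m t^m$ and the generating function $\mathscr{E}(t) = \sum_{m\geq 1}E_{L,Z,\cI}^{(r)}(m)\, t^m$ satisfy $(1-q_L^{-r}t)P(t) = V_0 + \mathscr{E}(t)$. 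Combined with the standard rewriting $Z(s)=(1-q_L^s)P(q_L^{-s}) + q_L^s V_0$ of the Igusa local zeta function $Z(s) = \int_{\cO_L^n}\phi_{L,Z}\, q_L^{-s\ord_L(\cI_{\cO_L}(x))}|dx|$, a short manipulation gives the pivotal identity
$$F_{r,L,Z,\cI}(s) = (1-q_L^s)\,\mathscr{E}(q_L^{-s}) + (1-q_L^{-r})V_0.$$

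The main chain of inequalities follows at once. For $\moi_{K,Z}^{\textnormal{loc}}(\cI)\geq \moi_{K,Z}(\cI)$, I observe that every global generating set $(f_1,\dots,f_r)$ of $\cI_K$ yields a tuple $(\AA_K^n,r,f_1,\dots,f_r)\in \cA(X,Z)$, so the supremum over global $r$-tuples in the definition of $\moi_{K,Z}(\cI)$ is taken over a sub-family of the one defining $\moi_{K,Z}^{\textnormal{loc}}(\cI)$. For $\moi_{K,Z}^{\textnormal{naive}}(\cI)\geq \moi_{K,Z}^{\textnormal{loc}}(\cI)$, the pivotal identity shows that whenever $|E_{L,Z,\cI}^{(r)}(m)|\leq c\, q_L^{-m\sigma}$ for some $\sigma<\moi_{K,Z}^{\textnormal{loc}}(\cI)$ and all $L$ of sufficiently large residue characteristic, the series $\mathscr{E}(q_L^{-s})$, and hence $F_{r,L,Z,\cI}(s)$, is holomorphic on $\Re(s)>-\sigma$; thus $\sigma_{L,Z}^{\textnormal{naive}}(\cI)\geq \sigma$, and the liminf over $L\in \cL_{K,M}$ as $M\to \infty$ closes the argument. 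Part (i) is immediate: when $r<n-\dim(X_K)$, Krull's height theorem gives $\mathrm{ht}(\cI_K) = n-\dim(X_K)>r$, so $\cI_K$ cannot be generated by $r$ elements and $\moi_{K,Z}^{(r)}(\cI)=0$ by the convention of Lemma-Definition~\ref{moidef}.

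For the ``if'' direction of (ii), when $\cI\cO_{\AA_K^n}(V)$ is generated by $r_0 = n-\dim(X_K)$ polynomials on some affine open $V \supset Z_K$, the tuple $(V, r_0, f_1, \dots, f_{r_0})$ lies in $\cA(X,Z)$ and the pivot identity relates the naive function $F_{r_0,L,Z,\cI}(s)$ sharply to $\mathscr{E}$: the smallest $\sigma$ with $-\sigma$ the real part of a pole of $F_{r_0,L,Z,\cI}(s)$ equals the largest $\sigma$ such that $|E_{L,Z,\cI}^{(r_0)}(m)|=O(q_L^{-m\sigma})$, up to polynomial factors in $m$ which are absorbed into the liminf via the standard control of coefficients of a rational function by its poles. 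Taking liminf gives $\moi_{K,Z}^{\textnormal{naive}}(\cI)\leq \moi_{K,Z}^{(r_0)}(\cI)\leq \moi_{K,Z}^{\textnormal{loc}}(\cI)$, and combined with the already-established reverse inequality this forces equality. The alternative case $\moi_{K,Z}^{\textnormal{naive}}(\cI)=\moi_{K,Z}^{\textnormal{loc}}(\cI)=\lct_Z(\cI)$ is verified on a log-resolution of $\cI$: the divisor component realising the minimal ratio $\nu_i/N_i = \lct_Z(\cI)$ produces a pole of $Z(s)$ at $s=-\lct_Z(\cI)$ which survives the factor $(1-q_L^{-(s+r_0)})$ precisely when $\lct_Z(\cI)\neq r_0$, pinning both indices to $\lct_Z(\cI)$. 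The argument for (iii) follows by specialising $Z=X$.

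The ``only if'' direction is the main obstacle. Assume equality $\moi_{K,Z}^{\textnormal{naive}}(\cI)=\moi_{K,Z}^{\textnormal{loc}}(\cI)$ holds but $r_0$-generation fails on every affine neighbourhood of $Z_K$. Then any tuple in $\cA(X,Z)$ achieving the loc index has $r\geq r_0+1$, and for such $r$ the recursion $(1-q_L^{-r}t)P(t) = V_0 + \mathscr{E}^{(r)}(t)$ shows that the poles of $\mathscr{E}^{(r)}(q_L^{-s})$ coincide with those of $P(q_L^{-s})$, except possibly at $s=-r\neq -r_0$. Meanwhile $F_{r_0,L,Z,\cI}(s)$ inherits the polar structure of $P(q_L^{-s})$, with its cancellation sitting at $s=-r_0$. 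Comparing the two pole sets and using $\lct_Z(\cI)\leq n-\dim(X_K) = r_0$, the equality of the two indices forces their common leading real part to be $-\lct_Z(\cI)$. The delicate step is tracking, on a log-resolution of $\cI$, precisely which numerical data $(\nu_i,N_i)$ contribute to the leading pole in each of the two generating-function pictures and verifying that compatibility between them pins the common value to $\lct_Z(\cI)$; this is where the characterisation of when $\lct_Z(\cI)$ actually equals $n-\dim(X_K)$ (log-canonical singularities) is invoked.
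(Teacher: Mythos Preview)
Your pivotal identity is correct and matches the paper's computation, and your proofs of (i) and of $\moi_{K,Z}^{\textnormal{loc}}(\cI)\geq \moi_{K,Z}(\cI)$ are fine. But the argument for $\moi_{K,Z}^{\textnormal{naive}}(\cI)\geq \moi_{K,Z}^{\textnormal{loc}}(\cI)$ has a real gap. From decay of $E_{L,Z,\cI}^{(r)}(m)$ you deduce that $F_{r,L,Z,\cI}(s)=(1-q_L^{-(s+r)})\cZ_{L,Z,\cI}(s)$ is holomorphic on $\Re(s)>-\sigma$, and then conclude $\sigma_{L,Z}^{\textnormal{naive}}(\cI)\geq\sigma$. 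That last step is not justified: the naive index is defined using $F_{r_0,L,Z,\cI}$ with $r_0=n-\dim(X_K)$, not $F_r$. Holomorphy of $F_r$ on $\Re(s)>-\sigma$ only says that the poles of $\cZ_{L,Z,\cI}(s)$ in that half-plane lie on $\Re(s)=-r$; if $r>r_0$ such poles, should they exist, would survive in $F_{r_0}$ and block the inequality. What rescues this is precisely the structural input you do not invoke: via a log-resolution one has $\sigma_{0L}=\lct_Z(\cI)\leq r_0$ for $L$ of large residue characteristic over a finite extension of $K$, so that for $r>r_0$ the leading pole of $\cZ_{L,Z,\cI}$ sits at $\Re(s)=-\lct_Z(\cI)\neq -r$ and one gets the \emph{equality} $\moi_{K,Z}^{(r)}(\cI)=\lct_Z(\cI)$ (this is the paper's (\ref{equa3})). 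Together with the trivial bound $\moi_{K,Z}^{\textnormal{naive}}(\cI)\geq\lct_Z(\cI)$ and the case $r=r_0$ (where your identity gives $\moi_{K,Z}^{(r_0)}(\cI)=\moi_{K,Z}^{\textnormal{naive}}(\cI)$ exactly), the inequality follows; without it, it does not.

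This same dichotomy is also what drives (ii) and (iii) cleanly. Once you know that every tuple in $\cA(X,Z)$ with $r>r_0$ gives $\moi^{(r)}=\lct_Z(\cI)$, while any tuple with $r=r_0$ gives $\moi^{(r_0)}=\moi^{\textnormal{naive}}$, the equivalence in (ii) is immediate: if $\moi^{\textnormal{naive}}>\lct_Z(\cI)$ then equality with $\moi^{\textnormal{loc}}$ forces the existence of an $r_0$-tuple, and if $\moi^{\textnormal{naive}}=\lct_Z(\cI)$ then all three indices coincide. Your ``only if'' paragraph gestures at this pole comparison but never isolates the key fact $\moi^{(r)}=\lct_Z(\cI)$ for $r>r_0$; and the appeal to the characterisation of $\lct_Z(\cI)=r_0$ via log-canonical singularities is spurious --- nothing of that sort is used or needed here.
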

In order to prove Proposition \ref{compa}, we need the following lemma.
\begin{lem}\label{eqzeta}Let $\cI$ be a non-zero ideal of $\cO_K[x_1,\dots,x_n]$ such that $\cI_K\neq (1)$. Let $X=\spec(\cO_K[x_1,\dots,x_n]/\cI)$ and $Z\subset X$ be an $\cO_K$-scheme of finite type. With the notation of Definitions \ref{naiv} and \ref{expodef}, we have 
 \begin{equation}
 F_{r,L,Z,\cI}(s)=q_L^{-(n+s)}\left(1-q_L^{-r}\right)\#Z(k_L)+\left(1-q_L^s\right)\sum_{m\geq 2}E_{L,Z,\cI}^{(r)}(m)q_L^{-ms}
 \end{equation}
 for all $L\in\cL_{K,1}$.
\end{lem}
\begin{proof}Let $L\in\cL_{K,1}$. We recall the map $\pi_{ij}:X(\cO_L/(\varpi_L)^i)\to X(\cO_L/(\varpi_L)^j)$ if $i\geq j\geq 0$ in Section \ref{nuloc}. For each $L\in \cL_{K,1}$ and $m\geq 0$, we set 
$$A_{mL}=\{x\in\cO_L^n\mid\overline{x}\in Z(k_L), \ord_L(\cI_{\cO_L}(x))\geq m\}.$$
Since $Z\subset X$ we have $A_{0L}=A_{1L}$ and
$$\int_{A_{mL}}|dx|=q_L^{-mn}\#\left(\pi_{m1}^{-1}(Z(k_L))\right)$$
for all $L\in\cL_{K,1}$.
These together with the definitions of  $F_{r,L,Z,\cI}(s)$ and $E_{L,Z,\cI}^{(r)}(m)$ yield
\begin{align*}
&F_{r,L,Z,\cI}(s)\\
=&\left(1-q_L^{-(s+r)}\right)\int_{\cO_L^n}\phi_{L,Z}(x)q_L^{-s\ord_L\left(\cI_{\cO_L}(x)\right)}\left|dx\right|\\
=&\left(1-q_L^{-(s+r)}\right)\int_{\{x\in\cO_L^n\mid\overline{x}\in Z(k_L)\}}q_L^{-s\ord_L\left(\cI_{\cO_L}(x)\right)}\left|dx\right|\\
=&\left(1-q_L^{-(s+r)}\right)\Biggl(\sum_{m\geq 1}\int_{x\in A_{mL}\setminus A_{(m+1)L}}q_L^{-sm}\left|dx\right|\Biggr)\\
=&\left(1-q_L^{-(s+r)}\right)\sum_{m\geq 1}\Biggl(\int_{x\in A_{mL}}q_L^{-sm}\left|dx\right|-\int_{x\in A_{(m+1)L}}q_L^{-sm}\left|dx\right|\Biggr)\\
=&\left(1-q_L^{-(s+r)}\right)\Biggl(\int_{x\in A_{1L}}q_L^{-s}\left|dx\right|+\sum_{m\geq 2}\left(\int_{x\in A_{mL}}\left(q_L^{-sm}-q_L^{-s(m-1)}\right)\left|dx\right|\right)\Biggr)\\
=&\left(1-q_L^{-(s+r)}\right)\Bigl(q_L^{-(n+s)}\#Z(k_L)+\sum_{m\geq 2}\#\pi_{m1}^{-1}(Z(k_L))q_L^{-mn}\left(q_L^{-sm}-q_L^{-s(m-1)}\right)\Bigr)\\
=&\left(1-q_L^{-(s+r)}\right)q_L^{-(n+s)}\#Z(k_L)+\left(1-q_L^s\right)\sum_{m\geq 2}\#\pi_{m1}^{-1}(Z(k_L))\left(1-q_L^{-(s+r)}\right)q_L^{-mn}q_L^{-sm}\\
=&\left(1-q_L^{-(s+r)}\right)q_L^{-(n+s)}\#Z(k_L)+\left(1-q_L^s\right)\sum_{m\geq 2}\#\pi_{m1}^{-1}(Z(k_L))\left(q_L^{-m(s+r)}-q_L^{-(m+1)(s+r)}\right)q_L^{-m(n-r)}\\
=&\left(1-q_L^{-(s+r)}\right)q_L^{-(n+s)}\#Z(k_L)+ \left(1-q_L^s\right)q_L^{-(2s+n+r)}\#Z(k_L)\\
&+\left(1-q_L^s\right)\sum_{m\geq 2}\Bigl(q_L^{-m(n-r)}\#\pi_{m1}^{-1}(Z(k_L))-q_L^{-(m-1)(n-r)}\#\pi_{(m-1)1}^{-1}(Z(k_L))\Bigr)q_L^{-m(s+r)}\\
=&q_L^{-(n+s)}\left(1-q_L^{-r}\right)\#Z(k_L)+\left(1-q_L^s\right)\sum_{m\geq 2}q_L^{-mn}\left(\#\pi_{m1}^{-1}\left(Z(k_L)\right)-q_L^{n-r}\#\pi_{(m-1)1}^{-1}\left(Z(k_L)\right)\right)q_L^{-ms}\\
=&q_L^{-(n+s)}\left(1-q_L^{-r}\right)\#Z(k_L)+\left(1-q_L^s\right)\sum_{m\geq 2}E_{L,Z,\cI}^{(r)}(m)q_L^{-ms}.
\end{align*}
\end{proof}
\begin{cor}\label{naig}Let $\cI$ be a non-zero ideal of $\cO_K[x_1,\dots,x_n]$ such that $\cI_K\neq (1)$. Let $X=\spec(\cO_K[x_1,\dots,x_n]/\cI)$ and $Z\subset X$ be an $\cO_K$-scheme of finite type. Suppose that $\cI_K$ can be generated by $r$ elements $f_1,...,f_r\in \cO_K[x_1,\dots,x_n]$. Then we have 
$$\sigma_{L,Z}^{r,\textnormal{naive}}(\cI)=\sigma_{L, Z\mathfrak{D}_{\cO_K}^r}\left(\sum_{i=1}^r y_if_i(x)\right)$$
provided that $L\in\cL_{K,M}$ for a large enough integer $M$. Consequently, we have $\moi_{K,Z}^{r,\textnormal{naive}}(\cI)=\moi_{K,Z\mathfrak{D}_{\cO_K}^r}\left(\sum_{i=1}^r y_if_i(x)\right)=\moi_{K,Z}^{(r)}(\cI).$
Moreover, the supremums  in Lemma-Definition \ref{moidef}, Definitions \ref{moiloc1}, \ref{moiloc2} and Remark \ref{well} are attained. 
\end{cor}
\begin{proof}
By Remark \ref{absexp} we have  
$$E_{L,Z,\cI}^{(r)}(m)=E_{L,Z\mathfrak{D}_{\cO_K}^r,g}(\psi)$$
where $g=\sum_{i=1}^r y_if_i(x)$ and $\psi$ is an additive character of conductor $m$ of $L\in\tilde{\cL}_{K,M}$ for a large enough integer $M$. By \cite[Proposition 1.4.4, Corollary 1.4.5]{DenefBour}, if $L\in\cL_{K,M}$ then $$G_{r,L,Z,\cI}(s):=\sum_{m\geq 2}E_{L,Z,\cI}^{(r)}(m)q_L^{-ms}$$ is a rational function of $q_L^{-s}$ and  $\sigma_{L,Z\mathfrak{D}_{\cO_K}^r}(g)>0$ is the minimum taken over all $\sigma\in \RR\cup\{+\infty\}$ such that either $\sigma=+\infty$ or  $-\sigma$ is the real part of a pole of $G_{r,L,Z,\cI}(s)$. Thus, it follows from Lemma \ref{eqzeta} that 
\begin{equation}\label{equa20}
\sigma_{L,Z}^{r,\textnormal{naive}}(\cI)=\sigma_{L,Z\mathfrak{D}_{\cO_K}^r}(g)
\end{equation}
for all $L\in\cL_{K,M}$. The second claim follows from Lemma-Definition \ref{moidef},  Definitions \ref{dmoif} and \ref{naiv}, and (\ref{equa20}). 

We can conclude immediately that the supremums in Lemma-Definition \ref{moidef}, Definition \ref{moiloc1} and Remark \ref{well} are attained  by using (\ref{equa20}), Proposition \ref{VZ} and Definition \ref{naiv}. For the claim that the supremum in Definition \ref{moiloc2} is attained, we only need to observe moreover that if $n'\geq n$ and $\iota:X\to \AA_{\cO_K}^{n'}$ is a closed embedding whose image is the closed subscheme $Y$ of $\AA_{\cO_K}^{n'}$ associated to an ideal $\cI'$ of $\cO_K[y_1,...,y_{n'}]$ then Definition \ref{expodef} implies 
$$E_{L,Z,\cI}^{(r)}(m)=q_L^{m(n'-n)}E_{L,\iota(Z),\cI'}^{(r+n'-n)}(m)$$
and 
$$G_{r,L,Z,\cI}(s)=G_{r+n'-n,L,\iota(Z),\cI'}(s-n'+n).$$

\end{proof}
\begin{cor}\label{coboun}
Let $\cI$ be a non-zero ideal of $\cO_K[x_1,\dots,x_n]$ such that $\cI_K\neq (1)$. Let $X=\spec(\cO_K[x_1,\dots,x_n]/\cI)$. There is a rational constant $c$ and an integer $M$ such that $$\left|E_{L,Z,\cI_{\cO_{K'}}}^{(r)}(m)\right|\leq q_L^cm^{n-1}q_L^{-\sigma_{L,Z}^{r,\textnormal{naive}}(\cI)m}$$
 for all finite extensions $K'$ of $K$, all $\cO_{K'}$-schemes $Z\subset X_{\cO_{K'}}$ of finite type, all $L\in \cL_{K',M}$ and all $m\geq 2$. 
 
 In particular, for each $\cO_K$-scheme $Z\subset X$ of finite type, there is a rational constant $c$ and an integer $M$ such that
 $$\left|E_{L,Z,\cI}^{(r)}(m)\right|\leq q_L^cm^{n-1}q_L^{-\moi_{K,Z}^{r,\textnormal{naive}}(\cI)m}$$
 for all $L\in \cL_{K,M}$ and all $m\geq 2$.
 
 Moreover, for each $\cO_K$-scheme $Z\subset X$ of finite type, there is a strictly increasing sequence $(m_i)_{i\geq  1}$ of natural numbers,  a set $\cA_{i}\subset \cL_{K,1}$  for each $i\geq 1$ and a rational constant $c$ such that $\cA_i\cap \cL_{K',M}\neq \emptyset$ for all $i,M\geq 1$ and all $K'\in\mathfrak{A}_{Z}$, and we have
 $$\left|E_{L,Z,\cI}^{(r)}(m_i)\right|\geq q_{L}^cq_{L}^{-\moi_{K,Z}^{r,\textnormal{naive}}(\cI)m_i}$$
 for all $i\geq 1$ and all $L\in\cA_i$, where $\mathfrak{A}_{Z}$ is the set consisting of all finite extensions $K'$ of $K$ such that  $\moi_{K,Z}^{r,\textnormal{naive}}(\cI)=\moi_{K',Z_{\cO_{K'}}}^{r,\textnormal{naive}}(\cI_{\cO_{K'}})$.

\end{cor}
\begin{proof}Let $h$ be a log resolution of $\cI_K$. Let $((\nu_i,N_i))_{i\in I}$ be the numerical data of $h$.  Let $K'$ be a finite extension of $K$ and $Z\subset X_{\cO_{K'}}$ be an $\cO_{K'}$-scheme of finite type. By using \cite[Theorem 2.10]{VeysZ}, we can give an explicit formula of $F_{r,L,Z,\cI_{\cO_{K'}}}(s)$ associated to $h$ uniformly in  $L\in\cL_{K',M}$ for an integer $M$ depending only on $h$. This and Lemma \ref{eqzeta} yield a rational function $G_{L,Z,\cI}(T)$ whose power series expansion must be $\sum_{m\geq 2}E_{L,Z,\cI_{\cO_{K'}}}^{(r)}(m)T^{m}$. Moreover, the denominator of $G_{L,Z,\cI}(T)$ can be taken to be the product of factors of the form $1-q_L^{-a}T^b$ for $(a,b)\in ((\nu_i,N_i))_{i\in I}$ such that $\frac{-a}{b}$ is the real part of a pole of  $F_{r,L,Z,\cI_{\cO_{K'}}}(s)$ (see also Proposition \ref{VZ}). By writing  $G_{L,Z,\cI}(T)$ in partial fractions, we can expand it as a power series in $T$ and compute its coefficients $C_{L,Z}(m)$ uniformly in $L, Z$ and $m$. The result is quite similar to \cite[Corollary 1.4.5]{DenefBour} (see also \cite[Corollary 2.2.4]{Saskia-Kien}).  More precisely, let $\textnormal{Pol}_Z(L)\subset \{-\frac{\nu_i}{N_i}\mid i\in I\}$ be the set consisting of the real part of poles of $F_{r,L,Z,\cI_{\cO_{K'}}}(s)$. Then for each $0\leq i\leq n-1$ and $\lambda\in \textnormal{Pol}_Z(L)$, there is a positive integer $\ell_{i\lambda}$, a Presburger subset $A_{i\lambda}$ of $\NN$, rational numbers $c_{i\lambda\ell},b_{i\lambda\ell}$ together with $\AA_{\cO_K}^n$-schemes  $X_{i\lambda\ell}$ of finite type and non-zero polynomials $H_{i\lambda\ell}(y)\in \QQ[y]$ for $1\leq \ell\leq \ell_{i\lambda}$, all of which are independent of $Z, K',L$ such that
\begin{equation}\label{expressione}
C_{L,Z}(m)=\sum_{\lambda\in \textnormal{Pol}_Z(L)}\sum_{0\leq i\leq n-1}\11_{A_{i\lambda}}(m)\sum_{1\leq \ell\leq \ell_{i\lambda}}\frac{c_{i\lambda\ell}}{H_{i\lambda\ell}(q_L)}\#X_{i\lambda\ell Z_{k_L}}(k_L)m^{i}q_L^{b_{i\lambda\ell}+\lambda m}
\end{equation}for all $L\in \cL_{K',M}$ and all $m\geq 1$,  where $X_{i\lambda\ell Z_{k_L}}= X_{i\lambda\ell}\times_{\AA_{\cO_{K}}^n}Z_{k_L}$ and a Presburger subset of $\NN$ is a set which can be written as a finite Boolean combination of sets of the form $\{ m\in \NN\mid m_1\leq m, m\equiv m_2 \mod m_3\}$ for integers $m_1,m_2,m_3$. Moreover, each $\lambda\in \textnormal{Pol}_Z(L)$ appears non-trivially in (\ref{expressione}) when $m\to+\infty$, i.e., for each integer $N>0$, $\lambda$ appears non-trivially in the restriction of $C_{L,Z}$ to the interval $[N,+\infty)$.  It follows from (\ref{expressione}) and Proposition  \ref{Lang-Weil} below  that there is a rational constant $c$ satisfying
 $$\left|C_{L,Z}(m)\right|\leq q_L^cm^{n-1}q_L^{-\sigma_{L,Z}^{r,\textnormal{naive}}(\cI)m}$$
 for all finite extensions $K'$ of $K$, all $\cO_{K'}$-schemes $Z\subset X_{\cO_{K'}}$ of finite type, all $L\in \cL_{K',M}$ and all $m\geq 1$. On the other hand, $C_{L,Z}(m)=E_{L,Z,\cI_{\cO_{K'}}}^{(r)}(m)$ for $m\geq 2$ as mentioned above. This implies the first assertion. 
 
The second assertion follows from the first assertion if we can show that there is an integer $M_0$ such that 
\begin{equation}\label{Meq}
\moi_{K,Z}^{r,\textnormal{naive}}(\cI)=\min_{L\in\cL_{K,M'}}\sigma_{L,Z}^{r,\textnormal{naive}}(\cI),
\end{equation}
for all $M'>M_0$. By Proposition \ref{VZ} and Definition \ref{naiv}, the set $\{\sigma_{L,Z}^{r,\textnormal{naive}}\mid L\in \cL_{K,1}\}$ is contained in the finite set $\{\frac{\nu_i}{N_i}| i\in I\}$. This and the definition of $\moi_{K,Z}^{r,\textnormal{naive}}(\cI)$ imply $(\ref{Meq})$. 

In order to prove the last assertion, we use (\ref{expressione}) again. Let $Z$ be an $\cO_K$-subscheme of $\AA_{\cO_K}^n$ of finite type, we set $\lambda_0=-\moi_{K,Z}^{r,\textnormal{naive}}(\cI)$, $A_{\lambda_0}=\cup_{i=0}^{n-1}A_{i\lambda_0}$ and $\cL_{K,M'}^{(Z)}=\{ L\in \cL_{K,M'}| \sigma_{L,Z}^{r,\textnormal{naive}}(\cI)=\moi_{K,Z}^{r,\textnormal{naive}}(\cI)\}$ for each integer $M'$. By (\ref{Meq}), we have $\cL_{K,M'}^{(Z)}\neq \emptyset$ for all $M'$. Since  $\lambda_0$ appears non-trivially in (\ref{expressione}) whenever $L\in \cL_{K,M}^{(Z)}\neq\emptyset$, $A_{\lambda_0}$ must be an infinite set. It follows from (\ref{expressione}) that there is a rational constant $c$ such that if $m\in A_{\lambda_0}$, $m$ is large enough and $L\in\cL_{K,M}^{(Z)}$ then either $$f_{Z,L}(m):=\sum_{0\leq i\leq n-1}\11_{A_{i\lambda_0}}(m)\sum_{1\leq \ell\leq \ell_{i\lambda_0}}\frac{c_{i\lambda_0\ell}}{H_{i\lambda_0\ell}(q_L)}\#X_{i\lambda_0\ell Z_{k_L}}(k_L)m^{i}q_L^{b_{i\lambda_0\ell}}=0$$
or 
$$\left|C_{L,Z}(m)\right|\geq q_L^cq_L^{-\moi_{K,Z}^{r,\textnormal{naive}}(\cI) m}.$$
We set $$B=\{m\in A_{\lambda_0}\mid\exists M'>M \textnormal{ such that } f_{Z,L}(m)=0 \textnormal{ for all } L\in \cL_{K,M'}^{(Z)}\}.$$
Since $A_{\lambda_0}$ is infinite,  it is clear that the last assertion follows if we can show that $\#B<n$. Suppose that $\#B\geq n$. Let $m_1,...,m_n$ be distinct elements of $B$. By the definition of $B$, for each $1\leq i\leq n$, there is $M_i$ such that $f_{Z,L}(m_i)=0$ for all $L\in \cL_{K,M_i}^{(Z)}$. Let us put $M_0=\max_{1\leq i\leq n}M_{i}$. Then  $f_{Z,L}(m_i)=0$ for all $1\leq i\leq n$ and all $L\in \cL_{K,M_0}^{(Z)}$. Since $f_{Z,L}(m)$ is a polynomial in $m$ of degree at most $n-1$, we must have $f_{Z,L}(m)=0$ for all $m$ and all $L\in\cL_{K,M_0}^{(Z)}$.  Thus, $\lambda_0$ appears trivially in (\ref{expressione}) whenever $L\in \cL_{K,M_0}^{(Z)}$. It implies that $\cL_{K,M_0}^{(Z)}=\emptyset$, a contradiction.  Thus, $\#B<n$ as desired.
\end{proof}
\begin{proof}[Proof of Proposition \ref{compa}]
The inequality $\moi_{K,Z}^{\textnormal{loc}}(\cI)\geq \moi_{K,Z}(\cI)$ follows from Lemma-Definition \ref{moidef} and Definition \ref{moiloc1}. Similarly, the claim $(i)$ is obvious by Lemma-Definition \ref{moidef}.

Let $r_0=n-\dim_{Z_K}(X_K)$. Let $\sigma_{0L}$ be the minimum taken over all real numbers $\sigma$ such that $-\sigma$ is the real part of a pole of the Igusa local zeta function
$$\cZ_{L,Z,\cI}(s)=\int_{\cO_L^n}\phi_{L,Z}(x)q_L^{-s\ord_L\left(\cI_{\cO_L}(x)\right)}\left|dx\right|.$$
By Proposition \ref{VZ}, we have $\sigma_{0L}\geq \lct_{Z}(\cI)$ for all $L\in\cL_{K,1}$ and $\sigma_{0L}=\lct_{Z}(\cI)$ if $L\in\cL_{K',M}$ for a finite extension $K'$ of $K$ and a large enough integer $M$.  Thus, $\sigma_{0L}=\lct_{Z}(\cI)\leq r_0$ if $L\in \cL_{K',M}$ as mentioned in Section \ref{SJ}. 

Let $L\in\cL_{K,1}$ and $r$ be a positive integer, we use the notation $\sigma_{L,Z}^{r,\textnormal{naive}}(\cI)$ in Definition \ref{naiv}. Obviously, we have $\sigma_{L,Z}^{r,\textnormal{naive}}(\cI)\geq\sigma_{0L}$ for all $r$ and all $L\in\cL_{K,1}$. This and Definition \ref{naiv} imply 
\begin{equation}\label{nalo}
\sigma_{L,Z}^{r_0,\textnormal{naive}}(\cI)\geq \sigma_{0L}\geq \lct_{Z}(\cI)
\end{equation}
and 
\begin{equation}\label{nalom}
\moi_{K,Z}^{\textnormal{naive}}(\cI)\geq \lct_{Z}(\cI).
\end{equation}

 If $\sigma_{0L}\neq r$, then it is clear that $\sigma_{L,Z}^{r,\textnormal{naive}}(\cI)=\sigma_{0L}$. If $L\in \cL_{K',M}$, then $\sigma_{0L}=\lct_{Z}(\cI)\leq r_0$. Thus, we must have 
$$\sigma_{L,Z}^{r,\textnormal{naive}}(\cI)=\lct_{Z}(\cI)$$
for all $r>r_0$ and all $L\in \cL_{K',M}$. Therefore, if $r>r_0$ then we have  
\begin{equation}\label{equa1}
\liminf_{L\in\cL_{K,M}, M\to +\infty}\sigma_{L,Z}^{r,\textnormal{naive}}(\cI)= \lct_{Z}(\cI).
\end{equation}

Suppose that there exists a tuple $(U,f_1,\dots,f_r)\in\cA(X,Z)$. Then it is easy to see that $r\geq r_0$ (see, e.g., \cite[Chapter 1, Proposition 7.1]{Harts}). It follows from Remark \ref{absexp} and Corollary \ref{naig} that 
\begin{equation}\label{equa2}
\sigma_{L,Z}^{r,\textnormal{naive}}(\cI)=\sigma_{L,Z\mathfrak{D}_{\cO_K}^r}(g)
\end{equation}
for all $L\in\cL_{K,M}$ provided that $M$ is large enough.
If $r>r_0$ then Definition \ref{dmoif}, (\ref{equa1}) and (\ref{equa2}) imply 
\begin{equation}\label{equa3}
\moi_{K,Z\mathfrak{D}_{\cO_K}^r}(g)=\lct_{Z}(\cI).
\end{equation}
If $r=r_0$ then Definitions \ref{dmoif} and \ref{naiv}, (\ref{nalom}) and (\ref{equa2}) imply 
\begin{equation}\label{equa4}
\moi_{K,Z\mathfrak{D}_{\cO_K}^r}(g)=\moi_{K,Z}^{\textnormal{naive}}(\cI)\geq \lct_{Z}(\cI).
\end{equation}
By (\ref{nalom}), (\ref{equa3}) and (\ref{equa4}), we always have 
\begin{equation}\label{eq100}
\lct_Z(\cI)\leq \moi_{K,Z\mathfrak{D}_{\cO_K}^r}(g)\leq \moi_{K,Z}^{\textnormal{naive}}(\cI).
\end{equation}
The inequalities \begin{equation}\label{eq101}
\moi_{K,Z}^{\textnormal{naive}}(\cI)\geq \moi_{K,Z}^{\textnormal{loc}}(\cI)\geq\lct_Z(\cI)
\end{equation}
follow from Definition \ref{moiloc1} and (\ref{eq100}).

If $\moi_{K,Z}^{\textnormal{naive}}(\cI)=\lct_{Z}(\cI)$, then (\ref{eq101}) implies that $$\moi_{K,Z}^{\textnormal{naive}}(\cI)=\moi_{K,Z}^{\textnormal{loc}}(\cI)=\lct_{Z}(\cI).$$
If $\moi_{K,Z}^{\textnormal{naive}}(\cI)>\lct_{Z}(\cI)$, then  by using Definition \ref{moiloc1}, (\ref{equa3}) and (\ref{equa4}), $$\moi_{K,Z}^{\textnormal{naive}}(\cI)=\moi_{K,Z}^{\textnormal{loc}}(\cI)$$
if and only if we can find an element $(U,f_1,\dots,f_r)\in\cA(X,Z)$ such that $r=r_0$. The claim $(ii)$ is proved. 

Similarly, we can prove $(iii)$ by repeating the above argument  for $\moi_{K,X}(\cI)$ instead of $\moi_{K,Z}^{\textnormal{loc}}(\cI)$.
\end{proof}


\begin{cor}\label{llct}Let $\cI$ be a non-zero ideal of $\cO_K[x_1,\dots,x_n]$ such that $\cI_K\neq (1)$. Let $X=\spec(\cO_K[x_1,\dots,x_n]/\cI)$ and $Z\subset X$ be an $\cO_K$-scheme of finite type. The following claims hold:
\begin{itemize}
\item[(\textit{i}),] $\lct_{Z}(\cI)\leq \moi_{K,Z}^{(r)}(\cI)$ if $\moi_{K,Z}^{(r)}(\cI)\neq 0$.
\item[(\textit{ii}),]If $\moi_{K,Z}^{(r)}(\cI)>0$, then $\moi_{K,Z}^{(\ell)}(\cI)=\lct_{Z}(\cI)$ for all $\ell>r$.
\item[(\textit{iii}),]If $\moi_{K,Z}^{(r)}(\cI)>\lct_{Z}(\cI)$, then $\moi_{K,Z}^{(\ell)}(\cI)=0$ for all $\ell<r$ and $\moi_{K,Z}^{(r)}(\cI)=\moi_{K,Z}(\cI)$.
\item[(\textit{iv}),]If $n-\dim_{Z_K}(X_K)>\lct_{Z}(\cI)$, then  $\lct_{Z}(\cI)=\moi_{K,Z}(\cI)$.
\item[(\textit{v}),]If $r>\lct_{Z}(\cI)$  and $\moi_{K,Z}^{(r)}(\cI)\neq 0$, then  $\lct_{Z}(\cI)=\moi_{K,Z}^{(r)}(\cI)$.
\item[(\textit{vi}),]$\sigma_{L,Z}^{r,\textnormal{naive}}(\cI)\geq\lct_Z(\cI)$ for all $r\geq 1$ and all $L\in\cL_{K,1}$.
\item[(\textit{vii}),] Let $h$ be a log resolution of $\cI_K$ associated to irreducible divisors $(E_i)_{i\in I}$ and numerical data $((\nu_i,N_i))_{i\in I}$. Then the motivic oscillation indexes $\moi_{K,Z}^{(r)}(\cI), \moi_{K,Z}(\cI),\moi_{K,Z}^{\textnormal{naive}}(\cI)$, $\moi_{K,Z}^{r,\textnormal{naive}}(\cI)$ and $\moi_{K,Z}^{\textnormal{loc}}(\cI)$  belong to $\{\frac{\nu_i}{N_i}\mid i\in I\}$.
\end{itemize} 
\end{cor}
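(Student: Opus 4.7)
The plan is to derive every part of the corollary from the identities established during the proof of Proposition~\ref{compa}, combined with Krull's height theorem. Specifically, whenever $\cI_K=(f_1,\dots,f_r)$ and $L$ has sufficiently large residue characteristic, equation~\eqref{equa2} identifies $\sigma_{L,Z}^{(r)}(\cI)$ with the $L$-oscillation index of $g=\sum y_i f_i$; and equations~\eqref{equa3}--\eqref{equa4} then give
$$\moi_{K,(\AA_{\cO_K}^r\setminus\{0\})\times Z}(g)=\lct_{Z}(\cI)\quad\text{if } r>r_0:=n-\dim(X_K),$$
and $\moi_{K,(\AA_{\cO_K}^r\setminus\{0\})\times Z}(g)\geq \lct_{Z}(\cI)$ if $r=r_0$. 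I would also use that Krull's height theorem applied to $K[x_1,\dots,x_n]$ implies that $\cI_K$ cannot be generated by fewer than $r_0$ elements, so $\moi_{K,Z}^{(\ell)}(\cI)\neq 0$ forces $\ell\geq r_0$.

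Claims (i) and (ii) follow immediately. For (i), $\moi_{K,Z}^{(r)}(\cI)\neq 0$ means $\cI_K$ admits $r$ generators, hence $r\geq r_0$, and in either regime \eqref{equa3} or \eqref{equa4} yields the bound $\moi_{K,Z}^{(r)}(\cI)\geq \lct_{Z}(\cI)$. For (ii), given $\ell>r\geq r_0$, adjoining zero polynomials to a system of $r$ generators produces an $\ell$-element generating set, so \eqref{equa3} applies and gives $\moi_{K,Z}^{(\ell)}(\cI)=\lct_{Z}(\cI)$. Claim (iii) is then formal: $\moi_{K,Z}^{(r)}(\cI)>\lct_{Z}(\cI)$ together with (ii) forbids $r>r_0$, while (i) forces $r\geq r_0$, hence $r=r_0$; Krull's bound gives $\moi_{K,Z}^{(\ell)}(\cI)=0$ for $\ell<r$; and the supremum $\moi_{K,Z}(\cI)$ is attained at $r$ since (ii) supplies the strict inequality $\moi_{K,Z}^{(\ell)}(\cI)=\lct_{Z}(\cI)<\moi_{K,Z}^{(r)}(\cI)$ for $\ell>r$.

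For (iv), the key observation from the proof of Proposition~\ref{compa} is that $\sigma_{L,Z}^{(r)}(\cI)=\sigma_{0L}$ whenever $\sigma_{0L}\neq r$. Under the hypothesis $r_0>\lct_{Z}(\cI)$, one has $\sigma_{0L}=\lct_{Z}(\cI)\neq r_0$ for $L$ with large enough residue characteristic, hence $\moi_{K,Z}^{(r_0)}(\cI)=\lct_{Z}(\cI)$; combined with (ii) for $\ell>r_0$ and the Krull vanishing for $\ell<r_0$, this yields $\moi_{K,Z}(\cI)=\lct_{Z}(\cI)$. Claim (v) is settled analogously: $\moi_{K,Z}^{(r)}(\cI)\neq 0$ forces $r\geq r_0$, and either \eqref{equa3} (if $r>r_0$) or the dichotomy used in (iv) (if $r=r_0>\lct_{Z}(\cI)$) gives $\moi_{K,Z}^{(r)}(\cI)=\lct_{Z}(\cI)$. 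Finally, claim (vi) follows from the standard Denef-style computation of $\cZ_{L,Z,\cI}(s)$ via the log-resolution $h$: its candidate poles all have real parts in $\{-\nu_i/N_i:i\in I\}$, and since each oscillation index under consideration is by definition the negative of the largest such real part, each takes values in $\{\nu_i/N_i:i\in I\}$.

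I expect the only genuine subtlety to lie in (iii), where one must chain together the upper bound $r\leq r_0$ coming from (ii) and the hypothesis $\moi_{K,Z}^{(r)}(\cI)>\lct_{Z}(\cI)$, the lower bound $r\geq r_0$ coming from (i), and the vanishing for $\ell<r_0$, in order to confirm that the supremum defining $\moi_{K,Z}(\cI)$ is genuinely attained at $r=r_0$ and not merely bounded above by $\moi_{K,Z}^{(r)}(\cI)$.
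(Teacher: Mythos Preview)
Your proof is correct and follows exactly the route the paper intends: the paper gives no separate argument for Corollary~\ref{llct}, simply stating that ``the proof of Proposition~\ref{compa} implies'' it, and you have spelled out precisely how equations~\eqref{equa2}--\eqref{equa4} together with the height bound $r\ge r_0$ yield each item.

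Two small remarks. In (iii), your appeal to ``(ii)'' to rule out $r>r_0$ is slightly indirect: (ii) as stated would need $\moi_{K,Z}^{(r')}(\cI)>0$ for some $r'<r$, which you do not know. What you really use is \eqref{equa3} applied directly with the given $r$ (since $\moi_{K,Z}^{(r)}(\cI)>0$ guarantees $\cI_K$ has $r$ generators), and this is exactly the argument you already gave for (ii). In (iv), your line ``hence $\moi_{K,Z}^{(r_0)}(\cI)=\lct_{Z}(\cI)$'' tacitly assumes $\cI_K$ is generated by $r_0$ elements; when it is not, $\moi_{K,Z}^{(r_0)}(\cI)=0$ by definition, but then the minimal number of generators $r_1$ satisfies $r_1>r_0$, so \eqref{equa3} gives $\moi_{K,Z}^{(r_1)}(\cI)=\lct_Z(\cI)$ directly and the conclusion is unchanged. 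Neither point affects the validity of your argument.
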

\begin{proof}
By Corollary \ref{naig}, Lemma-Definition \ref{moidef} and Definition \ref{naiv}, if $r\geq 1$ then either $\moi_{K,Z}^{(r)}(\cI)=0$ or $\moi_{K,Z}^{(r)}(\cI)=\moi_{K,Z}^{r,\textnormal{naive}}(\cI)$. By Lemma-Definition \ref{moidef} and \cite[Chapter 1, Proposition 7.1]{Harts}, there exists $r_0\geq n-\dim_{Z_K}(X_K)$ such that $\moi_{K,Z}^{(r)}(\cI)=0$ if $r<r_0$ and $\moi_{K,Z}^{(r)}(\cI)>0$ if $r\geq r_0$.  On the other hand, by Definition \ref{naiv} and Proposition \ref{VZ}, $\moi_{K,Z}^{r,\textnormal{naive}}(\cI)\geq \lct_Z(\cI)$ for all $r\geq 1$ and if the equality does not hold then $r=\lct_Z(\cI)$. As mentioned in Section \ref{SJ}, we have $\lct_Z(\cI)\leq n-\dim_{Z_K}(X_K)$. These facts together with Lemma-Definition \ref{moidef} imply the items $(i),(ii),(iii),(iv),(v)$. 
 
The item $(vi)$ follows immediately from Definition \ref{naiv} and Proposition \ref{VZ}. 

Lastly,  we can deduce the item $(vii)$ by using Proposition \ref{VZ}, Corollary \ref{naig}, Lemma-Definition \ref{moidef}, Definitions \ref{dmoif}, \ref{naiv} and \ref{moiloc1}.
\end{proof}
We can understand  the (local) motivic oscillation indexes of ideals better by the following propositions.
\begin{prop}\label{limitp}Let $f_1,\dots,f_r\in \cO_K[x_1,\dots,x_n]$, $g=\sum_{i=1}^ry_if_i(x)$ and $Z\subset\AA_{\cO_K}^n$ be an $\cO_K$-scheme of finite type. Then we have
$$\moi_{K,Z\mathfrak{D}_{\cO_K}^r}(g)=\liminf_{\mathfrak{p}\in\Specm(\cO_K), p_{K_\mathfrak{p}}\to\infty} \sigma_{K_\mathfrak{p},Z\mathfrak{D}_{\cO_K}^r}(g).
$$

\end{prop}
\begin{proof} By Corollary \ref{naig}, it is sufficient to show that 
$$\moi_{K,Z}^{r,\textnormal{naive}}(\cI)=\liminf_{\mathfrak{p}\in\Specm(\cO_K), p_{K_\mathfrak{p}}\to\infty} \moi_{K_\mathfrak{p},Z}^{r,\textnormal{naive}}(\cI)$$
for all $r\geq 1$.
Let $\pi: Y\to \AA^n_K$ be a log resolution of $\cI_K$ and $\sigma$ be a real number. By \cite[Theorem 2.10]{VeysZ} and using the partial fraction decomposition of rational functions over the field $\QQ(y)$ of the form 
$$(1-Ty^{-r})\prod_{1\leq i\leq e}\frac{(y-1)y^{-\nu_i}T^{N_i}}{1-T^{N_i}y^{-\nu_i}}$$
for positive integers $\nu_i$ and $N_i$, there is an integer $M$, a non-zero polynomial $H(y)\in \QQ[y]$, polynomials $Q_j(y,T)\in \QQ[y,T]$ with $\deg_T(Q_j)=d_j\geq 1$ and $\cO_K$-schemes of finite type $U_{0,0}, V_{0,0}, U_{0,1}, V_{0,1},$ $U_{j,0},V_{j,0},\dots,U_{j,d_j-1},V_{j,d_j-1}$  for $1\leq j\leq \tilde{\ell}$, all of them depend only on $\pi$ and $\cI$ such that the following conditions hold:
\begin{itemize}
\item $H(q_L)\neq 0$ if $L\in\cL_{K,M}$.
\item $Q_j(y,T)$ is a power of an irreducible polynomial in $T$ over the field $\QQ(y)$ for all $1\leq j\leq \tilde{\ell}$.
\item For each $1\leq j\leq\tilde{\ell}$, $\QQ_j(y,T)\mid (T^{N_j}-y^{\nu_j})^{e_j}$ for some positive integers $\nu_j,N_j,e_j$.
\item $Q_j(y,T)$ and $Q_{j'}(y,T)$ are coprime in the ring $\QQ(y)[T]$ if $1\leq j\neq j'\leq \tilde{\ell}$.
\item For all local fields $L\in\cL_{K,M}$, we have 
$$F_{r,L,Z,\cI}(s)=\frac{1}{H(q_L)}\sum_{0\leq j\leq \tilde{\ell}}\frac{\sum_{i=0}^{d_j-1}(\#(U_{j,i}(k_L))-\#(V_{j,i}(k_L)))q_L^{-is}}{Q_j(q_L,q_L^{-s})},$$
where $$F_{r,L,Z,\cI}(s)=\left(1-q_L^{-(s+r)}\right)\int_{\cO_L^n}\phi_{L,Z}(x)q_L^{-s\ord_L\left(\cI_{\cO_L}(x)\right)}\left|dx\right|$$ as in Definition \ref{naiv} and we use the convention that $Q_0(y,T)=1, d_0=2$.
\end{itemize}
If $1\leq j\neq j'\leq \tilde{\ell}$, by using the resultant $res_T(Q_j(y,T),Q_{j'}(y,T))$ and the fact that $Q_j(y,T)$ and $Q_{j'}(y,T)$ are coprime in the ring $\QQ(y)[T]$, we can enlarge $M$ if needed to have that $Q_j(q_L,T)$ and $Q_{j'}(q_L,T)$ are coprime in the ring $\CC[T]$ whenever $L\in \cL_{K,M}$. Therefore, for each real number $\sigma$, there are $\cO_K$-schemes of finite type $X_1, Y_1, \dots, X_\ell, Y_\ell$ depending only on $\sigma,\cI$ and $\pi$ such that for all local fields $L\in\cL_{K,M}$, the following conditions are equivalent:
\begin{itemize}
\item[(\textit{i}),] There exists a pole $s_0$ of $F_{r,L,Z,\cI}(s)$ such that $\Re(s_0)=-\sigma$.
\item[(\textit{ii}),] There exists $1\leq j\leq \ell$ such that $\#X_j(k_L)\neq \#Y_j(k_L)$.
\end{itemize}
Now, let $\sigma=\moi_{K,Z}^{r,\textnormal{naive}}(\cI)$, then by its definition, we have 
$$\sigma\leq \liminf_{\mathfrak{p}\in\Specm(\cO_K), p_{K_\mathfrak{p}}\to\infty} \moi_{K_\mathfrak{p},Z}^{r,\textnormal{naive}}(\cI).$$
Suppose that 
$$\sigma<\liminf_{\mathfrak{p}\in\Specm(\cO_K), p_{K_\mathfrak{p}}\to\infty} \moi_{K_\mathfrak{p},Z}^{r,\textnormal{naive}}(\cI),
$$
then the condition $(i)$ does not hold for $\sigma$ and $K_{\mathfrak{p}}$ provided that $p_{K_\mathfrak{p}}$ is large enough. Thus, we must have that  $\#X_j(k_{K_\mathfrak{p}})=\#Y_j(k_{K_\mathfrak{p}})$
for all $1\leq j\leq \ell$ and all but finitely many primes $\mathfrak{p}\in\Specm(\cO_K)$. By \cite[Theorem 6.6]{NXp}, there exists an integer $M_1$ such that $\#X_j(k_{L})=\#Y_j(k_{L})$ for all $1\leq i\leq \ell$ and all $L\in \cL_{K,M_1}$. Then the condition $(i)$ does not hold for $\sigma$ and $L\in \cL_{K,M_1}$. By Definition \ref{naiv}, we have $\sigma\neq \moi_{K,Z}^{r,\textnormal{naive}}(\cI)$. This is a contradiction.
\end{proof}
\begin{prop}\label{gl-lc}
Let $\cI=(f_1,\dots,f_r)$ be a non-zero ideal of $\cO_K[x_1,\dots,x_n]$ such that $\cI_K\neq (1)$. Let $X=\Spec(\cO_K[x_1,\dots,x_n]/\cI)$ and $Z\subset X$ be an $\cO_K$-scheme of finite type. Then we have 
$$\moi^{(r)}_{K,Z}(\cI)\geq \min_{P\in Z(\overline{K})} \moi^{(r)}_{K(P),\tilde{P}}(\cI_{\cO_{K(P)}}),$$
where $K(P)$ is the extension of $K$ by the coordinates of $P=(P_1,...,P_n)$ and if $\ell$ is the smallest positive integer such that $\ell P\in \cO_{K(P)}^n$ then $\tilde{P}=\Spec(\cO_{K(P)}[x_1,...,x_n]/(\ell x_1-\ell P_1,\dots,\ell x_n-\ell P_n))$.

As a consequence, we have 
$$\moi^{\loc}_{K,Z}(\cI)\geq \min_{P\in Z(\overline{K})} \moi^{(r)}_{K(P),\tilde{P}}(\cI_{\cO_{K(P)}}).$$
\end{prop}
\begin{proof}We can write $Z_K=\sqcup_{i=1}^{\ell}Z_i$ such that $(Z_i)_{red}$ is a smooth $K$-variety for all $i$. For each $1\leq i\leq \ell$, we take an $\cO_K$-scheme $\tilde{Z}_i\subset\AA_{\cO_K}^n$ of finite type such that $(Z_i)_{red}=\tilde{Z}_i\otimes K$. Since $Z_i\cap (\cup_{j\neq i}Z_{j})=\emptyset$, we have $(\tilde{Z_i}\setminus \cup_{j\neq i}\tilde{Z}_j)\otimes K=(Z_i)_{red}$. Replacing $\tilde{Z_i}$ by $\tilde{Z_i}\setminus \cup_{j\neq i}\tilde{Z}_j$ if needed, we can suppose that $\tilde{Z}_i\cap \tilde{Z}_j=\emptyset$ if $i\neq j$.  We put $\tilde{Z}=\cup_{1\leq i\leq \ell}\tilde{Z}_i$. By Lemma-Definition \ref{moidef}, we have $\moi^{(r)}_{K,Z}(\cI)=\moi^{(r)}_{K,\tilde{Z}}(\cI)$. Moreover, it is easy to show that $Z(\overline{K})=\cup_{1\leq i\leq\ell}\tilde{Z}_i(\overline{K})$. Thus, it is sufficient to prove the following facts:
\begin{itemize}
\item[\textit{i},]$\moi^{(r)}_{K,\tilde{Z}}(\cI)\geq\min_{1\leq i\leq\ell}\moi^{(r)}_{K,\tilde{Z}_i}(\cI)$
\item[\textit{ii},]$\moi^{(r)}_{K,\tilde{Z}_i}(\cI)\geq \min_{P\in \tilde{Z}_i(\overline{K})} \moi^{(r)}_{K(P),\tilde{P}}(\cI_{\cO_{K(P)}})$
for all $i$.
\end{itemize}
We set $g=\sum_{1\leq i\leq r}y_if_i(x)$ as usual. We observe that the conditions $\tilde{Z}_i\cap\tilde{Z}_j=\emptyset$ if $i\neq j$ and $\tilde{Z}=\cup_{1\leq i\leq \ell}\tilde{Z}_i$ imply $\phi_{L,\tilde{Z}}=\sum_{1\leq i\leq\ell}\phi_{L,\tilde{Z}_i}$ for all $L\in\cL_{K,1}$. Thus, we have 
$$E^{(r)}_{L,\tilde{Z},\cI}(m)=\sum_{1\leq i\leq \ell}E^{(r)}_{L,\tilde{Z}_i,\cI}(m)$$
for all $L\in\cL_{K,1}$ and all $m\geq 1$. 

By Corollary \ref{coboun}, there is a rational constant $c$ and an integer $M$ depending only on $\cI$ satisfying
 $$\left|E_{L,Y,\cI_{\cO_{K'}}}^{(r)}(m)\right|\leq q_L^cm^{n-1}q_L^{-\sigma_{L,Y}^{r,\textnormal{naive}}(\cI)m}$$
 for  all finite extensions $K'$ of $K$, all $\cO_{K'}$-schemes $Y\subset X_{\cO_{K'}}$ of finite type, all $L\in \cL_{K',M}$ and all $m\geq 2$. By this inequality, Remark \ref{absexp} and Corollary \ref{naig}, we have
\begin{equation}\label{rough}
\left|E_{L,Y\mathfrak{D}_{\cO_{K'}}^r,g}(\psi)\right|=\left|E^{(r)}_{L,Y,\cI_{\cO_{K'}}}(m)\right|\leq q_L^cm^{n-1}q_L^{-\sigma_{L,Y\mathfrak{D}_{\cO_K}^r}(g)m}
\end{equation}
for all finite extensions $K'$ of $K$, all $\cO_{K'}$-schemes $Y\subset X_{\cO_{K'}}$ of finite type, all $m\geq 2$, all $L\in \cL_{K',M}$ and all additive characters $\psi$ of $L$ of conductor $m$. Therefore, we have 
\begin{equation}\label{rough1}
\left|E^{(r)}_{L,\tilde{Z},\cI}(m)\right|=\bigg|\sum_{1\leq i\leq \ell}E^{(r)}_{L,\tilde{Z}_i,\cI}(m)\bigg|\leq \ell q_L^cm^{n-1}q_L^{-\min_{1\leq i\leq \ell}\sigma_{L,\tilde{Z}_i\mathfrak{D}_{\cO_K}^r}(g)m}
\end{equation}
for all $m\geq 2$ and all $L\in\cL_{K,M}$. Let $((\nu_i,N_i))_{i\in I}$ be the numerical data of $h$. By Proposition \ref{VZ}, Definition \ref{naiv} and Corollary \ref{naig}, we have 
$$\{\sigma_{L,\tilde{Z}_i\mathfrak{D}_{\cO_K}^r}(g)\mid 1\leq i\leq\ell, L\in\cL_{K,M}\}\subset \{\frac{\nu_i}{N_i}\mid i\in I\}.$$
 This fact together with the finiteness of $I$,  Definition \ref{dmoif},  Lemma-Definition \ref{moidef}, (\ref{rough}) and (\ref{rough1}) imply that 
$$\moi^{(r)}_{K,\tilde{Z}}(\cI)\geq \liminf_{L\in\cL_{K,M'},M'\to+\infty}\min_{1\leq i\leq \ell}\sigma_{L,\tilde{Z}_i\mathfrak{D}_{\cO_K}^r}(g)=\min_{1\leq i\leq\ell}\moi^{(r)}_{K,\tilde{Z}_i}(\cI).$$
Thus, $(\textit{i})$ is proved.

Now, we will prove $(\textit{ii})$.  Note that $\tilde{Z}_i\otimes K$ is smooth. Thus, the logical compactness in \cite[Corollary 2.2.10]{Marker} implies that there exists $M_1>M$ such that $\tilde{Z}_i\otimes k_L$ is smooth for all $L\in\cL_{K,M_1}$. Therefore, Hensel's lemma and the fact that $\overline{K}\cap L$ is dense in $L$ for all $L\in\cL_{K,1}$ imply that if $L\in \cL_{K,M_1}$, then every point $P\in \tilde{Z}_i(k_L)$ can be lifted  to a point $\underline{P}\in \tilde{Z}_i(\overline{K})\cap \tilde{Z}_i(\cO_L)$. So we have 
$$E^{(r)}_{L,\tilde{Z}_i,\cI}(m)=\sum_{P\in \tilde{Z}_i(k_L)}E^{(r)}_{L,\tilde{\underline{P}},\cI_{\cO_{K(\underline{P})}}}(m)$$
for all $L\in\cL_{K,M_1}$ and all $m\geq 1$. By combining this with $(\ref{rough})$, we have 
\begin{align*}
\left|E^{(r)}_{L,\tilde{Z}_i,\cI}(m)\right|&\leq \#\tilde{Z}_i(k_L)q_L^cm^{n-1}q_L^{-\min_{P\in\tilde{Z}_i(\overline{K})}\sigma_{L,\tilde{P}\mathfrak{D}_{\cO_{K(P)}}^r}(g)m}\\
&\leq q_L^{c+n}m^{n-1}q_L^{-\min_{P\in\tilde{Z}_i(\overline{K})}\sigma_{L,\tilde{P}\mathfrak{D}_{\cO_{K(P)}}^r}(g)m}
\end{align*}
for all $L\in \cL_{K,M_1}$ and all $m\geq 1$. Hence, we can repeat the above argument for $(\textit{i})$ to obtain $(ii)$.

By Lemma-Definition \ref{moidef} and  Definition \ref{moiloc1}, we have $\moi^{\loc}_{K,Z}(\cI)\geq \moi^{(r)}_{K,Z}(\cI)$.  Thus, the consequence follows immediately from the above assertion.
\end{proof}
\subsection{Conjectures for exponential sums}\label{Conjexp}
Let $\cI$ be a non-zero ideal of $\cO_K[x_1,\dots,x_n]$ such that $\cI_K\neq (1)$ and $Z\subset \AA_{\cO_K}^n$ be an $\cO_K$-scheme of finite type. In this paper, we will formulate a conjecture for the exponential sums of $\cI$ at $Z$ by specializing Conjecture \ref{Iguconj} to the case where $f=\sum_{i=1}^ry_if_i$ and $\tilde{Z}=Z\mathfrak{D}_{\cO_K}^r$ if $(f_1,\dots,f_r)=\cI_K$. Definition \ref{expodef} and Remark \ref{absexp} suggest that we can state our conjecture without mentioning $f_1,\dots,f_r$.
\begin{conj}[General form of averaged Igusa conjecture for exponential sums]\label{avelocIgu}Let $\cI$ be a non-zero ideal of $\cO_K[x_1,\dots,x_n]$ such that $\cI_K\neq (1)$ and $Z\subset\AA_{\cO_K}^n$ be an $\cO_K$-scheme of finite type. For each positive integer $r$, there is an integer $M$ depending only on $\cI$ and a positive constant $c_r$ depending only on $\cI, r$  such that for all local fields $L\in\tilde{\cL}_{K,M}$ and all $m\geq 2$, we have 
$$\left|E_{L,Z,\cI}^{(r)}(m)\right|\leq c_rm^{n+r-1}q_L^{-m\moi_{K,Z}^{(r)}(\cI)}.$$
\end{conj}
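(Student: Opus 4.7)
The plan is to recognize Conjecture \ref{avelocIgu} as a special case of the general form of Igusa's conjecture for exponential sums (Conjecture \ref{Iguconj}), applied to a carefully chosen pair of polynomial and subscheme in $n+r$ variables. Concretely, I would fix generators $f_1,\dots,f_r \in \cO_K[x_1,\dots,x_n]$ of $\cI_K$, set $g(x,y) = \sum_{i=1}^r y_i f_i(x)$ viewed as a polynomial in the $n+r$ variables $(x,y)$, and set $U = (\AA_{\cO_K}^r\setminus\{0\}) \times Z$. Choose an integer $M_0$ large enough so that $\cI_{\cO_L} = (f_1,\dots,f_r)$ for every $L \in \cL_{K,M_0}$; that such an $M_0$ exists is clear by clearing denominators in a presentation of $\cI_K$.

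The key identification, already carried out inside the proof of Lemma-Definition \ref{moidef}, says that for any $L \in \cL_{K,M_0}$ and any additive character $\psi$ of $L$ of conductor $m \geq 2$, one has $E_{L,U,g}(\psi) = E_{L,Z,\cI}^{(r)}(m)$; and by Lemma-Definition \ref{moidef} itself, $\moi_{K,U}(g) = \moi_{K,Z}^{(r)}(\cI)$. Applying Conjecture \ref{Iguconj} to the pair $(g,U)$ in $n+r$ variables then yields
$$|E_{L,Z,\cI}^{(r)}(m)| = |E_{L,U,g}(\psi)| \leq c\, m^{(n+r)-1}\, q_L^{-m\,\moi_{K,U}(g)} = c\, m^{n+r-1}\, q_L^{-m\,\moi_{K,Z}^{(r)}(\cI)},$$
which is exactly the required bound, with $c_r := c$ depending only on $\cI$ and $r$ (the choice of generators $f_1,\dots,f_r$ can be absorbed into the constant, and the exponent $n+r-1$ matches the number of variables of $g$ minus one as expected). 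To extend from $\cL_{K,M}$ to the positive-characteristic part of $\tilde{\cL}_{K,M}$, I would appeal to the transfer principle for exponential sums modulo $\varpi_L^m$ developed in \cite{NguyenVeys}, which bridges bounds between mixed and equal characteristic local fields of sufficiently large residue characteristic.

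The main obstacle, of course, is that Conjecture \ref{Iguconj} is itself still open in general, so the above is only a reduction. To attack it unconditionally, I would follow the log-resolution strategy: pick a log-resolution $\pi\colon Y \to \AA_K^{n+r}$ of the ideal defining the zero scheme of $g$ restricted to $U$ (or, equivalently, use a log-resolution of $\cI_K$ together with the bilinear structure in $y$), translate the sought bound via the change-of-variables procedure of \cite[Theorem 3.1]{Denefdegree} into a sum of stationary-phase type contributions indexed by the irreducible components $E_i$ with numerical data $(\nu_i,N_i)$, and then use cancellation in character sums on the critical strata where $\nu_i/N_i = \moi_{K,Z}^{(r)}(\cI)$. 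This last step, upgrading the trivial log-canonical threshold bound to the $\moi$ bound, is the principal difficulty and is precisely why the author's unconditional result, Theorem \ref{wnsd}, works instead with the effectively computable weighted-homogeneous threshold $\tilde\sigma_{0w}$. The special bilinear structure of $g$, together with integration in the $y$-variables (which already produced the clean formula for $E_{L,Z,\cI}^{(r)}(m)$ in Definition \ref{expodef}), is the most promising feature to exploit in any serious unconditional attack on the conjecture.
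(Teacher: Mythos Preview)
Your reduction is exactly the one the paper makes: in the remark immediately following Conjecture \ref{avelocIgu}, the author observes that the statement is a special case of Conjecture \ref{Iguconj} applied to $g=\sum y_if_i$ on $(\AA_{\cO_K}^r\setminus\{0\})\times Z$, via the identification $E_{L,Z,\cI}^{(r)}(m)=E_{L,(\AA_{\cO_K}^r\setminus\{0\})\times Z,g}(\psi)$ from the proof of Lemma-Definition \ref{moidef}. Two small points: the separate appeal to a transfer principle is unnecessary, since Conjecture \ref{Iguconj} is already stated for $\tilde{\cL}_{K,M}$; and your argument presupposes that $\cI_K$ admits $r$ generators, whereas the paper disposes of the remaining case $\moi_{K,Z}^{(r)}(\cI)=0$ by the trivial bound $|E_{L,Z,\cI}^{(r)}(m)|\leq 2$.
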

\begin{remark}
The word ''averaged '' comes from the fact that if $r$ is large enough, then for a suitable choice of polynomials $f_1,\dots,f_r$ and an additive character $\psi$ of $L\in\tilde{\cL}_{K,1}$ of conductor $m$, $E_{L,Z,\cI}^{(r)}(m)$ is the ''average'' of the exponential sums $E_{L,Z,\sum_{i=1}^na_if_i(x)}(\psi)$ for $(a_1,\dots,a_r)\in \cO_L^r\setminus \varpi_L\cO_L^r$.

Let $r$ such that $\moi_{K,Z}^{(r)}(\cI)=0$, then Conjecture \ref{avelocIgu} holds trivially, since it is easy to verify that 
$$\left|E_{L,Z,\cI}^{(r)}(m)\right|\leq q_L^{-mn} \left(q_L^{mn}+q_L^{n-r} q_L^{(m-1)n}\right)
\leq 1+q_L^{-r}
\leq 2$$
in any case. 

When $\moi_{K,Z}^{(r)}(\cI)>0$, Lemma-Definition \ref{moidef} says that $\cI_K$ can be generated by $r$ polynomials $f_1,...,f_r\in\cO_K[x_1,...,x_n]$. Thus, Remark \ref{absexp} yields  $E_{L,Z,\cI}^{(r)}(m)= E_{L,Z\mathfrak{D}_{\cO_K}^r,\sum_{i=1}^na_if_i(x)}(\psi)$ for all additive characters $\psi$ of $L$ of conductor $m$ provided that $L\in\tilde{\cL}_{K,M}$ for a large enough integer $M$. On the other hand, by the definition,  $\moi_{K,Z}^{(r)}(\cI)=\moi_{K,Z\mathfrak{D}_{\cO_K}^r}\left(\sum_{i=1}^ny_if_i(x)\right)$.  Therefore, Conjecture \ref{avelocIgu} can be regarded as a special case of Conjecture \ref{Iguconj} with respect to $f:=\sum_{i=1}^na_if_i(x)$ and $\tilde{Z}:=Z\mathfrak{D}_{\cO_K}^r$.

On the other hand, it is clear that Conjecture \ref{avelocIgu} is a stronger version of Conjecture \ref{aveIgu}.
\end{remark}

Conjecture \ref{avelocIgu} is quite abstract and it is not easy to work with $\moi_{K,Z}^{(r)}(\cI)$ for its applications. Thus, it is better to consider some good lower bound $\sigma$ of $\moi_{K,Z}^{(r)}(\cI)$ and study Conjecture \ref{avelocIgu} for such $\sigma$ instead of $\moi_{K,Z}^{(r)}(\cI)$. 
Let $\cI\subset \cO_K[x_1,\dots,x_n]$ be a non-zero ideal which does not contain any non-zero constant. As in \cite{CluckerNguyen}, we can use the strong monodromy conjecture and the recent work of Musta\c{t}\v{a} and Popa on the minimal exponents in \cite{MustPopa} to predict a beautiful lower bound of $\moi_K(\cI)$. Recall that if $f_1,\dots,f_r$ are non-constant polynomials in $\CC[x_1,\dots,x_n]$, then Birch's singular locus $\BSing(f_1,\dots,f_r)$ is defined by
\[\BSing\left(f_1,\dots,f_r\right)=\{x\in \CC^n|\rank\left(\left(\frac{\partial f_i}{\partial x_j}(x)\right)_{1\leq i\leq r, 1\leq j\leq n}\right)<r\}.\]
We set $s(f_1,\dots,f_r)=\dim(\BSing(f_1,\dots,f_r))$ together with using the convention that $\dim(\emptyset)=-1$. If $f_1,\dots,f_r$ are homogeneous polynomials of degree $d$, then Birch's work in \cite{Birch} implies that 
\begin{equation}\label{B-lower}
\moi_\QQ(\cI)\geq \frac{n-s(f_1,\dots,f_r)}{r(d-1)2^{d-1}},
\end{equation}
where we use the convention that $\frac{n}{0}=+\infty$ and $\frac{0}{0}=0$. In \cite{Browning-HB}, Browning and Heath-Brown extended Birch's result to the situation for which we do not require that the polynomials $f_1,\dots,f_r$ are of the same degree. More precisely, they renumbered the homogeneous polynomials $f_1,\dots,f_r$ by $F_{ij}$ for $i\in J$ and $1\leq j\leq r_i$, where  $J$ is a non-empty finite subset of $\NN_{\geq 1}$ and $r_i>0$ for $i\in J$, such that $\deg(F_{ij})=i$ for all $i,j$. After that, they worked with Birch's singular locus of polynomials of the same degree. In particular, their work implies that 
$$\moi_K(\cI)\geq \tau_0:=\frac{1-\sum_{i\in J}t_ir_i}{t_{i_0}}+\sum_{i\in J}r_i,$$
where 
$$t_i=\sum_{\ell\in J, \ell\geq i} \frac{2^{\ell-1}(\ell-1)r_\ell}{n-s(F_{\ell 1},\dots,F_{\ell r_\ell})}$$
for each $i\in J$ and $i_0=\min\{i |i\in J\}$. Moreover, Conjecture \ref{aveIgu} holds for $\sigma=\tau_0$.  As mentioned in \cite{Browning-HB}, the estimates of Browning and Heath-Brown can be adapted to the general situation for which the homogeneity of polynomials is not assumed. In fact, the estimates of Browning and Heath-Brown may hold for any ideal $\cI$ generated by polynomials $g_{ij}$ for $i\in J$ and $1\leq j\leq r_i$, such that $\deg(g_{ij})=i$  and $F_{ij}$ is the homogeneous part of highest degree of $g_{ij}$ for all $i,j$. 

In this paper, we also work with the general situation where the homogeneity of polynomials is not necessary.  Let $J$ be a non-empty finite subset of $\NN_{\geq 1}$. For each $i\in J$, let $r_i$ be a positive integer. Let $\cI\subset\cO_K[x_1,\dots,x_n]$ be the ideal generated by non-constant polynomials $f_{ij}(x_1,\dots,x_n)$ for $i\in J$  and $1\leq j\leq r_i$, such that $\cI_K\neq (1)$ and $\deg(f_{ij})=i$ for all $i,j$. We will denote by $F_{ij}$ the homogeneous part of highest degree of $f_{ij}$. We put $r=\sum_{i\in J}r_i$ and $g=\sum_{i\in J,1\leq j\leq r_i}y_{ij}f_{ij}(x_1,\dots,x_n)$. Let us set
\begin{equation}\label{lbound}
\sigma_0\big( \left(f_{ij}\right)_{i\in J, 1\leq j\leq r_i}\big):=\min_{\ell\in J}\frac{n-s(F_{\ell 1},\dots,F_{\ell r_\ell})}{\ell}.
\end{equation}
Note that we have no further assumptions regarding our setting. If $z\in \CC^n\times (\CC^r\setminus \{0\})=U$, we will show that $\sigma_0\big( \left(f_{ij}\right)_{i\in J, 1\leq j\leq r_i}\big)$ is a lower bound of the minimal exponent $\tilde{\alpha}_{z}(g)$ of $g$ at $z$ (see Section \ref{BM}) 
as follows.
\begin{prop}\label{minimalexp}With the above setting, we have 
$$\tilde{\alpha}_{g\mid_U}=\min_{z\in (\CC^r\setminus \{0\})\times \CC^n}\tilde{\alpha}_{z}(g)\geq \sigma_0\big( \left(f_{ij}\right)_{i\in J, 1\leq j\leq r_i}\big).$$
\end{prop}
\begin{proof}
Let $(\textbf{a},x)\in U=\CC^n\times (\CC^r\setminus\{0\})$, where $\textbf{a}=(a_{ij})_{i\in J, 1\leq j\leq r_i}$ and $x=(x_1,\dots,x_n)$. Since $\tilde{\alpha}_{(\textbf{a},x)}(g)=+\infty$ if $g(\textbf{a},x)\neq 0$, we can suppose that $g(\textbf{a},x)=0$.  By \cite[Theorem E]{MustPopa} we have $$\tilde{\alpha}_{(\textbf{a},x)}(g)\geq\tilde{\alpha}_{x}(g_{\textbf{a}}),$$
where $g_\textbf{a}(x)=\sum_{i\in J, 1\leq j\leq r_i} a_{ij}f_{ij}(x).$

If there is $i\in J$ such that  $F_{i1},\dots,F_{ir_i}$ are linearly dependent then $\BSing(F_{i1},\dots,F_{ir_i})=\CC^n$ thus $s(F_{i1},\dots,F_{ir_i})=n$ and $$\sigma_0\big( \left(f_{ij}\right)_{i\in J, 1\leq j\leq r_i}\big)=\min_{\ell\in J}\frac{n-s(F_{\ell 1},\dots,F_{\ell r_\ell})}{\ell}=0.$$ Therefore, our claim is trivial since $\tilde{\alpha}_{g\mid_U}>0$.

If $F_{i1},\dots,F_{ir_i}$ are linearly independent for all $i\in J$, then we can suppose that $\deg(g_\textbf{a}(x))=\ell$ for some $\ell\in J$ and the homogeneous part of highest degree of $g_\textbf{a}(x)$ is $\sum_{j\in I}a_{\ell j}F_{\ell j}$ for a non-empty subset $I$ of $\{1,\dots,r_\ell\}$ such that $a_{\ell j}\neq 0$ for all $j\in I$. It is clear that $\Sing(\sum_{j\in I}a_{\ell j}F_{\ell j})\subset\BSing(F_{\ell 1},\dots,F_{\ell r_\ell})$. It follows from these facts, Section \ref{BM} and \cite[Proposition 2.4]{CluckerNguyen} that
$$\tilde{\alpha}_{x}(g_\textbf{a})\geq \tilde{\alpha}_{g_\textbf{a}}\geq\frac{n-\dim\big(\Sing(\sum_{j\in I}a_{\ell j}F_{\ell j})\big)}{\ell}\geq \frac{n-s\left(F_{\ell 1},\dots,F_{\ell r_\ell}\right)}{\ell}.$$
Therefore, we have 
\begin{align*}
\tilde{\alpha}_{g\mid_U}&=\min_{(\textbf{a},x)\in U, g(\textbf{a},x)=0} \tilde{\alpha}_{(\textbf{a},x)}(g)
\geq \min_{\textbf{a}\in \CC^r\setminus\{0\}}\min_{x\in\CC^n, g(\textbf{a},x)=0} \tilde{\alpha}_{x}(g_\textbf{a})\\
&\geq \min_{\ell\in J}\frac{n-s(F_{\ell 1},\dots,F_{\ell r_\ell})}{\ell}=\sigma_0\big( \left(f_{ij}\right)_{i\in J, 1\leq j\leq r_i}\big).
\end{align*}
\end{proof}
Let $Z\subset \AA^n_{\cO_K}$ be an $\cO_K$-scheme of finite type. By Lemma-Definition \ref{moidef}, $\moi_{K,Z}^{(r)}(\cI)=\moi_{K,Z\mathfrak{D}_{\cO_K}^r}(g)$. By an alternative definition of $\moi_{K,Z\mathfrak{D}_{\cO_K}^r}(g)$ in \cite[Section 2.4]{NguyenVeys}, the strong monodromy conjecture (see \cite{Igubf,DenefBour}) and the fact that $g$ has $0$ as its only critical value (since $\cI_K=\sum_{i\in J,1\leq j\leq r_i}(f_{ij})\neq (1)$ and $g=\sum_{i\in J,1\leq j\leq r_i}y_{ij}f_{ij}(x_1,\dots,x_n)$), we expect that $$\moi_{K,Z}^{(r)}(\cI)=\moi_{K,Z\mathfrak{D}_{\cO_K}^r}(g)\geq \tilde{\alpha}_{g\mid_{V}}\geq \tilde{\alpha}_{g\mid_{U}} ,$$
where $V$ is a small enough neighbourhood of $Z(\CC)\times (\CC^r\setminus \{0\})$ in $U=\CC^n\times (\CC^r\setminus \{0\}) $. On the other hand, since the strong monodromy conjecture may also hold for all $p$-adic fields,  we can predict a variant of Conjecture \ref{avelocIgu} as follows.
\begin{conj}\label{conjeff}Let $J$ be a non-empty finite subset of $\NN_{\geq 1}$. For each $i\in J$, let  $r_i$ be a positive integer. For each $1\leq j\leq r_i$, let $f_{ij}(x_1,\dots,x_n)\in \cO_K[x_1,\dots,x_n]$ be a non-constant polynomial. We set $r=\sum_{i\in J} r_i$ and $\cI=\sum_{i\in J, 1\leq j\leq r_i}(f_{ij})\subset \cO_K[x_1,\dots,x_n]$. Suppose that $\cI_K\neq (1)$ and $\deg(f_{ij})=i$ for all $i$. Let $Z\subset\AA^n_{\cO_K}$ be an $\cO_K$-scheme of finite type.  Then there is an integer $M$ depending only on $\cI$, a positive constant $c$ depending only on $\cI$ and $r$, and a positive constant $c_{L,Z,\cI,r}$ for each $L\in\tilde{\cL}_{K,M}\cup\cL_{K,1}$  such that $c_{L,Z,\cI,r}=c$ if $L\in \tilde{\cL}_{K,M}$ and
$$\left|E_{L,Z,\cI}^{(r)}(m)\right|\leq c_{L,Z,\cI,r}m^{n+r-1}q_L^{-m\sigma_0\big( \left(f_{ij}\right)_{i\in J, 1\leq j\leq r_i}\big)}$$
for all $L\in\tilde{\cL}_{K,M}\cup\cL_{K,1}$ and all $m\geq 2.$
\end{conj}
Note that the statement of Conjecture \ref{conjeff} also holds for all $m\geq 1$ if $Z=\AA_{\cO_K}^n$. Indeed, we will prove the following proposition in Section \ref{exsums}.
\begin{prop}\label{m=1}With the notation and the assumption of Conjecture \ref{conjeff}, there is an integer $M_\cI$, a positive constant $c_{\cI,r}$ and a positive constant $c_{L,\cI,r}$ for each $L\in\tilde{\cL}_{K,1}$  such that $c_{L,\cI,r}=c_{\cI,r}$ if $L\in \tilde{\cL}_{K,M_\cI}$ and
$$\left|E_{L,\cI}^{(r)}(1)\right|\leq c_{L,\cI,r}q_L^{-\sigma_0\big( \left(f_{ij}\right)_{i\in J, 1\leq j\leq r_i}\big)}$$
for all $L\in\tilde{\cL}_{K,1}$.
\end{prop}
\begin{example}

It is easy to show that if $r=1$ and $f=f_{11}$ is a homogeneous polynomial of degree $d>1$  such that the projective scheme associated to $f$ is smooth then $\moi_{\QQ}^{(1)}\big((f)\big)=\sigma_0\big(\left(f_{11}\right)\big)$ (see \cite{CluckerNguyen}). 

\end{example}

\begin{remark}
Conjecture \ref{avelocIgu} does not imply Conjecture \ref{conjeff} even if one can show that $\moi_{K,Z}^{(r)}(\cI)\geq \sigma_0\big(\left(f_{ij}\right)_{i\in J, 1\leq j\leq r_i}\big)$.

Proposition \ref{m=1} implies the case $w=(1,...,1)\wedge m=1$ of \ref{wnsd}.

On the other hand, the case  $w=(1,...,1)\wedge m\geq 2$ of \ref{wnsd} follows from  the weaker version of 
Conjecture \ref{conjeff} for $Z=\AA_\ZZ^n$ with the exponent $\tilde{\sigma}_{0w}\big(\left(f_{ij}\right)_{i\in J, 1\leq j\leq r_i}\big)$ instead of 
$\sigma_{0}\big(\left(f_{ij}\right)_{i\in J, 1\leq j\leq r_i}\big)$. In Section \ref{exsums}, we will show that this weaker version is true not only for $w=(1,...,1)$ but also for every $w\in \NN_{\geq 1}^n$ to imply \ref{wnsd} for $m\geq 2$.

\end{remark}
Now we will see how Conjecture \ref{conjeff} helps to study the singular series mentioned in the introduction. Let $I\neq (1)$ be a non-zero ideal of $\cO_K$, then we can write $I=\prod_{j}\mathfrak{p}_j^{m_j}$ for maximal ideals $\mathfrak{p}_j$ of $\cO_K$ and positive integers $m_j$. For each $j$, let $\varpi_j$ be a fixed uniformizing parameter of $\cO_{K_{\mathfrak{p}_j}}$. The Chinese remainder theorem yields
$$\cO_K/I\simeq\bigoplus_{j}\cO_{K_{\mathfrak{p}_j}}/(\varpi_j^{m_j}).$$
Let $\psi:\cO_K/I\to\CC^*$ be a primitive character, i.e., $\psi$ is of order $\cN_I:=\left|\cO_K/I\right|=N_{K/\QQ}(I)$. Let $i_j:\cO_{K_{\mathfrak{p}_j}}/(\varpi_j^{m_j})\to \cO_K/I$ be the canonical injection and $\psi_j=\psi\circ i_j$, then there is an additive character $\tilde{\psi}_j$ of  $K_{\mathfrak{p}_j}$ of conductor $m_j$ such that $\psi_j\pi_{m_j}=\tilde{\psi}_j|_{\cO_{K_{\mathfrak{p}_j}}}$, where $\pi_{m_j}:\cO_{K_{\mathfrak{p}_j}}\to \cO_{K_{\mathfrak{p}_j}}/(\varpi_j^{m_j})$ is the canonical projection.
\begin{lemdefn} Let $I$ be an ideal of $\cO_K$. Let $\cI=(f_1,\dots,f_r)$ be a non-zero ideal of $\cO_K[x_1,\dots,x_n]$ such that $\cI_K\neq (1)$. If $I=(1)$, we set $E_{\cI}^{(r)}(I)=1$. If $I\neq (1)$, suppose that $I=\prod_{1\leq j\leq \ell}\mathfrak{p}_j^{m_j}$ for maximal ideals $\mathfrak{p}_j$ of $\cO_K$ and positive integers $m_j$. Let $\psi$ be a primitive character of  $\cO_K/I$. The exponential sum 
\begin{equation*}
E_{\cI}^{(r)}(I):=\cN_I^{-(n+r)}\sum_{(\overline{y}_1,...,\overline{y}_r)\in (\cO_K/I)^r, I+\sum_{i}y_i\cO_K=(1)}\sum_{x\in (\cO_K/I)^n}\psi\bigg(\sum_{1\leq i\leq r}\overline{y}_{i}f_{i}(x)\bigg)
\end{equation*}
depends only on $I,\cI,r$. We call $E_{\cI}^{(r)}(I)$ the $r^{\textnormal{th}}$-exponential sum modulo $I$ of $\cI$.

\end{lemdefn}
\begin{proof}Suppose that $I=\prod_{1\leq j\leq \ell}\mathfrak{p}_j^{m_j}$  for maximal ideals $\mathfrak{p}_j$ of $\cO_K$ and positive integers $m_j$. Let $\psi$ be a primitive character of $\cO_K/I$. For each $1\leq j\leq \ell$, let $\tilde{\psi}_j$ be an additive character of $K_{\mathfrak{p}_j}$ of conductor $m_j$ such that $\psi\circ i_j\circ \pi_{m_j}=\tilde{\psi}_j|_{\cO_{K_{\mathfrak{p}_j}}}$ as mentioned above. We set $g=\sum_{1\leq i\leq r}y_if_i(x_1,...,x_n)$. It follows from the Chinese remainder theorem and Remark \ref{absexp} that  
$$E_{\cI}^{(r)}(I)=\prod_{1\leq j\leq \ell}E_{K_{\mathfrak{p}_j},\mathfrak{D}_{\cO_K}^{n,r},g}(\tilde{\psi}_j)=\prod_{1\leq j\leq \ell}E_{K_{\mathfrak{p}_j},\cI}^{(r)}(m_j),$$
where $\mathfrak{D}_{\cO_K}^{n,r}$ was defined in Section \ref{SV}. Thus, $E_{\cI}^{(r)}(I)$ depends only on $I,r,\cI$.
\end{proof}
\begin{defn}\label{defsing}
Let $\cI$ be a non-zero ideal of $\cO_K[x_1,\dots,x_n]$ such that $\cI_K\neq (1)$. For each $r$, we consider the singular series 
$$\mathfrak{S}_{\cI}^{(r)}=\sum_{(0)\neq I\subset \cO_K}E_{\cI}^{(r)}(I)\cN_I^r$$
associated to $r$ and $\cI$.
\end{defn}
\begin{remark} 
If we can find polynomials $f_1,\dots,f_r$ such that $\cI=\sum_{i=1}^r(f_i)$ then  $\mathfrak{S}_{\cI}^{(r)}$ agrees with the singular series $\mathfrak{S}(f_1,\dots,f_r)$ associated to $f_1,\dots,f_r$ (see \cite{Birch}). In particular, if $\cI=(f)$ then $\mathfrak{S}_{\cI}^{(1)}$ is the singular series $\mathfrak{S}(f)$ in the introduction. 

If $\cI=\sum_{i\in J, 1\leq j\leq r_i}(f_{ij}), r=\sum_{i\in J}r_i$ are as in the statement of Conjecture \ref{conjeff}, then Conjecture \ref{conjeff} implies that there exists a constant $c_{\epsilon}>0$ for each $\epsilon>0$ such that
$$\left|E_{\cI}^{(r)}(I)\right|\leq c_{\epsilon} \cN_I^{-\sigma_0\left( \left(f_{ij}\right)_{i\in J, 1\leq j\leq r_i}\right)+\epsilon}$$
for all non-zero ideals $I$ of $\cO_K$. Here, the error term $\epsilon$ appears when we majorize the factor $m^{n+r-1}$ in Conjecture \ref{conjeff}. 

If Conjecture \ref{conjeff} holds  for $\cI$ then $\mathfrak{S}_{\cI}^{(r)}$ converges absolutely if $\sigma_0\left( \left(f_{ij}\right)_{i\in J, 1\leq j\leq r_i}\right)>r+1$ and in this case we have 
\begin{align*}
\mathfrak{S}_{\cI}^{(r)}&=\prod_{\mathfrak{p}\in \Specm(\cO_K)}\bigg(1+\sum_{m\geq 1}E_{K_{\mathfrak{p}},\cI}^{(r)}(m)\cN_{\mathfrak{p}}^{rm}\bigg)\\
&=\prod_{\mathfrak{p}\in \Specm(\cO_K)}\lim_{m\to +\infty}\cN_{\mathfrak{p}}^{-m(n-r)}\#\spec\left(\cO_K[x_1,\dots,x_n]/\cI\right)\left(\cO_K/\mathfrak{p}^m\right).
\end{align*}
\end{remark}

\section{Some properties of the motivic oscillation indexes}\label{property}
 In this section we will first give some important properties of the (local) motivic oscillation indexes of ideals. We will use the notation in Sections \ref{nota} and \ref{exponential sum}. If there is no further assumption, then  $K$ is a number field and $X$ is the closed subscheme of $\AA_{\cO_K}^n$ associated to an ideal $\cI\subset \cO_K[x_1,\dots,x_n]$. We start this section by recalling a version of the Lang-Weil estimate from \cite{LWeil}.
\begin{prop}[Lang-Weil estimate] \label{Lang-Weil}
Let $k=\FF_q$ be a finite field and $Y\subset\PP^n_k$ be a closed subvariety of dimension $r$.  For each positive integer $\ell$, we denote by $a_Y(\ell)$ the number of geometrically irreducible components of $Y_{\FF_{q^\ell}}$ of dimension $r$. Then there is a constant $c_Y$ depending only on $n, r$, and the number and degree of the equations defining $Y$ such that for every $\ell\geq 1$, we have
$$\left|\#Y(\FF_{q^\ell})-a_Y(\ell)q^{\ell r}\right|\leq c_Yq^{\ell(r-\frac{1}{2})}.$$
\end{prop}
 Let $\delta_K^{(r)}(\cI)\geq -\infty$ be the infimum of all real numbers $\delta$ satisfying
$$\left|E^{(r)}_{\cI}(\mathfrak{p})\right|=\cN_{\mathfrak{p}}^{-n}\left|\#X(k_{K_{\mathfrak{p}}})-\cN_{\mathfrak{p}}^{n-r}\right|\leq c_\delta \cN_{\mathfrak{p}}^{\delta}$$
for all $\mathfrak{p}\in\Specm(\cO_K)$ and some positive constant $c_\delta$ independent of $\mathfrak{p}$.
The following proposition allows us to obtain  some geometric information about $E_{\cI}^{(r)}(\mathfrak{p})$ for $\mathfrak{p}\in\Specm(\cO_K)$.

\begin{prop}\label{delta}Let $X$ be the closed subscheme of $\AA_{\cO_K}^n$ associated to an ideal $\cI$ of $\cO_K[x_1,...,x_n]$. Let $r$ be a positive integer.  Suppose that $\cI$ can be generated by $r$ elements.  If one of the following conditions holds
\begin{itemize}
\item[(\textit{i}),] $\dim(X_K)>n-r$,
\item[(\textit{ii}),] $\dim(X_K)=n-r$ and $X_\CC$ is not irreducible,
\end{itemize}
then $\delta_K^{(r)}(\cI)=\dim(X_K)-n$. 

In particular, there is $\delta<-r$ and a constant $c$ such that 
$$\left|E_{\cI}^{(r)}(\mathfrak{p})\right|\leq c\cN_\mathfrak{p}^\delta$$
for all $\mathfrak{p}\in\Specm(\cO_K)$ if and only if $X_K$ is geometrically irreducible of dimension $n-r$.
\end{prop}
\begin{proof}Suppose that $X_\CC\neq\emptyset$. By using the Lang-Weil estimate in Proposition \ref{Lang-Weil}, it is easy to verify that $\delta_K^{(r)}(\cI)\leq d=\dim(X_\CC)-n=\dim(X_K)-n.$ Let $X_1,\dots,X_\ell$ be the irreducible components of $X_\CC$ of dimension $d+n$. Let $K'$ be a finite extension of $K$ such that $X_1,\dots,X_\ell$ can be defined over $K'$. By Chebotarev's density theorem, there are infinitely many primes $\mathfrak{p}\in\Specm(\cO_K)
$ such that $\mathfrak{p}$ splits completely in $\cO_{K'}$. Let $\mathfrak{p}_0\in\Specm(\cO_K)$ such that $\mathfrak{p}_0$ splits completely in $\cO_{K'}$ and $\cN_{\mathfrak{p}_0}$ is large enough then $X_1,\dots,X_\ell$ can be defined over $\cO_{K_{\mathfrak{p}_0}}$ and $X_{i}\otimes k_{K_{\mathfrak{p}_0}}$ is geometrically irreducible for all $i$. We can use the Lang-Weil estimate in Proposition \ref{Lang-Weil} again to see that there is a positive constant $c_X$ depending only on $X$ such that
$$\left|\#X(k_{K_{\mathfrak{p}_0}})-\ell\cN_{\mathfrak{p}_0}^{d+n}\right|\leq c_{X}\cN_{\mathfrak{p}_0}^{d+n-1/2}.$$
 Thus, if either $d=-r$ and $\ell>1$  or $d>-r$, then $\delta_K^{(r)}(\cI)\geq d$. Therefore, $\delta_K^{(r)}(\cI)= d$
as desired.

In order to prove the second claim, we observe that if $X_\CC=\emptyset$ then it is easy to check that $\delta_K^{(r)}(\cI)=-r$. Thus, if $\delta_K^{(r)}(\cI)<-r$ then $X_\CC\neq\emptyset$. Moreover, if $\delta_K^{(r)}(\cI)<-r$ then neither $(i)$ nor $(ii)$ holds since otherwise $-r\leq d=\delta_K^{(r)}(\cI)<-r$. Thus, the condition $\delta_K^{(r)}(\cI)<-r$ implies that $\dim(X_K)=n-r$ and $X_\CC$ is irreducible. Conversely, if $\dim(X_K)=n-r$ and $X_\CC$ is irreducible, then we can use the Lang-Weil estimate in Proposition \ref{Lang-Weil} again to have $\delta_K^{(r)}(\cI)<-r$. 
\end{proof}

 Suppose that $\cI$ can be generated by non-zero elements $f_1,\dots,f_r$ and $\cI_K\neq (1)$. We set $g=\sum_{1\leq i\leq r}y_if_i(x)$. If $L\in\cL_{K,1}$, we denote by 
$$\Oi_L^{(r)}(\cI) :=\sigma_{L,\AA_{\cO_{K}}^n}^{r,\textnormal{naive}}(\cI)=\sigma_{L,\mathfrak{D}_{\cO_K}^{n,r}}\bigg(\sum_{1\leq i\leq r}y_if_i(x)\bigg)$$  the $L$-oscillation index of $g$ at $\mathfrak{D}_{\cO_K}^{n,r}$ (see Section \ref{IE}, Definition \ref{naiv} and Corollary \ref{naig}). We set $$\tilde{\sigma}_K^{(r)}(\cI)=\min_{L\in \cL_{K,1}}\Oi_L^{(r)}(\cI)$$
and $$\sigma_K^{(r)}(\cI)=\min_{\mathfrak{p}\in \Specm(\cO_K)}\Oi_{K_\mathfrak{p}}^{(r)}(\cI).$$
By Lemma-Definition \ref{moidef}, Proposition \ref{limitp} and the item $(vi)$ of Corollary  \ref{llct}, it is clear that 
\begin{equation}\label{low}
\moi_{K}^{(r)}(\cI)\geq \sigma_{K}^{(r)}(\cI)\geq \tilde{\sigma}_K^{(r)}(\cI)\geq \lct(\cI).
\end{equation} 

Now, we also obtain some geometric and arithmetic information about $\moi_K(\cI)$ as follows.
\begin{prop}\label{equcondi} Let $X$ be the closed subscheme of $\AA_{\cO_K}^n$ associated to an ideal $\cI$ of $\cO_K[x_1,...,x_n]$ generated by $r$ elements such that $\cI_K\neq (1)$. The following conditions are equivalent:
\begin{itemize}
\item[(\textit{i}),] $\tilde{\sigma}_K^{(r)}(\cI)>r$.
\item[(\textit{ii}),] If $L\in \cL_{K,1}$, then the Igusa local zeta function $\cZ_{L,\AA_{\cO_K}^n,\cI}(s)$ has a pole at $s_0=-r$ of multiplicity at most $1$. Moreover, if $s_1$ is a pole of $\cZ_{L,\AA_{\cO_K}^n,\cI}(s)$ such that $\Re(s_1)\neq -r$, then $\Re(s_1)<-r$.
\item[(\textit{iii}),] If $L\in \cL_{K,1}$, then $\sigma_{L,\AA_{\cO_K}^n}^{\textnormal{naive}}(\cI)=\Oi_L^{(r)}(\cI)>r$.
\item[(\textit{iv}),]$\moi_K(\cI)=\moi_K^{(r)}(\cI)=\moi_K^{\textnormal{naive}}(\cI)>r$.
\item[(\textit{v}),]$\moi_K^{(r)}(\cI)>r$.
\item[(\textit{vi}),] $X_{K}$ is a complete intersection variety in $\AA^n_K$ of dimension $n-r$ and has only rational singularities.
\end{itemize}
\end{prop}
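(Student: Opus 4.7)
The plan is to derive all six equivalences from the factorization
\[
F_{r,L,\AA_{\cO_K}^n,\cI}(s) = (1 - q_L^{-(s+r)})\,\cZ_{L,\AA_{\cO_K}^n,\cI}(s)
\]
established in the proof of Proposition~\ref{compa}. The factor $1 - q_L^{-(s+r)}$ has simple zeros precisely at $s = -r + 2\pi i k/\log q_L$, so each pole of $\cZ_{L,\cI}$ at one of these points has its order reduced by one in $F_{r,L,\cI}$, while all other poles are preserved. Hence $\Oi_L^{(r)}(\cI)>r$ is equivalent to $\cZ_{L,\cI}$ having no pole with real part $>-r$ and only simple poles (if any) with real part $=-r$; taking the minimum over $L\in\cL_{K,1}$ yields (i)~$\Leftrightarrow$~(ii).

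Next I would observe that (ii) forces $r = n - \dim(X_K)$. Since a log-resolution computation shows $-\lct(\cI)$ is the real part of a genuine pole of $\cZ_{L,\cI}$ at sufficiently large residue characteristic, applying (ii) at such $L$ gives $\lct(\cI)\geq r$; combined with the standard bound $\lct(\cI)\leq n-\dim(X_K)\leq r$ this yields $r=n-\dim(X_K)$, so $X_K$ is a complete intersection of codimension $r$. With $r=n-\dim(X_K)$ the definitions of $\sigma_{L,\AA_{\cO_K}^n}^{\textnormal{naive}}(\cI)$ and $\Oi_L^{(r)}(\cI)$ coincide, giving (ii)~$\Leftrightarrow$~(iii). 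The easy direction (i)~$\Rightarrow$~(v) is the ``min~$\leq$~liminf'' inequality $\tilde\sigma_K^{(r)}(\cI)\leq\moi_K^{(r)}(\cI)$, and (iv)~$\Leftrightarrow$~(v) follows from Corollary~\ref{llct}(iii) (yielding $\moi_K^{(r)}(\cI)=\moi_K(\cI)$ since $\moi_K^{(r)}(\cI)>r\geq\lct(\cI)$) together with Proposition~\ref{compa}(iii), applied once the complete-intersection property has been extracted at large residue characteristic.

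The geometric content lies in (v)~$\Leftrightarrow$~(vi). Given (v), large-$L$ arguments produce the complete-intersection property as above, while the pole-structure condition at large $L$ translates into a root condition on the reduced Bernstein-Sato polynomial $\tilde b_g$ of $g=\sum_i y_if_i$: namely that $-r$ is a root of multiplicity at most one and that $\tilde b_g$ has no larger root. By the Musta\c{t}\v{a}--Popa characterization \cite{MustPopa} (combined with Proposition~\ref{minimalexp}), this is equivalent to $X_K$ having rational singularities. Conversely, (vi) at large residue characteristic supplies the same root structure on $\tilde b_g$ and hence the pole structure of (ii), giving (vi)~$\Rightarrow$~(v).

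The subtlest step is upgrading (vi)~$\Rightarrow$~(i) to \emph{every} $L\in\cL_{K,1}$. My approach is to choose a log-resolution $\pi:Y\to\AA_K^n$ of $\cI_K$ and apply \cite[Theorem~3.1]{Denefdegree} to express $\cZ_{L,\cI}$ uniformly in $L$ via the numerical data $(\nu_i,N_i)_{i\in I}$; the rational-singularity hypothesis combined with the complete-intersection structure should force $\nu_i/N_i>r$ on every exceptional divisor except one ``main'' divisor where $\nu_i/N_i=r$ and the contribution is genuinely simple, and this numerical structure is intrinsic to the resolution and independent of $L$. The main obstacle is ruling out accidental coincidences $\nu_i/N_i=r$ elsewhere and confirming that residue-field-dependent cancellations never raise the pole multiplicity at $s=-r$ above one across all $L$, requiring a careful analysis of the log-resolution data of a complete intersection with rational singularities and the deepest input from \cite{MustPopa}.
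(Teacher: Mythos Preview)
Your equivalences (i)--(iv) are fine and match the paper.  The real gap is in (v)~$\Rightarrow$~(vi).  You write that ``the pole-structure condition at large $L$ translates into a root condition on the reduced Bernstein--Sato polynomial $\tilde b_g$''.  This translation is not available: the passage from poles of Igusa local zeta functions to roots of $b_g$ is precisely the strong monodromy conjecture, which is open.  The paper does not touch $\tilde b_g$ at all in this proof.  Instead, once the complete-intersection and reducedness properties are established (the latter by a blow-up argument showing a non-reduced component would force $\lct(\cI)\leq r/2$), the paper argues via jet schemes: if some exceptional divisor had $\nu_{i_0}=rN_{i_0}>r$, then by \cite{ELM} the number $a_m$ of top-dimensional irreducible components of the $m$-th jet scheme $X_m\otimes\overline K$ satisfies $a_{m-1}\neq a_m$ for infinitely many $m$.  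Picking such an $m$ large enough and applying Lang--Weil to $X_{m-1}$ and $X_{m-2}$ gives $|E_{L,\cI}^{(r)}(m)|\sim |a_{m-1}-a_{m-2}|\,q_L^{-mr}$, which for large $q_L$ contradicts the Denef-formula bound $|E_{L,\cI}^{(r)}(m)|\leq q_L^b m^{n-1}q_L^{-m\,\moi_K^{(r)}(\cI)}$.  This avoids any appeal to Bernstein--Sato polynomials or to \cite{MustPopa}.

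For (vi)~$\Rightarrow$~(i) you are on the right track, but the correct reference is not \cite{MustPopa} on minimal exponents; it is Musta\c{t}\v{a}'s jet-scheme characterization of rational singularities \cite[Theorem~2.1]{Mustata1} combined with Elkik \cite{Elk}.  This produces a log resolution $h:Y\to\AA_K^n$ in which $\nu_i>rN_i$ whenever $(\nu_i,N_i)\neq(r,1)$, and moreover the divisors with $(\nu_i,N_i)=(r,1)$ are pairwise disjoint.  Plugging this specific numerical data into Denef's formula shows that every pole of $(1-q_L^{-(s+r)})\cZ_{L,\cI}(s)$ has $\Re(s)<-r$, for \emph{every} $L\in\cL_{K,1}$---the disjointness condition is exactly what prevents a double pole at $s=-r$, so your worry about ``accidental coincidences'' is resolved by the structure of this particular resolution, not by any further cancellation argument.
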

\begin{proof}Suppose that $(i)$ holds, by the definition, $r<\tilde{\sigma}_K^{(r)}(\cI)\leq\Oi_L^{(r)}(\cI)$. By Definition \ref{naiv}, if $s_1$ is a pole of   $(1-q_L^{-(s+r)})\cZ_{L,\AA_{\cO_K}^n,\cI}(s)$, then $\Re(s_1)\leq -\Oi_L^{(r)}(\cI)< -r$. So $(ii)$ follows from $(i)$.

Suppose $(ii)$ holds. By the definition of $\Oi_L^{(r)}(\cI)$, one has $\Oi_L^{(r)}(\cI)>r$ for all $L\in \cL_{K,1}$.  If $\dim(X_{K})>n-r$, then it is known that $\lct(\cI)<r$ (see Section \ref{SJ}). By Proposition \ref{VZ}, for each integer $M$, there is a local field $L\in\cL_{K,M}$ such that $s_1=-\lct(\cI)>-r$ is a pole of $\cZ_{L,\AA_{\cO_K}^n,\cI}(s)$. We have a contradiction with $(ii)$. Thus, $\dim(X_{K})=n-r$. This together with the definitions of $\sigma_{L,\AA_{\cO_K}^n}^{\textnormal{naive}}(\cI)$ and $\Oi_L^{(r)}(\cI)$ imply $(iii)$.

Suppose $(iii)$ holds. The fact that $\moi_K^{(r)}(\cI)=\moi_K^{\textnormal{naive}}(\cI)>r$ follows from Lemma-Definition \ref{moidef} and Definition \ref{naiv}. We have $r\geq n-\dim(X_K)\geq \lct(\cI)$ as mentioned in Section \ref{SJ}. Thus, $\moi_K^{(r)}(\cI)>\lct(\cI)$. We use Corollary \ref{llct} to see that $\moi_K(\cI)=\moi_K^{(r)}(\cI)=\moi_K^{\textnormal{naive}}(\cI)>r$. 

The implication $(iv)\Rightarrow (v)$ is trivial.

Let us  prove that $(v)$ implies $(vi)$. If $\dim(X_{K})>n-r$, then $\lct(\cI)\leq n-\dim(X_{K})<r$ (see Section \ref{SJ}). But it implies that $\moi_K^{(r)}(\cI)=\lct(\cI)<r$ by using Corollary \ref{llct}. We get a contradiction. Therefore, $\dim(X_{K})=n-r$ and $X_K$ is equi-dimensional. So $X_K$ is a complete intersection in $\AA_K^n$. In order to prove that $X_K$ has only rational singularities, we firstly show that $X_K$ is a variety. It is sufficient to show that $X_\CC$ is reduced. Suppose that $X_{\CC}$ is non-reduced. Let $X'$ be an irreducible component of $X_{\CC}$ such that $X'$ is non-reduced. Let $h_1:B=B_{X_{\CC,red}}(\AA_\CC^n)\to \AA_\CC^n$ be the blowing up of $\AA_\CC^n$ along $X_{\CC,red}$.  Let $h_2: Y \to B$ be a proper morphism which is isomorphic over the complement of a closed subset of $h_1^{-1}(X_{\CC})$ such that $Y$ is smooth and $(h_2^{-1}h_1^{-1}(X_{\CC}))_{red}$ is a divisor with normal crossings. Let $E'$ be an irreducible component of $(h_1^{-1}(X'))_{red}$. Then $E'$ is an integral divisor of $B$. Let $E$ be the proper transform of $E'$ by $h_2$. Then the coefficient of $E$ in $K_{Y/\AA_\CC^n}$ is $r-1$. However, since $X'$ is non-reduced, the coefficient of $E$ in $(h_1h_2)^{-1}(X_\CC)$ is at least $2$. Thus, $\lct(\cI)\leq \frac{r}{2}<r$. We use Corollary \ref{llct} again to see that $\moi_K^{(r)}(\cI)= \lct(\cI)<r$. It is a contradiction. Thus, $X_\CC$ is reduced, so is $X_K$. 
By Corollary \ref{jetsi},  it suffices to show that if $h:Y\to \AA_K^n$ is a log resolution of $\cI_K$ with $h^{-1}(X_K)=\sum_{i\in I}N_iE_i$ and $K_{Y/\AA_K^n}=\sum_{i\in I}(\nu_i-1)E_i$ then $\nu_i>rN_i$ if $(\nu_i,N_i)\neq (r,1)$, and $E_i\cap E_j=\emptyset$ whenever $(\nu_i,N_i)=(\nu_j,N_j)=(r,1)$ and $i\neq j$. Suppose that for some $i$ we have $\nu_i<rN_i$ then $\lct(\cI)<r$ thus $\moi_L^{(r)}(\cI)= \lct(\cI)<r$ as mentioned above. We get a contradiction. Now, suppose that $\nu_i\geq rN_i$ for all $i$ and either $\nu_{i_0}=rN_{i_0}>r$ for some $i_0$ or there exist distinct elements $j_1, j_2$ of  $I$ satisfying $E_{j_1}\cap E_{j_2}\neq\emptyset$ and $(\nu_{j_1},N_{j_1})=(\nu_{j_2},N_{j_2})=(r,1)$. For each $m$, let $X_m$ be the $m^{\textnormal{th}}$ jet scheme over $\cO_K$ associated to $X$ (see Section \ref{SJ}). We can use the result in \cite{ELM} to see that $\dim(X_m\otimes K)=\dim(X_K)+\dim(X_{m-1}\otimes K)=(m+1)(n-r)$ for all $m\geq 0$. Moreover, by using \cite{ELM} again,  if we denote by $a_m$ the number of irreducible components of $X_{m}\otimes \overline{K}$ of dimension $(m+1)(n-r)$, then the existence of either $i_0$ or $(j_1,j_2)$ implies that  there are infinitely many $m$ such that $a_m\neq a_{m-1}$. 
By Corollaries \ref{naig} and \ref{coboun}, there is a rational constant $b$ and an integer $M$ such that 
 \begin{equation}\label{weakb}
\left|E_{L,\cI}^{(r)}(m)\right|\leq q_L^bm^{n-1}q_L^{-m\moi_K^{(r)}(\cI)},
\end{equation}
 for all $m\geq 2$ and all $L\in\cL_{K,M}$. Let $m>2$ such that $a_{m-2}\neq a_{m-1}$ and $m(\moi_K^{(r)}(\cI)-r)>b+1$. 
 
By \cite[Proposition 3.0.2]{AizenAvni} and enlarging $M$ if needed, we can suppose that 
$$\#X(\cO_L/\varpi_L^m\cO_L)=\# X_{m-1}(k_L) \textnormal{ and } \#X(\cO_L/\varpi_L^{m-1}\cO_L)=\# X_{m-2}(k_L)$$
for all $L\in\cL_{K,M}$. Let $K'$ be a finite extension of $K$ such that all irreducible components of $X_{m-1}\otimes \overline{K}$ and $X_{m-2}\otimes \overline{K}$ are defined over $K'$. By using \cite[Proposition 9.7.8]{Gro} and enlarging $M$ again, we can suppose that any irreducible component of $(X_{m-1}\sqcup X_{m-2})\otimes k_L$ is geometrically irreducible if $L\in\cL_{K',M}$. By Proposition \ref{Lang-Weil}, we have 
\begin{align*}
\left|E_{L,\cI}^{(r)}(m)\right|&=q_L^{-mn}\left|\#X(\cO_L/\varpi_L^m\cO_L)-q_L^{n-r}\#X(\cO_L/\varpi_L^{m-1}\cO_L)\right|\\
&=q_L^{-mn}\left|\# X_{m-1}(k_L)-q_L^{n-r}\# X_{m-2}(k_L)\right|\\
&=j\left|a_{m-2}-a_{m-1}\right|q_L^{-mr}(1+O_{X,m}(q_L^{-1/2}))\\
&>\left|a_{m-2}-a_{m-1}\right|q_L^{-m\moi_K^{(r)}(\cI)+b+1}(1+O_{X,m}(q_L^{-1/2}))
\end{align*}
if $L\in\cL_{K',M}$. This contradicts with (\ref{weakb}) if $q_L$ is large enough. Thus, $(v)$ implies $(vi)$.

To finish our proof, suppose that $X_{K}$ is  a complete intersection and has only rational singularities. We use Corollary \ref{jetsi} again to produce a log resolution $h:Y\to \AA_K^n$ with $K_{Y/\AA_K^n}=\sum_{i\in I}(\nu_i-1)E_i$ and $h^{-1}(X_K)=\sum_{i\in I}N_iE_i$ such that $\nu_i>rN_i$ if $(\nu_i,N_i)\neq (r,1)$, moreover if  $i\neq j$ and  $(\nu_i,N_i)=(\nu_j,N_j)=(r,1)$ then $E_i\cap E_j=\emptyset$. We can use the log resolution $h$ and Proposition \ref{VZ} to see that if $L\in\cL_{K,1}$ and $s_0$ is a pole of $(1-q_L^{-(s+r)})\cZ_{L,\AA_{\cO_K}^n,\cI}(s)$ then $\Re(s_0)<-r$. By Definition  \ref{naiv}, we have  $\Oi_{L}^{(r)}(\cI)=\sigma_{L, \AA_{\cO_K}^n}^{r,\textnormal{naive}}(\cI)>r$ for all $L\in\cL_{K,1}$. Thus, $\tilde{\sigma}_K^{(r)}(\cI)>r$.

\end{proof}
Similarly, we have an arithmetic characterization of the property of a scheme being a complete intersection having only (semi) log canonical singularities as follows.
\begin{prop}\label{equcondilog} Let $X$ be the closed subscheme of $\AA_{\cO_K}^n$ associated to an ideal $\cI$ of $\cO_K[x_1,...,x_n]$ generated by $r$ elements such that $\cI_K\neq (1)$. If $X_K$ is normal then the following conditions are equivalent:
\begin{itemize}
\item[(\textit{i}),] $\tilde{\sigma}_K^{(r)}(\cI)\geq r$.
\item[(\textit{ii}),] If $L\in \cL_{K,1}$ and $s_0$ is a pole of the Igusa local zeta function $\cZ_{L,\AA_{\cO_K}^n,\cI}(s)$, then $\Re(s_0)\leq -r$.
\item[(\textit{iii}),]If $L\in \cL_{K,1}$, then $\sigma_{L,\AA_{\cO_K}^n}^{\textnormal{naive}}(\cI)=\Oi_L^{(r)}(\cI)\geq r$.
\item[(\textit{iv}),] $\moi_K(\cI)=\moi_K^{(r)}(\cI)=\moi_K^{\textnormal{naive}}\geq r$.
\item[(\textit{v}),]$\moi_K^{(r)}(\cI)\geq r$.
\item[(\textit{vi}),] $X_{K}$ is a complete intersection variety in $\AA^n_K$ of dimension $n-r$ and has only log canonical singularities.
\end{itemize}
On the other hand, if $X_K$ is not assumed to be normal then the above assertion still holds when we replace log canonical singularities by semi-log canonical singularities in the item $(vi)$.
\end{prop}
\begin{proof}
We can adapt the proof of Proposition \ref{equcondi}. The main points are as follows:
\begin{itemize}
\item If $h:Y\to \AA_K^n$ is a log resolution of $\cI_K$ with $h^{-1}(X_K)=\sum_{i\in I}N_iE_i$ and $K_{Y/\AA_K^n}=\sum_{i\in I}(\nu_i-1)E_i$, then $\lct(\cI)=\min_{i\in I} \nu_i/N_i$. Thus, if $\lct(\cI)<r$, then Corollary \ref{llct} and (\ref{low}) implies that $\tilde{\sigma}_K^{(r)}(\cI)=\moi_K^{(r)}(\cI)=\lct(\cI)<r$. Moreover, for each integer $M$, there is a local field $L\in\cL_{K,M}$ such that $s_1=-\lct(\cI)>-r$ is a pole of $\cZ_{L,\AA_{\cO_K}^n,\cI}(s)$ (see Proposition \ref{VZ}).
\item If $\lct(\cI)=r$ and $\cI_K$ can be generated by $r$ elements, then the proof of Proposition \ref{equcondi} deduces that $X_K$ is geometrically reduced and a complete intersection of dimension $n-r$. Let $X_m$ be the $m^{th}$ jet scheme of $X$ over $\cO_K$ (see Section \ref{SJ}). We can use \cite{ELM} to see that every irreducible component of $X_{m}\otimes K$ is of dimension at most $(m+1)\dim(X_K)$. 
By Proposition \ref{singjet},  this holds if only if $X_K$ has only semi-log canonical singularities. On the other hand, by  Proposition \ref{singjet}, when $X_K$ is normal then $X_K$ has only log canonical singularities if and only if $X_K$ has only semi-log canonical singularities.   
\end{itemize}
\end{proof}

\begin{cor}\label{co1}Let $X$ be the closed subscheme of $\AA_{\cO_K}^n$ associated to an ideal $\cI$ of $\cO_K[x_1,...,x_n]$ generated by $r$ elements such that $\cI_K\neq (1)$. The following conditions are equivalent:
\begin{itemize}
\item[(\textit{a}),] $\moi_K(\cI)>n-\dim(X_K)$.
\item[(\textit{b}),]$X_K$ is a complete intersection in $\AA^n_K$ and has only rational singularities.
\end{itemize}
Moreover, the following conditions are equivalent:
\begin{itemize}
\item[(\textit{c}),] $\moi_K(\cI)\geq n-\dim(X_K)$.
\item[(\textit{d}),]$X_K$ is a complete intersection in $\AA^n_K$ and has only semi-log canonical singularities.
\end{itemize}
\end{cor}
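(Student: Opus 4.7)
Set $r_0 := n - \dim(X_K)$. The strategy in both equivalences is to translate the global statement about $\moi_K(\cI) = \sup_r \moi_K^{(r)}(\cI)$ into one about the single index $r = r_0$, so that we may choose $r_0$ generators of $\cI_K$ and invoke Proposition \ref{equcondi} or Proposition \ref{equcondilog} applied to the resulting $\cO_K$-ideal.

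For (a) $\Leftrightarrow$ (b), the key reduction is $\moi_K(\cI) > r_0 \iff \moi_K^{(r_0)}(\cI) > r_0$. The direction $\Leftarrow$ is immediate from $\moi_K(\cI) \geq \moi_K^{(r_0)}(\cI)$. For $\Rightarrow$, suppose $r$ is such that $\moi_K^{(r)}(\cI) > r_0$; then $\moi_K^{(r)}(\cI) > 0$ forces $\cI_K$ to be generated by $r$ elements, so $r \geq r_0$. If $r > r_0$, then $r > r_0 \geq \lct(\cI)$, and Corollary \ref{llct}(v) gives $\moi_K^{(r)}(\cI) = \lct(\cI) \leq r_0$, a contradiction; hence $r = r_0$. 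Granted this, pick $g_1,\dots,g_{r_0} \in \cO_K[x]$ with $\cI_K = (g_1,\dots,g_{r_0})K[x]$ and form $\cJ := (g_1,\dots,g_{r_0}) \subset \cO_K[x]$. Then $\cJ_K = \cI_K$, so $X_K = \Spec(K[x]/\cJ_K)$ and $\moi_K^{(r_0)}(\cJ) = \moi_K^{(r_0)}(\cI)$, and Proposition \ref{equcondi} (v) $\Leftrightarrow$ (vi) applied to $\cJ$ with $r = r_0$ yields the equivalence.

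For (c) $\Leftrightarrow$ (d) the strategy is identical, with Proposition \ref{equcondilog} replacing Proposition \ref{equcondi}. If $\moi_K(\cI) > r_0$, then (a) $\Rightarrow$ (b) already gives that $X_K$ is a complete intersection with rational singularities; complete intersections are Gorenstein, so these rational singularities are canonical (by Elkik's theorem recalled in Section \ref{nota}), hence log-canonical, and (d) holds. In the boundary case $\moi_K(\cI) = r_0$, the same Corollary \ref{llct}(v) analysis shows that if the supremum is attained at some $r > r_0$ rather than at $r_0$, then $\moi_K^{(r)}(\cI) = \lct(\cI) = r_0$ while $\moi_K^{(r_0)}(\cI) = 0$, i.e.\ $\cI_K$ is not generated by $r_0$ elements. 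To exclude this degenerate situation one uses normality of $X_K$: reusing the jet-scheme argument of Ein--Musta\c{t}\v{a} employed in the proof of Proposition \ref{equcondilog}, one shows that whenever $\lct(\cI_K)$ attains its maximal value $n - \dim(X_K)$ on a normal $X_K$, the ideal $\cI_K$ must in fact be generated by $n - \dim(X_K)$ elements. With this excluded, the proof concludes exactly as in (a) $\Leftrightarrow$ (b), via Proposition \ref{equcondilog} (v) $\Leftrightarrow$ (vi) applied to $\cJ$ with $r = r_0$.

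The main obstacle is precisely this exclusion of the degenerate subcase in (c) $\Rightarrow$ (d): one must prove that a normal $X_K$ with $\lct(\cI_K) = n - \dim(X_K)$ is automatically an ideal-theoretic complete intersection in $\AA^n_K$. This is where normality is genuinely used, and where the purely formal reduction via Corollary \ref{llct} no longer suffices.
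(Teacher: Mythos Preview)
Your reduction for (a) $\Leftrightarrow$ (b) is correct and is essentially the paper's own argument: both pick an $r$ at which the supremum $\moi_K(\cI)=\sup_{r'}\moi_K^{(r')}(\cI)$ is realized, use Corollary \ref{llct}(v) to force $r=r_0$, and then invoke Proposition \ref{equcondi}.

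For (c) $\Leftrightarrow$ (d) the paper writes only ``by a similar argument together with using Proposition \ref{equcondilog}''. You go further and correctly isolate the obstruction: in the boundary case $\moi_K(\cI)=r_0$ the inequality $\moi_K^{(r)}(\cI)\le \lct(\cI)\le r_0$ for $r>r_0$ becomes an equality rather than a contradiction, so nothing forces $\moi_K^{(r_0)}(\cI)\neq 0$. However, your proposed resolution---that normality of $X_K$ together with $\lct(\cI_K)=r_0$ forces $\cI_K$ to be generated by $r_0$ elements---is false. Take $X_K$ to be three non-collinear $K$-points in $\AA^2_K$. Then $X_K$ is smooth (hence normal), $r_0=2$, and $\lct(\cI_K)=2$ since at each point $\cI_K$ localizes to the maximal ideal of $\cO_{\AA^2,P}$; yet $\cI_K$ requires three generators, because a length-$3$ zero-dimensional complete intersection in $\AA^2$ would have $\deg f\cdot\deg g=3$, forcing a linear generator and hence collinearity. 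In this example $\moi_K^{(2)}(\cI)=0$ while $\moi_K^{(3)}(\cI)=\lct(\cI_K)=2=r_0$ by (\ref{equa3}), so (c) holds but (d) fails.

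The Ein--Musta\c{t}\v{a} jet-scheme input you invoke is used in the proof of Proposition \ref{equcondilog} to pass from dimension bounds on jet schemes to log-canonicity \emph{after} a set of $r$ generators has already been fixed and the complete-intersection structure is in hand; it does not manufacture generators, and cannot play the role you assign it. Thus your argument for (c) $\Rightarrow$ (d) has a genuine gap in precisely the subcase you flagged, and the paper's terse ``similar argument'' does not visibly close it either.
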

\begin{proof}  

Suppose that $(a)$ holds. Let $r'\in\NN_{\geq 1}$ such that $\moi_K(\cI)=\moi_K^{(r')}(\cI)$. Then it is clear that the ideal $\cI_K$ can be generated by $r'$ elements $g_1,...,g_{r'}\in\cO_K[x_1,...,x_n]$. It follows from \cite[Chapter 1, Section 7]{Harts} and $\cI_K\neq (1)$ that $\dim(X_K)\geq n-r'$. By Lemma-Definition \ref{moidef},  replacing $\cI$ by $(g_1,...,g_{r'})$ does not harm our proof, thus we can suppose that $\cI=(g_1,...,g_{r'})$.  If $\dim(X_K)>n-r'$, then we have $\lct(\cI)\leq n-\dim(X_K)<r'$ (see Section \ref{SJ}). Thus, Corollary \ref{llct} implies that  $\moi_K^{(r')}(\cI)=\lct(\cI)\leq n-\dim(X_K)$. We get a contradiction. If $\dim(X_K)=n-r'$, then we have $r'<\moi_K(\cI)=\moi_K^{(r')}(\cI)$. Thus, we can use Proposition \ref{equcondi} to obtain $(b)$.

Suppose that $(b)$ holds. Let $r'=n-\dim(X_K)$. Then $\cI_K$ can be generated by elements $g_1,...,g_{r'}\in\cO_K[x_1,...,x_n]$. We can use  Proposition \ref{equcondi} for $\cI'=(g_1,...,g_{r'})$ and Lemma-Definition \ref{moidef} to conclude that $\moi_K(\cI)=\moi_K(\cI')=\moi_K^{(r')}(\cI')>r'$. We get $(a)$.

By a similar argument together with using Proposition \ref{equcondilog}, we have $(c) \Leftrightarrow (d)$.
\end{proof}
\begin{cor}\label{co2}Let $X$ be the closed subscheme of $\AA_{\cO_K}^n$ associated to an ideal $\cI$ of $\cO_K[x_1,...,x_n]$ generated by $r$ elements such that $\cI_K\neq (1)$. Suppose that $\moi_K^{(r)}(\cI)>r$  and $X_{\overline{K}}$ has one irreducible component which can be defined over $K$, then there exists an integer $M$ such that $\cZ_{L,\AA_{\cO_K}^n,\cI}(s)$ has a pole at $s_0=-r$ of multiplicity $1$ for all $L\in\cL_{K,M}$.
\end{cor}
\begin{proof}Since $\moi_K^{(r)}(\cI)>r$, it follows from Proposition \ref{equcondi} that $X_K$ is a complete intersection in $\AA_K^n$ of dimension $n-r$ and has only rational singularities. In particular, $X_K$ is equi-dimensional. Moreover, Corollary \ref{jetsi} helps us to produce a log resolution $h:Y\to \AA_K^n$ of $\cI_K$ associated to irreducible divisors $(E_i)_{i\in I}$ and numerical data $((\nu_i,N_i))_{i\in I}$ with the following properties:
\begin{itemize}
\item[(\textit{i}),]$\nu_i\geq rN_i$ for all $i\in I$.
\item[(\textit{ii}),]If $\nu_i=rN_i$, then $(\nu_i,N_i)=(r,1)$.
\item[(\textit{iii}),]If $i\neq j$ and $(\nu_i,N_i)=(\nu_j,N_j)=(r,1)$, then $E_i\cap E_j=\emptyset$.
\end{itemize} 
Since $X_K$ has a geometrically irreducible component, there is $i\in I$ such that $E_i$ is geometrically irreducible and $(\nu_i,N_i)=(r,1)$.  
These facts and Proposition \ref{VZ} imply that $s_0=-r$ is a pole of multiplicity $1$ of $\cZ_{L,\AA_{\cO_K}^n,\cI}(s)$ if $L$ is of large enough residue field characteristic.
\end{proof}

\begin{cor}\label{root}Let $\cI$ be an ideal of $\cO_K[x_1,...,x_n]$ generated by $r$ elements such that $\cI_K\neq (1)$. Recall the Bernstein-Sato polynomial $b_{\cI_K}(s)$  of $\cI_K$ in Section \ref{BM}.  Suppose that $\moi_K^{(r)}(\cI)>r$, then $-r$ is the largest root of $b_{\cI_K}(s)$ and the multiplicity of $b_{\cI_K}(s)$ at $-r$ is $1$.
\end{cor}
\begin{proof}
The assertion follows  immediately from Proposition  \ref{equcondi} and \cite[Theorem 1.1, Corollary 1.3]{Mustideal}.
\end{proof}
\begin{proof}[Proof of \ref{pole1} and Corollary \ref{corpol}]Note that $\dim(X_K)\geq n-r$. If $\moi_K(\cI)>r\geq n-\dim(X_K)$, then Corollary \ref{co1} and Proposition \ref{equcondi} imply that $\moi_K(\cI)=\moi_K^{(r)}(\cI)$. Thus, Corollary \ref{corpol} and many parts of \ref{pole1} were proved in Proposition \ref{equcondi}. It remains to consider the case that $\moi_K(\cI)\leq r$ in \ref{pole1}.

We show that $\moi_K(\cI)\leq r$ if and only if $\moi_K^{(r)}(\cI)=\lct(\cI)$. Indeed, suppose $\moi_K^{(r)}(\cI)=\lct(\cI)$ and $\moi_K(\cI)>r$. As mentioned in Section \ref{SJ}, $\lct(\cI)\leq n-\dim(X_K)\leq r$, thus $\moi_K^{(r)}(\cI)\leq r$. However, since $\moi_K(\cI)>r$ we have $\moi_K(\cI)=\moi_K^{(r)}(\cI)>r$ as shown above. It is a contradiction. Thus, $\moi_K^{(r)}(\cI)=\lct(\cI)$ implies $\moi_K(\cI)\leq r$. Conversely, suppose that $\moi_K(\cI)\leq r$ then Corollary \ref{llct} implies $\lct(\cI)\leq \moi_K^{(r)}(\cI)\leq \moi_K(\cI)\leq r$. If $\moi_K^{(r)}(\cI)<\moi_K(\cI)$, then Corollary \ref{llct} implies that  $\lct(\cI)=\moi_K^{(r)}(\cI)$. By Corollary \ref{llct}, if $r\geq\moi_K^{(r)}(\cI)=\moi_K(\cI)>\lct(\cI)$, then $\moi_K^{(r)}(\cI)=\lct(\cI)$, thus we have a contradiction. Therefore, the condition $r\geq\moi_K^{(r)}(\cI)=\moi_K(\cI)$ implies $\lct(\cI)=\moi_K^{(r)}(\cI)$. In any case, we have $\lct(\cI)=\moi_K^{(r)}(\cI)$.


If $\moi_K(\cI)=r$, it follows from Lemma-Definition \ref{moidef} that $\moi_K^{(r)}(\cI)=\lct(\cI)\leq r$. It is sufficient to show that if $r=\moi_K(\cI)>\moi_K^{(r)}(\cI)$, then $X_\QQ$ is a complete intersection having only rational singularities. By Corollary \ref{llct}, we have $\moi_K(\cI)=\moi_K^{(\ell)}(\cI)$ for some $\ell<r$. This and Proposition \ref{compa} imply $\moi_K(\cI)=r>\ell\geq n-\dim(X_K)$. Therefore, our claim follows from Corollary \ref{co1}.
\end{proof}
We can state a local version of Propositions \ref{equcondi}, \ref{equcondilog} and \ref{pole1} as follows.
\begin{named}{Theorem F}\label{locpole}Let $K$ be a number field and $X$ be the subscheme of $\AA_{\cO_K}^n$ associated to a non-zero ideal $\cI\subset \cO_K[x_1,\dots,x_n]$ such that $\cI_K\neq (1)$. Let $Z\subset X$ be an $\cO_K$-scheme of finite type such that $Z_K\neq\emptyset$. Suppose that $X_K$ is equi-dimensional 
then $\moi_{K,Z}^{\textnormal{loc}}(\cI)>n-\dim(X_K)$ (resp. $\moi_{K,Z}^{\textnormal{loc}}(\cI)\geq n-\dim(X_K)$) if and only if there is an affine open subset $U$ of $X_K$ containing $Z_K$ such that $U$ is a complete intersection in an affine open subset of $\AA_K^n$ and has only rational singularities (resp. semi-log canonical singularities). Moreover, the following conditions are equivalent:
\begin{itemize}
\item[\textit{(i)},]There is an integer $M$ such that if $L\in\cL_{K,M}$ and $s_0$ is a pole of $\cZ_{L,Z,\cI}(s)$, then either $\Re(s_0)<\dim(X_K)-n$ or $\Re(s_0)=\dim(X_K)-n$ and $\cZ_{L,Z,\cI}(s)$ has a pole of multiplicity at most $1$ at $s_0$.
\item[\textit{(ii)},]$X_{\overline{K}}$ is a locally complete intersection having only rational singularities at any point $P\in Z(\overline{K})$.
\item[\textit{(iii)},]For all finite extensions $K'$ of $K$ and all points $P\in Z(K')$ we have $\moi_{K',P}^{\textnormal{loc}}(\cI)>n-\dim(X_K)$.
\end{itemize} 
\end{named}
\begin{proof}
The proof is similar to the proof of Propositions \ref{equcondi}, \ref{equcondilog} and Corollary \ref{co1}, but instead of working with  $X_K$ and generators of $\cI_K$, we will proceed with a suitable open affine neigbourhood $U\subset \AA_K^n$ of $Z_K$ and polynomials $f_1,\dots,f_r\in\cO_K[x_1,\dots,x_n]$ which generate $\cI_K\cO_U$ as in Definition \ref{moiloc1}.
\end{proof}
\ref{locpole} implies the following consequences.
\begin{cor}\label{rati1}Let $K$ be a number field and $X$ be the subscheme of $\AA_{\cO_K}^n$ associated to a non-zero ideal $\cI\subset \cO_K[x_1,\dots,x_n]$ such that $\cI_K\neq (1)$. Suppose that $X_K$ is equi-dimensional. Then $X_K$ is  a locally complete intersection having only rational singularities if and only if $\moi_{K',P}^{\textnormal{loc}}(\cI)>n-\dim(X_K)$ for all finite extensions $K'$ of $K$ and all points $P\in X(K')$.
\end{cor}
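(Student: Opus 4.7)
The plan is to reduce the statement to the equivalence of (ii) and (iii) in \ref{locpole}. Take $Z=X$ in \ref{locpole}; since $\cI_{K}\neq(1)$ we have $X_{K}\neq\emptyset$, so the hypothesis $Z_{K}\neq\emptyset$ of \ref{locpole} is satisfied. Since $X_{K}$ is equi-dimensional by assumption, the ``moreover'' portion of \ref{locpole} applies, and in particular conditions (ii) and (iii) there are equivalent. Condition (iii) with $Z=X$ is exactly the right-hand side of the corollary, namely $\moi_{K',P}^{\textnormal{loc}}(\cI)>n-\dim(X_{K})$ for all finite extensions $K'/K$ and all $P\in X(K')$. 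Condition (ii) reads: $X_{\overline{K}}$ is a locally complete intersection with rational singularities at every point $P\in X(\overline{K})$, which (by running over all closed points of $X_{\overline{K}}$) is the same as saying that $X_{\overline{K}}$ itself is a locally complete intersection with rational singularities.

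It then remains to descend this property from $\overline{K}$ to $K$. The ``locally complete intersection'' part is handled directly by the Remark recalled in Section \ref{nota} (citing \cite[Proposition 3.20, Remark 3.23]{LiuQ}): being a locally complete intersection is preserved and reflected under arbitrary field extension. For the rational singularities part, I would invoke the standard fact that for $K$-varieties in characteristic zero, $X_{K}$ has rational singularities if and only if $X_{\overline{K}}$ does; this follows from flat base change for higher direct images applied to a resolution $Y\to X$ together with the faithful flatness of $\spec(\overline{K})\to\spec(K)$, and the fact that a resolution of $X_{K}$ base-changes to a resolution of $X_{\overline{K}}$ (smoothness, properness and birationality are all preserved under base change by a field extension in characteristic zero). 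Combining these two descent statements yields: $X_{K}$ is a locally complete intersection with rational singularities if and only if $X_{\overline{K}}$ is.

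Stringing the equivalences together gives the desired if-and-only-if. I do not anticipate a serious obstacle: the nontrivial content is already packaged inside \ref{locpole}, and the descent of ``locally complete intersection with rational singularities'' from $\overline{K}$ to $K$ is classical. The only point that requires mild care is to ensure that the local version of rational singularities ``at every $\overline{K}$-point'' of $X_{\overline{K}}$ really coincides with rational singularities of the scheme $X_{\overline{K}}$, which is immediate since rational singularities is a local (stalk-wise) property on $X$.
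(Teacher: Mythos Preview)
Your proof is correct and follows exactly the approach the paper intends: the corollary is stated immediately after \ref{locpole} with no explicit proof, and your argument (specialize $Z=X$ in the equivalence (ii)$\Leftrightarrow$(iii) of \ref{locpole}, then descend from $\overline{K}$ to $K$) is precisely the intended deduction. The descent step you spell out for rational singularities is the same one the paper invokes implicitly in the proof of the next corollary (Corollary~\ref{rati2}), where it writes ``so is $X$ (see Section~\ref{nota})''.
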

\begin{proof}
The assertion follows immediately from \ref{locpole}.
\end{proof}
\begin{cor}\label{rati2}Let $K$ be a number field and $X$ be a $K$-scheme of finite type. Suppose that $X$ is equi-dimensional  then $X$ is a locally complete intersection having only rational singularities  (resp. semi-log canonical singularities) if and only if $\moi_{K',P}^{\textnormal{aloc}}(X)>-\dim(X)$ (resp. $\moi_{K',P}^{\textnormal{aloc}}(X_{K'})\geq -\dim(X)$) for all finite extensions $K'$ of $K$ and all points $P\in X(K')$, where we used Remark \ref{variety} to define $\moi_{K',P}^{\textnormal{aloc}}(X)$.
\end{cor}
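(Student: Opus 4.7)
The plan is to reduce \ref{rati2} to \ref{locpole}, applied to an arbitrary affine chart around each point. Since both ``locally complete intersection with rational singularities'' and ``locally complete intersection with log-canonical singularities'' are Zariski-local properties on $X$, and the loci where they fail are closed, each holds globally on $X$ if and only if it holds at every closed point, equivalently at every $P\in X(K')$ for every finite extension $K'/K$. So both directions of the claim can be verified one point at a time.

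Fix therefore a finite extension $K'/K$ and a point $P\in X(K')$. Choose any affine open neighbourhood $V$ of $P$ in $X_{K'}$ together with a closed embedding $\iota\colon V\hookrightarrow \AA^{n}_{K'}$, and let $\cI$ denote the defining ideal of $\iota(V)$. Since $X$ is equi-dimensional (respectively normal and equi-dimensional), the same is true of $\iota(V)$, and closed embeddings preserve dimension, so $n-\dim(\iota(V))=n-\dim(X)$. By Definition \ref{moiloc2},
$$
\moi_{K',P}^{\textnormal{aloc}}(X)=\max_{(\iota',V',n')\in\cB(X,P)}\bigl(\moi_{K',\iota'(P)}^{\textnormal{loc}}(\iota'(V'))-n'\bigr),
$$
so the inequality $\moi_{K',P}^{\textnormal{aloc}}(X)>-\dim(X)$ (respectively $\geq -\dim(X)$) is equivalent to the existence of some triple $(\iota,V,n)$ satisfying $\moi_{K',\iota(P)}^{\textnormal{loc}}(\iota(V))>n-\dim(\iota(V))$ (respectively $\geq n-\dim(\iota(V))$).

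Now I would invoke the first half of \ref{locpole}, applied to the subscheme $\iota(V)\subset\AA^n_{K'}$ with the singleton $\{\iota(P)\}$ in place of $Z$: the cited biconditional asserts that the displayed inequality on $\moi_{K',\iota(P)}^{\textnormal{loc}}(\iota(V))$ holds precisely when some affine open of $\iota(V)$ containing $\iota(P)$ is a complete intersection in an affine open of $\AA^n_{K'}$ with rational (respectively log-canonical) singularities. Transporting this through the isomorphism $\iota\colon V\iso\iota(V)$ and the open immersion $V\hookrightarrow X_{K'}$, one obtains exactly the statement that $X$ is a locally complete intersection with rational (respectively log-canonical) singularities at $P$. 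Varying $K'/K$ and $P\in X(K')$ yields both directions of the claim.

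The only mild subtlety is that in the forward direction one must exhibit a single embedding $\iota$ witnessing the strict (respectively weak) inequality, which is automatic because $\moi^{\textnormal{aloc}}$ is defined as a maximum over all such embeddings; conversely, given the geometric property at $P$, any closed embedding of a small enough affine neighbourhood of $P$ will furnish the required triple, since the singularity type and the lci property are preserved under the isomorphism $V\iso\iota(V)$. No geometric input is required beyond what has already been established in \ref{locpole}.
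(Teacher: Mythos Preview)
Your proposal is correct and follows essentially the same approach as the paper's own proof: reduce to one point at a time, unwind Definition~\ref{moiloc2} to pass from $\moi^{\textnormal{aloc}}$ to $\moi^{\textnormal{loc}}$ for a suitable embedding, and then invoke \ref{locpole}. The only cosmetic difference is that in the converse direction the paper explicitly picks an embedding whose image is a complete intersection, whereas you observe (correctly) that for an lci scheme any closed embedding into affine space will, after shrinking, realise the image as a complete intersection in an open of $\AA^n_{K'}$.
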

\begin{proof}Suppose that $\moi_{K',P}^{\textnormal{aloc}}(X_{K'})>-\dim(X)$ for all finite extensions $K'$ of $K$ and all points $P\in X(K')$. Let $P\in X(K')$, we can use Definition \ref{moiloc2}, Remark \ref{variety} and Corollary \ref{naig} to obtain a suitable closed embedding of a small enough affine neighbourhood of $P$ in $X_{K'}$ to an affine space of finite dimension over $K'$ such that this closed embedding computes $\moi_{K',P}^{\textnormal{aloc}}(X_{K'})$. Then  \ref{locpole} implies that $X_{K'}$ is  a locally  complete intersection having only rational singularities in a neighbourhood of $P'$ in $X_{K'}$. This fact holds for all $K'$ and all $P\in X(K')$. Thus, $X_{\overline{K}}$ is a locally complete intersection having only rational singularities, so is $X$ (see Sections \ref{SV} and \ref{SJ}).

If $X$ is  a locally  complete intersection having only rational singularities, so is $X_{K'}$ for all finite extensions $K'$ of $K$. Let $P\in X(K')$, we can choose a suitable affine neighbourhood $U$ of $P$ in  $X_{K'}$ and a suitable closed embedding $\varphi$ from $U$ to an affine space of finite dimension over $K'$ such that $\varphi(U)$ is a complete intersection. Then $\varphi(U)$ is a complete intersection having only rational singularities. We can use  Definition \ref{moiloc2}, Remark \ref{variety} and \ref{locpole} to deduce that $\moi_{K',P}^{\textnormal{aloc}}(X_{K'})>-\dim(X)$. 

The claim for semi-log canonical singularities is proved similarly.
\end{proof}

\section{$FRS/FTS$ morphisms, uniform counting points of schemes over finite rings and some Waring type problems} \label{uniadd}
We will use the notation of Sections \ref{exponential sum} and \ref{property}. We also need other definitions. In this section, let $K$ be a number field as usual.
\begin{defn}\label{acomD}Suppose that $X$ is an affine $\cO_K$-scheme of finite type and $N,R,D\in \ZZ_{\geq 0}$.
\begin{itemize}
\item[(\textit{i}),]\index{complexity}One says that $X$ is of complexity at most $(N,R,D)$ if we have a closed embedding $X_K\to\AA_{K}^n$ such that $$X_K=\spec\big(K[x_1,\dots,x_n]/(f_1,\dots,f_r)\big),$$ where $n\leq N, r\leq R$ and $\max_{1\leq i\leq r}(\deg(f_i))\leq D$.
\item[(\textit{ii}),]\index{degree complexity}Weakly, one says that $X$ is of degree complexity at most $(R,D)$ if we only require that $\max_{1\leq i\leq r}(\deg(f_i))\leq D$ and $r\leq R$ for a closed embedding $X_K\to \AA_{K}^n$ as above.
 \end{itemize}
\end{defn}
\begin{defn}\label{comD}Let $X$ be a scheme of finite type over $\cO_K$ and $N,R,D\in\ZZ_{\geq 0}$. One says that $X$ is of complexity at most $(N,R,D)$ if we can find an open cover $X=\cup_{1\leq i\leq \ell} U_i$ of $X$ where $\ell\leq N$ and $U_1,\dots,U_\ell$ are open affine subschemes of $X$ of complexity at most $(N,R,D)$. Similarly, one can say about the degree complexity $(R,D)$ of $X$ by forgetting $N$.
\end{defn}
\begin{defn}\label{mcomD}
Let $X$ and $Y$ be schemes of finite type over $\cO_K$ of complexity at most $(N,R,D)$. Let $\varphi:X\to Y$ be an $\cO_K$-morphism.  One says that $\varphi$ is of  complexity at most $(N,R,D)$ if we can find open covers $Y=\cup_{1\leq i\leq \ell} U_i$ and $X=\cup_{1\leq i\leq \ell}\cup_{1\leq j\leq \ell_i}V_{ij}$ where $\ell,\ell_i\leq N$ and $U_i, V_{ij}$ are $\cO_K$-affine schemes of complexity at most $(N,R,D)$ such that the following conditions hold:
\begin{itemize}
\item[(\textit{i}),]$\varphi(V_{ij})\subset U_i$ for all $i,j$.
\item[(\textit{ii}),]For each $1\leq i\leq \ell$ and each $1\leq j\leq \ell_i$, there are closed embeddings $\gamma_i:U_i\otimes K\to \AA_K^{n_i}, \gamma_{ij}:V_{ij}\otimes K\to \AA_K^{n_{ij}}$ as in Definition \ref{acomD} and a morphism $\varphi_{ij}=(g_1,\dots,g_{n_{i}}): \AA_K^{n_{ij}}\to \AA_K^{n_{i}}$ such that $\gamma_i^{-1}\varphi_{ij}\gamma_{ij}=\varphi_K|_{V_{ij}\otimes K}$, $n_i, n_{ij}\leq N$ and $\max_{1\leq m\leq n_{i}}\deg(g_{m})\leq D$.
\end{itemize}

\end{defn}
\begin{defn}\label{dcomD}Let $X$ and $Y$ be schemes of finite type over $\cO_K$ of degree complexity at most $(R,D)$. Let $\varphi:X\to Y$ be an $\cO_K$-morphism.  One says that $\varphi$ is of  degree complexity at most $(R,D)$ if in Definition \ref{mcomD}, we only require that $\deg(g_{m})\leq D$ for all $m$ and $U_i, V_{ij}$ are of degree complexity at most $(R,D)$ for all $i,j$.
\end{defn}
\begin{remark}
If $R=N=D$, Definition \ref{mcomD} is weaker than \cite[Definition 7.7]{G-H1}. In particular, we do not require any conditions on the transition morphisms of affine covers.
\end{remark}
\begin{defn}\label{defrs}
Let $\mathsf{F}$ be a field of characteristic zero. Let $X$ and $Y$ be $\mathsf{F}$-schemes of finite type. Let $\varphi:X\to Y$ be an $\mathsf{F}$-morphism then $\varphi$ is \index{\textit{FRS} morphism}$FRS$ if $\varphi$ is flat and every non-empty fiber of $\varphi$ has only rational singularities. One says that $\varphi$ is \index{\textit{FTS} morphism}$FTS$ if it is flat and its non-empty fibers are normal and have only terminal singularities. One also says that $\varphi$ is \index{\textit{FGI} morphism}$FGI$ if it is flat with geometrically irreducible fibers. 
\end{defn}
We also need a slight extension of the notion of $E$-smooth morphisms in \cite{C-G-H} to the situation of morphisms between varieties having only semi-log canonical singularities.
\begin{defn}\label{esm}Let $\mathsf{F}$ be a field of  characteristic zero and $X,Y$ be $\mathsf{F}$-varieties. Let $\varphi: X\to Y$ be an $\mathsf{F}$-morphism. Suppose that $X$ and $Y$ are equi-dimensional and locally complete intersections  having only semi-log canonical singularities. For each $0\leq m$, we denote by $X_m, Y_m$ the $m^{\textnormal{th}}$ jet schemes associated to $X, Y$ over $\mathsf{F}$ as in Section \ref{SJ}.  We also denote by $\varphi_m: X_m\to Y_m$ the morphism induced by $\varphi$. Let $E$ be a positive integer. The morphism \index{$E$-smooth morphism}$\varphi$ is $E$-smooth if the following conditions hold:
\begin{itemize}
\item[(\textit{D}),] For all $m\geq 0$, every non-empty fiber of $\varphi_m$ is of dimension $\dim(X_m)-\dim(Y_m)$.
\item[(\textit{S1}),] For all $m\geq 0$, every non-empty fiber of $\varphi_m$ has singular locus of codimension at least $E$.
 \end{itemize}
\end{defn}
In this paper, we also need a variant version of Definition \ref{esm}.
\begin{defn}\label{sesm}Let $\mathsf{F}$ be a field of  characteristic zero and $X,Y$ be $\mathsf{F}$-varieties. Let $\varphi: X\to Y$ be an $\mathsf{F}$-morphism. Suppose that $X$ and $Y$ are equi-dimensional and locally complete intersections  having only semi-log canonical singularities. Let $E$ be a positive integer. For each $m\geq 0$, let $X_m,Y_m,\varphi_m$ be as in Definition \ref{esm}. Let $y\in Y_m$, we write $X_{m,\varphi,y}$ (or simple $X_{m,y}$) for the fiber of $\varphi_m$ over $y$ (if $\varphi$ is understood).  We also denote by $\pi_{X}^{ij}:X_i\to X_j$ the truncation morphism if $i\geq j$ (see Section \ref{SJ}).  The morphism $\varphi$ is \index{strongly $E$-smooth morphism}strongly $E$-smooth if the following conditions hold:
\begin{itemize}
\item[(\textit{D}),]For all $m\geq 0$, every non-empty fiber of $\varphi_m$ is of dimension $\dim(X_m)-\dim(Y_m)$.  
\item[(\textit{S2}),]For all $m\geq 0$ and all $y\in Y_m$, $X_{m,y}\cap \left(\pi_{X}^{m0}\right)^{-1}\left(\Sing(\varphi)\right)$ is of codimension at least $E$ in $X_{m,y}$, where $\Sing(\varphi)$ is the singular locus of $\varphi$.
 \end{itemize}
\end{defn}
The following proposition provides more information about strongly $E$-smooth morphisms.
\begin{prop}\label{strong}With the notation of Definition \ref{sesm}, the following conditions are equivalent:
\begin{itemize}
\item[(\textit{i}),]$\varphi$ is strongly $E$-smooth.
\item[(\textit{ii}),]If $x$ is a geometric point of $\Sing(\varphi)$ and $m\geq 0$, then  $$\dim\big(X_{m,s_{Y,m}(\varphi(x))}\big)= \dim(X_m)-\dim(Y_m)$$ and  $X_{m,s_{Y,m}(\varphi(x))}\cap \left(\pi_{X}^{m0}\right)^{-1}\left(\Sing(\varphi)\right)$ is of dimension at most $$\dim(X_m)-\dim(Y_m)-E,$$ where $s_{Y,m}:Y\to Y_m$ is the zero section of $\pi_Y^{m0}$ mentioned in Section \ref{SJ}.
\end{itemize}
\end{prop}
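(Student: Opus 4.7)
The implication $(i) \Rightarrow (ii)$ is immediate from the definitions: given a geometric point $x$ of $\Sing(\phi)$, the identity $\phi_m \circ s_{X,m} = s_{Y,m} \circ \phi$ implies $s_{X,m}(x) \in X_{m, s_{Y,m}(\phi(x))}$, so this fiber is non-empty; conditions $(D)$ and $(S2)$ applied at $y = s_{Y,m}(\phi(x))$ yield exactly the two assertions of $(ii)$.

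The converse $(ii) \Rightarrow (i)$ is proved using the natural $\mathbb{G}_m$-action on jet schemes. For any $\mathsf{F}$-scheme $V$ there is a $\mathbb{G}_m$-action on $V_m$ scaling the $i$-th jet coordinate by $t^i$; its fixed locus is the zero-jet section $s_{V,m}(V)$, and $\lim_{t \to 0}(t \cdot z) = s_{V,m}(\pi_V^{m0}(z))$ for every $z \in V_m$. By functoriality, $\phi_m$ is $\mathbb{G}_m$-equivariant and the closed subscheme $(\pi_X^{m0})^{-1}(\Sing(\phi))$ of $X_m$ is $\mathbb{G}_m$-stable. Applying upper semicontinuity of fiber dimension along the $\mathbb{A}^1$-family $t \mapsto t \cdot y$, both to $\phi_m$ itself and to its restriction to $(\pi_X^{m0})^{-1}(\Sing(\phi))$, one obtains for every $y \in Y_m$ with $y_0 := \pi_Y^{m0}(y)$ the two inequalities
\begin{align*}
\dim X_{m,y} &\leq \dim X_{m, s_{Y,m}(y_0)}, \\
\dim\bigl(X_{m,y} \cap (\pi_X^{m0})^{-1}(\Sing(\phi))\bigr) &\leq \dim\bigl(X_{m, s_{Y,m}(y_0)} \cap (\pi_X^{m0})^{-1}(\Sing(\phi))\bigr).
\end{align*}

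From the second inequality, condition $(S2)$ is deduced at once: if the left-hand side is non-empty then $y_0 \in \phi(\Sing(\phi))$, and for any $x \in \Sing(\phi) \cap \phi^{-1}(y_0)$ one has $s_{Y,m}(y_0) = s_{Y,m}(\phi(x))$, so $(ii)$ bounds the right-hand side by $\dim X_m - \dim Y_m - E$. For condition $(D)$, observe that on the open subscheme $X_m \setminus (\pi_X^{m0})^{-1}(\Sing(\phi))$ the morphism $\phi_m$ is smooth of relative dimension $\dim X_m - \dim Y_m$, by smoothness of $\phi$ outside $\Sing(\phi)$ and functoriality of jet schemes; hence $X_{m,y} \setminus (\pi_X^{m0})^{-1}(\Sing(\phi))$, when non-empty, is equi-dimensional of dimension $\dim X_m - \dim Y_m$, which together with the $(S2)$ bound just established forces $\dim X_{m,y} = \dim X_m - \dim Y_m$. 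The remaining case, in which $X_{m,y} \subset (\pi_X^{m0})^{-1}(\Sing(\phi))$, will be ruled out by combining the first semicontinuity inequality with $(ii)$ at any $x \in \Sing(\phi) \cap \phi^{-1}(y_0)$ and the standard lower bound $\dim X_{m,y} \geq \dim X_m - \dim Y_m$ for non-empty fibers, which applies here since $\phi$ (and hence $\phi_m$) is dominant and $Y_m$ is irreducible under our hypotheses on $Y$.

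The main technical obstacle I expect is justifying the upper semicontinuity argument rigorously when $y_0$ is a singular point of $Y$, where the local structure of $(\pi_Y^{m0})^{-1}(y_0)$ is delicate. I plan to handle this by an \'etale-local reduction around $y_0$ and by exploiting the equi-dimensionality of $X_m$ and $Y_m$ guaranteed by Musta\c{t}\v{a}'s theorem for jet schemes of l.c.i.\ varieties with rational singularities, so that the degeneration $t \to 0$ can be analysed within the $\mathbb{G}_m$-quotient family.
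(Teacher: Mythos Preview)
Your approach is essentially the paper's. The paper reduces to the two inequalities
\[
\dim X_{m,s_{Y,m}(\phi(P))} \geq \dim X_{m,Q},\qquad
\dim\bigl(X_{m,s_{Y,m}(\phi(P))}\cap(\pi_X^{m0})^{-1}(\Sing\phi)\bigr) \geq \dim\bigl(X_{m,Q}\cap(\pi_X^{m0})^{-1}(\Sing\phi)\bigr)
\]
and proves them by passing to local coordinates at $P$, where $X_{m,Q}\cap(\pi_X^{m0})^{-1}(P)$ is cut out by equations $G_d = a_d(Q)$ with each $G_d$ weighted-homogeneous (weight of $x_{ij}$ equal to $j$) and $a_d(s_{Y,m}(\phi(P)))=0$; Chevalley semicontinuity along the scaling family then gives the inequalities. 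Your global $\mathbb{G}_m$-action on jet schemes \emph{is} this scaling, so the mechanism is identical; you simply avoid choosing coordinates.

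Two small corrections. First, your justification of the lower bound $\dim X_{m,y}\geq\dim X_m-\dim Y_m$ via ``$\phi_m$ dominant and $Y_m$ irreducible'' is not available: neither hypothesis is given. The paper gets this bound from the equi-dimensionality of $X_m$ (a consequence of $X$ being l.c.i.\ with rational singularities) together with EGA~IV, 13.1.1; you already invoke equi-dimensionality in your last paragraph, so use it here. Second, your anticipated obstacle at singular points of $Y$ is not one: the $\mathbb{G}_m$-action on $Y_m$ and the extension of $t\mapsto t\cdot y$ to $t=0$ exist scheme-theoretically regardless of whether $y_0$ is singular, and Chevalley semicontinuity needs no smoothness. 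The paper's local computation makes no special provision for singular $Y$ either.
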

\begin{proof}
The implication $(\textit{i})\Rightarrow (\textit{ii})$ is trivial from Definition \ref{sesm}.

Now, we suppose that condition $(ii)$ holds and prove that $(i)$ holds. Since $X$ is equi-dimensional and a locally complete intersection having 
only semi-log canonical singularities, we deduce from Proposition \ref{singjet} that $\dim(X_m)=(m+1)\dim(X)$ and $X_m$ is equi-dimensional for all $m\geq 0$. Therefore, \cite[Lemma 13.1.1]{EGA4} implies that any non-empty fiber of $\varphi_m$ is of dimension at least $\dim(X_m)-\dim(Y_m)$. Thus, it is sufficient to show that for each geometric point $P$ of $X$ and each geometric point $Q\in Y_m$ with $\pi_Y^{m0}(Q)=\varphi(P)$, we have 
\begin{equation}\label{ieqdimen}
\dim\big(X_{m,s_{Y,m}\left(\varphi(P)\right)}\cap \left(\pi_{X}^{m0}\right)^{-1}\left(\Sing(\varphi)\right)\big)\geq \dim\big(X_{m,Q}\cap \left(\pi_{X}^{m0}\right)^{-1}\left(\Sing\left(\varphi\right)\right)\big).
\end{equation}
and
\begin{equation}\label{eqdimen}
\dim\big(X_{m,s_{Y,m}(\varphi(P))}\big)\geq \dim\left(X_{m,Q}\right).
\end{equation}

We will prove (\ref{ieqdimen}). The proof of (\ref{eqdimen}) proceeds similarly. In order to prove (\ref{ieqdimen}), we only need to verify that 
\begin{equation}\label{uppdi}
\dim\big(X_{m,s_{Y,m}\left(\varphi(P)\right)}\cap \left(\pi_{X}^{m0}\right)^{-1}(P)\big)\geq \dim\big(X_{m,Q}\cap \left(\pi_{X}^{m0}\right)^{-1}(P)\big)
\end{equation}
for each geometric point $P$ of $\Sing(\varphi)$ and each geometric point $Q$ of $Y_m$ such that $\pi_Y^{m0}(Q)=\varphi(P)$. The statement is local, thus we can suppose that $$P=O_n:=(0,\dots,0)\in X=\spec(\mathsf{F}[x_1,\dots,x_n]/(f_1,\dots,f_r)),$$ 
$$\varphi(P)=O_e:=(0,\dots,0)\in Y=\spec(\mathsf{F}[y_1,\dots,y_e]/(g_1,\dots,g_\ell)),$$
and $\varphi=(h_1,\dots,h_e)$ for polynomials $f_1,\dots,f_r$, $h_1,\dots,h_e\in \mathsf{F}[x_1,\dots,x_n]$ and polynomials  $g_1,\dots,g_\ell$ in $\mathsf{F}[y_1,\dots,y_e]$. 

We use the inclusion $X\subset \tilde{X}=\AA_\mathsf{F}^n$ to regard $X_m^{(O_n)}:=X_m\cap \left(\pi_{X}^{m0}\right)^{-1}(O_n)$ as a closed subscheme of $\tilde{X}_m\cap \big(\pi_{\tilde{X}}^{m0}\big)^{-1}(O_n)=\AA_\mathsf{F}^{nm}$ associated  to an ideal $\cJ_m$ of $\mathsf{F}[x_{ij}]_{1\leq i\leq m,1\leq j\leq n}$. Similarly, we can view $Y_{m}^{(O_e)}:=Y_m\cap \left(\pi_{Y}^{m0}\right)^{-1}(O_e)$ as a closed subscheme of $\AA_\mathsf{F}^{em}=\tilde{Y}_m\cap \big(\pi_{\tilde{Y}}^{m0}\big)^{-1}(O_e)$, where $\tilde{Y}=\AA_\mathsf{F}^{e}$. Let $\tilde{\varphi}:\tilde{X}=\AA_\mathsf{F}^n\to \tilde{Y}=\AA_\mathsf{F}^{e}$ be the morphism given by  $x\mapsto \left(h_1(x),...,h_e(x)\right)$. Then $\varphi_m$ sends $X_m^{(O_n)}$ to $Y_{m}^{(O_e)}$ and $\tilde{\varphi}_m$ sends $\AA_\mathsf{F}^{nm}=\tilde{X}_m\cap \big(\pi_{\tilde{X}}^{m0}\big)^{-1}(O_n)$ to $\tilde{Y}_m\cap \big(\pi_{\tilde{Y}}^{m0}\big)^{-1}(O_e)=\AA_\mathsf{F}^{em}$. Moreover, the restriction of $\varphi_m$ to $X_m^{(O_n)}$ equals to the restriction of $\tilde{\varphi}_m$ to $X_m^{(O_n)}$. Let us write $\tilde{\varphi}_m=\left(G_{ij}\right)_{1\leq i\leq e, 1\leq j\leq m}$ with polynomials $G_{ij}\in\mathsf{F}[x_{ij}]_{1\leq i\leq m,1\leq j\leq n}$. By a simple calculation, we can show that the following conditions hold:
\begin{itemize}
\item[(\textit{a}),]$G_{ij}$ is $w$-weighted homogeneous of $w$-degree $d_{ij}>0$ for all $1\leq i\leq e$ and $1\leq j\leq m$, where $w=\left(w_{ij}\right)_{1\leq i\leq m,1\leq j\leq n}$ with $w_{ij}=i$ is the weight for the variable $x_{ij}$.
\item[(\textit{b}),]$\cJ_m$ can be generated by non-constant $w$-weighted homogeneous polynomials $H_{\ell}$ for $1\leq \ell\leq r$.
\item[(\textit{c}),] For each geometric point $Q=(y_{ij})_{1\leq i\leq e, 1\leq j\leq m}$ of $Y_m^{(O_e)}$, $X_{m,Q}\cap \left(\pi_{X}^{m0}\right)^{-1}(O_n)=X_m^{(O_n)}\cap\{ G_{ij}-y_{ij}=0 \hspace{0.1cm}\forall 1\leq i\leq e, 1\leq j\leq m\}$.
\item[(\textit{d}),]$s_{Y,m}(O_e)=(0,...,0)\in Y_m^{(O_e)}\subset\AA_\mathsf{F}^{em}.$
\end{itemize}
Let $\PP_\mathsf{F}(w)$ be the weighted homogeneous space over $\mathsf{F}$ associated to $w=\left(w_{ij}\right)_{1\leq i\leq m,1\leq j\leq n}$ and $V$ be the closed scheme of $\PP_{\mathsf{F}}(w)\times \AA_\mathsf{F}^{em}$ associated to the equations $H_\ell=0$ for $1\leq \ell\leq r$ and $G_{ij}-y_{ij}=0$ for $1\leq i\leq e, 1\leq j\leq m$.
Since the projection $\rho:V\subset\PP_{\mathsf{F}}(w)\times \AA_\mathsf{F}^{em}\to \AA_\mathsf{F}^{em}$ is projective and  $\rho^{-1}\big(\left(y_{ij}\right)_{1\leq i\leq e, 1\leq j\leq m}\big)\to \rho^{-1}\big(\left(c^{d_{ij}}y_{ij}\right)_{1\leq i\leq e, 1\leq j\leq m}\big), \left(x_{ij}\right)_{1\leq i\leq n,1\leq j\leq m}\mapsto \left(c^{w_{ij}}x_{ij}\right)_{1\leq i\leq n,1\leq j\leq m}$ is an $\overline{\mathsf{F}}$-isomorphism for all $\left(y_{ij}\right)_{1\leq i\leq e, 1\leq j\leq m}\in \AA^{em}(\overline{\mathsf{F}})$ and all $c\in\overline{\mathsf{F}}^{\times}$, we can use Chevalley's upper semi-continuous theorem \cite[Theorem 13.1.3]{EGA4} to obtain (\ref{uppdi}).
\end{proof}
We have a relation between Definition \ref{esm} and Definition \ref{sesm} as follows.
\begin{prop}\label{se}Let $X$ and $Y$ be as in Definition \ref{sesm}. Let $\varphi:X\to Y$ be a morphism and $E$ be a positive integer. If  $\varphi$ is strongly $E$-smooth,  then $\varphi$ is $E$-smooth. Conversely, if $Y$ is smooth and $\varphi$ is $E$-smooth, then $\varphi$ is strongly $E$-smooth.
\end{prop}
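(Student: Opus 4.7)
The strategy is to analyze both (S1) and (S2) via the singular locus of the jet morphism $\phi_m$, and to exploit the chain of inclusions
\[
\Sing(X_{m,y}) \;\subset\; X_{m,y} \cap \Sing(\phi_m) \;\subset\; X_{m,y} \cap (\pi_X^{m0})^{-1}(\Sing(\phi)),
\]
in which the right inclusion holds without any further hypothesis: it is the contrapositive of the fact that the $m$-jet functor sends smooth morphisms to smooth morphisms, so that $\phi$ smooth at $P$ forces $\phi_m$ to be smooth on all of $(\pi_X^{m0})^{-1}(P)$. Condition (D) appears in both definitions, so only the implications between (S1) and (S2) need to be addressed.

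For the forward direction (strongly $E$-smooth implies $E$-smooth), it suffices to invoke the two inclusions above: for any $y \in Y_m$, (S2) bounds the dimension of the right-most set by $\dim(X_m) - \dim(Y_m) - E$, hence the left-most set, being contained in it, satisfies the same bound. Combined with (D), this is precisely (S1). No assumption on $Y$ is needed here.

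For the converse, assume $Y$ is smooth. I first claim that $\phi_m$ is flat. Since $X$ is an equidimensional locally complete intersection with rational singularities, the dimension computation used in the proof of Proposition \ref{equcondi} yields $\dim(X_m) = (m+1)\dim(X)$; the standard equations carving $X_m$ out of $\mathbb{A}^{n(m+1)}$ then form a regular sequence, so $X_m$ is itself a locally complete intersection and hence Cohen--Macaulay. As $Y$ smooth implies $Y_m$ smooth, and condition (D) ensures fibers of the expected dimension, miracle flatness gives flatness of $\phi_m$. For a flat morphism with smooth target, smoothness at $x$ is equivalent to smoothness of the fiber at $x$; consequently the left-most inclusion in the chain above becomes an equality, and (S1) translates into the dimension bound $\dim(X_{m,y} \cap \Sing(\phi_m)) \le \dim(X_m) - \dim(Y_m) - E$.

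The only remaining task --- and, in my view, the main point of the argument --- is to upgrade the generic inclusion $\Sing(\phi_m) \subset (\pi_X^{m0})^{-1}(\Sing(\phi))$ to an equality under the hypothesis that $Y$ is smooth. Working locally, write $X = V(f_1, \ldots, f_r) \subset \mathbb{A}^n$ and $\phi = (h_1, \ldots, h_e)$ with $Y$ an affine space. The equations $F_{l,k}$ defining $X_m$ and the components $H_{l',k'}$ of $\phi_m$ are the coefficients of $t^k$ in $f_l$ and $h_{l'}$ evaluated at a formal jet. A direct inspection of the Jacobian at a point $x$ with $\pi_X^{m0}(x) = P$, using an ordering of coordinates by increasing jet level $k$, reveals a block lower-triangular structure whose diagonal blocks are each equal to the Jacobian of $(f_l, h_{l'})$ at $P$; the rank of the full matrix therefore equals $(m+1)$ times its rank at $P$, and $\phi_m$ is smooth at $x$ if and only if $\phi$ is smooth at $P$. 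This yields the missing inclusion $(\pi_X^{m0})^{-1}(\Sing(\phi)) \subset \Sing(\phi_m)$ and completes the proof.
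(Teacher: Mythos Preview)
Your proof is correct and follows essentially the same route as the paper: both directions hinge on the equality $\Sing(X_{m,z}) = X_{m,z}\cap(\pi_X^{m0})^{-1}(\Sing(\phi))$, obtained (in the converse) from miracle flatness of $\phi_m$ together with a block--lower--triangular Jacobian computation. The only organizational difference is that the paper first records $\Sing(X_y)=\Sing(\phi)\cap X_y$ at level~$0$ and then cites the argument of \cite[Theorem~3.3]{EMY} to lift to jets, whereas you route through $\Sing(\phi_m)$ and spell out the Jacobian argument yourself; the underlying computation is the same, and your version is pleasantly self-contained. One cosmetic point: ``with $Y$ an affine space'' tacitly passes to an \'etale chart (smooth does not mean Zariski-locally $\AA^e$), which is harmless here but worth making explicit.
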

\begin{proof}
Suppose that $\varphi$ is strongly $E$-smooth. Let $m\geq 0$. Let $y\in Y_m$ such that $X_{m,y}\neq \emptyset$ then  we have $\Sing(X_{m,y})\subset \left(\pi_{X}^{m0}\right)^{-1}\left(\Sing(\varphi)\right)$ as a consequence of \cite[Proposition 1.1]{Ish}. Thus, $\Sing(X_{m,y})$ is of codimension at least $E$ in  $X_{m,y}$ for every $m\geq 0$ and $y\in Y_m$ such that $X_{m,y}\neq \emptyset$. Thus, $\varphi$ is $E$-smooth.

Suppose that $Y$ is smooth and $\varphi$ is $E$-smooth. Then $Y_m$ is smooth for all $m$. By the assumption on $X$, we can use Proposition \ref{singjet} to have that  $X_m$ is equi-dimensional and a locally complete intersection for all $m\geq 0$.  By these facts together with the condition $(\textit{D})$, the smoothness of $Y_m$, the miracle flatness theorem and \cite[Chapter III, Proposition 9.5]{Harts}, we see that $\varphi_m$ is flat and every non-empty fiber of $\varphi_m$ is a locally complete intersection of pure dimension $\dim(X_m)-\dim(Y_m)$ for all $m\geq 0$. This and the condition $(\textit{S1})$ imply that any non-empty fiber of $\varphi_m$ is reduced.
Thus, we can adapt the argument in \cite[Theorem 3.3]{EMY} to have $\Sing(X_{m,z})=
\left(\pi_{X}^{m0}\right)^{-1}\left(\Sing(\varphi)\right)\cap X_{m,z}$ if $X_{m,z}\neq\emptyset$.  Therefore, if the condition $(\textit{S1})$ holds, then so does the condition $(\textit{S2})$. Thus, $\varphi$ is strongly $E$-smooth.
\end{proof}
Note that our condition $(\textit{D})$ in Definition \ref{esm} is a bit weaker than the jet-flat condition in \cite{C-G-H}. However, when $Y$ is smooth, these two conditions agree. Moreover, we also have a version of \cite[Remark 2.8]{C-G-H} for  detecting the condition $(\textit{D})$ as follows.
\begin{lem}Using the notation of Definition \ref{sesm}, suppose that $X$ and $Y$ are equi-dimensional, locally complete intersections having only semi-log canonical singularities. Let $\varphi:X\to Y$ be a morphism. Then $\varphi$ satisfies the condition $(\textit{D})$ if one of the following conditions holds:
\begin{itemize}
\item[(\textit{a}),]$\varphi_m$ is flat for all $m\geq 0$, in other words, $\varphi$ is jet-flat in the sense of \cite{C-G-H}.
\item[(\textit{b}),]$\varphi$ is flat and every non-empty fiber of $\varphi$ is a locally complete intersection having only semi-log canonical singularities.
\end{itemize}

Additionally, suppose that $Y$ is smooth, then $(\textit{D})\Leftrightarrow(\textit{a})\Leftrightarrow(\textit{b})$.  

\end{lem}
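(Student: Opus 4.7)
The plan is to leverage the dimensional identities $\dim X_m=(m+1)\dim X$ and $\dim Y_m=(m+1)\dim Y$, together with the equi-dimensionality of $X_m$ and $Y_m$; these hold because $X$ and $Y$ are equi-dimensional locally complete intersections with rational singularities, as established in the proof of Proposition~\ref{equcondi} (following Ein--Musta\c{t}\v{a}). Under condition $(\textit{a})$, flatness of $\phi_m$ between equi-dimensional Noetherian schemes immediately yields that every non-empty fiber of $\phi_m$ has pure dimension $\dim X_m - \dim Y_m$ by the standard dimension formula for flat morphisms (cf.\ \cite[Chapter III, Proposition~9.5]{Harts}), so $(\textit{a})\Rightarrow(\textit{D})$.

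For $(\textit{b})\Rightarrow(\textit{D})$, I would sandwich the fiber dimension. The lower bound $\dim X_{m,z}\geq \dim X_m-\dim Y_m$ on non-empty fibers follows from Chevalley's theorem and the equi-dimensionality of $X_m,Y_m$. For the upper bound, I would recycle the zero-section argument from the proof of Proposition~\ref{strong}: for $z\in Y_m$ with $y=\pi_Y^{m0}(z)$, the $w$-weighted homogeneity of the polynomials cutting out the infinitesimal slice of $X_{m,z}$ combined with Chevalley's upper semi-continuous theorem yields
\[
\dim X_{m,z}\ \leq\ \dim X_{m,s_{Y,m}(y)}\ =\ \dim (F_y)_m,
\]
where $F_y=\phi^{-1}(y)$. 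Since $\phi$ is flat, $\dim F_y=\dim X-\dim Y$; and since by hypothesis $F_y$ is a locally complete intersection with log-canonical singularities, Proposition~\ref{equcondilog} gives $\dim (F_y)_m=(m+1)\dim F_y=\dim X_m-\dim Y_m$, closing the sandwich.

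Assuming $Y$ is smooth, the implication $(\textit{D})\Rightarrow(\textit{a})$ follows from the miracle flatness theorem applied to $\phi_m:X_m\to Y_m$: indeed $Y_m$ is smooth, while $X_m$ is a locally complete intersection (hence Cohen--Macaulay) by \cite[Proposition~1.4]{Mustata1}, and non-empty fibers are equi-dimensional of the expected dimension by $(\textit{D})$. Under the additional assumption that every fiber of $\phi$ is normal, it remains to deduce $(\textit{b})$ from $(\textit{D})$: having already $(\textit{a})$, $\phi$ is flat, and flatness with $Y$ smooth and $X$ a locally complete intersection forces each non-empty fiber $F_y$ to be a locally complete intersection; the normality of $F_y$ combined with $\dim (F_y)_m=(m+1)\dim F_y$ (which is $(\textit{D})$ specialised at $z=s_{Y,m}(y)$) then forces $F_y$ to have log-canonical singularities via Proposition~\ref{equcondilog}, yielding $(\textit{b})$.

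The main obstacle I expect is rigorously globalising the upper-bound argument in $(\textit{b})\Rightarrow(\textit{D})$: the proof of Proposition~\ref{strong} is formulated in terms of the infinitesimal slice $X_{m,z}\cap(\pi_X^{m0})^{-1}(P)$ over a single closed point $P$, so one must carefully cover $F_y$ by finitely many affine charts, apply the weighted-homogeneity dimension bound on each chart, and then combine these local bounds into a global inequality for $\dim X_{m,z}$ without losing track of the equi-dimensional structure of $X_m$.
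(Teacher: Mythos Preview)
Your proposal follows essentially the same route as the paper: the sandwich argument for $(\textit{b})\Rightarrow(\textit{D})$ via the zero-section comparison from the proof of Proposition~\ref{strong}, miracle flatness for $(\textit{D})\Rightarrow(\textit{a})$ when $Y$ is smooth (using that $X_m$ is a locally complete intersection by \cite[Proposition~1.4]{Mustata1}), and the converse jet-scheme dimension characterisation for $(\textit{a})\Rightarrow(\textit{b})$ under the normality hypothesis.

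One correction: where you invoke Proposition~\ref{equcondilog} to pass between ``$F_y$ has log-canonical singularities'' and ``$\dim(F_y)_m=(m+1)\dim F_y$'', that proposition does not actually say this---it is a statement about motivic oscillation indices of ideals over number fields, and the fiber $F_y$ lives over an arbitrary residue field. The paper instead cites \cite[Theorem~1.3]{Ein-Must} and \cite[Proposition~1.4]{Mustata1} directly, which give the jet-scheme dimension characterisation of log-canonical singularities for any normal locally complete intersection; you should do the same. Your anticipated obstacle about globalising the upper bound is not a real difficulty: the inequality $\dim X_{m,z}\leq\dim X_{m,s_{Y,m}(y)}$ is obtained in the paper simply by quoting the proof of Proposition~\ref{strong}, and the pointwise chart-by-chart argument you describe assembles without any subtlety since dimension is the maximum over irreducible components.
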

\begin{proof}By Proposition \ref{singjet}, we conclude that $X_m$ and $Y_m$ are of pure dimension $(m+1)\dim(X)$ and $(m+1)\dim(Y)$ respectively.

If $\varphi_m$ is flat, then every non-empty fiber of $\varphi_m$ is of pure dimension $\dim(X_m)-\dim(Y_m)$ (see \cite[Chapter III, Proposition 9.5]{Harts}).  Thus,  $(\textit{a})$ implies $(\textit{D})$. 

On the other hand, suppose that $(\textit{b})$ holds. Since $\varphi$ is flat and $X,Y$ are equi-dimensional, every non-empty fiber of $\varphi$ is of pure dimension $\dim(X)-\dim(Y)$ as mentioned above. Let $y\in Y_m$ such that $\varphi_m^{-1}(y)$ is non-empty. The proof of Proposition \ref{strong} implies that 
$$\dim(X_m)-\dim(Y_m)\leq \dim\left(\varphi_m^{-1}(y)\right)\leq \dim\left(\varphi_m^{-1}\left(s_{Y,m}\left(\pi^{m0}(y)\right)\right)\right).$$
Note that $\left(\varphi^{-1}\left(\pi^{m0}(y)\right)\right)_m\simeq \varphi_m^{-1}\left(s_{Y,m}\left(\pi^{m0}(y)\right)\right)$. Since $\varphi^{-1}\left(\pi^{m0}(y)\right)$ is a locally complete intersection having only semi-log canonical singularities, we can use Proposition \ref{singjet} to have 
\begin{align*}
\dim\left(\varphi_m^{-1}\left(s_{Y,m}\left(\pi^{m0}(y)\right)\right)\right)&=\dim\left(\left(\varphi^{-1}\left(\pi^{m0}(y)\right)\right)_m\right)\\
&=(m+1)\dim\left(\varphi^{-1}(\pi^{m0}(y))\right)\\&=(m+1)(\dim(X)-\dim(Y))\\&=\dim(X_m)-\dim(Y_m).
\end{align*}
Thus,  $\dim\left(\varphi_m^{-1}(y)\right)=\dim(X_m)-\dim(Y_m)$ whenever $\varphi_m^{-1}(y)\neq\emptyset$. So $(\textit{b})$ implies $(\textit{D})$.

Suppose that $Y$ is smooth. Then $Y_m$ is smooth for all $m$. We use this, the miracle flatness theorem and the fact that  $X_m$ and $Y_m$ are equi-dimensional to see that $(\textit{D})$ implies $(\textit{a})$. It follows from this and the above argument that $(\textit{b})\Rightarrow(\textit{D})\Leftrightarrow(\textit{a})$. It remains to show that $(\textit{a})\Rightarrow(\textit{b})$.  Indeed, suppose moreover that the condition $(\textit{a})$ holds. If $m\geq 0$ then every non-empty fiber of $\varphi_m$ is of pure dimension $\dim(X_m)-\dim(Y_m)$ as explained above. The smoothness of $Y$ and the fact that $X$ is a locally complete intersection imply that every non-empty fiber of $\varphi=\varphi_0$ is a locally complete intersection. Let $y\in Y$ such that $\varphi^{-1}(y)\neq\emptyset$. We have $(\varphi^{-1}(y))_m\simeq \varphi_m^{-1}(s_{Y,m}(y))$. Therefore, $\dim\left((\varphi^{-1}(y))_m\right)=\dim\left(\varphi_m^{-1}(s_{Y,m}(y))\right)=\dim\left(X_m\right)-\dim\left(Y_m\right)=(m+1)\left(\dim(X)-\dim(Y)\right)=(m+1)\dim\left(\varphi^{-1}(y)\right)$ for all $m\geq 0$. Thus, we can use Proposition \ref{singjet} to conclude that the fiber $\varphi^{-1}(y)$ is a locally complete intersection having only semi-log canonical singularities whenever it is non-empty. This means that $(\textit{a})\Rightarrow(\textit{b})$ as desired.


\end{proof}
By a slight improvement of the proof of \cite[Theorem 4.11]{C-G-H}, we have the following result for counting points of schemes over finite rings.
\begin{prop}\label{countEsmth}Let $K$ be a number field and $X, Y$ be $\cO_K$-schemes of finite type. Suppose that $X_K, Y_K$ are equi-dimensional and locally complete intersections having only semi-log canonical singularities. Let  $E$ be a positive integer and $\varphi:X\to Y$ be an $\cO_K$-morphism such that $\varphi_{K}$ is strongly $E$-smooth. Then there is a positive constant $C$ and an integer $M$ such that 
$$\left|\frac{\#\left(_m^L\varphi\right)^{-1}(y)}{q_L^{m(\dim(X_K)-\dim(Y_K))}}-\frac{\# \left(_1^L\varphi\right)^{-1}(\pi_{m1}(y))}{q_L^{\dim(X_K)-\dim(Y_K)}}\right|\leq Cq_L^{-E}$$
for all $L\in \tilde{\cL}_{K,M}$, all $m\geq 1$ and all $y\in Y(\cO_L/(\varpi_L^{m}))$, where $_m^L\varphi:X(\cO_L/(\varpi_L^{m}))\to Y(\cO_L/(\varpi_L^{m}))$ is the canonical map induced by $\varphi$.
\end{prop}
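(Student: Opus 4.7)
The plan is to adapt the argument of \cite[Theorem 4.11]{C-G-H}, which treats the case where the target $Y$ is smooth (so that $E$-smoothness suffices). The single substantive modification needed is to handle a target $Y_K$ which is merely a locally complete intersection with rational singularities; this is precisely what the strong $E$-smooth hypothesis (Definition \ref{sesm}) is designed for, via the dimension comparison of Proposition \ref{strong}.

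First I would set up the local geometry. Since the statement is local on both $X$ and $Y$, cover each by finitely many affine opens on which $X=\spec(\cO_K[\mathbf{x}]/(f_1,\dots,f_r))\hookrightarrow\AA_{\cO_K}^n$ with $r=n-\dim(X_K)$, and likewise $Y\hookrightarrow\AA_{\cO_K}^e$ cut out by $e-\dim(Y_K)$ equations, and $\phi$ given by explicit polynomials; this is possible because $X_K,Y_K$ are locally complete intersections. After enlarging $M$ we may assume these descriptions remain regular sequences modulo the uniformizer for all $L\in\tilde\cL_{K,M}$. Next, for fixed $y\in Y(\cO_L/(\varpi_L^m))$, decompose $(_m^L\phi)^{-1}(y)$ according to the reduction $\bar x=\pi_{m1}(x)\in X(k_L)$, splitting $X(k_L)$ into the open locus $U$ where $\phi_{k_L}$ is smooth and the complement $S=\Sing(\phi)(k_L)\cap X(k_L)$.

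The smooth part is handled by Hensel's lemma in the relative form: for $\bar x\in U$ with $\phi(\bar x)\equiv\pi_{m1}(y)\bmod\varpi_L$, exactly $q_L^{(m-1)(\dim X_K-\dim Y_K)}$ lifts in $(_m^L\phi)^{-1}(y)$ exist above $\bar x$, and summing over $\bar x\in U\cap(_1^L\phi)^{-1}(\pi_{m1}(y))$ reproduces the main term $q_L^{(m-1)(\dim X_K-\dim Y_K)}\cdot\#(_1^L\phi)^{-1}(\pi_{m1}(y))$ up to an $S$-error on both sides. The singular part is bounded by identifying lifts of $y$ in $(_m^L\phi)^{-1}(y)$ with $k_L$-points of the jet-scheme fiber $(X_{m-1})_{\phi_{m-1},\tilde y}$, where $\tilde y\in Y_{m-1}(k_L)$ corresponds to $y$, so that the contribution from $\bar x\in S$ lies in $(X_{m-1})_{\phi_{m-1},\tilde y}\cap(\pi_X^{m-1,0})^{-1}(\Sing(\phi))$. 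By Proposition \ref{strong} this intersection has dimension at most $(m-1)(\dim X_K-\dim Y_K)+\dim X_K-\dim Y_K-E=m(\dim X_K-\dim Y_K)-E$, and the uniform Lang-Weil estimate of Proposition \ref{Lang-Weil} applied to the bounded family of schemes produced by our finite local description yields at most $C' q_L^{m(\dim X_K-\dim Y_K)-E}$ such points, with $C'$ independent of $L$ and $y$. Dividing by $q_L^{m(\dim X_K-\dim Y_K)}$, performing the same decomposition for $(_1^L\phi)^{-1}(\pi_{m1}(y))$, and subtracting, the smooth parts cancel exactly and both singular parts are $O(q_L^{-E})$; thus one may take $E(K,\phi)=E$.

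The main obstacle is precisely the step where $Y$ is singular. In the smooth-$Y$ case of \cite{C-G-H} the jet scheme $Y_{m-1}$ is itself smooth, so fibers of $\phi_{m-1}$ over different lifts of $y\in Y$ are abstractly isomorphic (via the torsor structure) and may be replaced by the fiber over the zero section $s_{Y,m-1}(\pi_Y^{m-1,0}(\tilde y))$, reducing the counting to condition $(S1)$ of Definition \ref{esm}. When $Y$ is singular, different lifts $\tilde y\in Y_{m-1}$ give genuinely different scheme-theoretic fibers, and one cannot appeal to condition $(S1)$ directly; it is exactly the upper-semicontinuity argument in the proof of Proposition \ref{strong} (reducing the dimension at an arbitrary lift to that at $s_{Y,m-1}(\phi(x))$) that makes the strong $E$-smooth condition $(S2)$ the correct hypothesis. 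Uniform control of the Lang-Weil constant across the resulting family of fibers, together with the finiteness of the local affine cover produced in Step 1, then yields the required uniform bound.
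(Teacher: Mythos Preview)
Your proposal is correct and matches the paper's approach: the paper does not give a written proof at all, stating only that the result follows ``by a slight improvement of the proof of \cite[Theorem 4.11]{C-G-H}'', and you have correctly identified both that improvement (replacing $E$-smoothness by strong $E$-smoothness to handle a target $Y_K$ that is merely LCI with rational singularities rather than smooth) and the mechanism by which it works (condition $(S2)$ together with Proposition \ref{strong} controls the jet-fiber over an arbitrary lift $\tilde y\in Y_{m-1}$, not just over the zero section). Your dimension calculation $\dim(X_{m-1})-\dim(Y_{m-1})-E=m(\dim X_K-\dim Y_K)-E$ and the smooth/singular decomposition with Hensel and Lang--Weil are exactly the ingredients of the \cite{C-G-H} argument; the bound $Cq_L^{-E(K,\phi)}$ in the statement should indeed be read as $Cq_L^{-E}$.
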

\begin{proof}Let $X_m$ (resp. $Y_m$) be the $m^{\textnormal{th}}$ jet scheme associated to $X$ (resp.  $Y$) over $\cO_K$ as in Section \ref{SJ}. Let $\varphi_m:X_m\to Y_m$ be the $m^{\textnormal{th}}$ jet morphism of $\varphi$. For each $L\in\tilde{\cL}_{K,1}$, $y\in Y_m(k_L)=Y(k_L[t]/(t^{m+1}))$, we set $X_{m,y}=\varphi_m^{-1}(y)$. 
By the argument of \cite[Theorems 4.7 and 4.11]{C-G-H} and Remark \ref{jetloc}, it is sufficient to prove that for each  $m\in\NN$  there is a constant $C_m$ and an integer $M_m$  such that 
\begin{equation*}\left|\frac{\#\left(X_{m,y}\cap \left(\pi_{X}^{m0}\right)^{-1}\left(\Sing(\varphi_{k_L})\right)\right)(k_L)}{q_L^{(m+1)(\dim(X_K)-\dim(Y_K))}}-\frac{\# \left(\varphi^{-1}\left(\pi_Y^{m0}(y)\right)\cap \Sing\left(\varphi_{k_L}\right)\right)(k_L)}{q_L^{\dim(X_K)-\dim(Y_K)}}\right|\leq C_mq_L^{-E}
\end{equation*}for all $L\in \tilde{\cL}_{K,M_m}$ and all $y\in Y_m(k_L)$. By Proposition \ref{singjet}, we have $\dim(X_m\otimes K)=(m+1)\dim(X_K)$ and $\dim(Y_m\otimes K)=(m+1)\dim(Y_K)$. This together with the strongly $E$-smooth condition of $\varphi_K$ and the logical compactness  in \cite[Corollary 2.2.10]{Marker} imply that there is an integer $M_m$ such that $$\dim\big(X_{m,y}\cap \left(\pi_{X}^{m0}\right)^{-1}\left(\Sing(\varphi_{k_L})\right)\big)\leq (m+1)(\dim(X_K)-\dim(Y_K))-E$$ and 
$$\dim\big(\varphi^{-1}\left(\pi_Y^{m0}(y)\right)\cap \Sing\left(\varphi_{k_L}\right)\big)\leq \dim(X_K)-\dim(Y_K)-E$$ if $L\in \tilde{\cL}_{K,M_m}$ and $y\in Y_m(k_L)$. The assertion follows by these inequalities and Proposition \ref{Lang-Weil}.
\end{proof}
Now, we have enough materials to state every result of this section.


\begin{prop}\label{counting}Let $\cI$ be a non-zero ideal of $\cO_K[x_1,\dots,x_n]$ such that $\cI$ can be generated by $r$ elements and $\cI_K\neq (1)$. Suppose that $\moi_{K}^{(r)}(\cI)>r$ and Conjecture \ref{avelocIgu} holds for $\cI,r, Z=\AA_{\cO_K}^n$. Let  $0<\epsilon<\moi_{K}^{(r)}(\cI)-r$. Then there exists an integer $M$ depending only on $\epsilon, r, \cI$ such that for all local fields $L\in\tilde{\cL}_{K,M}$ and all integers $m>0$, we have
$$\left|\frac{\#X(\cO_L/\varpi_L^m\cO_L)}{q_L^{m(n-r)}}-\frac{\# X(k_L)}{q_L^{n-r}}\right|\leq \frac{q_L^{-2(\moi_{K}^{(r)}(\cI)-r-\epsilon)}}{1-q_L^{r+\epsilon-\moi_{K}^{(r)}(\cI)}}.$$
\end{prop}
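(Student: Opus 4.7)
The plan is to reduce the claim to a telescoping sum of the exponential sums $E_{L,\cI}^{(r)}(j)$ and then apply Conjecture \ref{avelocIgu} term by term. Note first that the case $m=1$ is trivial, as the left-hand side equals $0$, so I may assume $m\geq 2$ throughout.

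\emph{Step 1 (telescoping).} Starting from Definition \ref{expodef} with $Z=\AA_{\cO_K}^n$ and multiplying through by $q_L^{jr}$, one obtains, for every $j\geq 2$,
$$E_{L,\cI}^{(r)}(j)\,q_L^{jr}=q_L^{-j(n-r)}\#X(\cO_L/(\varpi_L^{j}))-q_L^{-(j-1)(n-r)}\#X(\cO_L/(\varpi_L^{j-1})).$$
Since $X(\cO_L/\varpi_L)=X(k_L)$, summing from $j=2$ to $j=m$ gives the clean telescoped identity
$$\frac{\#X(\cO_L/\varpi_L^m\cO_L)}{q_L^{m(n-r)}}-\frac{\#X(k_L)}{q_L^{n-r}}=\sum_{j=2}^{m}E_{L,\cI}^{(r)}(j)\,q_L^{jr}.$$

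\emph{Step 2 (apply the averaged Igusa bound).} Write $\sigma=\moi_{K}^{(r)}(\cI)$. By hypothesis, Conjecture \ref{avelocIgu} holds for $\cI$, $r$ and $Z=\AA^{n}_{\cO_K}$, so there exist an integer $M_{0}$ and a constant $c_{r}>0$ (depending only on $\cI$ and $r$) such that
$$|E_{L,\cI}^{(r)}(j)\,q_{L}^{jr}|\leq c_{r}\,j^{n+r-1}q_{L}^{-j(\sigma-r)}$$
for every $L\in\tilde{\cL}_{K,M_{0}}$ and every $j\geq 2$.

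\emph{Step 3 (absorb the polynomial factor and sum the geometric series).} Since $\epsilon<\sigma-r$ and the sequence $j\mapsto j^{-1}\log(c_{r}j^{n+r-1})$ is bounded for $j\geq 2$ (as $j^{-1}\log j\to 0$), I may enlarge $M_{0}$ to an integer $M$ depending only on $c_{r},n,r,\epsilon$ so that the uniform estimate $c_{r}j^{n+r-1}\leq q_{L}^{j\epsilon}$ holds for every $L\in\tilde{\cL}_{K,M}$ and every $j\geq 2$. For such $L$ and any $m\geq 2$, combining Steps 1 and 2 and summing the resulting geometric series gives
$$\left|\frac{\#X(\cO_L/\varpi_L^m\cO_L)}{q_L^{m(n-r)}}-\frac{\#X(k_L)}{q_L^{n-r}}\right|\leq \sum_{j=2}^{\infty}q_{L}^{-j(\sigma-r-\epsilon)}=\frac{q_{L}^{-2(\sigma-r-\epsilon)}}{1-q_{L}^{r+\epsilon-\sigma}},$$
which is exactly the asserted inequality. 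The only mildly delicate step is Step 3, namely the uniform (in $j\geq 2$) absorption of $c_{r}j^{n+r-1}$ into $q_{L}^{j\epsilon}$, but this is harmless since the margin $\epsilon>0$ is fixed and the condition forces only $q_L$, hence $M$, to exceed an explicit quantity depending on $c_{r},n,r,\epsilon$.
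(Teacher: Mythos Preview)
Your proof is correct and follows essentially the same approach as the paper: telescope via the identity $E_{L,\cI}^{(r)}(j)\,q_L^{jr}=q_L^{-j(n-r)}\#X(\cO_L/(\varpi_L^j))-q_L^{-(j-1)(n-r)}\#X(\cO_L/(\varpi_L^{j-1}))$, apply Conjecture~\ref{avelocIgu} to each term, absorb the polynomial factor $c_r j^{n+r-1}$ into $q_L^{j\epsilon}$ by enlarging $M$, and sum the geometric series. The only cosmetic difference is that the paper bounds each consecutive difference first and then sums, whereas you write the full telescoped identity first and then bound; the content is identical.
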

\begin{proof}Our claim for $m=1$ is trivial. Suppose that $m\geq 2$. Since Conjecture \ref{avelocIgu} holds for  $\cI,r, Z=\AA_{\cO_K}^n$, there is an integer $M_0$ and a constant $c$ depending only on $r, \cI$ such that 
$$|E_{L,\cI}^{(r)}(m)|\leq cm^{n-1}q_L^{-m\moi_{K}^{(r)}(\cI)},$$
for all $m\geq 2$ and all $L\in\tilde{\cL}_{K,M_0}$. Thus, for each $\epsilon>0$, there exists an integer $M_{\epsilon}$ depending only on $\epsilon,r,\cI$ such that 
$$|E_{L,\cI}^{(r)}(m)|\leq q_L^{-m(\moi_{K}^{(r)}(\cI)-\epsilon)},$$
for all $m\geq 2$ and all $L\in\tilde{\cL}_{K,M_{\epsilon}}$. By the definition, we have 
$$E_{L,\cI}^{(r)}(m)=q_L^{-mn}(\#X(\cO_L/(\varpi_L^m))-q_L^{n-r}\#X(\cO_L/(\varpi_L^{m-1})))$$
if $m\geq 2$. Hence,
\begin{equation}\label{differ}
\left|\frac{\# X(\cO_L/\varpi_L^m\cO_L)}{q_L^{m(n-r)}}-\frac{\# X(\cO_L/\varpi_L^{m-1}\cO_L)}{q_L^{(m-1)(n-r)}}\right|\leq q_L^{-m(\moi_{K}^{(r)}(\cI)-r-\epsilon)}
\end{equation}
if $m\geq 2$ and $L\in\tilde{\cL}_{K,M_\epsilon}$.
Therefore, we have
\begin{align*}
\left|\frac{\#X(\cO_L/\varpi_L^m\cO_L)}{q_L^{m(n-r)}}-\frac{\# X(k_L)}{q_L^{n-r}}\right|&\leq \sum_{i=2}^m\left|\frac{\# X(\cO_L/\varpi_L^m\cO_L)}{q_L^{m(n-r)}}-\frac{\# X(\cO_L/\varpi_L^{m-1}\cO_L)}{q_L^{(m-1)(n-r)}}\right|\\
&\leq \sum_{i=2}^mq_L^{-m(\moi_{K}^{(r)}(\cI)-r-\epsilon)}\leq \frac{q_L^{-2(\moi_{K}^{(r)}(\cI)-r-\epsilon)}}{1-q_L^{r+\epsilon-\moi_{K}^{(r)}(\cI)}}
\end{align*}
if $0<\epsilon<\moi_{K}^{(r)}(\cI)-r$, $m\geq 2$ and $L\in\tilde{\cL}_{K,M_{\epsilon}}$.
\end{proof}

From the definition of $\Oi_L^{(r)}(\cI)$ in Section 2.2 and the proof of Proposition \ref{counting}, we obtain the following corollary.
\begin{cor}\label{countab}
With the assumption of Proposition \ref{counting}, for each $L\in \cL_{K,1}$ such that $\Oi_L^{(r)}(\cI)>r$ and $0<\epsilon<\Oi_L^{(r)}(\cI)-r$, there is a constant $c_{L,\epsilon,\cI}$ depending only on $L,\epsilon,\cI$ such that
$$\left|\frac{\#X(\cO_L/\varpi_L^m\cO_L)}{q_L^{m(n-r)}}-\frac{\# X(k_L)}{q_L^{n-r}}\right|\leq c_{L,\epsilon,\cI}\frac{q_L^{-2(\Oi_{L}^{(r)}(\cI)-r-\epsilon)}}{1-q_L^{r+\epsilon-\Oi_{L}^{(r)}(\cI)}}.$$
\end{cor}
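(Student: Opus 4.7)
The plan is to run a pointwise-in-$L$ version of the proof of Proposition~\ref{counting}, where the role of Conjecture~\ref{avelocIgu} is played by the definition of the $L$-oscillation index $\Oi_L^{(r)}(\cI)$ itself. Concretely, since $\Oi_L^{(r)}(\cI)$ is by construction the $L$-oscillation index of $g=\sum_{i=1}^r y_if_i(x)$ on $(\AA_{\cO_K}^r\setminus\{0\})\times\AA_{\cO_K}^n$, applying (\ref{osci}) at $\sigma:=\Oi_L^{(r)}(\cI)-\epsilon<\Oi_L^{(r)}(\cI)$ yields a constant $c=c(L,\cI,\epsilon)$ with
$$|E_{L,(\AA_{\cO_K}^r\setminus\{0\})\times\AA_{\cO_K}^n,g}(\psi)|\le c\,q_L^{-m_\psi\sigma}$$
for every additive character $\psi$ of $L$.

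Next I would invoke the computation carried out in the proof of Lemma-Definition~\ref{moidef}, which identifies $E_{L,(\AA_{\cO_K}^r\setminus\{0\})\times\AA_{\cO_K}^n,g}(\psi)$ with $E_{L,\cI}^{(r)}(m)$ (where $m=m_\psi\ge 2$) as soon as $\cI_{\cO_L}=(f_1,\dots,f_r)\cO_L[x]$. Since the single field $L$ is fixed, any discrepancy in small residue characteristic can be absorbed into the constant, producing
$$|E_{L,\cI}^{(r)}(m)|\le c_{L,\epsilon,\cI}\,q_L^{-m(\Oi_L^{(r)}(\cI)-\epsilon)}\qquad(m\ge 2),$$
the pointwise analogue of the bound furnished by Conjecture~\ref{avelocIgu} in the proof of Proposition~\ref{counting}.

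The remainder is a mechanical telescoping identical to the end of that proof: the defining identity in Definition~\ref{expodef} rearranges to
$$\frac{\# X(\cO_L/\varpi_L^m\cO_L)}{q_L^{m(n-r)}}-\frac{\# X(\cO_L/\varpi_L^{m-1}\cO_L)}{q_L^{(m-1)(n-r)}}=q_L^{mr}E_{L,\cI}^{(r)}(m),$$
so each consecutive difference is bounded by $c_{L,\epsilon,\cI}\,q_L^{-i(\Oi_L^{(r)}(\cI)-r-\epsilon)}$. The hypothesis $\Oi_L^{(r)}(\cI)-r-\epsilon>0$ makes the geometric series $\sum_{i\ge 2}q_L^{-i(\Oi_L^{(r)}(\cI)-r-\epsilon)}$ converge to exactly $\frac{q_L^{-2(\Oi_L^{(r)}(\cI)-r-\epsilon)}}{1-q_L^{r+\epsilon-\Oi_L^{(r)}(\cI)}}$, which matches the stated bound; the case $m=1$ is trivial.

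There is no serious obstacle: the corollary is essentially the observation that the argument of Proposition~\ref{counting} localizes cleanly once one recognizes that $\Oi_L^{(r)}(\cI)$ packages precisely the pointwise-in-$L$ version of the uniform bound supplied by Conjecture~\ref{avelocIgu}. The only cosmetic subtlety is the aforementioned gap between $\cI_{\cO_L}$ and $(f_1,\dots,f_r)\cO_L[x]$ for fixed $L$ of small residue characteristic, which is handled by allowing $c_{L,\epsilon,\cI}$ to depend on $L$.
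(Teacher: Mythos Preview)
Your proof is correct and follows the same route the paper indicates (the paper simply says the corollary follows ``from the definition of $\Oi_L^{(r)}(\cI)$ in Section~2.2 and the proof of Proposition~\ref{counting}''). One minor remark: the ``cosmetic subtlety'' you flag does not actually arise here, since Proposition~\ref{counting} assumes $\cI=(f_1,\dots,f_r)$ as an ideal of $\cO_K[x_1,\dots,x_n]$ (not merely $\cI_K=(f_1,\dots,f_r)$), so $\cI_{\cO_L}=(f_1,\dots,f_r)\cO_L[x]$ for every $L\in\cL_{K,1}$ and the identification $E_{L,(\AA_{\cO_K}^r\setminus\{0\})\times\AA_{\cO_K}^n,g}(\psi)=E_{L,\cI}^{(r)}(m)$ holds without any residue-characteristic restriction.
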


\begin{proof}[Proof of \ref{count}]Let $\sigma_0>r$. Let $\epsilon$ be a small enough positive number such that $\sigma_0-\epsilon>r$. By the proof of Proposition \ref{counting}, there is an integer $M_{\epsilon}$ such that for all $L\in\tilde{\cL}_{\QQ,M_{\epsilon}}$ and all $m\geq 1$, we have 
$$\left|\frac{\#X(\cO_L/\varpi_L^m\cO_L)}{q_L^{m(n-r)}}-\frac{\# X(k_L)}{q_L^{n-r}}\right|\leq \frac{q_L^{-2(\sigma_0-r-\epsilon)}}{1-q_L^{r+\epsilon-\sigma_0}}.$$
Then we can improve slightly the proof of \cite[Theorem 4.11]{C-G-H} with the remark that the dimension of a non-empty scheme is always a natural integer to see that the structure morphism $X_K\to \spec(K)$ is $E_{\epsilon}$-smooth, where $E_{\epsilon}=\lceil 2(\sigma_0-r-\epsilon)\rceil$. By taking $\epsilon$ small enough, we have $\lceil 2(\sigma_0-r-\epsilon)\rceil=\lceil 2(\sigma_0-r)\rceil$. Thus, the structure morphism $X_K\to \spec(K)$ is $E$-smooth if $E=\lceil 2(\sigma_0-r)\rceil$. Since $\spec(K)$ is smooth, Proposition \ref{se} implies that $\varphi$ is strongly $E$-smooth. Thus, we can use Proposition \ref{countEsmth} to see that there is an integer $M$ and a positive constant $C$ such that for all $L\in\tilde{\cL}_{K,M}$ and all $m\geq 1$, we have 
$$\left|\frac{\#X(\cO_L/\varpi_L^m\cO_L)}{q_L^{m(n-r)}}-\frac{\# X(k_L)}{q_L^{n-r}}\right|\leq Cq_L^{-\lceil 2(\sigma_0-r)\rceil}.$$
\end{proof}
As mentioned in Section \ref{intro},  \ref{count} can be generalized as follows.
\begin{prop}\label{acount}Suppose that Conjecture \ref{avelocIgu} holds.  Let $X$ be an $\cO_K$-scheme of finite type. Suppose that $X_K$ is equi-dimensional.  Let $\cU=(U_i)_{1\leq i\leq N}$ be an open affine cover of $X$ such that $U_i$ is of finite type over $\cO_K$ for all $i$. We set 
$Z_I:=(\cap_{i\in I} U_i)\setminus (\cup_{j\notin I} U_j)$ for each non-empty subset $I$ of $\{1,\dots,N\}$.
 Suppose that $\sigma(\cU,X):=\min_{I}\moi_{K,Z_I}^{\textnormal{aloc}}(X_K)>-\dim(X_K)$. Then there is a constant $C$ and an integer $M$ such that 
$$\left|\frac{\#X(\cO_L/\varpi_L^m\cO_L)}{q_L^{m\dim(X_K)}}-\frac{\# X(k_L)}{q_L^{\dim(X_K)}}\right|\leq Cq_L^{-\lceil 2(\sigma(\cU,X)+\dim(X_K))\rceil}$$
for all $m\geq 1$ and all $L\in\tilde{\cL}_{K,M}$.
\end{prop}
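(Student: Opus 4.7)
The plan is to decompose the counting along the partition $X = \bigsqcup_I Z_I$ and then apply the local counting theorem \ref{locCounting} (the $Z$-local analogue of \ref{count}) to each piece. Set
$$N_I(L,m) := \#\{P \in X(\cO_L/\varpi_L^m\cO_L) : \pi_{m1}(P) \in Z_I(k_L)\},$$
so that $\#X(\cO_L/\varpi_L^m\cO_L) = \sum_I N_I(L,m)$ and $\#X(k_L) = \sum_I \#Z_I(k_L)$, where $I$ ranges over the non-empty subsets of $\{1,\dots,N\}$. Putting $V_I := \bigcap_{i\in I} U_i$, an open affine subscheme of $X$ containing $Z_I$, the Hensel lifting property shows that for all $L$ of sufficiently large residue characteristic (uniformly in $I$) every point contributing to $N_I(L,m)$ factors uniquely through $V_I(\cO_L/\varpi_L^m\cO_L)$.

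For each $I$, I would use Definition \ref{moiloc2} to fix a closed embedding $\iota_I \colon V_I \otimes K \hookrightarrow \AA_K^{n_I}$ realizing the maximum defining $\moi_{K,Z_I}^{\textnormal{aloc}}(X)$. Let $\cI_I$ be the corresponding defining ideal (spread out integrally over $\cO_K$) and put $r_I := n_I - \dim(X_K)$. The hypothesis $\sigma(\mathcal U,X) > -\dim(X_K)$ forces $\moi^{\textnormal{loc}}_{K,\iota_I(Z_I)}(\cI_I) > r_I$, so \ref{locpole} applies: $\iota_I(V_I)\otimes K$ is a complete intersection of codimension $r_I$ with rational singularities in some affine open $W_I \subset \AA_K^{n_I}$ containing $\iota_I(Z_I)$, and $\cI_I|_{W_I}$ is generated by $r_I$ elements. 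Combining this with Proposition \ref{compa} and Corollary \ref{llct} yields $\moi^{\textnormal{loc}}_{K,\iota_I(Z_I)}(\cI_I) = \moi^{(r_I)}_{K,\iota_I(Z_I)}(\cI_I)$, so
$$\moi_{K,\iota_I(Z_I)}^{(r_I)}(\cI_I) - r_I \;=\; \moi_{K,Z_I}^{\textnormal{aloc}}(X) + \dim(X_K) \;\geq\; \sigma(\mathcal U,X) + \dim(X_K).$$

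Given Conjecture \ref{avelocIgu} and this inequality, \ref{locCounting} furnishes an integer $M_I$ and a constant $C_I$ such that for every $L \in \tilde{\cL}_{K,M_I}$ and every $m \geq 1$,
$$\left|\frac{N_I(L,m)}{q_L^{m\dim(X_K)}} - \frac{\#Z_I(k_L)}{q_L^{\dim(X_K)}}\right| \;\leq\; C_I\,q_L^{-\lceil 2(\sigma(\mathcal U,X) + \dim(X_K))\rceil},$$
the exponent following from monotonicity of $\lceil\cdot\rceil$ applied to the lower bound above. Taking $M := \max_I M_I$ and $C := \sum_I C_I$, a finite sum over the $2^N - 1$ non-empty subsets $I$, and applying the triangle inequality to the two decompositions of the first paragraph delivers the claimed bound. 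The main obstacle is securing \ref{locCounting} in the precise form above; concretely, this amounts to extending the $E$-smoothness framework of \cite{C-G-H} and Proposition \ref{countEsmth} to points of $X(\cO_L/\varpi_L^m\cO_L)$ whose reductions modulo $\varpi_L$ are constrained to lie in a prescribed subscheme $Z$. Once that $Z$-local counting statement is in place, the partition-and-telescope argument above is routine.
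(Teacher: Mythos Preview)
Your overall strategy—partition $X$ by the $Z_I$, embed each piece into an affine space via the data realizing $\moi^{\textnormal{aloc}}_{K,Z_I}(X)$, and then sum the local counting bounds—is exactly the paper's approach. Two remarks are in order.

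First, your invocation of \ref{locCounting} is circular: in the paper's logical order, Theorem~H is deduced \emph{from} Proposition~\ref{acount}, not the other way around. You recognise this in your final paragraph, and your diagnosis is correct: what is really needed is the $Z$-localized analogue of Proposition~\ref{counting} and of Theorem~C. The paper obtains this directly. Conjecture~\ref{avelocIgu} is already stated for an arbitrary subscheme $Z$, so the telescoping argument of Proposition~\ref{counting} carries over verbatim to bound
\[
\Bigl|\tfrac{N_I(L,m)}{q_L^{m(n_I-r_I)}}-\tfrac{N_I(L,1)}{q_L^{n_I-r_I}}\Bigr|
\]
by $q_L^{-2(\moi^{(r_I)}_{K,\iota_I(Z_I)}(\cI_I)-r_I-\epsilon)}/(1-q_L^{\cdots})$. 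The upgrade from this $\epsilon$-weakened bound to the clean $\lceil\cdot\rceil$ exponent then goes through exactly as in the proof of Theorem~C: the weak bound forces $E$-smoothness of the structure morphism of the local affine piece, and the argument underlying Proposition~\ref{countEsmth} (which proceeds fibrewise over $k_L$-points) restricts without change to points whose reduction lies in $Z_I(k_L)$. So there is no missing ingredient; one simply repeats those two proofs with $Z_I$ in place of $\AA^n$.

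Second, a minor technical point: you take $V_I=\bigcap_{i\in I}U_i$ and then choose an embedding $\iota_I$ of $V_I\otimes K$ ``realizing the maximum''. But Definition~\ref{moiloc2} maximises over \emph{all} pairs $(V,\iota)$ with $V$ an affine open containing $Z_I\otimes K$, and the optimal $V$ need not be $V_I\otimes K$. The paper accordingly lets $U$ be the affine neighbourhood and $\iota$ the embedding that together compute $\moi^{\textnormal{aloc}}_{K,Z_I}(X)$, rather than fixing $V_I$ in advance. (Also, identifying $N_I(L,m)$ with a count inside the chosen affine neighbourhood only needs that open immersions detect points on residue fields; Hensel's lemma is not required.)
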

\begin{proof}Let us use the map $\pi_{ij}^Y: Y(\cO_L/(\varpi_L^i))\to Y(\cO_L/(\varpi_L^j))$ in Section \ref{nuloc} if $i\geq j\geq 0$ and $Y$ is an $\cO_K$-scheme.
It is easily seen that $X=\sqcup_{\emptyset\neq I\subset \{1,\dots,N\}}Z_I$ and $Z_I\cap Z_J=\emptyset$ if $I\neq J$. Thus, we have 
$$\frac{\#X(\cO_L/\varpi_L^m\cO_L)}{q_L^{m\dim(X_K)}}-\frac{\# X(k_L)}{q_L^{\dim(X_K)}}=\sum_{\emptyset\neq I\subset \{1,\dots,N\}}\left(\frac{\#\left(\left(\pi_{m1}^X\right)^{-1}\left(Z_I(k_L)\right)\right)}{q_L^{m\dim(X_K)}}-\frac{\# Z_I(k_L)}{q_L^{\dim(X_K)}}\right).$$
So it is sufficient to show that for each $\emptyset\neq I\subset\{1,\dots,N\}$, there is a constant $C_I$ and an integer $M_I$ such that 
\begin{equation}\label{equal1}
\left|\frac{\#\left(\left(\pi_{m1}^X\right)^{-1}\left(Z_I(k_L)\right)\right)}{q_L^{m\dim(X_K)}}-\frac{\# Z_I(k_L)}{q_L^{\dim(X_K)}}\right|\leq C_Iq_L^{-\lceil 2(\sigma(\cU,X)+\dim(X_K))\rceil}
\end{equation}
for all $m\geq 1$ and all $L\in\tilde{\cL}_{K,M_I}$. Indeed, by Definition \ref{moiloc2} and Corollary \ref{naig}, we can take an affine neighbourhood $U$ of $Z_I\otimes K$ in $X_K$ and a closed embedding  $\iota: U\to \AA_{K}^n$ such that $(U,\iota)$ computes $\moi_{K,Z_I}^{\textnormal{aloc}}(X_K)$. By Definition \ref{moiloc2} and the fact that $X_K$ is equi-dimensional, one has
$$\moi_{K,\iota(Z_I\otimes K)}^{\textnormal{loc}}(\iota(U))=n+\moi_{K,Z_I}^{\textnormal{aloc}}(X_K)>n-\dim(X_K)=n-\dim(\iota(U)).$$
Thus, Definition \ref{moiloc1}, Remark \ref{well} and \ref{locpole} imply that there is an affine neighbourhood  $V$ of $\iota(Z_I\otimes K)$ in $\AA_K^n$ together with polynomials $f_1,\dots,f_r\in \cO_K[x_1,\dots,x_n]$ such that $r=n-\dim(\iota(U))$, $\iota(U)\cap V=\spec(\cO_V(V)/(f_1,\dots,f_r))$ and $\moi_{K,\iota(Z_I\otimes K)}^{\textnormal{loc}}(\iota(U))=\moi_{K,\tilde{Z}_I}^{(r)}(\cI)$, where $\cI=(f_1,\dots,f_r)$ and $\tilde{Z}_I$ is any $\cO_K$-subscheme of $\tilde{U}=\spec(\cO_K[x_1,...,x_n]/\cI)$ of finite type satisfying $\tilde{Z}_I\otimes K=\iota(Z_I\otimes K)$. Since $\tilde{Z}_I\otimes K=\iota(Z_I\otimes K)\simeq Z_I\otimes K$ and $Z_I,\tilde{Z}_I$ are of finite type over $\cO_K$, there is an integer $M>0$ such that $Z_I\otimes \cO_K[1/M]\simeq \tilde{Z}_I\otimes \cO_K[1/M]$. On the other hand, by the definitions of $\iota$ and $\tilde{U}$, and enlarging $M$ if needed, there is an open subscheme $X_0$ containing  $Z_I\otimes \cO_K[1/M]$ of $X\otimes\cO_K[1/M]$ and an  open subscheme $\tilde{U}_0$ containing $\tilde{Z}_I\otimes \cO_K[1/M]$ of $\tilde{U}\otimes\cO_K[1/M]$ together with an isomorphism $\tau: X_0\to \tilde{U}_0$ of $\cO_K[1/M]$-schemes such that $\tau(Z_I\otimes \cO_K[1/M])=\tilde{Z}_I\otimes \cO_K[1/M]$. 
Thus, we have 
\begin{align*}
 E_{L,\tilde{Z}_I,\cI}^{(r)}(m)=&q_L^{-mn}\left(\#\left(\left(\pi_{m1}^{\tilde{U}}\right)^{-1}\left(\tilde{Z}_I\left(k_L\right)\right)\right)-q_L^{n-r}\#\left(\left(\pi_{(m-1)1}^{\tilde{U}}\right)^{-1}\left(\tilde{Z}_I(k_L)\right)\right)\right)\\
 =&q_L^{-mn}\left(\#\left(\left(\pi_{m1}^{X}\right)^{-1}\left(Z_I(k_L)\right)\right)-q_L^{n-r}\#\left(\left(\pi_{(m-1)1}^{X}\right)^{-1}\left(Z_I(k_L)\right)\right)\right)\\
 \end{align*}
for all $m\geq 2$ and all $L\in\tilde{\cL}_{K,M}$. Now, we can proceed as in the proof of Proposition \ref{counting} and the proof of \ref{count} to obtain a positive constant $C_I$ and an integer $M_I$ such that (\ref{equal1}) holds.

\end{proof}

\begin{named}{Theorem G}\label{locCounting}Suppose that Conjecture \ref{avelocIgu} holds. Let $X$ be an $\cO_K$-scheme of finite type such that $X_K$ is equi-dimensional and a locally complete intersection having only rational singularities. Let $\cU=\{U_1,\dots,U_N\}$ be an open affine cover of $X$ such that for each $i$, $U_i$ is an $\cO_K$-scheme of finite type and there is a closed embedding $\iota_i:U_i\otimes K\to\AA_K^{n_i}$ whose image is a complete intersection in $\AA_K^{n_i}$. Since $X_K$ is a locally complete intersection, such a cover $\cU$ exists. Let $\sigma(\cU,X)=\min_{I}\moi_{K,Z_I}^{\textnormal{aloc}}(X_K)$, where $Z_I:=(\cap_{i\in I} U_i)\setminus (\cup_{j\notin I} U_j)$ for $\emptyset\neq I\subset \{1,...,N\}$.  Then $\sigma(\cU,X)>-\dim(X_K)$. Moreover, there is a constant $C$ and an integer $M$ such that $$\left|\frac{\#X(\cO_L/\varpi_L^m\cO_L)}{q_L^{m\dim(X_K)}}-\frac{\# X(k_L)}{q_L^{\dim(X_K)}}\right|\leq Cq_L^{-\lceil 2(\sigma(\cU,X)+\dim(X_K))\rceil}$$
for all $m\geq 1$ and all $L\in\tilde{\cL}_{K,M}$.
\end{named}
\begin{proof}Let $\emptyset\neq I\subset \{1,...,N\}$ and $i\in I$. We deduce from the assumptions on $\iota_i$ and $X_K$ that $\iota_i(U_i\otimes K)$ is equi-dimensional and a complete intersection having only rational singularities. Therefore, we can use Definition \ref{moiloc2} and \ref{locpole} to have 
$$
\moi_{K,Z_I}^{\textnormal{aloc}}(X_K)\geq \moi_{K,\iota(Z_I\otimes K)}^{\textnormal{loc}}(\iota_i(U_i\otimes K))-n_i> -\dim(\iota_i(U_i\otimes K)).$$
It is clear that $\dim(\iota_i(U_i\otimes K))=\dim(U_i\otimes K)=\dim(X_K)$. Hence
$$\sigma(\cU,X)=\min_{I}\moi_{K,Z_I}^{\textnormal{aloc}}(X_K)>-\dim(X_K).$$
This inequality and  Proposition \ref{acount} imply the second claim.
\end{proof}
Now, we will provide a version of \ref{locCounting} for flat families of varieties. In order to deal with this, we need some new notation.  Suppose that $X$ is an $\cO_K$-scheme of finite type. Let $\cE(X)$ be the set of all finite open affine covers $\cU=(U_i)_{i\in I}$ of $X$ such that $U_i$ is of finite type over $\cO_K$ for all $i\in I$. For each $\cU\in\cE(X)$, let $\sigma(U,X)$ be as in Proposition \ref{acount} and \ref{locCounting}.  By Definitions \ref{moiloc1}, \ref{moiloc2}, Proposition \ref{VZ}, Lemma \ref{eqzeta} and Corollary \ref{naig}, the set $\sigma(X)=\{\sigma(\cU,X)\mid \cU\in \cE(X)\}$ is finite since it is contained in the set $\Lambda((V_i,\iota_i,h_i)_{i\in I})=\{\frac{\nu_{ij}}{N_{ij}}-n_i\mid i\in I, j\in J_{h_i}\}$ for a fixed cover $\cV_0=(V_i)_{i\in I}$ of $X_K$ by open affine subschemes, a fixed closed embedding $\iota_i$ of $V_i$ to $\AA_K^{n_i}$ and a fixed log resolution $h_i:Y_i\to \AA_K^{n_i}$ of $\iota_i(V_i)$ whose numerical data is $((\nu_{ij},N_{ij}))_{j\in J_{h_i}}$ for each $i\in I$.  We set
$$\rho(K,X)=\max_{\cU\in\cE(X)}\lceil 2(\sigma(\cU,X)+\dim(X_K))\rceil.$$
When $X$ is a $K$-scheme of finite type, we take an $\cO_K$-scheme $\tilde{X}$ of finite type such that $X=\tilde{X}_K$ as in Remark \ref{variety} and set $\sigma(X):=\sigma(\tilde{X})$, $\rho(K,X):=\rho(K,\tilde{X})$. As mentioned  in Remark \ref{variety}, $\sigma(X)$ and $\rho(K,X)$ do not depend on the choice of $\tilde{X}$. We use the convention that $\rho(K,\emptyset)=+\infty$.

Let $X$ and $Y$ be $\cO_K$-schemes of finite type. Let $\varphi:X\to Y$ be an $\cO_K$-morphism then $\varphi$ is of finite type since $\cO_K$ is a Noetherian ring. Thus, we can cover $X_K$ (resp. $Y_K$) by finitely many open affine subschemes $W_i$ (resp. $V_j$) of finite type over $K$ for $i\in I$ (resp. $j\in J$) such that for each $i\in I$, there is $j\in J$ such that $\varphi(W_i)\subset V_j$. Let $y\in Y(\overline{K})$ then  $\{W_{iy}=W_{i}\cap\varphi_K^{-1}(y)\mid i\in I\}$ is a cover of $X_{Ky}=\varphi_K^{-1}(y)$ by open affine subschemes of finite type over $K[y]$. For each $i\in I$ and $j\in J$, we fix a closed embedding $\iota_{ij}:W_i\to V_j\times\AA_K^{n_{ij}}$ such that $\varphi_K|_{W_i}=\textnormal{pr}\circ\iota_{ij}$, where $\textnormal{pr}$ is the projection $V_j\times\AA_K^{n_{ij}}\to V_j$. We can use Definitions \ref{moiloc1}, \ref{moiloc2}, Proposition \ref{VZ}, Lemma \ref{eqzeta} and Corollary \ref{naig} again together with the argument in \cite[Property 1.23]{Mustata2} on constructing log resolution of $\iota_{ij}(W_{iy})$ in $\AA_{K(y)}^{n_{ij}}$ (piecewise) uniformly in $y\in V_j(\overline{K})$  to show that the sets $\cup_{y\in Y(\overline{K})}\sigma(X_{Ky})$ and $\{\rho(K(y),X_{Ky})\mid y\in Y(\overline{K})\}$ are finite. Now we set
$$E(K,\varphi)=\min_{y\in Y(\overline{K})} \rho(K(y),X_{Ky}).$$

\begin{prop}\label{FRST}Suppose that $X$ and $Y$ are $\cO_K$-schemes of finite type such that $X_K,Y_K$ are equi-dimensional. Let $\varphi:X\to Y$ be an $\cO_K$-morphism such that $\varphi_K:X_K\to Y_K$ is flat. With the notion of $FRS$ and $FTS$ morphisms in Definition \ref{defrs}, the following assertions hold:
\begin{itemize}
\item[(\textit{a}),]$E(K,\varphi)>0$ if and only if $\varphi_K$ is an $FRS$ morphism whose non-empty fibers are locally complete intersections. If this is the case, we have 
$$\dim(\Sing(X_{Ky}))\leq \dim(X_{Ky})-E(K,\varphi)-1$$ and 
$$\dim\left(\big(\pi_{X_{Ky}}^{(m-1)0}\big)^{-1}\left(\Sing(X_{Ky})\right)\right)\leq m\dim(X_{Ky})-E(K,\varphi)$$ for all $m\geq 2$ and all $y\in Y(\overline{K})$ such that $X_{Ky}\neq\emptyset$.
\item[(\textit{b}),]If Conjecture \ref{avelocIgu} holds and $E(K,\varphi)> 1$, then $\varphi_K$ is an $FTS$ morphism whose non-empty fibers are locally complete intersections.
\end{itemize}

\end{prop}
\begin{proof}
Let $y\in Y(\overline{K})$ such that $X_{Ky}\neq\emptyset$. Since $\varphi_K$ is flat and $X_K, Y_K$ are equi-dimensional, we have that $X_{Ky}$ is of pure dimension $\dim(X_K)-\dim(Y_K)$ (see \cite[Chapter III, Proposition 9.5]{Harts}). Let $\cU=\{U_1,...,U_N\}\in \cE(\tilde{X}(y))$ where $\tilde{X}(y)$ in an $\cO_{K(y)}$-scheme of finite type such that $X_{Ky}= \tilde{X}(y)\otimes K(y)$. Let $Z_I= (\cap_{i\in I} U_i)\setminus (\cup_{j\notin I} U_j)$ for each $\emptyset\neq I\subset \{1,...,N\}$. We can use Definition \ref{moiloc2} and \ref{locpole} to show that $\moi_{K(y),Z_I}^{\textnormal{aloc}}(X_{Ky})>-\dim(X_{Ky})$ if and only if there is an open neighbourhood $V$ of $Z_I\otimes K(y)$ in $X_{Ky}$ such that $V$ is a locally complete intersection having only rational singularities. Thus, $$\sigma(\cU,X_{Ky})=\min_{I}\moi_{K(y),Z_I}^{\textnormal{aloc}}(X_{Ky})>-\dim(X_{Ky})$$ if and only if $X_{Ky}$ is a locally complete intersection having only rational singularities. This together with the definitions of $\rho(K(y),X_{Ky})$ and $E(K,\varphi)$ imply the first claim of (\textit{a}). We take an $\cO_{K(y)}$-scheme $Z$ of finite type such that $Z_{K(y)}=X_{Ky}$. By \ref{locCounting} and the definition of $E(K,\varphi)$, there is a constant $C$ and an integer $M$ such that 
\begin{equation}\label{upbou}
\left|\frac{\#Z(\cO_L/\varpi_L^m\cO_L)}{q_L^{m(\dim(X_K)-\dim(Y_K))}}-\frac{\# Z(k_L)}{q_L^{\dim(X_K)-\dim(Y_K)}}\right|\leq Cq_L^{-E(K,\varphi)}
\end{equation}
for all $m\geq 1$ and all $L\in\tilde{\cL}_{K(y),M}$. 
By Hensel's lemma, Remark \ref{jetloc} and (\ref{upbou}), for each $m\geq 1$, there is an integer $M_m$ such that
\begin{equation}\label{upbou1}
\left|\frac{\#\left(\left(\pi_{Z_{k_L}}^{(m-1)0}\right)^{-1}\left(\Sing\left(Z_{k_L}\right)\right)\right)(k_L)}{q_L^{m(\dim(Z_{K(y)}))}}-\frac{\# \Sing(Z_{k_L})(k_L)}{q_L^{\dim(Z_{K(y)})}}\right|\leq Cq_L^{-E(K,\varphi)}
\end{equation}for all $L\in\tilde{\cL}_{K(y),M_m}$. Thus, by the discussion in \cite[Theorem 4.11, $(3)\Rightarrow (1)$]{C-G-H}, one has $$\dim(\Sing(Z_{K(y)}))\leq \dim(Z_{K(y)})-E(K,\varphi)-1$$ and 
$$\dim\left(\big(\pi_{Z_{K(y)}}^{(m-1)0}\big)^{-1}\left(\Sing(Z_{K(y)})\right)\right)\leq m\dim\left(Z_{K(y)}\right)-E(K,\varphi)$$ for all $m\geq 2$. This completes the proof of (\textit{a}).  

To prove $(b)$, it is sufficient to prove that if Conjecture \ref{avelocIgu} holds and $E(K,\varphi)>1$ then $Z_{K(y)}=X_{Ky}$ is normal  and has only terminal singularities. 
To simplify notation, we set $\tilde{Z}=Z_{K(y)}$.
Since  $E(K,\varphi)>1$, it follows from (\textit{a}) that $\tilde{Z}$ is a locally complete intersection having only rational singularities. This and Proposition \ref{singjet} imply that 
$\tilde{Z}_{m-1}$ is equi-dimensional and a locally complete intersection of dimension $m\dim(\tilde{Z})$, where $\tilde{Z}_{m-1}$ is the $(m-1)^{th}$ jet scheme over $K(y)$ associated to $\tilde{Z}$. On the other hand, it is clear that $\Sing(\tilde{Z}_{m-1})\subset\big(\pi_{\tilde{Z}}^{(m-1)0}\big)^{-1}(\Sing(\tilde{Z}))$. Thus, $\Sing(\tilde{Z}_{m-1})$ is of codimension at least $E(K,\varphi)\geq 2$ in $\tilde{Z}_{m-1}$ for all $m\geq 1$. 
Therefore, $\tilde{Z}_{m-1}$ is Cohen-Macaulay and regular in codimension $1$ for all $m\geq 1$. So $\tilde{Z}_{m-1}$ is normal for all $m\geq 1$. Thus, Proposition \ref{singjet} implies that $\tilde{Z}$ has only terminal singularities.

\end{proof}
In the situation of Proposition \ref{FRST}, if we suppose moreover that $X_K$ and $Y_K$ are locally complete intersections having only semi-log canonical singularities then the map $\varphi$ is strongly $E(K,\varphi)$-smooth provided that $E(K,\varphi)>0$. This is the content of the following theorem.
\begin{thm}\label{improE}
Suppose that $X$ and $Y$ are $\cO_K$-schemes of finite type such that $X_K,Y_K$ are equi-dimensional and locally complete intersections having only semi-log canonical singularities. Let $\varphi:X\to Y$ be an $\cO_K$-morphism. Suppose that $\varphi_K$ is flat and $E(K,\varphi)>0$. In addition, suppose that Conjecture \ref{avelocIgu} holds.  Then $\varphi_K$ is strongly $E(K,\varphi)$-smooth. Moreover, there is a positive constant $C$  and an integer $M$ such that 
$$\left|\frac{\#\left(_m^L\varphi\right)^{-1}(y)}{q_L^{m(\dim(X_K)-\dim(Y_K))}}-\frac{\#\left(_1^L\varphi\right)^{-1}\left(\pi_{m1}(y)\right)}{q_L^{\dim(X_K)-\dim(Y_K)}}\right|\leq Cq_L^{-E(K,\varphi)}$$
for all $L\in \tilde{\cL}_{K,M}$ and all $y\in Y(\cO_L/(\varpi_L^{m}))$, where $_m^L\varphi:X(\cO_L/(\varpi_L^{m}))\to Y(\cO_L/(\varpi_L^{m}))$ is the canonical map induced by $\varphi$.

\end{thm}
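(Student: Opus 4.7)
The strategy is to prove first that $\phi_K$ is strongly $E(K,\phi)$-smooth, after which the counting estimate follows directly from Proposition \ref{countEsmth}. By Definition \ref{sesm}, strong $E$-smoothness amounts to the jet-dimension condition $(\textit{D})$ together with the jet singular-locus codimension condition $(\textit{S2})$.

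For $(\textit{D})$, combine the flatness of $\phi_K$ with the hypothesis $E(K,\phi)>0$ and invoke Proposition \ref{FRST}(a): $\phi_K$ is an $FRS$ morphism whose non-empty fibers are locally complete intersections. Since rational singularities are log-canonical, condition (b) of the lemma preceding Proposition \ref{countEsmth} is satisfied for $\phi_K$, and that lemma yields $(\textit{D})$.

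For $(\textit{S2})$, I will use Proposition \ref{strong} to reduce the task: it is enough to bound $\dim\bigl(X_{m,s_{Y,m}(\phi(x))}\cap (\pi_X^{m0})^{-1}(\Sing(\phi))\bigr)$ for every geometric point $x$ of $\Sing(\phi)$ and every $m\geq 0$. Writing $y=\phi(x)$ and $F=X_{K(y),y}$, the identification $\phi_m^{-1}(s_{Y,m}(y))\simeq F_m$, together with the fact that flatness of $\phi_K$ with locally complete intersection fibers forces $F\cap \Sing(\phi)=\Sing(F)$, reformulates the desired bound as
$$\dim\bigl(F_m\cap (\pi_F^{m0})^{-1}(\Sing(F))\bigr)\leq \dim(F_m)-E(K,\phi).$$
To obtain this, I apply \ref{locCounting} (valid under Conjecture \ref{avelocIgu}) to an $\cO_{K(y)}$-model of $F$; Hensel's lemma then lifts the resulting counting estimate from $F$ itself to the truncated preimage $(\pi_F^{(m-1)0})^{-1}(\Sing(F))$, exactly as in the proof of Proposition \ref{FRST}(b); a Lang--Weil dimension comparison (Proposition \ref{Lang-Weil}) produces the displayed bound with exponent $\rho(K(y),F)$, which by the very definition of $E(K,\phi)$ satisfies $\rho(K(y),F)\geq E(K,\phi)$. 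Using $\dim(F_m)=(m+1)\dim(F_K)=\dim(X_m)-\dim(Y_m)$, a consequence of $(\textit{D})$, this is precisely $(\textit{S2})$ with $E=E(K,\phi)$.

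Once $(\textit{D})$ and $(\textit{S2})$ are in place, $\phi_K$ is strongly $E(K,\phi)$-smooth, and Proposition \ref{countEsmth} applied with $E=E(K,\phi)$ delivers the claimed inequality with a single pair $(C,M)$. The main technical obstacle is the jet-dimension bound for fiberwise singular loci: one must combine the arithmetic estimate from \ref{locCounting} with the dimensional asymptotic so that the bound holds uniformly across all geometric fibers, which is exactly why $E(K,\phi)$ is defined as the infimum of $\rho(K(y),X_{K(y),y})$ and why a fiberwise application of \ref{locCounting} suffices.
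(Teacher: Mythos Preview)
Your proposal is correct and follows essentially the same route as the paper's proof: both reduce to establishing strong $E(K,\phi)$-smoothness via Proposition \ref{strong}, obtain the fiberwise jet-singular-locus bound by applying \ref{locCounting} to an $\cO_{K(y)}$-model of each fiber (exactly as in the proof of Proposition \ref{FRST}(b), using Hensel's lemma and the Lang--Weil/dimension argument from \cite[Theorem 4.11]{C-G-H}), and then invoke Proposition \ref{countEsmth}. Your explicit verification of condition $(\textit{D})$ through Proposition \ref{FRST}(a) and the lemma preceding Proposition \ref{countEsmth} is a minor elaboration; the paper instead extracts the needed dimension equality $\dim((X_{K,y})_m)=(m+1)\dim(X_{K,y})$ directly from the computations already carried out in the proof of Proposition \ref{FRST}.
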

\begin{proof}Firstly, we will show that $\varphi_K$ is strongly $E(K,\varphi)$-smooth.  Let $y\in Y(\overline{K})$ such that  $\tilde{Z}=X_{Ky}\neq\emptyset$. By the flatness of $\varphi_K$  and the fact that $X_K, Y_K$ are equi-dimensional, we have that $\tilde{Z}$ is of pure  dimension $\dim(X_K)-\dim(Y_K)$. It follows from Proposition \ref{FRST} that $$\dim(\Sing(\tilde{Z}))\leq \dim(\tilde{Z})-E(K,\varphi)-1$$ and 
$$\dim\left(\big(\pi_{\tilde{Z}}^{(m-1)0}\big)^{-1}\big(\Sing(\tilde{Z})\big)\right)\leq m\dim(\tilde{Z})-E(K,\varphi)$$ for all $m\geq 2$. For each $m\geq 0$, let $X_{K,m}$ (resp. $Y_{K,m}$) be the $m^{\textnormal{th}}$ jet scheme associated to $X_K$ (resp. $Y_K$) over $K$ as in Section \ref{SJ}. Let $\varphi_{K,m}:X_{K,m}\to Y_{K,m}$ be the $m^{\textnormal{th}}$ jet morphism of $\varphi_K$ for $m\geq 0$.  By Proposition \ref{singjet} and the assumption that $X_K, Y_K$ are equi-dimensional and locally complete intersections having only semi-log canonical singularities, we have $\dim(X_{K,m-1})=m\dim(X_K)$, $\dim(Y_{K,m-1})=m\dim(Y_K)$, so
$$\dim\left(\big(\pi_{\tilde{Z}}^{(m-1)0}\big)^{-1}\big(\Sing(\tilde{Z})\big)\right)\leq \dim(X_{K,m-1})-\dim(Y_{K,m-1})-E(K,\varphi)$$
for all $m\geq 1$. Since $\varphi_K$ is flat and $X_K,Y_K$ are equi-dimensional, we have $\Sing(\varphi_{K})\cap\tilde{Z}=\Sing(\tilde{Z})$. On the other hand, the condition $\varphi_{K}^{-1}(y)=\tilde{Z}$ implies $\tilde{Z}_{m-1}=\varphi_{K,m-1}^{-1}(s_{Y_{K},m-1}(y))\subset X_{K,m-1}$ for all $m\geq 1$. Therefore, we have
$$\big(\pi_{\tilde{Z}}^{(m-1)0}\big)^{-1}\big(\Sing(\tilde{Z})\big)=\big(\pi_{X_{K}}^{(m-1)0}\big)^{-1}\left(\Sing(\varphi_{K})\right)\cap \left(X_{K,m-1}\right)_{s_{Y_{K},m-1}(y)}$$
are of dimension at most $\dim(X_{K,m-1})-\dim(Y_{K,m-1})-E(K,\varphi)$ for all $m\geq 1$. By Proposition \ref{strong}, $\varphi_K$ is strongly $E(K,\varphi)$-smooth. Thus, we can use Proposition \ref{countEsmth} to obtain a positive constant $C'$ and an integer $M'$ such that 
$$\left|\frac{\#\left(_m^L\varphi\right)^{-1}(y)}{q_L^{m(\dim(X_K)-\dim(Y_K))}}-\frac{\# \left(_1^L\varphi\right)^{-1}(\pi_{m1}(y))}{q_L^{\dim(X_K)-\dim(Y_K)}}\right|\leq C'q_L^{-E(K,\varphi)}$$
for all $L\in \tilde{\cL}_{K,M'}$ and all $y\in Y(\cO_L/(\varpi_L^{m}))$.
\end{proof}
\begin{proof}[Proof of $(i)$ of \ref{FRS}] By the idea of \cite[Section 6]{G-H1}, it is sufficient to deal with number fields $K$. The main point here is to use Proposition \ref{delta} to check when a morphism is of relative dimension $d$ and whose fibers are geometrically irreducible. The proof consists of two steps. In the first step, we will prove our statement for the affine case. For this step, the problem reduces to proving a uniform bound for some family of exponential sums over finite fields. In the second step, we will try to reduce the statement to the corresponding statement for affine varieties.

\textbf{Step 1.}  We suppose that for each $i$, $X_i$ is the affine subscheme of $\AA_K^{n_i}$ associated to the ideal $\cI_i$ generated by polynomials $f_{i1},\dots,f_{iN_i}$ in $\cO_K[x_{i1},\dots,x_{in_i}]$ with $N_i=n_i-\dim(X_i)$. Since $X_i$ is geometrically irreducible and $\varphi_{i\overline{K}}(X_i\otimes \overline{K})$ is not contained in any proper affine subspace of $\AA_{\overline{K}}^r$ for all $i$, we have $\dim(X_i)\geq 1$ for all $i$. We denote by $\tilde{X}_i$ the $\cO_K$-scheme associated to the ideal generated by $f_{i1},\dots,f_{iN_i}$. Moreover, we suppose that $\varphi_i=(g_{i1},\dots,g_{ir})$ with polynomials $g_{i1},\dots,g_{ir}\in K[x_{i1},\dots,x_{in_i}]$.  Suppose that $\ell>2r$. Then we will prove that $\Phi^{(\ell)}=\varphi_1*...*\varphi_\ell:\cX^{(\ell)}=X_1\times...\times X_\ell\to\AA_K^r$ is $FGI$. So it suffices to verify that if $Q\in \overline{K}^r$ then $(\Phi_{\overline{K}}^{(\ell)})^{-1}(Q)$ is irreducible. Replacing $K$ by $K(Q)$ and using a change of coordinates if needed, we can suppose that $Q=0$.  Thus, we need to prove that $(\Phi^{(\ell)})^{-1}(0)$ is geometrically irreducible. Replacing $\Phi^{(\ell)}$ by $H\Phi^{(\ell)}$ for a suitable non-zero integer $H$, we can suppose that the coefficients of $g_{ij}$ belong to $\cO_K$ for all $i,j$. Then $(\Phi^{(\ell)})^{-1}(0)$ is associated to the ideal $\cI_K$, where
$$\cI=\bigg(f_{11},\dots,f_{1N_1},\dots,f_{\ell 1},\dots,f_{\ell N_\ell},\sum_{1\leq i\leq \ell}g_{i1},\dots,\sum_{1\leq i\leq \ell}g_{ir}\bigg).$$
We set $n=\sum_{1\leq i\leq \ell}n_i, N=\sum_{1\leq i\leq \ell}N_i, \cR=r+N$ and  $g_e=\sum_{1\leq i\leq \ell}g_{ie}$  for $1\leq e\leq r$. By Proposition \ref{delta} and the miracle flatness theorem, it is sufficient to show that there is a constant $\delta<-\cR$ and a constant $c$ such that 
\begin{equation}\label{eq1234}
\left|E_{\cI}^{(\cR)}(\mathfrak{p})\right|=\left|E_{K_\mathfrak{p},\cI}^{(\cR)}(1)\right|\leq c\cN_\mathfrak{p}^{\delta}
\end{equation}
for all $\mathfrak{p}\in\Specm(\cO_K)$.

By Remark \ref{absexp}, if $L\in \cL_{K,1}$ then we have
\begin{align*}
E_{L,\cI}^{(\cR)}(1)&=\int_{\cO_L^{n}\times (\cO_L^{\cR}\setminus \varpi_L\cO_L^{\cR})}\psi\bigg(\sum_{1\leq e\leq r}y_eg_{e}+\sum_{i=1}^\ell \sum_{1\leq j\leq N_i} y_{ij}f_{ij}\bigg)\mu_{L^{n+\cR}}\\
&=\underbrace{\int_{\cO_L^{n}\times \cO_L^{N}\times (\cO_L^r\setminus \varpi_L\cO_L^{r})}\psi\bigg(\sum_{1\leq e\leq r}y_eg_{e}+\sum_{i=1}^\ell \sum_{1\leq j\leq N_i} y_{ij}f_{ij}\bigg)\mu_{L^{n+\cR}}}_{J_1(L,1)}
\end{align*}
\begin{align}
&\hspace{2.4cm}+\underbrace{\int_{\cO_L^{n}\times (\cO_L^{N}\setminus \varpi_L\cO_L^{N})\times \varpi_L\cO_L^{r}}\psi\bigg(\sum_{1\leq e\leq r}y_eg_{e}+\sum_{i=1}^\ell \sum_{1\leq j\leq N_i} y_{ij}f_{ij}\bigg)\mu_{L^{n+\cR}}}_{J_2(L,1)}\label{eq0101}
\end{align}
for all additive characters $\psi$ of $L$ of conductor $1$. Here, we use the variables  $(y_1,\dots,y_r)\in L^r$, $(x_{ij})_{1\leq i\leq \ell, 1\leq j\leq n_i}\in L^n$ and $(y_{ij})_{1\leq i\leq\ell, 1\leq j\leq N_i}\in L^{N}$.

Now, we will estimate $J_{1}(K_{\mathfrak{p}},1)$ and $J_{2}(K_\mathfrak{p},1)$ if $\mathfrak{p}\in \Specm(\cO_K)$. First of all, if $L\in\cL_{K,1}$, $\psi$ is an additive character of $L$ of conductor $1$,  $(y_1,\dots,y_r)\in \varpi_L\cO_L^r$ and $(x_{ij})_{1\leq i\leq \ell, 1\leq j\leq n_i}\in \cO_L^n$ then $\psi(\sum_{1\leq e\leq r}y_eg_{e})=1$, thus we have 
\begin{align*}
J_2(L,1)&=\int_{\cO_L^{n}\times (\cO_L^{N}\setminus \varpi_L\cO_L^{N})\times \varpi_L\cO_L^{r}}\psi\bigg(\sum_{1\leq e\leq r}y_eg_{e}+\sum_{i=1}^\ell \sum_{1\leq j\leq N_i} y_{ij}f_{ij}\bigg)\mu_{L^{n+\cR}}\\
&=q_L^{-r}\int_{\cO_L^{n}\times (\cO_L^{N}\setminus \varpi_L\cO_L^{N})}\psi\bigg(\sum_{i=1}^\ell \sum_{1\leq j\leq N_i} y_{ij}f_{ij}\bigg)\mu_{L^{n+N}}\\
&=q_L^{-r}q_L^{-n}\left(\#\tilde{\cX}^{(\ell)}(k_L)-q_L^{n-N}\right)
=q_L^{-r}E_{L,I(\tilde{\cX}^{(\ell)})}^{(N)}(1),
\end{align*}
where $\tilde{\cX}^{(\ell)}=\tilde{X}_{1}\times...\times\tilde{X}_\ell$. On the other hand, one has $\dim(\cX^{(\ell)})=\sum_{1\leq i\leq \ell}\dim(X_i)=n-N$ and $\cX^{(\ell)}=\tilde{\cX}^{(\ell)}_K$ is geometrically irreducible. By Proposition \ref{delta}, there is a constant $c$ and a real number $\delta_0<-N$ such that
\begin{equation}\label{eq0123}
\left|J_2(K_\mathfrak{p},1)\right|\leq c\cN_{\mathfrak{p}}^{\delta_0-r}
\end{equation} 
for all $\mathfrak{p}\in\Specm(\cO_K)$.

On the other hand, by orthogonality of characters, we have
\begin{align}
J_1(L,1)&=\int_{\cO_L^{n}\times \cO_L^{N}\times (\cO_L^r\setminus \varpi_L\cO_L^{r})}\psi\bigg(\sum_{1\leq e\leq r}y_eg_{e}+\sum_{i=1}^\ell \sum_{1\leq j\leq N_i} y_{ij}f_{ij}\bigg)\mu_{L^{n+\cR}}\nonumber\\
&=\int_{\cO_L^r\setminus \varpi_L\cO_L^{r}}\bigg(\prod_{1\leq i\leq \ell}\int_{\cO_L^{n_i}\times \cO_L^{N_i}}\psi\bigg(\sum_{1\leq e\leq r}y_eg_{ie}+ \sum_{1\leq j\leq N_i} y_{ij}f_{ij}\bigg)\mu_{L^{n_i+N_i}}\bigg)\mu_{L^r}\nonumber\\
&=\int_{\cO_L^r\setminus \varpi_L\cO_L^{r}}\bigg(\prod_{1\leq i\leq \ell}q_L^{-n_i}\sum_{x\in \tilde{X}_i(k_L)}\Psi\bigg(\sum_{1\leq e\leq r}\overline{y}_eg_{ie}(x)\bigg)\bigg)\mu_{L^r},\label{eq003}
\end{align}
where $\Psi$ is the additive character of $k_L$ induced by $\psi$. For each $1\leq i\leq \ell$, since $\varphi_{i\overline{K}}(X_{i}\otimes \overline{K})$ is not contained in any proper affine subspace of $\AA_{\overline{K}}^r$, one has that $\sum_{1\leq e\leq r}a_ig_{ie}$ is non-constant on $X_{i}\otimes\overline{K}$ for all $(a_1,...,a_r)\in\overline{K}^r\setminus\{0\}$. By logical compactness  (see \cite[Corollary 2.2.10]{Marker}), there exists an integer $M$ such that for all finite fields $k$ of characteristic at least $M$ and all tuples $(\overline{y}_1,\dots,\overline{y}_r)\in \overline{k}^r\setminus\{0\}$, we have that
$\tilde{X}_{i}\otimes\overline{k}$ is irreducible and $\sum_{1\leq e\leq r}\overline{y}_ig_{ie}$ is non-constant on $\tilde{X}_{i}\otimes\overline{k}$. Thus, $(\sum_{1\leq e\leq r}\overline{y}_ig_{ie})^{-1}(a)\cap (\tilde{X}_{i}\otimes\overline{k})$ is of dimension at most $\dim(X_i)-1$ for all $a\in \overline{k}$ and all $(\overline{y}_1,\dots,\overline{y}_r)\in \overline{k}^r\setminus\{0\}$ provided that $\Ch(k)>M$. Therefore,  by enlarging $M$ if needed, it follows immediately from \cite[Theorem 2]{Kowalski:Def} that there is a constant $c_0$ such that 
$$\bigg|\sum_{x\in \tilde{X}_i(k_L)}\Psi\bigg(\sum_{1\leq e\leq r}\overline{y}_eg_{ie}(x)\bigg)\bigg|\leq c_0q_L^{-\frac{1}{2}}\#\tilde{X}_i(k_L)$$
for all $1\leq i\leq \ell$, all $L\in\tilde{\cL}_{K,M}$ and all tuples $(\overline{y}_1,\dots,\overline{y}_r)\in k_L^r\setminus\{0\}$. 
Thus, Proposition \ref{Lang-Weil} implies that there is a constant $c_1$ such that 
\begin{equation}\label{eq023}
\bigg|\sum_{x\in \tilde{X}_i(k_L)}\Psi\bigg(\sum_{1\leq e\leq r}\overline{y}_eg_{ie}(x)\bigg)\bigg|\leq c_1q_L^{\dim(X_i)-1/2}
\end{equation}
for all $1\leq i\leq \ell$, all $L\in\tilde{\cL}_{K,M}$ and all tuples $(\overline{y}_1,\dots,\overline{y}_r)\in k_L^r\setminus\{0\}$. By combining (\ref{eq003}) and (\ref{eq023}), we have
\begin{align*}
\left|J_1(L,1)\right|&\leq c_1^\ell q_L^{-n}q_L^{\sum_{1\leq i\leq \ell}\dim(X_i)}q_L^{-\ell/2}
=c^\ell q_L^{-n}q_L^{n-N}q_L^{-\ell/2}
=c^{\ell}q_L^{-N-\ell/2}
\end{align*}
for all $L\in\cL_{K,M}$. So there exists a constant $C$ such that 
\begin{equation}\label{eq0012}
\left|J_1(K_\mathfrak{p},1)\right|\leq C\cN_\mathfrak{p}^{-N-\ell/2}
\end{equation} for all $\mathfrak{p}\in \Specm(\cO_K)$. It follows from (\ref{eq0101}), (\ref{eq0123}) and (\ref{eq0012}) that $$\left|E_{L,\cI}^{(\cR)}(1)\right|\leq \left|J_1(K_\mathfrak{p},1)\right|+\left|J_2(K_\mathfrak{p},1)\right|\leq (C+c)\cN_\mathfrak{p}^{\max\{-N-\ell/2,\delta_0-r\}}$$ for all $\mathfrak{p}\in \Specm(\cO_K)$. Since $\delta_0<-N$ and $\ell>2r$, we have  $\delta=\max\{-N-\ell/2,\delta_0-r\}<-(N+r)=-\cR$. This implies (\ref{eq1234}) as desired.

\textbf{Step 2.} Since $X_{i}\otimes\overline{K}$ is irreducible, if $Y_1,Y_2$ are two non-empty open subsets of $X_{i}\otimes\overline{K}$, then $Y_1\cap Y_2\neq \emptyset$. We take a cover of $X_{i}\otimes\overline{K}$ by non-empty affine open subsets $(Y_{ij})_{j\in I_i}$, then  $Y_{ij}\cap Y_{ij'}\neq \emptyset$ for all $j,j'$. Since $X_{i}$ is a locally  complete intersection, we can suppose that each $Y_{ij}$ has a closed embedding to an affine space $\AA_{\overline{K}}^{n_{ij}}$  whose image is a complete intersection. We use the above argument to see that $\Phi_{\overline{K}}^{(\ell)}|_{Y_{1j_1}\times...\times Y_{\ell j_\ell}}$ is $FGI$ for all tuples $(j_1,\dots,j_\ell)\in I_1\times...
\times I_\ell$. Thus, for each $Q\in \overline{K}^r$ and each tuple $(j_1,\dots,j_\ell)\in I_1\times...
\times I_\ell$ we have $(\Phi_{\overline{K}}^{(\ell)})^{-1}(Q)\cap Y_{1j_1}\times...\times Y_{\ell j_\ell}$ is geometrically irreducible of dimension $\dim(\cX^{(\ell)})-r$. For two tuples $(j_1,\dots,j_\ell),(j'_1,\dots,j'_\ell)\in I_1\times...\times I_\ell$, we can repeat the above argument with $X_i=Y_{ij_i}\cap Y_{ij'_i}$ for $1\leq i\leq \ell$ to obtain a  suitable non-empty open subset $Z$ of $Y_{1j_1}\times...\times Y_{\ell j_\ell}\cap Y_{1j'_1}\times...\times Y_{\ell j'_\ell}$ such that  $(\Phi_{\overline{K}}^{(\ell)})^{-1}(Q)\cap Z$ is geometrically irreducible of dimension  $\dim(\cX^{(\ell)})-r$. So for each $Q\in \overline{K}^r$, $\big((\Phi_{\overline{K}}^{(\ell)})^{-1}(Q)\cap Y_{1j_1}\times...\times Y_{\ell j_\ell}\big)_{(j_1,\dots,j_\ell)\in I_1\times...
\times I_\ell}$ forms an open cover of $(\Phi_{\overline{K}}^{(\ell)})^{-1}(Q)$ by geometrically irreducible varieties such that $(\Phi_{\overline{K}}^{(\ell)})^{-1}(Q)\cap Y_{1j_1}\times...\times Y_{\ell j_\ell}\cap Y_{1j'_1}\times...\times Y_{\ell j'_\ell}\neq \emptyset$ for all pairs $(j_1,\dots,j_\ell),(j'_1,\dots,j'_\ell)$. Hence, 
$(\Phi_{\overline{K}}^{(\ell)})^{-1}(Q)$ is geometrically irreducible of dimension $\dim(\cX^{(\ell)})-r$ for all $Q\in \overline{K}^r$. Therefore, the miracle flatness theorem implies that $\Phi^{(\ell)}$ is $FGI$.

\end{proof}

\begin{proof}[Proof of $(ii)$ of \ref{FRS}] 
The claim is trivial if $D=1$, so we can suppose that $D>1$. As in \cite[Section 6]{G-H1}, it is sufficient to prove our claims for any number field $K$. The proof will be divided into several steps.

\textbf{Step 1:}\textit{ Reduce the statement to proving a lower bound of the motivic oscillation index of ideals.}

 Let $r\geq 1$ and $(X_i)_{i\geq 1}$ be $K$-varieties of degree complexity at most $(R,D)$ such that $X_i$ is smooth for all $i\geq 1$. Let $(\varphi_i:X_i\to \AA_K^r)_{1\leq i}$ be $K$-morphisms as in the statement. We will show that if $\ell>N(r,R,D)$ (resp. $N'(r,R,D)$) then $\Phi^{(\ell)}=\varphi_1*...*\varphi_\ell: \cX^{(\ell)}=X_1\times...\times X_\ell\to\AA_K^r$ is an $FRS$ (resp. $FTS$) morphism. Let $P=(x_1,\dots,x_\ell)\in \cX^{(\ell)}(\overline{K})=X_1(\overline{K})\times... \times X_\ell(\overline{K})$, we will check when the fiber $\cX_{Q}^{(\ell)}$ is a locally complete intersection of dimension $-r+\sum_{i=1}^{\ell} \dim(X_i)$ and has only rational singularities in a small neighbourhood of $P$, where $Q=\Phi_{K(P)}^{(\ell)}(P)$. Replacing $K$ by $K(P)$ if needed, we can suppose that $P\in \cX^{(\ell)}(K)$.  This is a local question, so we can suppose that  for each $i$, $X_i$ is an affine closed subvariety of $\AA_K^{n_i}$ for some $ n_i>0$. We also can suppose that $I(X_i)$ is generated by polynomials $f_{i1},\dots,f_{iN_i}$ of degree at  most $D$ in the variables $x_{i1},\dots,x_{in_i}$ over $\cO_K$ and 
$N_i=n_i-\dim(X_i)\leq R$ for all $i$. Since $\varphi_{i\overline{K}}(X_{ij})$ is not contained in any proper affine subspace of $\AA_{\overline{K}}^r$ for each irreducible component $X_{ij}$ of $X_{i}\otimes\overline{K}$ and each $1\leq i\leq \ell$, we have $\dim(X_i)\geq 1$ for all $i$, thus 
\begin{equation}\label{nNi}
N_i<n_i
\end{equation} 
for all $i$.  For each $i$, let $\varphi_i=(g_{i1},\dots,g_{ir})$ for polynomials $(g_{ij})_{1\leq j\leq r}$ of degree at most $D$ in the variables $x_{i1},\dots,x_{in_i}$ over $K$.   Then $\cX^{(\ell)}$ is the closed subvariety of $\AA_K^{\sum_{1\leq i\leq \ell}n_i}$ associated to the ideal $I(\cX^{(\ell)})=(f_{11},\dots,f_{1N_1},\dots,f_{\ell 1},\dots,f_{\ell N_\ell})$. On the other hand, one has $$\Phi^{(\ell)}=\bigg(\sum_{1\leq i\leq \ell}g_{i1},\dots,\sum_{1\leq i\leq \ell}g_{ir}\bigg).$$ By a similar argument as in the proof of Part $(i)$ of \ref{FRS}, we can suppose that $P=0\in \AA_K^{\sum_{i=1}^\ell n_i}, Q=0\in \AA_K^r$ and the coefficients of $g_{ij}$ belong to $\cO_K$ for all $i,j$. Let $\cI$ be the ideal of $\cO_K[x_{ij}]_{1\leq i\leq \ell, 1\leq j\leq n_{i}}$ generated by the polynomials  $f_{11},\dots,f_{1N_1},\dots,f_{\ell 1},\dots,f_{\ell N_\ell}$ and  $\sum_{1\leq i\leq \ell}g_{i1},\dots,\sum_{1\leq i\leq \ell}g_{ir}$. Then we have $I(\cX_{Q}^{(\ell)})=\cI_K$.
 
We set $g_e=\sum_{1\leq i\leq \ell}g_{ie}, n= \sum_{1\leq i\leq \ell} n_i,\cR=r+\sum_{i=1}^{\ell} N_i \textnormal{ and } N=\sum_{i=1}^{\ell} N_i.$ By \ref{locpole}, Propositions \ref{gl-lc}, \ref{FRST} and the miracle flatness theorem, it is sufficient to show that 
\begin{equation}\label{ineq00}
\moi_{K, P}^{(\cR)}(\cI)\geq \ell D
\end{equation}
if $X_i=\AA_K^{n_i}$ for all $i$, 
and 
\begin{equation}\label{ineq01}
\moi_{K', P}^{(\cR)}(\cI)\geq N+\ell/(2(D^{R+1}-1))
\end{equation}
for a finite extension $K'$ of $K$ in the general case.

We put $$\sigma=\max_{K\subset K'}\moi_{K',P}^{(\cR)}(\cI),$$
where the maximum is taken over all finite extensions $K'$ of $K$. Note that the maximum is attained since there are only finitely many candidates for the value of $\moi_{K',P}^{(\cR)}(\cI)$ as seen in Corollary \ref{llct}. Then (\ref{ineq01}) is equivalent to
\begin{equation}\label{ineq02}
\sigma\geq N+\ell/(2(D^{R+1}-1)).
\end{equation}

\textbf{Step 2:} \textit{Simplify the exponential sums $E_{L,P,\cI}^{(\cR)}(m)$ for $m\geq 2$ if $L$ is a local field over $\cO_K$ of large enough residue field characteristic.}

Let $L\in\cL_{K,1}$, $\psi$ be an additive character of $L$ of conductor $m\geq 2$, we have 
\begin{align*}
E_{L,P,\cI}^{(\cR)}(m)&=\int_{\varpi_L\cO_L^{n}\times (\cO_L^{\cR}\setminus \varpi_L\cO_L^{\cR})}\psi\bigg(\sum_{1\leq e\leq r}y_eg_{e}+\sum_{i=1}^\ell \sum_{1\leq j\leq N_i} y_{ij}f_{ij}\bigg)\mu_{L^{n+\cR}}\\
&=\underbrace{\int_{\varpi_L\cO_L^{n}\times \cO_L^{N}\times (\cO_L^r\setminus \varpi_L\cO_L^{r})}\psi\bigg(\sum_{1\leq e\leq r}y_eg_{e}+\sum_{i=1}^\ell \sum_{1\leq j\leq N_i} y_{ij}f_{ij}\bigg)\mu_{L^{n+\cR}}}_{I_1(L,m)}\\
&\hspace{0.3cm}+\underbrace{\int_{\varpi_L\cO_L^{n}\times (\cO_L^N\setminus \varpi_L\cO_L^{N})\times \varpi_L\cO_L^{r}}\psi\bigg(\sum_{1\leq e\leq r}y_eg_{e}+\sum_{i=1}^\ell \sum_{1\leq j\leq N_i} y_{ij}f_{ij}\bigg)\mu_{L^{n+\cR}}}_{I_2(L,m)}.
\end{align*}
Here, we use the variables  $(y_1,\dots,y_r)\in L^r$, $(x_{ij})_{1\leq i\leq \ell, 1\leq j\leq n_i}\in L^n$ and $(y_{ij})_{1\leq i\leq\ell, 1\leq j\leq N_i}\in L^{N}$. We will show that $I_2(L,m)=0$ for all $m\geq 2$ if $L$ is of large enough residue field characteristic. Let $\tilde{\cX}^{(\ell)}\subset\AA_{\cO_K}^n$ be the $\cO_K$-scheme associated to the ideal generated by $f_{ij}, 1\leq i\leq \ell, 1\leq j\leq N_i$. Let $\cY^{(\ell)}\subset\AA_{\cO_K}^n$ be the $\cO_K$-scheme associated to the ideal generated by $g_1,\dots,g_r$. We set $\cV^{(\ell)}=\cY^{(\ell)}\cap\tilde{\cX}^{(\ell)}$ and recall the maps $\pi_{ij}:\tilde{\cX}^{(\ell)}(\cO_L/(\varpi_L^i))\to \tilde{\cX}^{(\ell)}(\cO_L/(\varpi_L^j))$ for $i\geq j\geq 1$ from Section \ref{nuloc}. By using orthogonality of characters as in Lemma-Definition \ref{moidef}, if $m\geq 2$ then one has
\begin{align*}
&I_2(L,m)\\=&q_L^{-mn}\biggl(\#\left(\pi_{m1}^{-1}(0)\cap \pi_{m(m-1)}^{-1}\left(\cV^{(\ell)}\left(\cO_L/\left(\varpi_L^{m-1}\right)\right)\right)\right)-q_L^{n-N}\#\left(\pi_{(m-1)1}^{-1}(0)\cap \cV^{(\ell)}(\cO_L/(\varpi_L^{m-1}))\right)\biggr)\\
=&q_L^{-mn}\sum_{y\in \cV^{(\ell)}(\cO_L/(\varpi_L^{m-1}))\cap\pi_{(m-1)1}^{-1}(0)}\left(\#\left(\pi_{m(m-1)}^{-1}(y)\right)-q_L^{n-N}\right).
\end{align*}
By Hensel's lemma and the fact that $\cX^{(\ell)}$ is smooth of dimension $n-N$, if $L$ is of large enough residue field characteristic, one has  
$$\#\left(\pi_{m(m-1)}^{-1}(y)\right)=q_L^{n-N}$$
for all $m\geq 2$ and all $y\in \tilde{\cX}^{(\ell)}(\cO_L/(\varpi_L^{m-1}))$. Thus, there is an integer $M$ such that $I_2(L,m)=0$ for all $m\geq 2$ if $L\in\tilde{\cL}_{K,M}$.

\textbf{Step 3:} \textit{Prove Inequality (\ref{ineq00}).}

Suppose that $X_i=\AA_K^{n_i}$ for all $i$, i.e.,  $N_i=0$ for all $i$. Since $\varphi_i(\AA_{\overline{K}}^{n_i})$ is not contained in any proper affine subspace of $\AA_{\overline{K}}^r$, we can use \cite[Theorem 6.1]{Cluckers-multi} to obtain a constant $a$ depending only on $L$, $(g_{ij})_{1\leq i\leq \ell, 1\leq j\leq r}$ and a positive constant $C=C(n_1,\dots,n_\ell, D)$ such that for all $(y_1,\dots,y_e)\in \cO_L^r\setminus \varpi_L\cO_L^{r}$, one has
\begin{align*}
\bigg|\int_{\varpi_L\cO_L^{n}}\psi\bigg(\sum_{1\leq e\leq r}y_eg_{e}\bigg)\mu_{L^{n}}\bigg|&=\bigg|\prod_{1\leq i\leq \ell}\int_{\varpi_L\cO_L^{n_i}}\psi\bigg(\sum_{1\leq e\leq r}y_eg_{ie}\bigg)\mu_{L^{n_i}}\bigg|\\
&\leq Cq_L^am^{n-1}q_L^{\frac{-m\ell}{D}}
\end{align*}
for all $m\geq 1$. By using fiber integration and \textbf{Step 2}, we have 
$$\left|E_{L,P,\cI}^{(\cR)}(m)\right|=\left|I_1(L,m)\right|\leq Cq_L^am^{n-1}q_L^{\frac{-m\ell}{D}}\int_{\cO_L^r\setminus \varpi_L\cO_L^{r}}\mu_{L^r}=(1-q_L^{-r})Cq_L^am^{n-1}q_L^{\frac{-m\ell}{D}}$$
 for all $m\geq 2$. This inequality together with Corollaries \ref{naig} and \ref{coboun} imply (\ref{ineq00}). 
 
 
\textbf{Step 4:} \textit{Construct an infinite sequence $(m_d)_{d\geq 1}$ of positive integers and give a lower bound of $|I_1(L,m_d)|$ in terms of $\sigma,m_d,q_L$ for good choices of $L$.}


 Let $K'$ be a finite extension of $K$ such that $\moi_{K',P}^{(\cR)}(\cI)=\sigma$. By the definitions of $\sigma$ and $\moi_{K',P}^{(\cR)}(\cI)$, we see that $\moi_{K'',P}^{(\cR)}(\cI)=\sigma$ for all finite extensions $K''$ of $K'$. Replacing $K$ by $K'$ if needed, one may suppose that $\moi_{K',P}^{(\cR)}(\cI)=\sigma$ for all finite extensions $K'$ of $K$.
By Corollary \ref{coboun},  there is a rational constant $c$, a strictly increasing sequence $(m_i)_{i\geq 1}$ of positive integers and a subset $\cA_i$ of $\cL_{K,1}$ for each $i\geq 1$ such that $\cA_i\cap \cL_{K',M}\neq \emptyset$ for all $i$, all finite extensions $K'$ of $K$ and all  $M\geq 1$, moreover one has  
\begin{equation}\label{lowine}
\left|I_1(L,m_i)\right|=\left|E_{L,P,\cI}^{(\cR)}(m_i)\right|\geq q_{L}^cq_{L}^{-m_i\sigma}
\end{equation}
for all $i\geq 1$ and all $L\in\cA_i$. 

\textbf{Step 5:} \textit{For each large enough integer $d$, rewrite $I_1(L,m_d)$ as an exponential sum over $k_L$ if  $L$ is a local field over $\cO_K$ of large enough residue field characteristic.}

\hspace{1.5cm}\textbf{Substep 5a:} \textit{For each integer $d$, show that $I_1(L,m_d)=I_1(k_L((t)),m_d)$ if  $L$ is a local field over $\cO_K$ of large enough residue field characteristic.}

Let us take $d\geq 1$. If $L\in\tilde{\cL}_{K,1}$, then we set $\tilde{L}=k_L((t))$.  By using the transfer principle for exponential sums in \cite[Proposition 5.4]{NguyenVeys}, there is an integer $M_d$ such that if $L\in\tilde{\cL}_{K,M_d}$ and $\psi$ is an additive character of $L$ of conductor $m_d$, then we have an additive character $\tilde{\psi}$ of $\tilde{L}$ of conductor $m_d$ satisfying
\begin{align}
I_{1}(L,m_d)=I_1(\tilde{L},m_d)=\int_{\varpi_{\tilde{L}}\cO_{\tilde{L}}^{n}\times \cO_{\tilde{L}}^{N}\times (\cO_{\tilde{L}}^r\setminus \varpi_L\cO_{\tilde{L}}^{r})}\tilde{\psi}\bigg(\sum_{1\leq e\leq r}y_eg_{e}+\sum_{i=1}^\ell \sum_{1\leq j\leq N_i} y_{ij}f_{ij}\bigg)\mu_{\tilde{L}^{n+\cR}}.\label{transf}
\end{align}

\hspace{1.5cm}\textbf{Substep 5b:} \textit{Fix a large enough integer $d$, for each $i$, describe the reduction modulo the $m_d^{\textnormal{th}}$ power of the maximal ideal of $\cO_{\AA_{K}^{n_i},P_i}$ of the polynomials $g_{ie}$ for all $e$.} 

For each $i$, since $X_{i}$ is smooth, by using a linear change of coordinates we can suppose that if $1\leq j\leq N_i$, then  $f_{ij}(x_{i1},\dots,x_{in_i})=x_{ij}+\tilde{f}_{ij}(x_{i1},\dots,x_{in_i})$  for some polynomial $\tilde{f}_{ij}$ of multiplicity at least $2$ at $P_i=0\in \AA_{\overline{K}}^{n_i}$. By our assumption, $\mathfrak{m}_i=(f_{i1},\dots,f_{iN_i},x_{i(N_i+1)},\dots,x_{in_i})\cO_{\AA_{\overline{K}}^{n_i},P_i}$ is the maximal ideal of the local ring $\cO_{\AA_{\overline{K}}^{n_i},P_i}$. Thus, for each $1\leq i\leq \ell$, each $1\leq e\leq r$ and each $m\geq 1$, there exists a polynomial $g_{iem}\in \overline{K}[x_{i(N_i+1)},\dots,x_{in_i}]$ of degree at most $m-1$ such that $g_{ie}-g_{iem}\in \mathfrak{m}_i^m+(f_{i1},\dots,f_{iN_i})\cO_{\AA_{\overline{K}}^{n_i},P_i}$. Let us fix  $d\geq 1$ such that $m_d>nD^{R+1}$. 
For each $i$ and each $e$,  we write $g_{ie}=g_{iem_d}+h_{iem_d}+\sum_{j=1}^{N_i}f_{ij}a_{ijm_d}$, where $h_{iem_d}\in \mathfrak{m}_i^{m_d}, a_{ijm_d}\in \cO_{\AA_{\overline{K}}^{n_i},P_i}.$
Replacing $K$ by its finite extension, one may suppose that $g_{iem_d},h_{iem_d},a_{ijm_d}$ are defined over $K$ for all $i,e,j$ and $\cA_{d}\cap \cL_{K,M}\neq \emptyset$ for all $M$. 

\hspace{1.5cm}\textbf{Substep 5c:} \textit{Let $d$ be as in} \textbf{Substep 5b}, \textit{introduce some notation related to the $m_d^{\textnormal{th}}$-jet scheme of affine spaces over $\cO_K$.}

As in Section \ref{jtrans} below,  we will write $$y_{e}(t)=\sum_{0\leq s}y_{es}t^s,y_{ij}(t)=\sum_{0\leq s}y_{ijs}t^s,x_{iu}(t)=\sum_{1\leq s}x_{ius}t^s, x_{iu}^{(v)}=(x_{ius})_{1\leq s\leq v},$$
$$ f_{ij}\left(x_{i1}(t),\dots,x_{in_i}(t)\right)=\sum_{1\leq s}f_{ijs}\left(x_{i1}^{(s)},\dots,x_{in_i}^{(s)}\right)t^s,$$
$$g_{ie}(x_{i1}(t),\dots,x_{in_i}(t))=\sum_{1\leq s}g_{ie}^{(s)}\left(x_{i1}^{(s)},\dots,x_{in_i}^{(s)}\right)t^s,$$
$$g_{iem_d}(x_{i(N_i+1)}(t),\dots,x_{in_i}(t))=\sum_{1\leq s}\tilde{g}_{ies}\left(x_{i(N_i+1)}^{(s)},\dots,x_{in_i}^{(s)}\right)t^s,$$
$$\tilde{y}^{(d1)}=(y_{ijs})_{1\leq i\leq \ell, 1\leq j\leq N_i, 0\leq s\leq m_d-2},\tilde{x}^{(d)}=\left(x_{i1}^{(m_d-1)},\dots,x_{in_i}^{(m_d-1)}\right)_{1\leq i\leq \ell},$$
$$\tilde{y}^{(d0)}=(y_{es})_{0\leq s\leq m_d-2, 1\leq e\leq r},\tilde{z}^{(d)}=\left(\tilde{y}^{(d0)},\tilde{y}^{(d1)},\tilde{x}^{(d)}\right),$$
$$ \hat{x}^{(d)}=\left(x_{i(N_i+1)}^{(m_d-1)},\dots,x_{in_i}^{(m_d-1)}\right)_{1\leq i\leq \ell}, \hat{z}^{(d)}=\left(\tilde{y}^{(d0)},\hat{x}^{(d)}\right),$$
$$F^{(d)}\big(\tilde{z}^{(d)}\big)=\sum_{1\leq i\leq \ell, 1\leq j\leq N_i, 0\leq s\leq m_d-2}y_{ijs}f_{ij(m_d-s-1)},$$
$$G^{(d)}\big(\tilde{z}^{(d)}\big)=\sum_{1\leq e\leq r, 0\leq s\leq m_d-2}y_{es}\bigg(\sum_{1\leq i\leq \ell}g_{ie}^{(m_d-s-1)}\bigg),$$
$$\hat{G}^{(d)}\big(\hat{z}^{(d)}\big)=\sum_{1\leq e\leq r, 0\leq s\leq m_d-2}y_{es}\bigg(\sum_{1\leq i\leq \ell}\tilde{g}_{ie(m_d-s-1)}\bigg).$$
For each $L\in\tilde{\cL}_{K,1}$, we set $$A_{1L}=k_L^{r(m_d-1)}\setminus\{ y_{e0}=0 \hspace{0.1cm}\forall e\}, A_{2L}=k_L^{N(m_d-1)},$$ and $$A_{3L}=k_L^{n(m_d-1)}, \tilde{A}_{3L}=k_L^{N(m_d-1)}, \hat{A}_{3L}=k_L^{(n-N)(m_{d-1})}.$$ 
Then we identify $A_{3L}$ with $\tilde{A}_{3L}\times\hat{A}_{3L}$ and consider the projection $\lambda_L:A_{1L}\times A_{2L}\times A_{3L}\to A_{1L}\times \hat{A}_{3L}$. Moreover, we denote by $Z$  the $\cO_K$-subscheme of $\AA_{\cO_K}^{n(m_d-1)}$ associated to the ideal generated by $f_{ijs}$ for  $1\leq i\leq \ell, 1\leq j\leq N_i, 1\leq s\leq m_d-1$. Then $F^{(d)}$ and $G^{(d)}$ define maps from $A_{1L}\times A_{2L}\times A_{3L}$ to $k_L$ in an obvious way. Similarly we can regard $\hat{G}^{(d)}$ as a map from  $A_{1L}\times\hat{A}_{3L}$ to $k_L$.

\hspace{1.5cm}\textbf{Substep 5d:} \textit{Let $d$ be as in} \textbf{Substep 5b}, \textit{use} \textbf{Substep 5b} \textit{and} \textbf{Substep 5c} \textit{to rewrite $I_1(k_L((t)),m_d)$ as an exponential sum over $k_L$ if $L$ is a local field over $\cO_K$ of large enough residue field characteristic.}

By enlarging $M_d$ if needed, we use \cite[Proposition 4.3]{NguyenVeys} to obtain a non-trivial character $\Psi_L$ of $k_L$ for each $L\in \tilde{\cL}_{K,M_d}$ such that 
$$I_{1}(\tilde{L},m_d)=q_L^{-m_d(n+\cR)+N+r}\sum_{\tilde{z}^{(d)}\in A_{1L}\times A_{2L}\times A_{3L}} \Psi_L\left(F^{(d)}\big(\tilde{z}^{(d)}\big)+G^{(d)}\big(\tilde{z}^{(d)}\big)\right).$$
Orthogonality of characters yields
$$\sum_{\tilde{z}^{(d)}\in A_{1L}\times A_{2L}\times \{\tilde{x}^{(d)}\}} \Psi_L\left(F^{(d)}\big(\tilde{z}^{(d)}\big)+G^{(d)}\big(\tilde{z}^{(d)}\big)\right)=0$$
for all $L\in \tilde{\cL}_{K,M_d}$ and all $\tilde{x}^{(d)}\in A_{3L}\setminus Z(k_L)$. Thus, we have 
\begin{align}I_{1}(\tilde{L},m_d)&=q_L^{-m_d(n+\cR)+N+r}\sum_{\tilde{z}^{(d)}\in A_{1L}\times A_{2L}\times Z(k_L)} \Psi_L\left(F^{(d)}\big(\tilde{z}^{(d)}\big)+G^{(d)}\big(\tilde{z}^{(d)}\big)\right)\nonumber\\
&=q_L^{-m_d(n+\cR)+N+r}\sum_{\tilde{z}^{(d)}\in A_{1L}\times A_{2L}\times Z(k_L)} \Psi_L\left(G^{(d)}\big(\tilde{z}^{(d)}\big)\right)\label{separ}
\end{align}
for all $L\in \tilde{\cL}_{K,M_d}$.

\hspace{1.5cm}\textbf{Substep 5e:} \textit{Simplify the exponential sums over finite fields obtained in} \textbf{Substep 5d} \textit{to complete} \textbf{Step 5}.

Since $g_{ie}=g_{iem_d}+h_{iem_d}+\sum_{j=1}^{N_i}f_{ij}a_{ijm_d}$ and $h_{iem_d}\in\mathfrak{m}_i^{m_d}$, we can  enlarge $M_d$ to have that $(\tilde{g}_{ies}-g_{ie}^{(s)} \mod (\varpi_L))$ belongs to the ideal of $k_L[x_{i1}^{(m_d-1)},\dots,x_{in_i}^{(m_d-1)}]$ generated by $(f_{iju} \mod (\varpi_L))$ for $1\leq i\leq \ell, 1\leq j\leq N_i, 1\leq u\leq m_d-1$, for all $i,e,s$ and all $L\in \tilde{\cL}_{K,M_d}$. Thus, $\tilde{g}_{ies}|_{Z(k_L)}=g_{ie}^{(s)}|_{Z(k_L)}$ for all $i,e,s$ and all $L\in \tilde{\cL}_{K,M_d}$. Therefore, if $L\in\tilde{\cL}_{K,M_d}$, $\tilde{z}^{(d)}= \left(\tilde{y}^{(d0)},\tilde{y}^{(d1)},\tilde{x}^{(d)}\right)\in A_{1L}\times A_{2L}\times Z(k_L)$ and $\hat{z}^{(d)}=\lambda_L\left(\tilde{z}^{(d)}\right)$ is the image of $\tilde{z}^{(d)}$ under the projection $\lambda_L: A_{1L}\times A_{2L}\times A_{3L}\to A_{1L}\times \hat{A}_{3L}$ in the above,  then it is clear to see that 
\begin{equation}\label{desc}G^{(d)}(\tilde{z}^{(d)})=\hat{G}^{(d)}(\hat{z}^{(d)}).
\end{equation} On the other hand, since $f_{ij}(x_{i1},\dots,x_{in_i})=x_i+\tilde{f}_{ij}(x_{i1},\dots,x_{in_i})$ and $\ord_{P_i}\tilde{f}_{ij}\geq 2$ for all $i,j$, one can show that the projection 
\begin{align*}
\theta_L:Z(k_L)&\to \hat{A}_{3L}=k_L^{(n-N)(m_d-1)}\\
(x_{ijs})_{1\leq i\leq \ell, 1\leq j\leq n_i, 1\leq s\leq m_d-1} &\mapsto (x_{ijs})_{1\leq i\leq \ell, N_i+1\leq j\leq n_i, 1\leq s\leq m_d-1}
\end{align*}
is a bijection for all $L\in\tilde{\cL}_{K,M_d}$.   
This together with (\ref{transf}), (\ref{separ}) and (\ref{desc}) imply
\begin{align}
\hspace{-3cm}I_{1}(L,m_d)&=q_L^{-m_d(n+\cR)+N+r}q_L^{N(m_d-1)}\sum_{\hat{z}^{(d)}\in A_{1L}\times \hat{A}_{3L}} \Psi_L\left(\hat{G}^{(d)}\big(\hat{z}^{(d)}\big)\right)\nonumber\\
&=q_L^{-m_d(n+r)+r}\sum_{\tilde{y}^{(d0)}\in A_{1L}}\prod_{i=1}^{\ell}\sum_{k_L^{(n_i-N_i)(m_d-1)}}\Psi_L\left(\hat{G}_i^{(d)}\big(\tilde{y}^{(d0)}, \hat{x}_i^{(d)}\big)\right)\label{reduct111}
\end{align}
for all $L\in \tilde{\cL}_{K,M_d}$, where $\hat{x}_i^{(d)}=\left(x_{i(N_i+1)}^{(m_d-1)},\dots,x_{in_i}^{(m_d-1)}\right)$ and $$\hat{G}_i^{(d)}\big(\tilde{y}^{(d0)}, \hat{x}_i^{(d)}\big)=\sum_{1\leq e\leq r, 0\leq s\leq m_d-2}y_{es}\tilde{g}_{ie(m_d-s-1)}$$
for $1\leq i\leq \ell$.

\textbf{Step 6:} \textit{For each large enough integer $d$, give an upper bound of $\left|I_1(L,m_d)\right|$ uniformly in local fields $L$ over $\cO_K$ of large enough residue field characteristic.}

We consider the weight $w$ on the variables $x_{ijs}$ given by $w(x_{ijs})=s$ for $1\leq i\leq \ell, N_i+1\leq j\leq n_i$ and $1\leq s\leq m_d-1$. Since $\varphi_i(X_{ij})$ is not contained in any proper affine subspace of $\AA_{\overline{K}}^{r}$ for all $i$ and  all irreducible components $X_{ij}$ of $X_i\otimes \overline{K}$, we see that  $\sum_{1\leq e\leq r} a_eg_{ie}$ is a non-constant polynomial of degree at most $D$ for all $1\leq i\leq \ell$ and all $(a_1,\dots,a_r)\in \overline{K}^r\setminus \{0\}$. Lemma \ref{multi} implies that  $\ord_{P_i}\big(\sum_{1\leq e\leq r} a_eg_{ie}|_{X_i}\big)\leq D^{R+1}$ for all $i$ and all $(a_1,\dots,a_r)\in \overline{K}^r\setminus \{0\}$. But we supposed above that $\ord_{P_i}\big(g_{ie}|_{X_i}-g_{iem_d}\big)\geq m_d>nD^{R+1}$, thus $\ord_{P_i}\big(\sum_{1\leq e\leq r} a_eg_{iem_d}\big)\leq D^{R+1}$ for all $1\leq i\leq \ell$ and all $(a_1,\dots,a_r)\in \overline{K}^r\setminus \{0\}$.  By enlarging $M_d$ and using Lemma \ref{boundimen}, if $L\in \tilde{\cL}_{K,M_d}$, then for each given point $\tilde{y}^{(d0)}=(y_{es})_{0\leq s\leq m_d-2, 1\leq e\leq r}\in A_{1L}$ and each $1\leq i\leq \ell$, the $w$-weighted homogeneous polynomial $\sum_{1\leq e\leq r}y_{e0}\tilde{g}_{ie(m_d-1)}\in k_L[\hat{x}_i^{(d)}]$ has singular locus of dimension at most $$m_d(n_i-N_i)-(m_d-1)/(D^{R+1}-1)<(m_d-1)(n_i-N_i)$$ since $m_d>nD^{R+1}$. Note that for each given point $\tilde{y}^{(d0)}\in A_{1L}$, the $w$-homogeneous part of highest $w$-degree of $\hat{G}_i^{(d)}\big(\tilde{y}^{(d0)}, \hat{x}_i^{(d)}\big)\in k_L[\hat{x}_i^{(d)}]$ is $\sum_{1\leq e\leq r}y_{e0}\tilde{g}_{ie(m_d-1)}$. So we can use Corollary \ref{quasisums} for the polynomial $\hat{G}_i^{(d)}\big(\tilde{y}^{(d0)}, \hat{x}_i^{(d)}\big)$ of $w$-degree $m_d-1$ for each $\tilde{y}^{(d0)}\in A_{1L}$ to obtain  a constant $C$ depending only on $n_1,\dots,n_\ell, m_d, w$ such that
\begin{align}
\sum_{\hat{x}_i^{(d)}\in k_L^{(n_i-N_i)(m_d-1)}}\Psi_L\left(\hat{G}_i^{(d)}\big(\tilde{y}^{(d0)}, \hat{x}_i^{(d)}\big)\right)
\leq &Cq_L^{\frac{(n_i-N_i)(2m_d-1)-(m_d-1)/(D^{R+1}-1)}{2}}\nonumber\\ \leq&Cq_L^{m_d(n_i-N_i)}q_L^{\frac{N_i-n_i+1}{2}}q_L^{\frac{-m_d}{2(D^{R+1}-1)}}\nonumber\\ \leq& Cq_L^{m_d(n_i-N_i)}q_L^{\frac{-m_d}{2(D^{R+1}-1)}}\label{reduct001}
\end{align}
for all $1\leq i\leq \ell$, all $L\in\tilde{\cL}_{K,M_d}$ and all $\tilde{y}^{(d0)}\in A_{1L}$. Here, we need to remark that $N_i<n_i$ for all $1\leq i\leq \ell$ as mentioned in (\ref{nNi}). By combining (\ref{reduct111}) and (\ref{reduct001}), we have
\begin{align}
\left|I_{1}(L,m_d)\right|
&\leq q_L^{-m_d(n+r)+r} \left(q_L^{r(m_d-1)}-q_L^{r(m_d-2)}\right)\prod_{1\leq i\leq \ell}Cq_L^{m_d(n_i-N_i)}q_L^{\frac{-m_d}{2(D^{R+1}-1)}}\nonumber\\
&\leq C^{\ell}q_L^{-m_d(N+\ell/(2(D^{R+1}-1)))}\label{reduct000}
\end{align}
for all $L\in\cL_{K,M_d}$. 

\textbf{Step 7:} \textit{Complete the proof.}

Suppose that  (\ref{ineq02}) does not hold then $\sigma-N-\ell/(2(D^{R+1}-1))<0$. By taking a large enough integer $d$, we can suppose that 
\begin{equation}\label{choc}
c>m_d\left(\sigma-N-\frac{\ell}{2(D^{R+1}-1)}\right).
\end{equation}

For all $L\in\cA_{d}\cap \cL_{K,M_d}$, it follows from (\ref{lowine}) and (\ref{reduct000}) that
$$q_{L}^cq_{L}^{-m_d\sigma}\leq \left|I_{1}(L,m_d)\right|\leq C^{\ell}q_{L}^{-m_d(N+\ell/(2(D^{R+1}-1)))}.$$
This contradicts with (\ref{choc}) and  the condition that $\cA_{d}\cap \cL_{K,M}\neq\emptyset$ for all $M$. Therefore, (\ref{ineq02}) holds as desired.

\end{proof}
\begin{lem}\label{multi}Let $X$ be an irreducible closed subvariety of $\AA_\CC^n$ and $P$ be a smooth point of $X$. Let $g\in\CC[x_1,\dots,x_n]$ be a polynomial of degree $d$  such that $g(P)=0$ and $g|_X$ is not constant, then the multiplicity $\ord_{P}(g|_X)$ of $g|_X$ at $P$ is at most $d\deg(X)$.
\end{lem}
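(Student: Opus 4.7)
The plan is to reduce to the curve case by cutting $X$ with a generic linear section through $P$, and then apply Bézout's theorem. Let $k=\dim X$. I would first choose a sufficiently generic linear subspace $\Lambda\subset\PP^n$ of dimension $n-k+1$ passing through $P$, and set $C=\overline{X}\cap\Lambda$, where $\overline{X}\subset\PP^n$ is the projective closure of $X$. By standard dimension counts (Bertini-type arguments on the Grassmannian of $(n-k+1)$-planes through $P$), for generic $\Lambda$ the scheme $C$ is a pure $1$-dimensional curve, $P$ is a smooth point of $C$, one has $\deg(C)=\deg(X)$, and $C\not\subset V(g)$. The last assertion uses that $V(g)\cap X$ is a proper closed subset of $X$ (since $g|_X$ is not constant and $g(P)=0$, so in particular $g|_X\not\equiv0$), hence of dimension $\leq k-1$, and so a generic $\Lambda$ of codimension $k-1$ through $P$ cuts it in a $0$-dimensional set.

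Next I would compare multiplicities along the closed immersion $C\hookrightarrow X$ at $P$. The restriction map $\cO_{X,P}\to\cO_{C,P}$ sends $\mathfrak{m}_{X,P}$ into $\mathfrak{m}_{C,P}$, so if $g|_X\in\mathfrak{m}_{X,P}^m$ then $g|_C\in\mathfrak{m}_{C,P}^m$, giving $\ord_P(g|_X)\leq \ord_P(g|_C)$. Since $P$ is a smooth point of $C$, the local ring $\cO_{C,P}$ is a DVR, and consequently
\[
\ord_P(g|_C)=\dim_{\CC}\cO_{C,P}/(g|_C)=i(P;\,C\cdot V(g)),
\]
the local intersection multiplicity of $C$ with the hypersurface $V(g)\subset\PP^n$ at $P$ (after homogenizing $g$ to a degree-$d$ form). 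Bézout's theorem applied to the curve $C$ of degree $\deg(X)$ and the degree-$d$ hypersurface $V(g)$ (using $C\not\subset V(g)$) gives
\[
\sum_{Q\in C\cap V(g)} i(Q;\,C\cdot V(g))=d\cdot\deg(C)=d\cdot\deg(X),
\]
and in particular the local contribution at $P$ is bounded by this total, yielding $\ord_P(g|_X)\leq d\deg(X)$.

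The only step requiring real care is the genericity of $\Lambda$: one must simultaneously arrange that $\Lambda$ meets $X$ transversely enough at $P$ so that $P$ remains a smooth point of $C=\overline{X}\cap\Lambda$, that the degree is preserved under the linear section (standard, by repeatedly applying the fact that a generic hyperplane section of a positive-dimensional variety preserves degree), and that $C$ is not swallowed by $V(g)$. All three hold on a dense open subset of the Grassmannian of $(n-k+1)$-planes through $P$, so any $\Lambda$ in the intersection of these open loci works. Once such a $\Lambda$ is fixed, the remainder of the argument is the straightforward Bézout estimate outlined above.
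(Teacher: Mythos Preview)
Your proof is correct and follows essentially the same route as the paper: reduce to a curve through $P$ by cutting $X$ with a linear space of complementary dimension, then bound the order of vanishing by the local intersection number with $V(g)$, which Bézout controls by $d\cdot\deg(\text{curve})\leq d\cdot\deg(X)$. The paper carries this out by changing coordinates (via Bertini) so that the linear section is given by coordinate hyperplanes $x_2=\cdots=x_r=0$, and then explicitly passes to the irreducible component $W$ of the resulting curve that contains $P$, arguing $\deg(W)\leq\deg(X)$; you instead take a generic $\Lambda$ through $P$ and work with $C=\overline{X}\cap\Lambda$ directly. One small point worth tightening: for Bézout you need no component of $C$ to lie in $V(g)$, whereas your genericity argument only guarantees this for the component through $P$; the cleanest fix (and what the paper does) is to replace $C$ by that component, whose degree is still $\leq\deg(X)$.
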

\begin{proof}Let $r=\dim(X)$. By using a linear change of coordinates, we can suppose that $P=0\in\AA_\CC^n$. Since $X$ is smooth at $P$, there are polynomials $f_{r+1},\dots,f_n$ such that  $I=I(X)\cO_{\AA_\CC^n,P}=(f_{r+1},\dots,f_n)\cO_{\AA_\CC^n,P}$. By Bertini's theorem  (see \cite[Chapter II, Theorem 8.18]{Harts}) and a suitable change of coordinates, we can suppose that if $S$ is a subset of $\{1,...,r\}$ then $P$ is a smooth point of the scheme associated to the ideal $(f_{r+1},\dots,f_{n})+\sum_{i\in S}\CC[x_1,\dots,x_n]x_i$. By reordering the coordinates, we can suppose that the ideal $$\sqrt{(g,x_2,\dots,x_r,f_{r+1},\dots,f_n)\cO_{\AA_\CC^n,P}}=(x_1,\dots,x_r,f_{r+1},\dots,f_n)\cO_{\AA_\CC^n,P}$$ is the maximal ideal $\mathfrak{m}_P$ of $\cO_{\AA_\CC^n,P}$. 

Let $Y$ be the scheme associated to the ideal $(x_2,\dots,x_r,f_{r+1},\dots,f_n)$. By our assumption, $P$ is a smooth point of $Y$. Thus, there is a unique  irreducible component $W$ of $Y$ containing $P$. We have $\deg(W)\leq \deg(X)$ by Bézout's theorem (see \cite[Theorem 18.4]{Harris}). The multiplicity of $g|_X$ at $P$ is the integer $m_P$ such that $g\in \mathfrak{m}_P^{m_P}+I$ but $g\notin \mathfrak{m}_P^{m_P+1}+I$. Similarly, the multiplicity of $g|_{W}$ at $P$ is the integer $m_1$ such that $g\in \mathfrak{m}_P^{m_1}+J$ but $g\notin \mathfrak{m}_P^{m_1+1}+J$, where $J=I(W)\cO_{\AA_\CC^n,P}=(x_2,\dots,x_r,f_{r+1},\dots,f_n)\cO_{\AA_\CC^n,P}$. Since $I\subset J$, one has $m_P\leq m_1$, so we only need to prove that $m_1\leq d\deg(W)\leq d\deg(X)$.

Let $Z(g)$ be the closed subscheme of $\AA_\CC^n$ defined by $g$. By Bézout's theorem (see \cite[Theorem 18.4]{Harris}), it suffices to show that $m_1\leq m_P(W,Z(g))$, where $m_P(W,Z(g))$ is the intersection multiplicity of $W$ and $Z(g)$ along $P$. To compute $m_P(W,Z(g))$, we may follow \cite[Chapter 1, Section 7]{Harts}. However, in an easier way, we can use the discussion after \cite[Theorem 18.4]{Harris} to have
$$m_P(W,Z(g))=\dim_{\CC}\cO_{\AA_\CC^n,P}/(g,x_2,\dots,x_r,f_{r+1},\dots,f_n)\cO_{\AA_\CC^n,P}.$$
Since $J$ cannot contain a power of $\mathfrak{m}_P$, by using Nakayama's lemma, we obtain a strictly decreasing sequence of ideals 
$$\cO_{\AA_\CC^n,P}\supset\mathfrak{m}_0\supset \mathfrak{m}_P^2+ J\supset...\supset \mathfrak{m}_P^{m_1}+ J$$ containing $(g,x_2,\dots,x_r,f_{r+1},\dots,f_n)\cO_{\AA_\CC^n,P}$. Thus, $m_1\leq m_P(W,Z(g))$. Therefore, our claim follows from the above discussion.

\end{proof}
\begin{lem}\label{boundimen}Let $K$ be a number field and $M$ be a non-zero integer. Let $f_1,\dots,f_r$ be non-constant polynomials in $\cO_K[1/M][x_1,\dots,x_n]$ such that $f_1(0)=...=f_r(0)=0$. Let $d\geq 1$. Suppose that  $\ord_0(a_1f_1+...+a_rf_r)\leq d$ for all $(a_1,\dots,a_r)\in K^n\setminus \{0\}$. We write 
$$x_i(t)=\sum_{\ell\geq 1}x_{i\ell}t^\ell, x^{(m)}=(x_{ij})_{1\leq i\leq n,1\leq j\leq m}$$ 
and $$f_j(x_1(t),\dots,x_n(t))=\sum_{m\geq 1} f_{jm}(x^{(m)})t^m \textnormal{ for } 1\leq j\leq r$$
as in Section \ref{jtrans} below. For each $m\geq 1$, there exists an integer $M_m>M$ such that for all $L\in\tilde{\cL}_{K,M_m}$ and all $(a_1,\dots,a_r)\in k_L^r\setminus \{0\}$, the singular locus of $\sum_{1\leq i\leq r}a_if_{i(m-1)}$ in $\AA_{\overline{k}_L}^{n(m-1)}$ is of dimension at most $mn-(m-1)/(d-1)$.
\end{lem}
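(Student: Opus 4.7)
The plan is to identify $\Sing(F_{m-1})$ with (the product of $\AA^n$ and) a jet-scheme fibre of the critical locus of $g := \sum_i a_i f_i$, and then bound its dimension via Mustata's jet-dimension formula. Setting $\gamma(t) = \sum_{j=1}^{m-1} x_j t^j$ and differentiating the identity $g(\gamma(t)) = \sum_k F_k(x^{(k)})\,t^k$ with respect to each variable $x_{ij}$, the chain rule gives
\[
\frac{\partial F_{m-1}}{\partial x_{ij}} \;=\; [t^{m-1-j}]\,\frac{\partial g}{\partial x_i}(\gamma(t)).
\]
Hence $x^0 \in \Sing(F_{m-1})$ if and only if $(\partial g/\partial x_i)(\gamma^0(t)) \equiv 0 \pmod{t^{m-1}}$ for every $i$; because $(\partial g/\partial x_i)(\gamma^0)$ mod $t^{m-1}$ is insensitive to the top coefficient $x^0_{m-1}$, this condition constrains only the truncation $\gamma^0_{m-2}(t) = \sum_{j=1}^{m-2} x^0_j t^j$ and leaves the $n$ coordinates $x^0_{i,m-1}$ completely free. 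Writing $Y := V(\partial g/\partial x_1, \dots, \partial g/\partial x_n) \subset \AA^n$ for the critical scheme of $g$, this identifies
\[
\Sing(F_{m-1}) \;=\; (Y_{m-2})_0 \times \AA^n \;\subset\; \AA^{(m-1)n},
\]
where $(Y_{m-2})_0$ denotes the fibre over $0$ of the $(m-2)$-jet scheme of $Y$.

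Next, in characteristic zero the hypothesis $\ord_0 g = e \leq d$ together with the Euler identity $e\cdot g_e = \sum_i x_i\,\partial g_e/\partial x_i$ applied to the initial form $g_e$ furnishes an index $i_0$ such that $h := \partial g/\partial x_{i_0}$ satisfies $\ord_0 h = e-1 \leq d-1$. Since $Y \subset V(h)$, one has $(Y_{m-2})_0 \subset (V(h)_{m-2})_0$, and Mustata's theorem on jet schemes of hypersurfaces (\cite{Mustata1, MustJAMS}) gives $\dim (V(h)_{m-2})_0 \leq (m-1)(n - \lct_0(V(h)))$. Combining this with the classical inequality $\lct_0(V(h)) \geq 1/\ord_0(h) \geq 1/(d-1)$ yields
\[
\dim \Sing(F_{m-1}) \;\leq\; n + (m-1)\Bigl(n - \frac{1}{d-1}\Bigr) \;=\; mn - \frac{m-1}{d-1},
\]
which is exactly the asserted bound.

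It then remains to transfer the argument from $\overline{K}$ to all residue fields $k_L$ with $L \in \cL_{K,M_m}$. The hypothesis $\ord_0(\sum_i a_i f_i) \leq d$ for every $(a_i) \neq 0$ is equivalent to the matrix of partial derivatives $(\partial^{\alpha} f_i/\partial x^\alpha(0))_{i,\,|\alpha|\leq d}$ having full column rank $r$, a condition that survives reduction modulo every prime outside a finite exceptional set. The selection of $i_0$ above requires only that $e \leq d$ be a unit in $k_L$, so taking $M_m > d$ suffices. Everything else --- the identification of $\Sing(F_{m-1})$ and Mustata's jet-dimension bound --- is either scheme-theoretic or is an upper bound on fibre dimension in the universal family over $\PP^{r-1}$ parametrising $(a_i)$, so it extends to all residue characteristics outside a larger finite set via generic constructibility and upper semicontinuity of fibre dimension. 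The principal conceptual step is thus the jet-scheme reduction in the first paragraph; once $\Sing(F_{m-1})$ is realised as $(Y_{m-2})_0 \times \AA^n$, bounding $(V(h)_{m-2})_0$ via Mustata and the elementary $\lct \geq 1/\ord$ inequality finishes the argument.
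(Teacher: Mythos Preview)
Your proof is correct and follows essentially the same route as the paper, which reduces to $\CC$ by logical compactness and then defers the jet--scheme computation to \cite[Lemma~3.4, Corollary~3.10]{Nguyennsd}; the identical identification of $C_{F_{m-1}}$ with a contact locus of $\cJ_g$ and the Musta\c{t}\u{a} bound appear in this paper in the proof of Lemma~\ref{lctdimen}. Two minor remarks: your use of $\lct_0$ rather than the global $\lct$ in Musta\c{t}\u{a}'s formula is justified because arcs based at $0$ see only the germ of $h$, so one may shrink to a neighbourhood where $\lct=\lct_0$; and the inequality $\lct_0(h)\ge 1/\ord_0(h)$, while standard, is perhaps most cleanly obtained by iterating the restriction theorem for multiplier ideals down to a generic line through $0$.
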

\begin{proof}
By logical compactness (see \cite[Corollary 2.2.10]{Marker}), it suffices to show that for all $(a_1,\dots,a_r)\in \CC^r\setminus \{0\}$, the singular locus of $\sum_{1\leq i\leq r}a_if_{i(m-1)}$ in $\AA_\CC^{n(m-1)}$ is of dimension at most $mn-(m-1)/(d-1)$. This fact follows from the proof of \cite[Lemma 3.4, Corollary 3.10]{Nguyennsd} and our assumption that $\ord_0(a_1f_1+...+a_rf_r)\leq d$ for all $(a_1,\dots,a_r)\in K^n\setminus \{0\}$.
\end{proof}

\begin{proof}[Proof of \ref{adduni}] 
 For each $1\leq i\leq \ell$, since $X_{i}\otimes\QQ$ is geometrically irreducible, we can choose an open affine subset $Y_i$ of $X_i$ such that $Y_i$ is of finite type over $\ZZ$ and $Y_{i}\otimes\QQ$ is also geometrically irreducible. It suffices to prove the first claim for $Y_i$ instead of $X_i$, thus we can suppose that $X_i$ is a closed subscheme  of $\AA_\ZZ^{n_i}$ for some $n_i$. Let $Z_i$ be the reduced scheme associated to $X_{i}\otimes\QQ$ and $\cI_i$ be an ideal of $\ZZ[x_1,\dots,x_n]$ such that the ideal of $Z_i$ is $\cI_{i}\otimes\QQ$. Then it is easy to find a non-zero integer $M$ such that $\spec(\ZZ[1/M][x_1,\dots,x_n]/\cI_i)\subset X_i\otimes \ZZ[1/M]$ for all $i$. So it is also sufficient to prove our first claim for $\spec(\ZZ[1/M][x_1,\dots,x_n]/\cI_i)$ instead of $X_i$. Thus, we can suppose that $Y_i=X_{i}\otimes\QQ$ is reduced for all $i$. By \cite[Chapter 3, Proposition 2.7]{LiuQ}, $Y_{i}\otimes\overline{\QQ}$ is also reduced.  Thus, if $\tilde{X}_i$ is the smooth locus of $Y_{i}\otimes\QQ$, then $\tilde{X}_i\neq \emptyset$. Moreover, $\tilde{X}_{i}\otimes\overline{\QQ}$ is the smooth locus of  $Y_{i}\otimes\overline{\QQ}$ and $\tilde{X}_i$ can be defined over $\ZZ$. Let $W_i$ be a $\ZZ$-scheme of finite type such that $W_{i}\otimes\QQ=\tilde{X}_i$ then for a suitable choice of a non-zero integer $M$ we have $W_i\otimes \ZZ[1/M]\subset X_i\otimes\ZZ[1/M]$. Repeating the above argument,  we can suppose that $Y_i=X_{i}\otimes\QQ$ is smooth for all $i$. Now, we can use \ref{FRS} to see that if $\ell>2r$ then $\Phi_\QQ^{(\ell)}:X_\QQ^{(\ell)}=(X_{1}\otimes\QQ)\times...\times (X_{\ell}\otimes\QQ)\to \AA_\QQ^r$ is $FGI$. It follows from the argument in \cite[Page 224]{Harris}) that the geometrical irreducibility of varieties of bounded complexity can be expressed by a formula in the language of rings.  Since the logical compactness  in \cite[Corollary 2.2.10]{Marker}, there exists a constant $M$ such that for all fields $k$ of characteristic at least $M$, every fiber of $\Phi_k^{(\ell)}:X_k^{(\ell)}=(X_{1}\otimes k)\times...\times (X_{\ell}\otimes k)\to \AA_k^r$ is  geometrically irreducible.  By the Lang-Weil estimate in Proposition \ref{Lang-Weil}, we can enlarge $M$ if needed to have that for all $p>M$ and all $y\in \FF_p^r$, the fiber $(\Phi_{\FF_p}^{(\ell)})^{-1}(y)$ has at least one smooth point defined over $\FF_p$. Thus, our first claim follows from Hensel's lemma.

In order to prove the second claim, we just need to use \ref{FRS} together with \cite[Theorem 9.3 (ii)]{G-H3}. In other words, the second claim  is equivalent to the fact that $\Phi_\QQ^{(\ell)}$ is $FRS$ and $FGI$, so it follows immediately from the condition $\ell>2r(D^{R+1}-1)$ and \ref{FRS}.

\end{proof}

\section{Bounds of exponential sums}\label{exsums}
In this section, we will prove some results on exponential sums over finite fields. In addition, we will recall   the transfer principle for bounds of exponential sums modulo $p^m$ in \cite{NguyenVeys}. After that we will prove Proposition \ref{m=1}, \ref{wnsd} and Corollary \ref{countinguni}. 
\subsection{Exponential sums over finite fields}\label{modp}
 If $\mathsf{F}$ is a field and $f(x_1,\dots,x_n)$ is a polynomial in $\mathsf{F}[x_1,\dots,x_n]$, we denote by $C_f$ the closed subscheme of $\AA_\mathsf{F}^n$ associated to the Jacobian ideal $\cJ_f$ of $f$ generated by polynomials $\frac{\partial f}{\partial x_i}, 1\leq i\leq n$.  We denote by $s(f)$ the dimension of $C_{f}$ with the convention that $s(f)=-1$ if $C_f=\emptyset$ as in Section \ref{intro}. Suppose that $f=\sum_{I\in \NN^n}a_Ix^I$ where the coefficients $a_I$ are contained in a commutative ring $R$ and  $x^I=x_1^{i_1}\cdot...\cdot x_n^{i_n}$ if $I=(i_1,\dots,i_n)$. We denote by $\Supp(f)=\{I\in \NN^n|a_I\neq 0\}$. Let $w=(w_1,\dots,w_n)\in\NN_{\geq 1}^n$, we set $\left|w\right|=\sum_{1\leq j\leq n}w_j$. We recall that a polynomial $f$ is of $w$-degree $d$ if $$d=d_w(f):=\max_{I=(i_1,\dots,i_n)\in\Supp(f)}(w,I),$$ where if $I=(i_1,\dots,i_n)$ then $(w,I)=\sum_{j}w_ji_j$ is the usual scalar product in $\RR^n$. If $(w,I)=d$ for all $I\in \Supp(f)$, then $f$ is a $w$-weighted homogeneous polynomial of $w$-degree $d$. Given a polynomial $f$ of $w$-degree $d$, we can write $f=\sum_{0\leq j\leq d}f_{j,w}$ where $f_{j,w}$ is the $w$-weighted homogeneous part of $w$-degree $j$ of $f$. We will set $\tilde{f}_w=f_{d,w}$. If $w=(1,1,\dots,1)$, then $\tilde{f}_w$ will be the highest degree homogeneous part $\tilde{f}$ of $f$ as usual.
 We recall the following lemma in \cite{CDenSperlocal}.
\begin{lem}[\cite{CDenSperlocal}, Corollary 7.2]\label{torus}
Let $w=(w_1,...,w_n)\in \NN_{\geq 1}^n$ and $f(x_1,...,x_n)$ be a polynomial over a field $k$. Let $1\leq i\leq n$, we consider the polynomial $$f_1(x_1,...,x_n,y)=f(x_1,...,x_{i-1},x_iy,x_{i+1},...,x_n).$$ We say that $f_1$ is obtained from $f$ by a basic torus transformation. Suppose that $f$ is $w$-weighted  homogeneous of $w$-degree $d$ and $d\neq 0 \in k$. Let $C_f\subset \AA_k^n$ and $C_{f_1}\subset \AA_k^{n+1}$ be as in the above. Then $s(f_1)=\dim(C_{f_1})\leq s(f)+1=\dim(C_f)+1$. Moreover, if we set $$\hat{f}(x_{11},...,x_{1w_1},...,x_{n1},...,x_{nw_n})=f(x_{11}\cdots x_{1w_1},...,x_{n1}\cdots x_{nw_n}),$$
then $\hat{f}$ is a homogeneous polynomial of degree $d$ obtained from $f$ by doing $|w|-n$ basic torus transformations and we have $s(\hat{f})\leq s(f)+|w|-n$. 
\end{lem}
\begin{lem}\label{homosums} Let $k$ be a finite field and $f$ be a homogeneous polynomial of degree $d>1$ in $k[x_1,\dots,x_n]$. Suppose that $d$ is invertible in $k$, then there exists a constant $C$ depending only on $n,d$ such that for all polynomials $g\in k[x_1,\dots,x_n]$ of degree at most $d-1$, all subsets $J_1, J_2$ of $\{1,\dots,n\}$ with $J_1\cap J_2=\emptyset$ and all non-trivial characters $\Psi:k\to\CC^*$, we have
\begin{equation}\label{finitefield}
E_{J_1,J_2,k,\Psi}(f,g)=\bigg|\sum_{(x_1,\dots,x_n)\in k^n, x_i=0\hspace{0.1cm}\forall i\in J_1, x_i\neq 0\hspace{0.1cm}\forall i\in J_2} \Psi(f(x)+g(x))\bigg|\leq C\#k^{\frac{n+s(f)}{2}}.
\end{equation}
\end{lem}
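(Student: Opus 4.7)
The plan is to combine three ingredients: a Denef--Sperber-type exponential-sum bound on $\AA^n_k$, an inclusion--exclusion to strip off the nonvanishing conditions $x_i \neq 0$, and a key dimension inequality for $f$ restricted to a coordinate subspace. For ingredient (a), I would invoke (or rederive from Deligne's estimates, in the spirit of [CDenSperlocal, \S 7]) the following: there exists a constant $C_0 = C_0(n,d)$ such that for every finite field $k$ with $\mathrm{char}(k) \nmid d$, every polynomial $h \in k[x_1,\dots,x_n]$ of degree $\leq d$ with nonzero degree-$d$ homogeneous part $\tilde h$, and every nontrivial additive character $\Psi$ of $k$,
\[
\Bigl|\sum_{x \in k^n} \Psi(h(x))\Bigr| \leq C_0\,\#k^{(n + s(\tilde h))/2}.
\]

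For ingredient (b), use $\prod_{i \in J_2}(1 - \mathbf{1}_{x_i=0}) = \sum_{J_3 \subseteq J_2} (-1)^{|J_3|}\mathbf{1}_{x_i=0,\ i \in J_3}$ to obtain
\[
E_{J_1,J_2,k,\Psi}(f,g) \leq \sum_{J_3 \subseteq J_2} \Bigl|\sum_{x \in V_{J_1 \cup J_3}(k)} \Psi(f(x)+g(x))\Bigr|,
\]
where $V_J := \{x \in \AA^n : x_i = 0,\ i \in J\}$. It then suffices to bound each inner sum, uniformly in $J = J_1 \cup J_3$, by a constant depending only on $n,d$ times $\#k^{(n + s(f))/2}$. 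When $f|_{V_J} \neq 0$ it is homogeneous of degree $d$ on $V_J \cong \AA^{n-|J|}$, and the estimate of ingredient (a) applied inside $V_J$ yields
\[
\Bigl|\sum_{x \in V_J(k)} \Psi((f+g)(x))\Bigr| \leq C_0\,\#k^{(n-|J| + s(f|_{V_J}))/2},
\]
which becomes $\leq C_0\,\#k^{(n+s(f))/2}$ once one establishes the \emph{key dimension lemma}
\[
\dim C_{f|_{V_J}} \leq s(f) + |J|,\qquad C_{f|_{V_J}} := \{y \in V_J : (\partial f/\partial x_j)(y) = 0,\ j \notin J\}.
\]

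For ingredient (c), the key lemma is proved using the $\GG_m$-action by homotheties on $V_J$: since $f$ is homogeneous of degree $d \geq 2$ with $d$ invertible in $k$, the morphism $\psi : V_J \to \AA^J$, $y \mapsto ((\partial f/\partial x_i)(y))_{i \in J}$, is $\GG_m$-equivariant of weight $d-1$, so $C_{f|_{V_J}}$ is $\GG_m$-stable and each of its irreducible components is a cone through $0$. Its fiber $\psi^{-1}(0) \cap C_{f|_{V_J}}$ equals $C_f \cap V_J$, of dimension $\leq s(f)$; since $0$ lies in the closure of the image of each component under $\psi$ (because $\psi(0) = 0$), upper semicontinuity of fiber dimension bounds the generic fiber dimension by $s(f)$, whence $\dim C_{f|_{V_J}} \leq \dim \overline{\psi(C_{f|_{V_J}})} + s(f) \leq |J| + s(f)$. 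In the degenerate case $f|_{V_J} = 0$, write $f = \sum_{i \in J} x_i h_i$ with $h_i$ homogeneous of degree $d-1$; then $C_f \cap V_J = \{y \in V_J : h_i(y)=0\ \forall i \in J\}$ is cut out by $|J|$ equations in $V_J$, so $s(f) \geq \max(0, n - 2|J|)$, and the trivial bound $\#k^{n - |J|}$ on the sub-sum is already $\leq \#k^{(n+s(f))/2}$. Summing the $2^{|J_2|} \leq 2^n$ contributions completes the proof with $C = 2^n C_0$. The principal technical hurdle is pinning down the Denef--Sperber estimate of ingredient (a) with $C_0$ depending only on $n$ and $d$ and uniform in both the perturbation $g$ and the field $k$; the $\GG_m$-equivariance and semicontinuity steps of the key lemma are short.
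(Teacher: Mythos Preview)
Your proof is correct and follows essentially the same architecture as the paper's: reduce via inclusion--exclusion to sums over coordinate subspaces $V_J$, invoke the key dimension inequality $s(f|_{V_J})\leq s(f)+|J|$, apply the Katz/Deligne bound on $\AA^{n-|J|}$, and handle the degenerate case via the trivial estimate. The only substantive difference is that the paper cites the key dimension inequality from \cite[Theorem 7.4]{CDenSperlocal} and the exponential-sum bound from \cite[Theorem 4]{Katz}, whereas you supply a self-contained $\GG_m$-equivariance/fiber-dimension argument for the former and leave the latter as an invoked black box; your direct argument for the key lemma is a nice addition, but note that the uniform constant in ingredient~(a) is exactly what \cite[Theorem 4]{Katz} provides.
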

\begin{proof}For each subset $J$ of $\{1,...,n\}$, we set $H_J=\{(x_1,...,x_n)\in k^n\mid x_i=0 \textnormal{ for all } i\in J\}$. Then for each $J_1,J_2\subset \{1,...,n\}$, the set $\{(x_1,...,x_n)\in k^n\mid x_i=0 \textnormal{ for all } i\in J_1, x_i\neq 0 \textnormal{ for all } i\in J_2 \}$ is a finite Boolean combination of the sets $H_J$ for $J\subset \{1,...,n\}$. Thus, it suffices to prove the lemma in the case that $J_2=\emptyset$. If $n\leq s(f)+2\left|J_1\right|$, then (\ref{finitefield}) holds trivially for $C=1$. So we can suppose that $2|J_1|+s(f)<n$ and $J_2=\emptyset$. Now, we fix a set $J_1\subset \{1,\dots,n\}$ with $2\left|J_1\right|+s(f)<n$ and put $f_{J_1}=f|_{\{x_i=0, i\in J_1\}}$. Then either $f_{J_1}$ is a homogeneous polynomial of degree $d$ in $n-|J_1|$ variables or $f_{J_1}=0$. As in the proof of \cite[Theorem 7.4]{CDenSperlocal} we have $s(f_{J_1})\leq s(f)+\left|J_1\right|<n-|J_1|$, thus we must have $f_{J_1}\neq 0$. By \cite[Theorem 4]{Katz}, there exists a constant $C_{J_1}$ depending only on $n,d$ such that for all polynomials $g\in k[x_1,\dots,x_n]$ of degree at most $d-1$ and all non-trivial characters $\Psi:k\to\CC^*$, we have 
$$\bigg|\sum_{(x_1,\dots,x_n)\in k^n, x_i=0\hspace{0.1cm}\forall i\in J_1} \Psi(f(x)+g(x))\bigg|\leq C_{J_1}\#k^{\frac{(n-\left|J_1\right|)+s(f_{J_1})}{2}}.$$
Since $s(f_{J_1})\leq s(f)+\left|J_1\right|$, we have 
$$E_{J_1,\emptyset,k,\Psi}(f,g)\leq C_{J_1}\#k^{\frac{n+s(f)}{2}}$$
 for all polynomials $g\in k[x_1,\dots,x_n]$ of degree at most $d-1$ and all non-trivial characters $\Psi:k\to\CC^*$. We can finish this proof by setting $C=1+\max_{2\left|J_1\right|+s(f)<n}C_{J_1}$.
\end{proof}
\begin{cor}\label{quasisums}
Let $k$ be a finite field and $f$ be a $w$-weighted homogeneous polynomial of $w$-degree $d>1$ in $k[x_1,\dots,x_n]$, where $w=(w_1,\dots,w_n)$ is a tuple in $\NN_{\geq 1}^n$. Suppose that $d$ is invertible in $k$, then there exists a constant $C$ depending only on $n,d,w$ such that for all polynomials $g\in k[x_1,\dots,x_n]$ with $d_w(g)\leq d-1$ and all non-trivial characters $\Psi:k\to\CC^*$, we have
\begin{equation}\label{finitefield1}
\bigg|\sum_{x\in k^n} \Psi\left(f(x)+g(x)\right)\bigg|\leq C\#k^{\frac{n+s(f)}{2}}.
\end{equation}
\end{cor}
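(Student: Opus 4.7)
The plan is to reduce Corollary \ref{quasisums} to the ordinary homogeneous case of Lemma \ref{homosums} via the torus substitution $x_i=\prod_{j=1}^{w_i}y_{ij}$ used in \cite[Theorem 7.4]{CDenSperlocal}. This substitution converts the $w$-weighted homogeneous polynomial $f$ into the ordinary homogeneous polynomial $f^{(w)}(y)=f(\prod_jy_{1j},\dots,\prod_jy_{nj})$ of degree $d$ in $|w|=w_1+\cdots+w_n$ variables, and analogously produces $g^{(w)}$ of ordinary degree at most $d-1$. To handle the loci where some $x_i$ vanishes cleanly, I first decompose
$$\sum_{x\in k^n}\Psi(f+g)=\sum_{J\subset\{1,\dots,n\}}S_J,\qquad S_J=\sum_{\substack{x_i=0,\,i\in J\\ x_i\neq 0,\,i\notin J}}\Psi(f(x)+g(x)),$$
reducing the problem to bounding each $S_J$ by a constant times $(\#k)^{(n+s(f))/2}$. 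Setting $I=J^c$, the restriction $f|_J:=f|_{x_i=0,\,i\in J}$ is either zero or $w|_I$-weighted homogeneous of $w|_I$-degree $d$, while $g|_J$ has $w|_I$-degree at most $d-1$.

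When $f|_J\neq 0$, the torus substitution on the $I$-coordinates realizes a $\prod_{i\in I}(\#k-1)^{w_i-1}$-to-one cover $(k^*)^{|I_w|}\twoheadrightarrow(k^*)^{I}$ with $|I_w|:=\sum_{i\in I}w_i$, so with $q=\#k$,
$$S_J=\frac{1}{\prod_{i\in I}(q-1)^{w_i-1}}\sum_{y\in(k^*)^{|I_w|}}\Psi\bigl(F_J(y)+G_J(y)\bigr),$$
where $F_J=(f|_J)^{(w|_I)}$ is ordinary homogeneous of degree $d>1$ in $|I_w|$ variables and $G_J=(g|_J)^{(w|_I)}$ has ordinary degree at most $d-1$. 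I apply Lemma \ref{homosums} with $(J_1,J_2)=(\emptyset,\{1,\dots,|I_w|\})$, then combine with $\prod_{i\in I}(q-1)^{w_i-1}\geq c\cdot q^{|I_w|-|I|}$ and the key inequality
$$s(F_J)\leq s(f|_J)+|I_w|-|I|\leq s(f)+|J|+|I_w|-|I|,$$
whose second step $s(f|_J)\leq s(f)+|J|$ is the standard iterated hyperplane restriction already used in Lemma \ref{homosums}. A short arithmetic check shows the resulting exponent collapses to exactly $(n+s(f))/2$, yielding $|S_J|\leq C\cdot q^{(n+s(f))/2}$.

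The edge case $f|_J=0$ is handled directly: writing $f=\sum_{j\in J}x_j h_j$, the subvariety $\{x_j=0\ \forall j\in J\}\cap\{h_j|_{x_j=0,\,j\in J}(x)=0\ \forall j\in J\}$ lies in $C_f$ and has dimension at least $n-2|J|$, so $s(f)\geq n-2|J|$ and the trivial bound $|S_J|\leq q^{|I|}$ is already at most $q^{(n+s(f))/2}$ (note $J=\emptyset$ cannot be this case, since $f$ itself is nonzero).

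The main obstacle will be establishing the key bound $s(F_J)\leq s(f|_J)+|I_w|-|I|$. The plan is to stratify $C_{F_J}$ by the vanishing pattern $T_i:=\{j:y_{ij}=0\}\subset\{1,\dots,w_i\}$ for $i\in I$, using the chain rule
$$\frac{\partial F_J}{\partial y_{ij}}(y)=\frac{\partial(f|_J)}{\partial x_i}(x)\cdot\prod_{\ell\neq j}y_{i\ell},\qquad x_i=\prod_\ell y_{i\ell}.$$
When $|T_i|\geq 2$ all $y_{i\cdot}$ partials vanish identically, while for $|T_i|\leq 1$ they collectively reduce to the single condition $\partial(f|_J)/\partial x_i(x)=0$. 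Partitioning $I=J_0\sqcup J_1\sqcup J_2$ according to $|T_i|\geq 2$, $|T_i|=1$, $|T_i|=0$, the corresponding $x$-locus sits inside $C_{(f|_J)|_{x_i=0,\,i\in J_0}}$, of dimension $\leq s(f|_J)+|J_0|$, while the maximal fiber above it in the $y$-stratum has dimension $|I_w|-|I|-|J_0|$. The two contributions add to $s(f|_J)+|I_w|-|I|$ uniformly across strata, proving the bound. Summing $|S_J|\leq C'\cdot q^{(n+s(f))/2}$ over the $2^n$ subsets $J$ then yields Corollary \ref{quasisums}.
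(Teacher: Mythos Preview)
Your route is the paper's torus-substitution idea, but with an unnecessary stratification and a real gap. The paper does not split into the sums $S_J$: it substitutes $x_i\mapsto x_i\,x_{\mathrm{new}}$ one weight-unit at a time, keeping each original $x_i$ ranging over all of $k$ and only the auxiliary variables restricted to $k^*$, yielding directly
\[
\sum_{x\in k^n}\Psi(f(x)+g(x))=\frac{1}{(\#k-1)^{|w|-n}}\sum_{y\in k^n\times(k^*)^{|w|-n}}\Psi(\hat f(y)+\hat g(y)),
\]
and then applies Lemma~\ref{homosums} once with $J_1=\emptyset$, $J_2=\{n+1,\dots,|w|\}$. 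No edge cases, no $S_J$.

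The gap: you import ``$s(f|_J)\le s(f)+|J|$'' from Lemma~\ref{homosums}, but there it is proved for \emph{ordinary} homogeneous $f$, where $0\in C_f$ automatically and the relevant loci are cones through the origin. For $w$-weighted homogeneous $f$ this breaks exactly when $C_f=\emptyset$, i.e.\ when $f$ contains a monomial $x_i$ with $w_i=d$. Take $n=3$, $w=(1,1,2)$, $d=2$, $f=x_3+x_1^2$, $g=0$: here $s(f)=-1$, yet $f|_{\{3\}}=x_1^2\in k[x_1,x_2]$ has $s(f|_{\{3\}})=1>0=s(f)+1$, and $|S_{\{3\}}|=(\#k-1)\bigl|\sum_{x_1\ne0}\Psi(x_1^2)\bigr|$ is of order $(\#k)^{3/2}$, exceeding the target $(\#k)^{(n+s(f))/2}=\#k$. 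The full sum is $0$ (summing over $x_3$ kills it), but your stratum-by-stratum bound cannot see that cancellation. Your edge-case argument fails the same way: with $f=x_2$, $w=(1,2)$, $J=\{2\}$ one has $h_2\equiv1$, the intersection is empty, and the claimed $s(f)\ge n-2|J|$ is false. The repair is short---when $C_f=\emptyset$ some $x_i$ with $w_i=d$ appears linearly in $f$ and (since $d_w(g)<d=w_i$) not at all in $g$, so the full sum vanishes; once that case is disposed of, $0\in C_f$ and all your inequalities go through---but this step is missing from your proposal.
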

\begin{proof}We use the idea from the proof of  \cite[Theorem 7.4]{CDenSperlocal}. Let us consider a basic torus transformation in Lemma \ref{torus}. We set $f_1(x_1,\dots,x_n,x_{n+1})=f(x_1x_{n+1},x_2,\dots,x_n)$ and $g_1(x_1,\dots,x_n,x_{n+1})=g(x_1x_{n+1},x_2,\dots,x_n)$ then we have 
\begin{equation}\label{toeq}
\frac{1}{\#k-1}\sum_{(x,x_{n+1})\in k^n\times k^*} \Psi\left(f_1(x,x_{n+1})+g_1(x,x_{n+1})\right)=\sum_{x\in k^n} \Psi\left(f(x)+g(x)\right).
\end{equation}
 Let $\hat{f}, \hat{g}$ be polynomials in $|w|$ variables as in Lemma \ref{torus}. Then $\hat{f}$ is homogeneous of degree $d$, $\deg(\hat{g})\leq d-1$ and $s(\hat{f})\leq s(f)+\left|w\right|-n$.  We can use (\ref{toeq}) and $|w|-n$ basic torus transformations as in Lemma \ref{torus} to have
\begin{equation}\label{finitesum1}
\frac{1}{(\#k-1)^{\left|w\right|-n}}\sum_{y\in k^n\times (k^*)^{\left|w\right|-n}} \Psi\left(\hat{f}(y)+\hat{g}(y)\right)=\sum_{x\in k^n}\Psi\left(f(x)+g(x)\right).
\end{equation}
 By Lemma \ref{homosums}, there exists a constant $c$ depending only on $n,w,d$ such that for all polynomials $g\in k[x_1,\dots,x_n]$ with $d_w(g)\leq d-1$ and all non-trivial characters $\Psi:k\to\CC^*$, we have
\begin{align}\label{finitesum2}
\bigg|\sum_{y\in k^n\times (k^*)^{\left|w\right|-n}} \Psi\left(\hat{f}(y)+\hat{g}(y)\right)\bigg|\leq c\#k^{\frac{\left|w\right|+s(\hat{f})}{2}}
\leq c\#k^{\frac{-n+2\left|w\right|+s(f)}{2}}.
\end{align}
By combining (\ref{finitesum1}) and  (\ref{finitesum2}), we obtain (\ref{finitefield1})  for $C=2^{\left|w\right|-n}c$.
\end{proof}

\subsection{Transfer principle for exponential sums modulo $p^m$}\label{jtrans}
Let $K$ be a number field. Let $f\in\cO_K[x_1,\dots,x_n]$ be a non-constant polynomial and $Z\subset\AA_{\cO_K}^n$ be an $\cO_K$-scheme of finite type. Let $x=(x_1(t),\dots,x_n(t))\in \cO_K[[t]]^n$, then we write 
$$x_i(t)=\sum_{j\geq 0}x_{ij}t^j.$$
For each integer $m\geq 0$, we set $x^{(m)}=(x_{ij})_{0\leq j\leq m, 1\leq i\leq n}$ and view it as a closed point of $\AA^{(m+1)n}$. By expanding $f(x)$ as a power series in $t$, we can write
$$f(x)=\sum_{i\geq 0}f_i\left(x^{(i)}\right)t^i\in\cO_K[[t]]$$
for polynomials $f_i\in\cO_K[x^{(i)}]$ depending only on $f$ for $i\geq 0$. We call $f_i$ the $i^{\text{th}}$ jet polynomial of $f$. Let $k$ be a field over $\cO_K$, i.e., $k$ is equipped with a structure of  $\cO_K$-algebra $\varphi:\cO_K\to k$, we still denote by $f, f_{i}$ the polynomials  $\varphi(f), \varphi(f_i)$ respectively. We identify $(Z(k)+tk[[t]]^n)/(t^{m+1})$ with the set of $k$-points of the $k$-scheme $Z_k^{(m)}:=(Z\otimes \AA_{\cO_K}^{mn})\otimes \spec(k)$ and view $f_{m}$ as a regular function on $Z_k^{(m)}$. Now, we recall an important result from \cite[Propositions 4.3 and 5.4, Corollary 5.5]{NguyenVeys}.

\begin{prop}\label{transfer} Let $f\in\cO_K[x_1,\dots,x_n]$ be a non-constant polynomial, $Z\subset\AA_{\cO_K}^n$ be an $\cO_K$-scheme of finite type and $\sigma$ be a  positive real number. Suppose that $f(Z(\CC))$ contains at most one critical value of $f$. Moreover, suppose that for each $m> 1$, there is an integer $M_m$ and a positive constant $C_m$ such that for all finite fields $k$ over $\cO_K$ with $\Ch(k)>M_m$, all $a\in k^*$ and all non-trivial characters $\Psi:k\to \CC^{*}$ of $k$, we have 
$$\bigg|\#k^{-mn}\sum_{x^{(m-1)}\in Z_{k}^{(m-1)}(k)}\Psi\left(af_{m-1}\left(x^{(m-1)}\right)\right)\bigg|\leq C_m\#k^{-m\sigma}.$$ 
Then there is an integer $M$ and a positive constant $C$ such that
$$\bigg|E_{L,Z,f}(\psi)\bigg|\leq Cm_{\psi}^{n-1}q_L^{-m_\psi\sigma}$$
for all $L\in\tilde{\cL}_{K,M}$ and all additive characters $\psi$ of $L$ of conductor $m_\psi>1$.
\end{prop}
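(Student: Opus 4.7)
The plan is to reduce to the case of a local field of positive characteristic via Cluckers--Loeser style motivic transfer, then do an explicit jet-expansion computation to identify $E_{L,Z,f}(\psi)$ with an exponential sum of the form appearing in the hypothesis, and finally upgrade the per-$m$ bounds to a bound uniform in $m_\psi$ with the predicted $m_\psi^{n-1}$ factor.

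\textbf{Step 1 (Transfer to equal characteristic).} The quantity $E_{L,Z,f}(\psi)$ is a motivic exponential integral in the Cluckers--Loeser sense, and inequalities of the shape $|E_{L,Z,f}(\psi)| \le Cm_\psi^{n-1} q_L^{-m_\psi \sigma}$ belong to the class of statements that transfer between mixed characteristic and equal characteristic local fields with the same residue field, provided the residue characteristic is large enough. Invoking the transfer principle for exponential sums (Proposition 5.4 of \cite{NguyenVeys} as used in the proof of \ref{wnsd}), it suffices to prove the bound for $L = k((t))$ with $k$ a finite field over $\cO_K$ of sufficiently large characteristic.

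\textbf{Step 2 (Jet expansion).} Fix $L = k((t))$, $\varpi_L = t$, and let $\psi$ be an additive character of $L$ of conductor $m = m_\psi > 1$. Such a character takes the form $\psi(y) = \Psi(\operatorname{res}(a t^{-m} y))$ for a unique non-trivial character $\Psi$ of $k$ and a unique $a \in k^\ast$, where $\operatorname{res}$ denotes the constant-coefficient map $k((t))\to k$. For $x = (x_1(t),\dots,x_n(t)) \in \cO_L^n$ with $x_i(t) = \sum_{j\ge 0}x_{ij}t^j$, Taylor expansion yields
\[
f(x(t)) \equiv \sum_{i=0}^{m-1}f_i(x^{(i)})\,t^i \pmod{t^m},
\]
and the variables $x_{j,m-1}$, $1\le j\le n$, appear only in $f_{m-1}$. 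Using that $\psi$ is trivial on $t^m\cO_L$ and its conductor is $m$, summing over $x_{j,m-1}$ together with orthogonality of characters shows that the contributions from $f_0,\dots,f_{m-2}$ reorganize into an averaging over $Z^{(m-1)}_k(k)$, so that
\[
E_{L,Z,f}(\psi) \;=\; \#k^{-mn}\sum_{x^{(m-1)}\in Z^{(m-1)}_k(k)}\Psi\!\bigl(a\,f_{m-1,k}(x^{(m-1)})\bigr).
\]
This is precisely the sum appearing in the hypothesis, so we get $|E_{L,Z,f}(\psi)|\le C_m q_L^{-m\sigma}$.

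\textbf{Step 3 (Uniformity in $m$).} The main obstacle is that the per-$m$ constants $C_m$ are a priori unbounded, while the conclusion requires a single constant $C$ and the polynomial factor $m_\psi^{n-1}$. To extract this uniformity, I would combine the Step~2 identification with the rationality theorem for Igusa local zeta functions: the generating series $\sum_{m\ge 1}E_{L,Z,f}(\psi_m)\,T^m$ (where $\psi_m$ runs through representative characters of conductor $m$) is a rational function of $T$ whose poles have real parts equal to negatives of candidate poles from a fixed log-resolution of $f$, and whose pole multiplicities are bounded by $n$. Under the assumption that $f(Z(\CC))$ contains at most one critical value of $f$, the same mechanism that underlies \cite[Proposition 1.4.4, Corollary 1.4.5]{DenefBour} and the computation in the proof of Proposition \ref{compa} shows that any pole contributing asymptotically to $E_{L,Z,f}(\psi)$ lies at real part $\le -\sigma$; this uses the Step~2 hypothesis to rule out poles of real part $>-\sigma$ uniformly in the residue field. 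Extracting coefficients from the rational function then yields
\[
|E_{L,Z,f}(\psi)|\;\le\; C\,m_\psi^{n-1}\,q_L^{-m_\psi \sigma},
\]
with $C$ depending only on the log-resolution data (hence only on $f,Z,K$), for all sufficiently large residue field characteristics. Combined with Step~1, this gives the statement.

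The hardest technical point is Step~3: one must show that the hypothesis on the jet sums forces the relevant poles of the generating series to have real part $\le -\sigma$, rather than only controlling individual coefficients, and then invoke the $n$-bound on pole multiplicities to produce the $m_\psi^{n-1}$ factor. The critical-value assumption on $f|_Z$ is exactly what is needed to prevent spurious poles arising from more than one twisted character orbit and to apply the Igusa--Denef framework cleanly.
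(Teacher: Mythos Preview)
The paper does not provide a proof of this proposition; the sentence immediately preceding it reads ``Now, we recall an important result from \cite{NguyenVeys}.'' So there is no in-paper argument to compare against, and your proposal is an attempt to reconstruct the proof from \cite{NguyenVeys}.

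Your three-step architecture---transfer to equal characteristic, jet identification, and uniformization via the pole structure of the Igusa zeta function---is the right one. But Step~2 contains a genuine error: your claim that every additive character of $L=k((t))$ of conductor $m$ has the form $\psi(y)=\Psi(\operatorname{res}(at^{-m}y))$ with $a\in k^*$ is false. The conductor-$m$ characters are parametrized by units $u\in\cO_L^*$ via $\psi_u(y)=\Psi(\operatorname{res}(ut^{-m}y))$, and your $a\in k^*$ only captures those $u$ that are constant. For a general such $\psi_u$ one gets
\[
\psi_u(f(x))=\Psi\Bigl(\sum_{i+j=m-1}u_i\,f_j(x^{(j)})\Bigr),
\]
which does not reduce to $\Psi(af_{m-1})$; your ``orthogonality/reorganization'' sentence does not justify the displayed identity for these $\psi$. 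The identity you wrote is correct precisely for the simple characters with $u\in k^*$---and no orthogonality argument is needed there, it is a direct unwinding.

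This is not fatal, but it means Step~3 must do more than you say. The hypothesis of the proposition really does concern only the simple characters (this is what ``all $a\in k^*$'' encodes), while the conclusion covers all $\psi$ of each conductor. The bridge is exactly the zeta-function mechanism you invoke: the candidate poles of $\cZ_{L,Z,f}(s)$ and of its twisted variants lie in the fixed finite set $\{-\nu_i/N_i\}$ coming from a log-resolution over $K$; the per-$m$ hypothesis on the simple characters, applied along sequences with $q_L\to\infty$, forces every candidate with real part $>-\sigma$ to be a fake pole; and then \cite[1.4.4--1.4.5]{DenefBour} gives the asymptotic expansion of $E_{L,Z,f}(\psi)$ for \emph{every} $\psi$ in terms of these same poles, with the $m_\psi^{n-1}$ factor coming from the pole-order bound $\le n$. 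The single-critical-value assumption on $f|_Z$ is what ensures that only one (shifted) zeta function governs all the $E_{L,Z,f}(\psi)$. So Step~3 has to supply both the $m$-uniformity \emph{and} the passage from simple characters to all characters; your sketch only flags the former.
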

\subsection{Proof of Proposition \ref{m=1} and \ref{wnsd}}
Throughout this section, we will use the following notation and assumption. Let $w$ be a tuple in $\NN_{\geq 1}^n$ and $J$ be a non-empty finite subset of $\NN_{\geq 1}$. Let $K$ be a number field. Let $f_{ij}\in \cO_K[x_1,\dots,x_n]$ be non-constant polynomials for $i\in J$ and $1\leq j\leq r_i$, be as in \ref{wnsd}, where $r_i>0$ for $i\in J$.  In particular, $d_{w}(f_{ij})=i$ for all $i,j$. Let $\cI$ be the ideal of $\cO_K[x_1,\dots,x_n]$ generated by $f_{ij}$ for $i\in J$ and $1\leq j\leq r_i$. 
We will assume that $\cI_K\neq (1)$. We will use the notations $ F_{ijw}, \sigma_0\big((f_{ij})_{i\in J, 1\leq j\leq r_i}\big), \tilde{\sigma}_{0w}\big((f_{ij})_{i\in J, 1\leq j\leq r_i}\big), s_{wi}, r$ in \ref{wnsd} and Proposition \ref{m=1}. To simplify the notation, we will write  $\sigma_0, \tilde{\sigma}_{0w}$ instead of $\sigma_0\big((f_{ij})_{i\in J, 1\leq j\leq r_i}\big)$ and $\tilde{\sigma}_{0w}\big((f_{ij})_{i\in J, 1\leq j\leq r_i}\big)$ respectively. In particular, we have $r=\sum_{i\in J}r_i$, $\sigma_0=\min_{\ell\in J}\frac{n-s_{w\ell}}{\ell}$ and $\tilde{\sigma}_{0w}=\min_{\ell\in J}\frac{n-s_{w\ell}}{2(\ell-1)},$ where we use the convention that $\frac{n}{0}=+\infty$. We also use the notations  $\mathfrak{D}^{r}_{\cO_K}=\AA_{\cO_K}^r\setminus \Spec(\cO_K[x_1,...,x_n]/(x_1,...,x_n))$, $\mathfrak{D}^{n,r}_{\cO_K}=\AA_{\cO_K}^n\times_{\Spec(\cO_K)}\mathfrak{D}^{r}_{\cO_K}, Y\mathfrak{D}^{r}_{\cO_K}=Y\times_{\Spec(\cO_K)}\mathfrak{D}^{r}_{\cO_K}$ if $Y$ is an $\cO_K$-scheme (see Section \ref{SV}). With the notation $\BSing(F_{i1w},\dots,F_{ir_{i}w})$ in Section \ref{Conjexp}, we have $s_{wi}=\dim(\BSing(F_{i1w},\dots,F_{ir_{i}w}))$. If the set $\{F_{ijw}|i\in J, 1\leq j\leq r_i\}$ is linearly dependent then $s_{wi}=n$ for some $i\in J$. Thus, $\tilde{\sigma}_{0w}=\sigma_0=0$ and there is nothing to do with Proposition \ref{m=1} and \ref{wnsd}. From now on, we will suppose that $\{F_{ijw}|i\in J, 1\leq j\leq r_i\}$ is linearly independent. We set $D=\max\{i|i\in J\}$ 
and $$g\big((a_{ij})_{i\in J, 1\leq j\leq r_i},x_1,\dots,x_n\big)=\sum_{i\in J, 1\leq j\leq r_i}a_{ij}f_{ij}(x).$$
Since $\cI_K\neq (1)$,  it is easily seen that $0$ is the only critical value of $g:\CC^{n+r}\to\CC$.

By Remark \ref{absexp}, in \ref{wnsd} and Proposition \ref{m=1}, we need to study the exponential sums 
\begin{equation}\label{Cm=1}
E^{(r)}_{L,\cI}(m):=E_{L,\mathfrak{D}^{n,r}_{\cO_K},g}(\psi)
\end{equation}
when $L\in\tilde{\cL}_{K,1}$, $m\geq 1$ and $\psi$ is an arbitrary additive character of $L$ of conductor $m$.

Let $\mathsf{F}$ be a field over $\cO_K$,  i.e., $\mathsf{F}$ is a field  endowed with a structure of $\cO_K$-algebra. For each $\textbf{a}_0=(a_{ij})_{i\in J, 1\leq j\leq r_i}\in \mathsf{F}^r$, we set $$g_{\textbf{a}_0}(x_1,\dots,x_n)=g((a_{ij})_{i\in J, 1\leq j\leq r_i},x_1,\dots,x_n).$$
Let $\tilde{g}_{\textbf{a}_0w}$ be the $w$-weighted homogeneous part of highest $w$-degree of $g_{\textbf{a}_0}$. 

For each $\textbf{a}\in \mathsf{F}[[t]]^r$, we write $\textbf{a}=(a_{ij}(t))_{i\in J, 1\leq j\leq r_i}$, where $a_{ij}(t)=\sum_{\ell\geq 0}a_{ij\ell}t^{\ell}$. We set $\textbf{a}^{(\ell)}=(a_{iju})_{i\in J, 1\leq j\leq r_i, \ell\geq u\geq 0}$ and $\textbf{a}_0=\textbf{a}^{(0)}\in \mathsf{F}^r$. Let $x=(x_1(t),\dots,x_n(t))\in \mathsf{F}[[t]]^n$, we write $x_i(t)=\sum_{\ell\geq 0}x_{i\ell}t^\ell$, $x^{(\ell)}=(x_{ij})_{1\leq i\leq n, 0\leq j\leq \ell},$ 
$$g(\textbf{a},x)=\sum_{\ell\geq 0}g_{\ell}\left(\textbf{a}^{(\ell)},x^{(\ell)}\right)t^\ell,
g_{\textbf{a}^{(\ell)}}\left(x^{(\ell)}\right)=g_{\ell}\left(\textbf{a}^{(\ell)},x^{(\ell)}\right),$$
$$\tilde{g}_{\textbf{a}_0w}(x)=\sum_{\ell\geq 0}\tilde{g}_{\textbf{a}_0w\ell}\left(x^{(\ell)}\right)t^\ell$$
as in Section \ref{jtrans}.
We consider the weight $\tilde{w}$ for the variables $(x_{i\ell})_{1\leq i\leq n, 0\leq \ell}$ given by $\tilde{w}(x_{i\ell})=w_i+D\ell$. Since $\{F_{ijw}|i\in J, 1\leq j\leq r_i\}$ is linearly independent, we see that $\tilde{g}_{\textbf{a}_0w}\neq 0$  if $\textbf{a}_0\in \CC^r\setminus\{0\}$. Thus, $\tilde{g}_{\textbf{a}_0w\ell}\neq 0$ for all $\ell\geq 0$ if $\textbf{a}_0\in \CC^r\setminus\{0\}$.

\begin{lem}\label{singlocus}If  $\textnormal{\textbf{a}}_0=(a_{ij})_{i\in J, 1\leq j\leq r_i}\in\CC^r\setminus\{0\}$ and $d_w(g_{\textnormal{\textbf{a}}_0})=\ell$, then one has $\ell\in J$ and $s(\tilde{g}_{\textnormal{\textbf{a}}_0w})=\dim(C_{\tilde{g}_{\textnormal{\textbf{a}}_0w}})\leq s_{w\ell}$.
\end{lem}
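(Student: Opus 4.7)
The plan is to decompose each $f_{ij}$ into its $w$-weighted homogeneous pieces, write $f_{ij}=\sum_{k\le i} f_{ijk,w}$ with $f_{iji,w}=F_{ijw}$, and read off the $w$-weighted homogeneous part of $w$-degree $m$ of $g_{\textbf{a}_0}$ as
\[
[g_{\textbf{a}_0}]_m \;=\; \sum_{\substack{i\in J,\, i\ge m \\ 1\le j\le r_i}} a_{ij}\, f_{ijm,w},
\]
with the top part $[g_{\textbf{a}_0}]_i = \sum_j a_{ij} F_{ijw}$ when $i\in J$. The $w$-degree condition $d_w(g_{\textbf{a}_0})=\ell$ amounts to $[g_{\textbf{a}_0}]_m=0$ for all $m>\ell$ and $[g_{\textbf{a}_0}]_\ell\ne 0$.

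To prove $\ell\in J$, I would run a downward induction on $i\in J$ with $i>\ell$. For the maximal such $i=D$ (assuming $D>\ell$), the vanishing of $[g_{\textbf{a}_0}]_D$ gives $\sum_j a_{Dj}F_{Djw}=0$; the linear independence of $\{F_{ijw}\}_{i\in J,j}$ assumed just before the lemma forces $a_{Dj}=0$ for all $j$. Having killed the coefficients with index $D$, the identity $[g_{\textbf{a}_0}]_{D'}=0$ at the next element $D'\in J$ with $D'>\ell$ reduces to $\sum_j a_{D'j}F_{D'jw}=0$, so again $a_{D'j}=0$ for all $j$; continue. This leaves $g_{\textbf{a}_0}=\sum_{i\in J,\, i\le \ell,\, j} a_{ij}f_{ij}$, whose $w$-degree is at most $\max\{i\in J:i\le \ell\}$. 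Since this $w$-degree equals $\ell$, we must have $\ell\in J$, and only the $a_{\ell j}$ with $i=\ell$ can contribute to the top piece.

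Consequently the top $w$-weighted part simplifies to $\tilde{g}_{\textbf{a}_0 w}=\sum_{j=1}^{r_\ell} a_{\ell j} F_{\ell j w}$, with the vector $\mathbf{a}_\ell=(a_{\ell 1},\dots,a_{\ell r_\ell})\ne 0$ (otherwise $\tilde{g}_{\textbf{a}_0 w}=0$, contradicting $d_w(g_{\textbf{a}_0})=\ell$). For the singular locus bound, at any point $x\in C_{\tilde{g}_{\textbf{a}_0 w}}$ one has
\[
\sum_{j=1}^{r_\ell} a_{\ell j}\,\frac{\partial F_{\ell j w}}{\partial x_k}(x)=0 \qquad \text{for all } 1\le k\le n,
\]
so the nonzero row vector $\mathbf{a}_\ell$ lies in the left kernel of the Jacobian matrix $\bigl(\partial F_{\ell j w}/\partial x_k\bigr)_{j,k}(x)$; hence its rank is $<r_\ell$, which means $x\in\BSing(F_{\ell 1 w},\dots,F_{\ell r_\ell w})$. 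Thus $C_{\tilde{g}_{\textbf{a}_0 w}}\subseteq \BSing(F_{\ell 1 w},\dots,F_{\ell r_\ell w})$ and $\dim C_{\tilde{g}_{\textbf{a}_0 w}}\le s_{w\ell}$.

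The only subtle point is the downward induction step: it relies crucially on the standing hypothesis that the family $\{F_{ijw}\}_{i\in J,\, 1\le j\le r_i}$ is linearly independent (not merely that each $\{F_{ij w}\}_{j}$ is linearly independent for fixed $i$), which was already reduced to in the paragraph preceding the lemma. Everything else is a short linear-algebra argument, so no serious obstacle is expected.
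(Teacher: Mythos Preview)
Your argument is correct and follows essentially the same route as the paper's proof: use the linear independence of $\{F_{ijw}\}$ to force $a_{ij}=0$ for $i>\ell$ and hence $\ell\in J$, then observe $C_{\tilde g_{\textbf{a}_0 w}}\subset \BSing(F_{\ell 1w},\dots,F_{\ell r_\ell w})$ via the obvious Jacobian-rank argument. The paper compresses your downward induction and the rank argument into the two phrases ``thus $\ell\in J$ and $a_{ij}=0$ if $i>\ell$'' and ``it is easy to verify,'' but the content is identical.
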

\begin{proof} Suppose that $d_w(g_{\textbf{a}_0})=\ell$. By our assumption, $\{F_{ijw}|i\in J, 1\leq j\leq r_i\}$ is linearly independent, thus $\ell\in J$ and $a_{ij}=0$ if $i>\ell$. Moreover, there exists a non-empty subset $I$ of $\{1,\dots,r_{\ell}\}$ such that $\tilde{g}_{\textnormal{\textbf{a}}_0w}=\sum_{j\in I}a_{\ell j}F_{\ell jw}$ and $a_{\ell j}\neq 0$ for all $j\in I$. It is easy to verify that $C_{\tilde{g}_{\textnormal{\textbf{a}}_0w}}\subset \BSing(F_{\ell 1w},\dots,F_{\ell r_{\ell}w})$. Thus, $s(\tilde{g}_{\textnormal{\textbf{a}}_0w})\leq s_{w\ell} =\dim(\BSing(F_{\ell 1w},\dots,F_{\ell r_{\ell}w}))$.
\end{proof}
\begin{lem}\label{highpart}If $\textnormal{\textbf{a}}\in \CC[[t]]^r\setminus t\CC[[t]]^r$ and $m\geq 0$, then the $\tilde{w}$-weighted homogeneous part of highest $\tilde{w}$-degree of $g_{\textnormal{\textbf{a}}^{(m)}}$ is $\tilde{g}_{\textnormal{\textbf{a}}_0wm}$.

\end{lem}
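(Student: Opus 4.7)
The plan is to do a careful bookkeeping of $\tilde{w}$-degrees after the substitution $x_k \mapsto x_k(t) = \sum_{\ell \geq 0} x_{k\ell} t^\ell$. First I would verify the key combinatorial identity: for a multi-index $I = (i_1, \ldots, i_n)$, the coefficient of $t^\ell$ in $\prod_{k=1}^n x_k(t)^{i_k}$ is a polynomial $P_\ell^I(x^{(\ell)})$ that is $\tilde{w}$-weighted homogeneous of $\tilde{w}$-degree $(w, I) + D\ell$. Indeed, each of its monomials has the form $\prod_{k, j} x_{k, \ell_{kj}}$ with $j$ running over the $i_k$ copies of the factor $x_k$ and $\sum_{k,j} \ell_{kj} = \ell$, hence its $\tilde{w}$-weight is $\sum_{k,j}(w_k + D\ell_{kj}) = \sum_k w_k i_k + D\ell = (w, I) + D\ell$.

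Applying this to each monomial of $f_{ij}$, I would deduce that $Q_{ij, \ell}(x^{(\ell)}) := [t^\ell]\, f_{ij}(x(t))$ has $\tilde{w}$-degree at most $i + D\ell$, with its top $\tilde{w}$-degree part of degree exactly $i + D\ell$ equal to $\tilde{F}_{ijw, \ell} := [t^\ell]\, F_{ijw}(x(t))$, since only the $w$-top-degree part $F_{ijw}$ of $f_{ij}$ contributes to weight $i + D\ell$. Next I would set $d = d_w(g_{\textbf{a}_0})$ and use the linear independence of $\{F_{ijw}\}_{i \in J, 1 \leq j \leq r_i}$: if $D_0$ denotes the largest $i \in J$ for which some $a_{ij, 0}$ is nonzero, then the $w$-degree-$D_0$ part of $g_{\textbf{a}_0}$ is $\sum_j a_{D_0 j, 0} F_{D_0 j w}$, which cannot vanish, so $d = D_0$. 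In particular, $a_{ij, 0} = 0$ for every $i \in J$ with $i > d$.

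Finally I would expand
\[
 g_{\textbf{a}^{(m)}}(x^{(m)}) = [t^m]\, g(\textbf{a}(t), x(t)) = \sum_{i \in J}\sum_{j=1}^{r_i}\sum_{u=0}^{m} a_{ij, u}\, Q_{ij, m-u}(x^{(m-u)})
\]
and bound the $\tilde{w}$-degree of each summand by $i + D(m - u)$. For $u \geq 1$ this is at most $D + D(m-1) = Dm < d + Dm$; for $u = 0$ and $i > d$ the coefficient $a_{ij, 0}$ vanishes; for $u = 0$ and $i \leq d$ the bound $i + Dm \leq d + Dm$ is strict unless $i = d$. Collecting the terms of $\tilde{w}$-degree exactly $d + Dm$, only the pairs $(u, i) = (0, d)$ survive, and their contribution is
\[
 \sum_{j=1}^{r_d} a_{dj, 0}\, \tilde{F}_{djw, m} = [t^m] \sum_{j=1}^{r_d} a_{dj, 0}\, F_{djw}(x(t)) = [t^m]\, \tilde{g}_{\textbf{a}_0 w}(x(t)) = \tilde{g}_{\textbf{a}_0 w m},
\]
which is exactly the asserted top $\tilde{w}$-degree part. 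The main obstacle is purely notational: keeping the multi-index combinatorics straight between the two weight systems $w$ and $\tilde{w}$, and invoking the linear-independence hypothesis on $\{F_{ijw}\}$ at precisely the point where the $u = 0, i > d$ contributions need to be killed.
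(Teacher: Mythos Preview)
Your proof is correct and follows essentially the same approach as the paper. The paper's argument splits the sum into the pieces $i\le \ell$ and $i>\ell$ (with $\ell=d_w(g_{\textbf{a}_0})$) and states the $\tilde w$-degree bounds more tersely, whereas you make the monomial-level bookkeeping explicit by showing each $[t^\ell]\,x(t)^I$ is $\tilde w$-homogeneous of weight $(w,I)+D\ell$ and then splitting by $u=0$ versus $u\ge 1$; the underlying idea and the use of the linear independence of $\{F_{ijw}\}$ to kill the $i>d$, $u=0$ terms are identical.
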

\begin{proof}
Let $\textnormal{\textbf{a}}=(a_{ij}(t))_{i\in J, 1\leq j\leq r_i}\in \CC[[t]]^r\setminus t\CC[[t]]^r$.  We put $\ell=d_w(g_{\textbf{a}_0})$. We write $$g_\textbf{a}(x(t))=\sum_{i\leq \ell, i\in J, 1\leq j\leq r_i}a_{ij}(t)f_{ij}(x(t))+ \sum_{i>\ell,i\in J, 1\leq j\leq r_i}a_{ij}(t)f_{ij}(x(t)),$$
$$\sum_{i\leq \ell, i\in J, 1\leq j\leq r_i}a_{ij}(t)f_{ij}(x(t))=\sum_{0\leq m}h_{1m}\left(x^{(m)}\right)t^m,$$
$$\sum_{\ell< i, i\in J, 1\leq j\leq r_i}a_{ij}(t)f_{ij}(x(t))=\sum_{0\leq m}h_{2m}\left(x^{(m)}\right)t^m.$$
Let $m\geq 0$. We have $g_{\textnormal{\textbf{a}}^{(m)}}\left(x^{(m)}\right)=h_{1m}\left(x^{(m)}\right)+h_{2m}\left(x^{(m)}\right)$. Note that $\textnormal{\textbf{a}}_0\in\CC^r\setminus\{0\}$, thus we have $\tilde{g}_{\textnormal{\textbf{a}}_0wm}\neq 0$ as explained above.  Since $\{F_{ijw}|i\in J, 1\leq j\leq r_i\}$ is linearly independent and $\ell=d_w(g_{\textbf{a}_0})$, we have  $a_{ij}(t)\in t\CC[[t]]$ if $i>\ell$ and $a_{\ell j}(t)\in \CC[[t]]\setminus t\CC[[t]]$ for some $j$. Moreover, the $\tilde{w}$-weighted homogeneous part of highest $\tilde{w}$-degree of $h_{1m}$ is $\tilde{g}_{\textnormal{\textbf{a}}_0wm}$. It is easy to verify that $d_{\tilde{w}}(\tilde{g}_{\textnormal{\textbf{a}}_0wm})=mD+\ell$ and $d_{\tilde{w}}(h_{2m})\leq (m-1)D+D<mD+\ell$. Thus, the $\tilde{w}$-weighted homogeneous part of highest $\tilde{w}$-degree of $g_{\textnormal{\textbf{a}}^{(m)}}$ is $\tilde{g}_{\textnormal{\textbf{a}}_0wm}$.

\end{proof}
\begin{prop}\label{estimatefinitesums}
There is an integer $M$ depending only on $(f_{ij})_{i\in J, 1\leq j\leq r_i}$ and a constant $C$ depending only on $n, D$ such that for all finite fields $k$ over $\cO_K$ with $\Ch(k)>M$ and all non-trivial characters $\Psi:k\to\CC^*$, we have 
\begin{equation}\label{estimatefinite}
\frac{1}{\#k^{n+r}}\bigg|\sum_{((a_{ij})_{i\in J, 1\leq j\leq r_i},x)\in (k^r\setminus\{0\})\times k^n}\Psi\bigg(\sum_{i\in J, 1\leq j\leq r_i}a_{ij}f_{ij}(x)\bigg)\bigg|\leq C\#k^{\frac{-n+s_w}{2}},
\end{equation}
where $s_w=\max_{\ell\in J} s_{w\ell}$.
\end{prop}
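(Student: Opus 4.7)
The plan is to fix $\textnormal{\textbf{a}}_0=(a_{ij})\in k^r\setminus\{0\}$, estimate the inner sum $\sum_{x\in k^n}\Psi(g_{\textnormal{\textbf{a}}_0}(x))$ uniformly, and then sum over $\textnormal{\textbf{a}}_0$. The saving factor $\#k^{-(n-s_w)/2}$ will come entirely from the inner sum via Corollary \ref{quasisums}, while the sum over the $\#k^r-1$ nontrivial parameters $\textnormal{\textbf{a}}_0$ contributes only a factor bounded by $\#k^r$, which exactly cancels against the $\#k^{-r}$ from the normalisation $\#k^{-(n+r)}$.

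First I would carry out the geometric input: for each $\textnormal{\textbf{a}}_0\in k^r\setminus\{0\}$ we need $d_w(g_{\textnormal{\textbf{a}}_0})=:\ell\in J$ together with the estimate $\dim C_{\tilde g_{\textnormal{\textbf{a}}_0 w}}\leq s_{w\ell}\leq s_w$. Over $\CC$, both facts are contained in Lemma \ref{singlocus}, whose proof shows the inclusion $C_{\tilde g_{\textnormal{\textbf{a}}_0 w}}\subseteq\BSing(F_{\ell 1w},\dots,F_{\ell r_\ell w})$. Stratifying the affine space of parameters $\textnormal{\textbf{a}}_0$ according to which of the blocks $(a_{\ell 1},\dots,a_{\ell r_\ell})$ is the first non-vanishing one, each stratum is a constructible $\cO_K$-scheme on which $\tilde g_{\textnormal{\textbf{a}}_0 w}$ is defined uniformly and the incidence variety $\{(\textnormal{\textbf{a}}_0,x):\nabla\tilde g_{\textnormal{\textbf{a}}_0 w}(x)=0\}$ is algebraic. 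Since over $\CC$ every fibre has dimension at most $s_{w\ell}$, upper semi-continuity of fibre dimension (or logical compactness, cf.\ \cite[Corollary 2.2.10]{Marker}) gives an integer $M_1$ depending only on $(f_{ij})$ such that the analogous bound holds over any field $k$ of characteristic exceeding $M_1$. By further enlarging $M_1$ we also guarantee that for every $\textnormal{\textbf{a}}_0\in k^r\setminus\{0\}$ the polynomial $\tilde g_{\textnormal{\textbf{a}}_0 w}$ remains non-zero and $w$-homogeneous of $w$-degree $\ell\in J$ (which uses linear independence of $\{F_{ijw}\}$ in large characteristic).

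Next I would apply Corollary \ref{quasisums} to $\tilde g_{\textnormal{\textbf{a}}_0 w}$ with lower-order term $g_{\textnormal{\textbf{a}}_0}-\tilde g_{\textnormal{\textbf{a}}_0 w}$. Taking $M:=\max(M_1,D)$ ensures $\ell\leq D$ is invertible in $k$, so when $\ell>1$ the corollary yields
\[
\Bigl|\sum_{x\in k^n}\Psi(g_{\textnormal{\textbf{a}}_0}(x))\Bigr|\leq C_\ell\,\#k^{(n+s(\tilde g_{\textnormal{\textbf{a}}_0 w}))/2}\leq C_\ell\,\#k^{(n+s_w)/2},
\]
with $C_\ell$ depending only on $n,\ell,w$; setting $C_0=\max_{\ell\in J,\ell>1}C_\ell$ absorbs this dependence into a constant depending only on $n,D,w$. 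The case $\ell=1$ is trivial, since then $g_{\textnormal{\textbf{a}}_0}$ is of the shape $\sum_{w_i=1}b_ix_i+c$ with at least one $b_i\neq 0$, and the inner sum vanishes by orthogonality of $\Psi$.

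Finally I would sum over $\textnormal{\textbf{a}}_0\in k^r\setminus\{0\}$, bounding the number of terms by $\#k^r$, to obtain
\[
\frac{1}{\#k^{n+r}}\Bigl|\sum_{(\textnormal{\textbf{a}}_0,x)\in(k^r\setminus\{0\})\times k^n}\Psi(g_{\textnormal{\textbf{a}}_0}(x))\Bigr|\leq \frac{\#k^r\cdot C_0\,\#k^{(n+s_w)/2}}{\#k^{n+r}}=C_0\,\#k^{(s_w-n)/2},
\]
which is the desired inequality with $C=C_0$. The main obstacle is the transfer of Lemma \ref{singlocus} to positive characteristic with uniform dependence on the parameter $\textnormal{\textbf{a}}_0$; the rest is essentially bookkeeping once Corollary \ref{quasisums} is in hand.
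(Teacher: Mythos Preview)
Your proof is correct and follows essentially the same approach as the paper: transfer Lemma \ref{singlocus} to large positive characteristic by logical compactness, apply Corollary \ref{quasisums} to the inner sum for each fixed $\textnormal{\textbf{a}}_0$, and then sum trivially over $\textnormal{\textbf{a}}_0\in k^r\setminus\{0\}$. Your treatment is in fact slightly more careful than the paper's in that you explicitly handle the case $\ell=1$ (where Corollary \ref{quasisums} does not apply) via orthogonality, and you note that the resulting constant a priori depends on $w$ as well as on $n,D$.
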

\begin{proof}
By logical compactness (see \cite[Corollary 2.2.10]{Marker}) and Lemma \ref{singlocus}, there exists an integer $M$ such that for all finite fields $k$ over $\cO_K$ with $\Ch(k)>M$ and all $\textbf{a}_0=(a_{ij})_{i\in J, 1\leq j\leq r_i}\in k^{r}\setminus\{0\}$, if $d_w(g_{\textbf{a}_0})=\ell$ then $\ell\in J$ and $s(\tilde{g}_{\textbf{a}_0w}(x))\leq s_{w\ell}\leq s_w$. By enlarging $M$, we can suppose that $M>D$.  We use Corollary \ref{quasisums} to obtain a constant $C$ depending only on $n,D$ such that 
\begin{align*}
\bigg|\sum_{(\textbf{a}_0,x)\in (k^r\setminus\{0\})\times k^n}\Psi\bigg(\sum_{i\in J, 1\leq j\leq r_i}a_{ij}f_{ij}(x)\bigg)\bigg|&=\bigg|\sum_{\textbf{a}_0\in k^r\setminus\{0\}}\sum_{x\in k^n}\Psi\left(g_{\textbf{a}_0}(x)\right)\bigg|\\
&\leq C(\#k^{r}-1)\#k^{\frac{n+s_w}{2}}
\end{align*}
for all finite fields $k$ over $\cO_K$ with $\Ch(k)>M$ and all non-trivial characters $\Psi$ of $k$. This yields (\ref{estimatefinite}).
\end{proof}
\begin{proof}[Proof of Proposition \ref{m=1}]
Recall that we need to find an integer $M$, a positive constant $c$ and a positive constant $c_L$ for each $L\in\tilde{\cL}_{K,1}$ such that $c_L=c$ if $L\in \tilde{\cL}_{K,M}$ and   $$\left|E_{L,\cI}^{(r)}(1)\right|\leq c_Lq_L^{-\sigma_0}$$
for all $L\in\tilde{\cL}_{K,1}$.
By (\ref{Cm=1}), we have $$E_{L,\cI}^{(r)}(1)=q_L^{-n-r}\sum_{(\textbf{a}_0,x)\in (k_L^r\setminus\{0\})\times k_L^n}\Psi\bigg(\sum_{i\in J, 1\leq j\leq r_i}a_{ij}f_{ij}(x)\bigg)$$
for all non-trivial characters $\Psi$ of $k_L$.

If $D>1$, then our claim follows from Proposition \ref{estimatefinitesums} with the remark that if $w=(1,1,\dots,1)$, then 
$$\sigma_0=\min_{\ell\in J} \frac{n-s_{w\ell}}{\ell}\leq \frac{n-s_w}{2}.$$

If $D=1$, since $\{F_{1jw}|1\leq j\leq r_i\}$ is linearly independent, one has 
$$\sum_{((a_{1j})_{1\leq j\leq r_1},x)\in (k^r\setminus\{0\})\times k^n}\Psi\bigg(\sum_{1\leq j\leq r_1}a_{1j}f_{1j}(x)\bigg)=0$$
for all finite fields $k$ over $\cO_K$ of large enough characteristic and all non-trivial characters $\Psi:k\to\CC^{*}$. Thus, our claim is trivial.
\end{proof}

With the above notation, we set
$$\gamma_w=\gamma_w\left((f_{ij})_{i\in J, 1\leq j\leq r_i}\right)=\min_{\textbf{a}_0\in \CC^r\setminus\{0\}}\lct\left(\cJ_{\tilde{g}_{\textbf{a}_0w}}\right),$$
where if $f$ is a non-constant polynomial in $\CC[x_1,\dots,x_n]$, then $\lct(\cJ_f)$ is the log canonical threshold (see Section \ref{SJ}) of the Jacobian ideal $\cJ_f$ of $f$.  Here, we use the convention that $\lct((1))=+\infty$.
\begin{lem}\label{sigw}If $w=(1,1,\dots,1)$ then $\gamma_w\geq 2\tilde{\sigma}_{0w}.$
\end{lem}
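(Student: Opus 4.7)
The plan is to fix an arbitrary $\textbf{a}_0\in \CC^r\setminus\{0\}$, set $\ell:=d_w(g_{\textbf{a}_0})$, and establish the pointwise inequality
\[
\lct(\cJ_{\tilde{g}_{\textbf{a}_0 w}}) \;\geq\; \frac{n-s_{w\ell}}{\ell-1}.
\]
Taking the infimum over $\textbf{a}_0$ on the left and observing that $\ell$ ranges over $J$ as $\textbf{a}_0$ varies, together with the identity
\[
\min_{\ell\in J}\frac{n-s_{w\ell}}{\ell-1}\;=\;2\tilde{\sigma}_{0w}
\]
coming straight from the definition of $\tilde{\sigma}_{0w}$, will yield $\gamma_w\geq 2\tilde{\sigma}_{0w}$.

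First I would unpack the structure of $\tilde{g}_{\textbf{a}_0 w}$. Since $w=(1,\dots,1)$, it is the usual top-degree homogeneous part of $g_{\textbf{a}_0}$, of degree $\ell$. Arguing exactly as in the proof of Lemma \ref{singlocus}, one has $\ell\in J$ and $\tilde{g}_{\textbf{a}_0 w}=\sum_{j\in I}a_{\ell j}F_{\ell j}$ for some non-empty $I\subset\{1,\dots,r_\ell\}$ with $a_{\ell j}\neq 0$ for $j\in I$; in particular $C_{\tilde{g}_{\textbf{a}_0 w}}\subset \BSing(F_{\ell 1},\dots,F_{\ell r_\ell})$, so $\dim C_{\tilde{g}_{\textbf{a}_0 w}}\leq s_{w\ell}$. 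The degenerate case $\ell=1$ is immediate: $\tilde{g}_{\textbf{a}_0 w}$ is then a nonzero linear form, so $\cJ_{\tilde{g}_{\textbf{a}_0 w}}=(1)$ and $\lct=+\infty$, matching the convention $(n-s_{w1})/0=+\infty$ in $\tilde{\sigma}_{0w}$ (note that when $\{F_{ijw}\}$ is linearly independent, the $F_{1j}$ are linearly independent linear forms so $\BSing(F_{11},\dots,F_{1r_1})=\emptyset$, i.e.\ $s_{w1}=-1<n$).

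Assume henceforth $\ell\geq 2$. Then $\cJ_{\tilde{g}_{\textbf{a}_0 w}}$ is generated by the $n$ partial derivatives $\partial \tilde{g}_{\textbf{a}_0 w}/\partial x_i$, each homogeneous of degree $\ell-1$, so $\ord_x(\cJ_{\tilde{g}_{\textbf{a}_0 w}})\leq \ell-1$ at every point $x\in\AA_\CC^n$. The key step will be to invoke the standard inequality
\[
\lct_x(\cI) \;\geq\; \frac{\codim_x V(\cI)}{\ord_x(\cI)}
\]
valid for any nonzero ideal $\cI$ on a smooth variety and any point $x\in V(\cI)$; this is for instance Example~9.3.5 of Lazarsfeld's \emph{Positivity in Algebraic Geometry II}, and can also be extracted from a log resolution together with the discrepancy bound $\nu_i\geq \codim_x\pi(E_i)$ for divisorial valuations centered on a smooth variety. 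Applied to $\cI=\cJ_{\tilde{g}_{\textbf{a}_0 w}}$, with $V(\cI)=C_{\tilde{g}_{\textbf{a}_0 w}}$ of codimension at least $n-s_{w\ell}$, this yields the required bound $\lct(\cJ_{\tilde{g}_{\textbf{a}_0 w}})\geq (n-s_{w\ell})/(\ell-1)$.

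The main obstacle is precisely this generic lower bound on log canonical thresholds of ideals with controlled generator degree and vanishing codimension. If a fully self-contained proof is preferred, the backup plan is to pick $c:=n-s_{w\ell}$ generic $\CC$-linear combinations $h_1,\dots,h_c$ of the partials; a Bertini-type argument guarantees that they form a complete intersection of codimension $c$ with each $h_i$ homogeneous of degree $\ell-1$. The containment $\cJ_{\tilde{g}_{\textbf{a}_0 w}}\supseteq (h_1,\dots,h_c)$ together with the monotonicity $\lct(\cI)\geq \lct(\cI')$ whenever $\cI\supseteq\cI'$ then reduces matters to showing $\lct((h_1,\dots,h_c))\geq c/(\ell-1)$, which can be obtained by degenerating to the monomial complete intersection $(x_1^{\ell-1},\dots,x_c^{\ell-1})$ (whose lct is exactly $c/(\ell-1)$ by Howald's formula) and invoking semicontinuity of log canonical thresholds in flat families.
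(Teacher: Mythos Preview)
Your overall plan coincides with the paper's: fix $\textbf{a}_0$, set $\ell=d_w(g_{\textbf{a}_0})$, invoke Lemma~\ref{singlocus}, and reduce everything to the inequality $\lct(\cJ_{\tilde g_{\textbf{a}_0 w}})\ge (n-s(\tilde g_{\textbf{a}_0 w}))/(\ell-1)$, then conclude from the definition of $\tilde\sigma_{0w}$. The paper obtains this key inequality by quoting \cite[Corollary~3.9]{Nguyennsd}. Your two attempts to justify it independently both contain gaps.

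The general inequality $\lct_x(\cI)\ge \codim_x V(\cI)/\ord_x(\cI)$ is false: for $\cI=(x^3,y^2)\subset\CC[x,y]$ one has $\codim V(\cI)=2$ and $\ord_0(\cI)=2$, yet $\lct(\cI)=\tfrac13+\tfrac12=\tfrac56<1$. What is true (and what \cite{Nguyennsd} supplies) is the special case where $\cI$ is generated by homogeneous forms of a \emph{single} degree $d$: then $\lct(\cI)\ge c/d$ with $c=\codim V(\cI)$. Lazarsfeld's Example~9.3.5 does not contain the general inequality you cite.

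Your backup also misuses semicontinuity. In a flat family $\cI_t$, lower semicontinuity only gives $\lct(\cI_{t_0})\le \lct(\cI_t)$ for $t$ \emph{generic} near $t_0$; placing $(x_1^{\ell-1},\dots,x_c^{\ell-1})$ at $t=0$ and $(h_1,\dots,h_c)$ at $t=1$ tells you nothing about $t=1$. A correct repair: after a linear coordinate change ensuring $V(h_1,\dots,h_c)\cap\{x_{c+1}=\cdots=x_n=0\}=\{0\}$, use the $\GM$-family $h_i^{(t)}(x)=h_i(x_1,\dots,x_c,\,t x_{c+1},\dots,t x_n)$. For $t\neq 0$ the fibres differ by automorphisms of $\AA^n$, so $\lct(\cI_t)=\lct((h_1,\dots,h_c))$ for all $t\neq 0$; the limit $(\bar h_1,\dots,\bar h_c)$ at $t=0$ is $\mathfrak{m}$-primary in $\CC[x_1,\dots,x_c]$, homogeneous of degree $\ell-1$, with empty projective base locus, so a single blowup of the origin is already a log resolution and gives $\lct=c/(\ell-1)$. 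Now semicontinuity at $t=0$ yields the desired bound.
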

\begin{proof}
If $w=(1,\dots,1)$, then  the proof of \cite[Corollary 3.9]{Nguyennsd}, Lemma \ref{singlocus} and  the definition of $\tilde{\sigma}_{0w}$ imply that
$$\lct\left(\cJ_{\tilde{g}_{\textbf{a}_0w}}\right)\geq \frac{n-s(\tilde{g}_{\textbf{a}_0w})}{d_w(\tilde{g}_{\textbf{a}_0w})-1}\geq 2\tilde{\sigma}_{0,w}$$
for all $\textbf{a}_0\in\CC^r\setminus\{0\}$. By the definition of $\gamma_w$, we have 
$\gamma_w\geq 2\tilde{\sigma}_{0w}.$
\end{proof}
\begin{lem}\label{lctdimen} For all $\textnormal{\textbf{a}}_0\in \CC^r\setminus\{0\}$ and all $m\geq 1$, we have 
$$s(\tilde{g}_{\textnormal{\textbf{a}}_0w(m-1)})\leq mn-m\gamma_w.$$
\end{lem}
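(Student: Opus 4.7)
The plan is to reduce the estimate to a statement about jet schemes of the Jacobian locus of $\tilde{g}_{\textbf{a}_0w}$, for which a classical bound of Musta\c{t}\v{a} type is available. Set $f=\tilde g_{\textbf{a}_0w}$, view it as a polynomial in $\CC[x_1,\dots,x_n]$, and let $V=V(\cJ_f)\subset\AA^n_{\CC}$ be the zero locus of the Jacobian ideal. I expect to show the pointwise identity
\[
C_{\tilde g_{\textbf{a}_0w(m-1)}} \;=\; V_{m-1}
\]
of subschemes of $\AA^{mn}_{\CC}$, where $V_{m-1}$ is the $(m-1)$-th jet scheme of $V$. Once this identification is established, Lemma \ref{lctdimen} reduces to a jet-scheme dimension bound.

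To verify the identification, I would argue by the chain rule. Expand $\partial f/\partial x_i(x(t))=\sum_{\ell\ge0} h_{i,\ell}(x^{(\ell)})t^\ell$ with $h_{i,\ell}\in\CC[x^{(\ell)}]$. Since $\partial x_i(t)/\partial x_{ij}=t^j$ for $0\le j$, the chain rule gives
\[
\frac{\partial f(x(t))}{\partial x_{ij}}\;=\;\frac{\partial f}{\partial x_i}(x(t))\cdot t^j\;=\;\sum_{\ell\ge 0} h_{i,\ell}(x^{(\ell)})\,t^{\ell+j}.
\]
Comparing with $\partial f(x(t))/\partial x_{ij}=\sum_\ell \partial f_\ell/\partial x_{ij}\cdot t^\ell$ (where $f_\ell=\tilde g_{\textbf{a}_0w\ell}$) and extracting the coefficient of $t^{m-1}$ yields $\partial\tilde g_{\textbf{a}_0w(m-1)}/\partial x_{ij}=h_{i,m-1-j}$ for $0\le j\le m-1$. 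As $j$ ranges over $\{0,\dots,m-1\}$, the index $m-1-j$ traverses $\{0,1,\dots,m-1\}$, so the Jacobian ideal of $\tilde g_{\textbf{a}_0w(m-1)}$ is generated exactly by $\{h_{i,\ell}\mid 1\le i\le n,\;0\le\ell\le m-1\}$, which are the defining equations of $V_{m-1}$. This gives the desired scheme-theoretic equality.

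With the identification in hand, I would invoke Musta\c{t}\v{a}'s jet-scheme characterization of the log canonical threshold (see \cite{Mustata1,MustJAMS}, and the discussion in \cite{Ein-Must}): for any ideal $\fra\subset\CC[x_1,\dots,x_n]$ cutting out a proper subscheme $W=V(\fra)\subset\AA^n_{\CC}$, and for every $m\ge 1$,
\[
\dim W_{m-1}\;\le\; m\bigl(n-\lct(\fra)\bigr).
\]
Applied to $\fra=\cJ_{\tilde g_{\textbf{a}_0w}}$ and combined with the very definition $\gamma_w=\min_{\textbf{a}_0\in\CC^r\setminus\{0\}}\lct(\cJ_{\tilde g_{\textbf{a}_0 w}})$, this gives
\[
s(\tilde g_{\textbf{a}_0w(m-1)})\;=\;\dim C_{\tilde g_{\textbf{a}_0w(m-1)}}\;=\;\dim V_{m-1}\;\le\; m\bigl(n-\lct(\cJ_{\tilde g_{\textbf{a}_0w}})\bigr)\;\le\; mn-m\gamma_w,
\]
which is the claim.

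The main obstacle I anticipate is ensuring that the jet-scheme/lct inequality is applied in precisely the form needed; one must be careful about the non-reduced structure of $V(\cJ_f)$ (we work with the ideal $\cJ_f$, not its radical) and about the convention $\dim\emptyset=-1$ in the case $\cJ_f=(1)$, where $\lct(\cJ_f)=+\infty$ and both sides of the inequality are vacuous. The chain-rule computation itself is routine once one tracks the two different meanings of $\partial/\partial x_{ij}$ (varying the jet coordinate versus differentiating through $x(t)$), which is the place where care is required.
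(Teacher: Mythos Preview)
Your proposal is correct and follows essentially the same route as the paper: the paper identifies $C_{\tilde g_{\textbf{a}_0w(m-1)}}(\CC)$ with the $(m-1)$-jets of $V(\cJ_{\tilde g_{\textbf{a}_0w}})$ by citing \cite[Lemma~3.1]{Nguyennsd}, whereas you spell out this identification explicitly via the chain rule; both then apply Musta\c{t}\v{a}'s jet-scheme bound \cite[Corollary~0.2]{MustJAMS} together with the definition of $\gamma_w$.
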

\begin{proof}
It follows from the proof of \cite[Lemma 3.1]{Nguyennsd} that
$$C_{\tilde{g}_{\textbf{a}_0w(m-1)}}(\CC)=\{x^{(m-1)}\in\CC^{mn}|\ord_t\left(\cJ_{\tilde{g}_{\textbf{a}_0w}}\left(\beta\left(x^{(m-1)}\right)\right)\right)\geq m\}$$
where  $$\beta\left(x^{(m-1)}\right)=\bigg(\sum_{0\leq j\leq m-1}x_{1j}t^j,\dots,\sum_{0\leq j\leq m-1}x_{nj}t^j\bigg)$$
and  if $y\in \CC[[t]]^n$ then $\ord_t\left(\cJ_{\tilde{g}_{\textbf{a}_0w}}(y)\right)=\min_{h\in \cJ_{\tilde{g}_{\textbf{a}_0w}}}\ord_t(h(y)).$
By Proposition \ref{jetlog} and the definition of $\gamma_w$, one has
$$s(\tilde{g}_{\textbf{a}_0w(m-1)})\leq mn-m\lct(\cJ_{\tilde{g}_{\textbf{a}_0w}})\leq mn-m\gamma_w.$$
\end{proof}
\begin{thm}\label{boundexpo} There is a positive constant $C$ and an integer $M$ depending only on $g$ such that for all $L\in\tilde{\cL}_{K,M}$ and all additive characters $\psi$ of $L$ of conductor $m_\psi\geq 1$, we have 
$$\left|E_{L,\mathfrak{D}_{\cO_K}^{n,r},g}(\psi)\right|\leq Cm_\psi^{n+r-1}q_L^{-\frac{\gamma_wm_\psi}{2}}.$$
\end{thm}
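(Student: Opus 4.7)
The natural strategy is to apply the transfer principle for exponential sums (Proposition \ref{transfer}) to the polynomial $g(\textbf{a},x) = \sum_{i,j} a_{ij}f_{ij}(x)$ on the subscheme $Z = (\AA_{\cO_K}^r\setminus\{0\})\times\AA_{\cO_K}^n$ of $\AA_{\cO_K}^{n+r}$, with the target exponent $\sigma = \gamma_w/2$. The case $m_\psi=1$ will need to be handled separately, since the transfer principle only outputs bounds for $m_\psi>1$, but the argument will be structurally identical to the higher-conductor case. First I will verify the hypothesis of Proposition \ref{transfer}: at a critical point of $g$ on $Z$, the equations $\partial g/\partial a_{ij} = f_{ij}(x) = 0$ for all $i,j$ force $g(\textbf{a},x) = \sum a_{ij}f_{ij}(x) = 0$, so $0$ is the unique critical value of $g|_Z$.

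For the main finite-field estimate required by Proposition \ref{transfer}, I need to show that for each $m>1$, there exist $M_m$ and $C_m$ with
\[
\frac{1}{\#k^{m(n+r)}}\Bigl|\sum_{(\textbf{a}^{(m-1)},x^{(m-1)})\in Z_{k}^{(m-1)}(k)} \Psi\bigl(a\,g_{m-1,k}(\textbf{a}^{(m-1)},x^{(m-1)})\bigr)\Bigr| \leq C_m\,\#k^{-m\gamma_w/2}
\]
for all finite fields $k$ over $\cO_K$ with $\mathrm{char}(k)>M_m$, all $a\in k^*$ and all non-trivial characters $\Psi$. I will estimate the inner sum by fixing $\textbf{a}^{(m-1)}$ with $\textbf{a}_0\in k^r\setminus\{0\}$ and analyzing the polynomial $g_{\textbf{a}^{(m-1)}}(x^{(m-1)}) = g_{m-1}(\textbf{a}^{(m-1)},x^{(m-1)})$ as a polynomial in the $mn$ variables $x^{(m-1)}$, equipped with the weight $\tilde{w}$ on $x_{i\ell}$ defined in the paper just before Lemma \ref{singlocus}.

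The key inputs are Lemma \ref{highpart}, which identifies the highest $\tilde{w}$-weighted homogeneous part of $g_{\textbf{a}^{(m-1)}}$ (viewed in the $x^{(m-1)}$ variables) as $\tilde{g}_{\textbf{a}_0 w(m-1)}$, and Lemma \ref{lctdimen}, which yields the singular-locus bound $s(\tilde{g}_{\textbf{a}_0 w(m-1)}) \leq mn - m\gamma_w$ uniformly in $\textbf{a}_0\in\CC^r\setminus\{0\}$. By logical compactness (as in the proof of Proposition \ref{estimatefinitesums}), this bound transfers to all finite fields of sufficiently large characteristic. Choosing $M_m$ large enough so that the $\tilde{w}$-degree $d_w(g_{\textbf{a}_0}) + D(m-1) \leq Dm$ is invertible in $k$, Corollary \ref{quasisums} applied to $a\,g_{\textbf{a}^{(m-1)}}$ gives
\[
\Bigl|\sum_{x^{(m-1)}\in k^{mn}} \Psi(a\,g_{\textbf{a}^{(m-1)}}(x^{(m-1)}))\Bigr| \leq C\,\#k^{(mn + s(\tilde{g}_{\textbf{a}_0 w(m-1)}))/2} \leq C\,\#k^{mn - m\gamma_w/2}.
\]
Summing the trivial bound $\#k^{mr}$ over the choices of $\textbf{a}^{(m-1)}$ with $\textbf{a}_0\neq 0$ and dividing by $\#k^{m(n+r)}$ yields the required estimate, and Proposition \ref{transfer} then gives the theorem for $m_\psi > 1$.

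For $m_\psi=1$, I will compute $E_{L,Z,g}(\psi)$ directly: unfolding the definition, it equals $\#k_L^{-(n+r)}$ times a character sum over $(\textbf{a}_0,x_0)\in (k_L^r\setminus\{0\})\times k_L^n$, which is handled by exactly the same argument (taking $m=1$, so $g_{\textbf{a}_0}$ has $w$-degree $\ell\in J$ and top $w$-part $\tilde{g}_{\textbf{a}_0 w}$ with $s(\tilde{g}_{\textbf{a}_0 w})\leq n-\gamma_w$ by Lemma \ref{lctdimen}). The main obstacle is the careful bookkeeping in step involving Lemmas \ref{highpart} and \ref{lctdimen}: one must verify that viewing $g_{\textbf{a}^{(m-1)}}$ as a polynomial in $x^{(m-1)}$ with $\textbf{a}^{(m-1)}$ as parameters (rather than in all variables simultaneously) still yields a top $\tilde{w}$-part whose singular locus is controlled by $\gamma_w$ uniformly in $\textbf{a}^{(m-1)}$ (not only in $\textbf{a}_0$), so that the bound coming from Corollary \ref{quasisums} does not deteriorate when we average over $\textbf{a}^{(m-1)}$. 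This is precisely the content of Lemma \ref{highpart}, which tells us that the top $\tilde{w}$-part depends only on $\textbf{a}_0$, making the resulting bound $C\#k^{mn-m\gamma_w/2}$ independent of the remaining coordinates of $\textbf{a}^{(m-1)}$.
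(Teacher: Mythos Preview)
Your proposal is correct and follows essentially the same approach as the paper: verify that $0$ is the only critical value of $g$ on $Z$, use Lemmas \ref{highpart} and \ref{lctdimen} together with logical compactness to control $s(\tilde{g}_{\textbf{a}_0w(m-1)})$ over finite fields, apply Corollary \ref{quasisums} to the inner sum in $x^{(m-1)}$ for each fixed $\textbf{a}^{(m-1)}$ with $\textbf{a}_0\neq 0$, then invoke Proposition \ref{transfer} for $m_\psi\geq 2$ and treat $m_\psi=1$ by the same finite-field estimate with $m=1$. Your write-up is in fact slightly more explicit than the paper's (you spell out the critical-value check and the $m_\psi=1$ step), but the logical skeleton is identical.
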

\begin{proof}
By Lemmas \ref{highpart}, \ref{lctdimen} and the logical compactness (see \cite[Corollary 2.2.10]{Marker}), for each $m\geq 1$, there is an integer $M_m>mD$ such that for all finite fields $k$ over $\cO_K$ with $\Ch(k)>M_m$ and all $\textbf{a}\in k[[t]]^r\setminus tk[[t]]^r$, the $\tilde{w}$-weighted homogeneous part of highest $\tilde{w}$-degree of $g_{\textbf{a}^{(m-1)}}\left(x^{(m-1)}\right)$ is 
$\tilde{g}_{\textbf{a}_0w(m-1)}\left(x^{(m-1)}\right)$ and 
\begin{equation}\label{upb}
s\left(\tilde{g}_{\textbf{a}_0w(m-1)}\right)\leq mn-m\gamma_w.
\end{equation}
By a simple calculation, we have  $d_{\tilde{w}}\left(g_{\textbf{a}^{(m-1)}}\left(x^{(m-1)}\right)\right)\leq mD$ for all $\textbf{a}\in k[[t]]^r\setminus tk[[t]]^r$ and all $m\geq 1$. By  Corollary \ref{quasisums} and  (\ref{upb}), there exists a constant $C_{m}$ depending only on $m,w,n,D$ such that for all finite fields $k$ over $\cO_K$ with $\Ch(k)>M_{m}$,  all $\textbf{a}\in k[[t]]^r\setminus tk[[t]]^r$ and all non-trivial characters $\Psi:k\to\CC^*$, we have 
\begin{align*}
\bigg|\sum_{x^{(m-1)}\in k^{mn}}\Psi\left(g_{\textbf{a}^{(m-1)}}\left(x^{(m-1)}\right)\right)\bigg|&\leq  C_{m}\#k^{\frac{mn+mn-m\gamma_w}{2}}.
\end{align*}
Therefore,  for each $m\geq 1$, we have 
\begin{align*}
&\bigg|\frac{1}{\#k^{m(n+r)}}\sum_{(\textbf{a}^{(m-1)},x^{(m-1)})\in k^{m(n+r)}, \textbf{a}_0\neq 0}\Psi\left(g_{m-1}\left(\textbf{a}^{(m-1)},x^{(m-1)}\right)\right)\bigg|\leq C_{m}\#k^{\frac{-m\gamma_w}{2}}
\end{align*}for all finite fields $k$ over $\cO_K$ with $\Ch(k)>M_{m}$ and all non-trivial characters $\Psi:k\to\CC^*$. From this and Proposition \ref{transfer} together with the fact that $0$ is the only critical value of $g:\CC^n\to \CC$, %
there is a constant $C'$ and an integer $M'$ depending only on $g$ such that for all $L\in\tilde{\cL}_{K,M'}$ and all additive characters $\psi$ of $L$ of conductor $m_\psi\geq 2$, we have 
$$\left|E_{L,\mathfrak{D}_{\cO_K}^{n,r},g}(\psi)\right|\leq C'm_\psi^{n+r-1}q_L^{-\frac{\gamma_wm_\psi}{2}}.$$
We can set $M=\max\{M_1,M'\}$ and $C=\max\{C_1,C')$ to finish our proof.
\end{proof}
\begin{cor}\label{subspace} Let $I_1, I_2$ be disjoint subsets of $\{1,\dots,n\}$ and $Z_{I_1I_2}=(\{x_i=0\hspace{0.1cm}\forall i\in I_1\}\setminus \{x_j=0 \hspace{0.1cm}\forall j\in I_2\})\subset\AA_{\cO_K}^n$, then there is a constant $C$ and an integer $M$ depending only on $g$ such that  
$$\left|E_{L,Z_{I_1I_2}\mathfrak{D}_{\cO_K}^{r},g}(\psi)\right|\leq Cm_\psi^{n+r-1}q_L^{-\frac{\gamma_wm_\psi}{2}}$$
for all $L\in\tilde{\cL}_{K,M}$ and all additive characters $\psi$ of $L$ of conductor $m_\psi\geq 1$.
\end{cor}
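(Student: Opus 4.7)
The plan is to reduce Corollary \ref{subspace} to essentially the same estimate as in Theorem \ref{boundexpo}, but carrying through additional vanishing conditions on the $x$-coordinates. Writing $Z_I := (\AA_{\cO_K}^r\setminus\{0\})\times\{x_i=0\ \forall i\in I\}$, the identity of indicator functions
$$\mathbf{1}_{Z_{I_1, I_2}} = \mathbf{1}_{Z_{I_1}} - \mathbf{1}_{Z_{I_1\cup I_2}}$$
(valid whenever $I_2 \neq \emptyset$, with the case $I_2 = \emptyset$ reducing directly to $Z_{I_1}$) gives $E_{L,Z_{I_1,I_2},g}(\psi) = E_{L,Z_{I_1},g}(\psi) - E_{L,Z_{I_1\cup I_2},g}(\psi)$, so it suffices to establish the desired bound for $E_{L, Z_I, g}(\psi)$ for an arbitrary subset $I \subset \{1,\dots,n\}$.

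Next, I would establish a subset version of Corollary \ref{quasisums}: for a finite field $k$, a $w$-weighted homogeneous polynomial $f$ of $w$-degree $d > 1$ (with $d$ invertible in $k$) in $k[x_1,\dots,x_n]$, a polynomial $g$ of $w$-degree at most $d-1$, disjoint subsets $J_1, J_2 \subset \{1,\dots,n\}$, and a non-trivial character $\Psi : k \to \CC^*$, one has
$$\left| \sum_{\substack{x \in k^n \\ x_i = 0\ \forall i\in J_1 \\ x_i \neq 0\ \forall i \in J_2}} \Psi(f(x) + g(x)) \right| \le C \#k^{(n + s(f))/2}$$
for a constant $C$ depending only on $n, d, w$. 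The proof follows that of Corollary \ref{quasisums}: one performs the torus substitution $x_i \mapsto x_{i1}\cdots x_{iw_i}$, observing that the constraints $x_i = 0$ and $x_i \neq 0$ carry over cleanly to the variables $x_{i1}$ (since the auxiliary variables $x_{ij}$, $j > 1$, are restricted to $k^*$, so $x_i = 0 \Leftrightarrow x_{i1} = 0$), and then invokes Lemma \ref{homosums} for the resulting degree-$d$ homogeneous polynomial $\hat{f}$ in $|w|$ variables, using $s(\hat{f}) \le s(f) + |w| - n$.

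With this tool in hand, the proof of Theorem \ref{boundexpo} can be run almost verbatim. For conductor $m_\psi \ge 2$, I would apply Proposition \ref{transfer} to the subscheme $Z = Z_I$: the jet scheme condition translates to $x_{i0} = 0$ for $i \in I$ in the jet variables $(x_{ij})_{1\le i\le n,\ 0\le j\le m-1}$. Lemma \ref{highpart} and Lemma \ref{lctdimen} still identify, for fixed $\textbf{a}^{(m-1)}$ with $\textbf{a}_0 \neq 0$, the top $\tilde{w}$-weighted homogeneous part of $g_{m-1}(\textbf{a}^{(m-1)}, \cdot)$ as $\tilde{g}_{\textbf{a}_0 w(m-1)}$ with $s(\tilde{g}_{\textbf{a}_0 w(m-1)}) \le mn - m\gamma_w$. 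Applying the subset version of Corollary \ref{quasisums} with $J_1 = \{i0 : i \in I\}$ and $J_2 = \emptyset$ gives
$$\left| \sum_{\substack{x^{(m-1)} \in k^{mn} \\ x_{i0} = 0,\ i \in I}} \Psi(g_{m-1}(\textbf{a}^{(m-1)}, x^{(m-1)})) \right| \le C_m \#k^{mn - m\gamma_w/2},$$
and summing over $\textbf{a}^{(m-1)}$ with $\textbf{a}_0 \neq 0$ and normalizing by $\#k^{-m(n+r)}$ produces the $\#k^{-m\gamma_w/2}$ bound needed to feed into Proposition \ref{transfer}. The case $m_\psi = 1$ is handled by the same analysis applied directly to $m = 1$, reducing $E_{L, Z_I, g}(\psi)$ to a character sum over $k_L^{n+r}$ with the restriction $\bar{\textbf{a}} \neq 0$ and $\bar{x}_i = 0$ for $i \in I$, using the $m = 1$ instance of Lemma \ref{lctdimen}.

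The main obstacle is the straightforward but delicate bookkeeping: one must verify that restricting $|I|$ coordinates to zero does not worsen the exponent $-m\gamma_w/2$. This works because the subset-Corollary bound $\#k^{((mn - |I|) + s(f|_{x_I=0}))/2}$ combines with the inequality $s(f|_{x_I=0}) \le s(f) + |I|$ (the key input to Lemma \ref{homosums}, borrowed from \cite{CDenSperlocal}) to cancel the $|I|$ contributions exactly. All other ingredients are already present in the proofs of Lemma \ref{homosums}, Corollary \ref{quasisums}, and Theorem \ref{boundexpo}.
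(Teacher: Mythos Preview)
Your proof is correct and follows essentially the same route as the paper: both arguments produce a weighted-homogeneous analogue of Lemma \ref{homosums} (handling vanishing and non-vanishing constraints on a subset of coordinates) and then plug it into the jet-sum estimate of Theorem \ref{boundexpo}, invoking Proposition \ref{transfer} exactly as before. The paper phrases this as ``Lemma \ref{homosums} also holds for weighted-homogeneous polynomials, by using Corollary \ref{quasisums} in place of Katz's theorem,'' which is precisely your subset version of Corollary \ref{quasisums}.

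One caveat on your inclusion--exclusion reduction: the paper's intended meaning of $Z_{I_1I_2}$, visible in its own proof (the summation condition $x_{i0}\neq 0\ \forall i\in I_2$) and in the application inside the proof of \ref{wnsd} (where the domain is $\AA^n\times(\AA^1\setminus\{0\})^{|w|-n}$), is that $x_j\neq 0$ for \emph{every} $j\in I_2$, not the literal set difference ``not all $x_j=0$.'' For that reading your two-term identity $\mathbf{1}_{Z_{I_1,I_2}}=\mathbf{1}_{Z_{I_1}}-\mathbf{1}_{Z_{I_1\cup I_2}}$ is false once $|I_2|\ge 2$. The fix is painless: either run full inclusion--exclusion over subsets of $I_2$, or simply skip the reduction altogether and apply your subset version of Corollary \ref{quasisums} directly with $J_1=\{(i,0):i\in I_1\}$ and $J_2=\{(j,0):j\in I_2\}$, which is exactly what the paper does.
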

\begin{proof}
Note that Lemma \ref{homosums} also holds for weighted-homogeneous polynomials. Indeed, this follows from using 
Corollary \ref{quasisums} instead of \cite[Theorem 4]{Katz} in the argument of Lemma \ref{homosums}. Now, we use this fact to obtain a constant $C_m$ depending only on $n,D,w,m$ and an integer $M_m$ depending only on $g,m$ such that 
\begin{align*}
\bigg|\sum_{\{x^{(m-1)}\in k^{mn}| x_{i0}=0 \hspace{0.1cm}\forall i\in I_1, x_{i0}\neq 0 \hspace{0.1cm}\forall i\in I_2\}}\Psi\left(g_{\textbf{a}^{(m-1)}}\left(x^{(m-1)}\right)\right)\bigg|&\leq  C_{m}\#k^{\frac{mn+mn-m\gamma_w}{2}}
\end{align*}
for all $I_1, I_2\subset \{1,\dots,n\}$ with  $I_1\cap I_2=\emptyset$, all finite fields $k$ over $\cO_K$ with $\Ch(k)>M_m$, all $\textbf{a}\in k[[t]]^r\setminus tk[[t]]^r$ and all non-trivial characters $\Psi$ of $k$.  The proof of Theorem \ref{boundexpo} can be repeated to prove our claim.
\end{proof}
In order to obtain a version of Theorem \ref{boundexpo} for all $L\in \cL_{K,1}$, we use a corollary of \cite[Proposition 4.1]{Nguyennsd} as follows.
\begin{prop}\label{smallp}Let $Y$ be a subscheme of $\AA_{\cO_K}^n$ and $Z= Y\mathfrak{D}_{\cO_K}^{r}$. Then for each $L\in\cL_{K,1}$ and each $0<\sigma<\frac{\lct_{L,Z}(\cJ_g)}{2}$, there exists a constant $C_{g,Y,L,\sigma}>0$ depending only on $g,Y,L,\sigma$ such that for all additive characters $\psi$ of $L$, we have
$$\left|E_{L,Z,g}(\psi)\right|\leq C_{g,Y,L,\sigma}q_L^{-\sigma m_\psi},$$
where we recall that 
$$\lct_{L,Z}(\cJ_g)=\min_{(\textnormal{\textbf{a}},x)\in (\cO_L^r\setminus \varpi_L\cO_L^r)\times \{x\in\cO_L^n\mid x \textnormal{ mod } (\varpi_L)\in Y(k_L)\}}\lct_{(\textnormal{\textbf{a}},x)}(\cJ_g).$$
\end{prop}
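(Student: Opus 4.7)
The plan is to derive Proposition \ref{smallp} as a direct corollary of \cite[Proposition 4.1]{Nguyennsd}, applied to $g = \sum_{i,j} y_{ij} f_{ij}(x)$ viewed as a polynomial in the $n+r$ variables $(y,x)$ over $\cO_K$, via a standard compactness-and-partition argument that is natural in this $p$-adic setting.

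First, I will observe that the domain of integration
$$W := (\cO_L^r \setminus \varpi_L \cO_L^r) \times \{x \in \cO_L^n \mid \bar{x} \in Y(k_L)\}$$
is clopen in $\cO_L^{n+r}$ and therefore compact. Indeed, each factor is the preimage under reduction modulo $\varpi_L$ of a fixed finite subset of the corresponding residue-field affine space, so $W$ is a finite disjoint union of compact open balls of the form $P + \varpi_L \cO_L^{n+r}$. Since $\psi$ is trivial on the complement of $W$ for the portions where the constraint is violated (there is no issue here: we just integrate $g$ over $W$ directly), $E_{L,Z,g}(\psi)$ is literally the integral of $\psi(g)$ over $W$ against $|dy \wedge dx|$.

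Next, for each point $P = (\textbf{a}_0, x_0) \in W$, I will apply \cite[Proposition 4.1]{Nguyennsd} to $g$ at $P$ with the given $\sigma$: by the very definition of $\lct_{L,Z}(\cJ_g)$ as a minimum over $W$, any $\sigma < \lct_{L,Z}(\cJ_g)/2$ automatically satisfies $\sigma < \lct_P(\cJ_g)/2$ for every $P \in W$. The cited result then yields a compact open neighborhood $U_P \subset W$ of $P$ and a constant $C_P > 0$ such that
$$\Bigl|\int_{U_P} \psi(g(y,x))\,|dy \wedge dx|\Bigr| \leq C_P\, q_L^{-\sigma m_\psi}$$
for every additive character $\psi$ of $L$.

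Finally, by compactness of $W$, finitely many such neighborhoods $U_{P_1}, \dots, U_{P_N}$ cover $W$; since these are compact open balls in an ultrametric space, they can be refined to a disjoint cover. The triangle inequality applied to the induced decomposition of $E_{L,Z,g}(\psi)$ then yields the desired bound with $C_{L,\sigma} = \sum_{i=1}^N C_{P_i}$. The only nontrivial point to verify is that the hypotheses of \cite[Proposition 4.1]{Nguyennsd} apply uniformly at every point of $W$, which is the main (small) obstacle here: this reduces to checking that the minimum defining $\lct_{L,Z}(\cJ_g)$ is actually attained, which follows from compactness of $W$ together with the fact that the log-canonical threshold takes only finitely many values along a constructible stratification of $W$ — a consequence of a log resolution of $\cJ_g$ over $K$, as used throughout Section \ref{property}.
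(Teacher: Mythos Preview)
Your proposal is correct and follows exactly the approach the paper intends: the paper states Proposition~\ref{smallp} without proof, introducing it simply as ``a corollary of \cite[Proposition 4.1]{Nguyennsd},'' and your compactness-and-partition argument is the standard way to pass from the local bound of that cited result to the global bound over the compact open set $W$. The only superfluous step is your final remark about the minimum being attained: since $\sigma < \lct_{L,Z}(\cJ_g)/2$ and $\lct_{L,Z}(\cJ_g) \leq \lct_P(\cJ_g)$ for every $P \in W$ by definition (whether one reads $\min$ as an infimum or not), the pointwise hypothesis $\sigma < \lct_P(\cJ_g)/2$ is automatic and no attainment argument is needed.
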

\begin{proof}
The proof follows by using \cite[Proposition 4.1]{Nguyennsd} with respect to the polynomial $g$ and the Schwartz-Bruhat function $$\phi_{L,Z}=\11_{(\cO_L^r\setminus \varpi_L\cO_L^r)\times\{x\in\cO_L^n\mid x \textnormal{ mod } (\varpi_L)\in Y(k_L)\}}.$$
\end{proof}
To complete the proof of  \ref{wnsd}, we need the following lemma.
\begin{lem}\label{lowerboundlct}With the notation of Proposition \ref{smallp},  we have 
$$\lct_{L,Z}(\cJ_g)\geq \gamma_w$$
for all $L\in\cL_{K,1}$.
\end{lem}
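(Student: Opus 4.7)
The plan is to reduce the claim to a pointwise statement over $\CC$, then exploit the weighted‑homogeneous structure of $\tilde g_{\textbf{a}_0 w}$ via a jet‑scheme computation. Since $\lct_{L,Z}(\cJ_g)$ is defined as a minimum of local log canonical thresholds, and since the log canonical threshold of an ideal is preserved under base change (first to an algebraic closure of $L$, then, for a given ideal defined over $\cO_K$, to $\CC$), it suffices to show that
$$\lct_{(\textbf{a}_0,x_0)}(\cJ_g)\;\geq\;\gamma_w\qquad\text{for every }(\textbf{a}_0,x_0)\in(\CC^r\setminus\{0\})\times\CC^n.$$
Observe also that since $\tilde g_{\textbf{a}_0 w}$ is $w$‑weighted homogeneous, the $\GM$‑action $x\mapsto\lambda^{\cdot w}x$ stabilizes $\cJ_{\tilde g_{\textbf{a}_0 w}}$, so $\lct(\cJ_{\tilde g_{\textbf{a}_0 w}})=\lct_0(\cJ_{\tilde g_{\textbf{a}_0 w}})$; consequently $\gamma_w=\min_{\textbf{a}_0\neq 0}\lct_0(\cJ_{\tilde g_{\textbf{a}_0 w}})$.

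For the pointwise bound, I would use Mustata's characterization of $\lct$ via jet schemes (the same tool invoked in Lemma~\ref{lctdimen}). Concretely, for the ideal $\cJ_g$ on $\AA^{r+n}_\CC$ at the point $(\textbf{a}_0,x_0)$, Mustata's criterion reduces the inequality $\lct_{(\textbf{a}_0,x_0)}(\cJ_g)\geq\gamma_w$ to the jet‑scheme estimate
$$\dim\Bigl\{(\textbf{a}^{(m-1)},x^{(m-1)})\;:\;\textbf{a}^{(0)}=\textbf{a}_0,\;x^{(0)}=x_0,\;\ord_t\bigl(\cJ_g(\textbf{a}(t),x(t))\bigr)\geq m\Bigr\}\;\leq\;m(n+r)-m\gamma_w$$
for every $m\geq 1$. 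Using $\textbf{a}^{(0)}=\textbf{a}_0\neq 0$, I would then apply the weighted specialization from Lemma~\ref{highpart}: the $\tilde w$‑leading part of $g_{\textbf{a}^{(m-1)}}(x^{(m-1)})$ is precisely $\tilde g_{\textbf{a}_0 w(m-1)}$, so the jet‑order condition on $\cJ_g$ at leading $\tilde w$‑order coincides with the jet‑order condition for $\cJ_{\tilde g_{\textbf{a}_0 w}}$ considered in the proof of Lemma~\ref{lctdimen}. The conclusion $s(\tilde g_{\textbf{a}_0 w(m-1)})\leq mn-m\gamma_w$ already proved there gives the desired dimension bound on the $x$‑part, while the $\textbf{a}$‑part contributes exactly $mr$ to the dimension (free parameters, since the $f_{ij}$ enforce no extra jet condition beyond what is already tracked by the $\partial g/\partial x_k$‑generators). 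Summing these two contributions yields the upper bound $m(n+r)-m\gamma_w$ and therefore the required lower bound on $\lct$.

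The main obstacle I expect is controlling the interplay between the two families of generators of $\cJ_g$: the $f_{ij}(x)$ coming from $\partial_{a_{ij}}g$ and the mixed expressions $\partial g/\partial x_k=\sum a_{ij}\partial f_{ij}/\partial x_k$. One must verify that restricting to the locus $\textbf{a}^{(0)}=\textbf{a}_0\neq 0$ the former generators do not cut down the jet‑scheme dimension below what the weighted specialization predicts, and conversely that the $\tilde w$‑leading part of the second family really produces the ideal $\cJ_{\tilde g_{\textbf{a}_0 w}}$ rather than a proper subideal. A cleaner alternative would be to realize the $\GM$‑action $\lambda\cdot(a_{ij},x_k)=(\lambda^{-i}a_{ij},\lambda^{w_k}x_k)$ as a one‑parameter family whose central fiber sees $\tilde g_{\textbf{a}_0 w}$ and appeal to lower semicontinuity of $\lct$ in families (cf.\ Lazarsfeld's \emph{Positivity II}); this bypasses the explicit jet computation but requires a suitable proper compactification of the family, which is the point where care must be taken.
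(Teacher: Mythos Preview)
Your jet–scheme route is essentially viable, but two adjustments are needed, and the paper actually takes a different and shorter path.

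First, the formulation. Fixing both $\textbf{a}^{(0)}=\textbf{a}_0$ and $x^{(0)}=x_0$ and then comparing to $m(n+r)-m\gamma_w$ is awkward, since the ambient jet space then has dimension $(m-1)(n+r)$ and Musta\c{t}\v{a}'s criterion does not literally read off $\lct_P$ from arcs through the single point $P$. What your argument really proves is the bound over the \emph{open set} $V=(\CC^r\setminus\{0\})\times\CC^n$: one shows
\[
\dim\{(\textbf{a}^{(m-1)},x^{(m-1)}):\textbf{a}^{(0)}\neq 0,\ \ord_t(\cJ_g)\geq m\}\;\leq\; m(n+r)-m\gamma_w,
\]
and then \cite[Cor.~0.2]{MustJAMS} applied on $V$ gives $\lct(\cJ_g|_V)\geq\gamma_w$, which is exactly the statement.

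Second, the ``obstacle'' you flag dissolves once you fiber correctly. The $\textbf{a}$–jets are \emph{not} free parameters in the contact locus (the generators $\partial_{x_k}g$ are linear in $\textbf{a}$), but one does not need them to be: project to the full $\textbf{a}^{(m-1)}$–space (dimension $mr$) and bound each $x$–fiber. Dropping the $f_{ij}$–generators only enlarges the fiber, and what remains is precisely $C_{g_{\textbf{a}^{(m-1)}}}$. Lemma~\ref{highpart} plus upper semicontinuity of dimension under the $\tilde w$–scaling give $\dim C_{g_{\textbf{a}^{(m-1)}}}\leq\dim C_{\tilde g_{\textbf{a}_0 w(m-1)}}$, and Lemma~\ref{lctdimen} finishes. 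So neither of your two worries is an obstruction.

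The paper bypasses jet schemes entirely. It first uses the restriction inequality and monotonicity for log canonical thresholds (\cite[Properties~1.12,~1.17]{Mustata2}) to obtain
\[
\lct_{(\textbf{a}_0,x_0)}(\cJ_g)\ \geq\ \lct_{x_0}(\cJ_g|_{\textbf{a}=\textbf{a}_0})\ \geq\ \lct_{x_0}(\cJ_{g_{\textbf{a}_0}})\ =\ \lct_0(\cJ_{g_{\textbf{a}_0,x_0}}),
\]
immediately eliminating the $\textbf{a}$–variables. Then, with $\theta_\lambda(x)=(\lambda^{-w_1}x_1,\dots,\lambda^{-w_n}x_n)$ on $\AA^n$ alone, the rescaled generators $\lambda^{d_{\textbf{a}_0}-w_k}\,\partial_{x_k}g_{\textbf{a}_0,x_0}(\theta_\lambda x)$ tend to $\partial_{x_k}\tilde g_{\textbf{a}_0 w}$ as $\lambda\to 0$, and lower semicontinuity of $\lct$ in families (\cite[Property~1.24]{Mustata2}) gives $\lct_0(\cJ_{g_{\textbf{a}_0,x_0}})\geq\lct_0(\cJ_{\tilde g_{\textbf{a}_0 w}})\geq\gamma_w$. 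No compactification is involved; this semicontinuity is a local statement about ideals in a polynomial ring. Your proposed $\GG_m$–action on $(\textbf{a},x)$ jointly does not fix the basepoint $(\textbf{a}_0,x_0)$, which is why the paper first slices at $\textbf{a}=\textbf{a}_0$ before degenerating.

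In short: your argument can be completed and has the merit of recycling Lemmas~\ref{highpart} and~\ref{lctdimen}; the paper's argument is a three–line application of standard $\lct$ properties and avoids jet schemes altogether.
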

\begin{proof}
By the definition, we have 
\begin{align*}
\lct_{L,Z}(\cJ_g)&\geq\min_{(\textbf{a},x)\in (\cO_L^r\setminus \varpi_L\cO_L^r)\times \cO_L^n}\lct_{(\textbf{a},x)}(\cJ_g)\geq \min_{(\textbf{a},x)\in (L^r\setminus \{0\})\times L^n}\lct_{(\textbf{a},x)}(\cJ_g)\\
&\geq \min_{(\textbf{a}_0,x^{(0)})\in (\CC^r\setminus \{0\})\times \CC^n}\lct_{(\textbf{a}_0,x^{(0)})}(\cJ_g).
\end{align*}
By \cite[Properties 1.12 and 1.17]{Mustata2}, if $(\textbf{a}_0,x^{(0)})\in (\CC^r\setminus \{0\})\times\CC^n$, then we have 
$$\lct_{(\textbf{a}_0,x^{(0)})}(\cJ_g)\geq \lct_{x^{(0)}}(\cJ_{g}|_{\textbf{a}=\textbf{a}_0})\geq \lct_{x^{(0)}}(\cJ_{g_{\textbf{a}_0}})=\lct_{0}(\cJ_{g_{\textbf{a}_0,x^{(0)}}}),$$
where $g_{\textbf{a}_0,x^{(0)}}(x)=g_{\textbf{a}_0}(x+x^{(0)})$. We recall that 
$$\cJ_{g_{\textbf{a}_0,x^{(0)}}}=\left(\frac{\partial g_{\textbf{a}_0,x^{(0)}}}{\partial x_1},\dots,\frac{\partial g_{\textbf{a}_0,x^{(0)}}}{\partial x_n}\right).$$
If $\lambda\neq 0$, we set $\theta_\lambda(x_1,\dots,x_n)=(\lambda^{-w_1}x_1,\dots,\lambda^{-w_n}x_n)$ and denote by $\cG_\lambda$  the ideal generated by $\{f(\theta_\lambda(x))|f\in \cJ_{g_{\textbf{a}_0,x^{(0)}}}\}$. Therefore, we have

\begin{align*}
\cG_\lambda&=\left(\frac{\partial g_{\textbf{a}_0,x^{(0)}}}{\partial x_1}(\theta_\lambda(x)),\dots,\frac{\partial g_{\textbf{a}_0,x^{(0)}}}{\partial x_n}(\theta_\lambda(x))\right)\\
&=\left(\lambda^{d_{\textbf{a}_0}-w_1}\frac{\partial g_{\textbf{a}_0,x^{(0)}}}{\partial x_1}(\theta_\lambda(x)),\dots,\lambda^{d_{\textbf{a}_0}-w_n}\frac{\partial g_{\textbf{a}_0,x^{(0)}}}{\partial x_n}(\theta_\lambda(x))\right),
\end{align*}
where $d_{\textbf{a}_0}=d_w(g_{\textbf{a}_0,x^{(0)}})=d_w(g_{\textbf{a}_0})$. Thus, $\lct_0(\cJ_{g_{\textbf{a}_0,x^{(0)}}})=\lct_0(\cG_\lambda)$ for all $\lambda\neq 0$. By using the
semi-continuity of log canonical thresholds in \cite[Property 1.24]{Mustata2}, we have 
\begin{align*}
\lct_0(\cG_\lambda)&=\lct_0\left(\lambda^{d_{\textbf{a}_0}-w_1}\frac{\partial g_{\textbf{a}_0,x^{(0)}}}{\partial x_1}(\theta_\lambda(x)),\dots,\lambda^{d_{\textbf{a}_0}-w_n}\frac{\partial g_{\textbf{a}_0,x^{(0)}}}{\partial x_n}(\theta_\lambda(x))\right)\\&\geq \lct_0(\cJ_{\tilde{g}_{\textbf{a}_0w}}).
\end{align*}
By the definition of $\gamma_w$, we have  $\lct_{(\textbf{a}_0,x^{(0)})}(\cJ_g)\geq \lct_0(\cJ_{\tilde{g}_{\textbf{a}_0w}})\geq \gamma_w$
for all $(\textbf{a}_0,x^{(0)})\in(\CC^r\setminus\{0\})\times\CC^n$. Thus, $\lct_{L,Z}(\cJ_g)\geq \gamma_w$.
\end{proof}

\begin{proof}[Proof of \ref{wnsd}]
If $w=(1,\dots,1)$, then \ref{wnsd} follows from combining (\ref{Cm=1}), Lemmas \ref{sigw}, \ref{lowerboundlct}, 
Theorem \ref{boundexpo} and Proposition \ref{smallp}.

To prove the general case, we use the torus transformations in Lemma \ref{torus}. From polynomials $f_{ij},F_{ijw}$ for $i\in J$ and $1\leq j\leq r_i$, we obtains polynomials $\hat{f}_{ij}, \hat{F}_{ijw}$ in $|w|$ variables $y_1,\dots,y_n,\dots,y_{|w|}$ by Lemma \ref{torus} with the following properties:
\begin{itemize}
\item[(\textit{i}),]$\deg(\hat{f}_{ij})=i$ for all $i,j$.
\item[(\textit{ii}),]The homogeneous part of highest degree of $\hat{f}_{ij}$ is $\hat{F}_{ijw}$ for all $i,j$.
\item[(\textit{iii}),]$\{\hat{F}_{ijw}| i\in J, 1\leq j\leq r_i\}$ is linearly independent.
\end{itemize} 
Let $G(\textbf{a},y)=\sum_{i\in J, 1\leq j\leq r_i}a_{ij}\hat{f}_{ij}$ and $\tilde{Z}=(\mathfrak{D}_{\cO_K}^{1})^{|w|-n}\times_{\Spec(\cO_K)}\AA_{\cO_K}^n.$ Then we have 
\begin{align*}
E_{L,\tilde{Z}\mathfrak{D}_{\cO_K}^{r},G}(\psi)&=\int_{(\cO_L\setminus\varpi_L\cO_L)^{\left|w\right|-n}\times \cO_L^n\times (\cO_L^r\setminus \varpi_L\cO_L^r)}\psi(G)\left|dy_1\wedge...\wedge dy_{\left|w\right|}\right|\left|d\textbf{a}\right|
\\&=(1-q_L^{-1})^{\left|w\right|-n}\int_{\cO_L^n\times (\cO_L^r\setminus \varpi_L\cO_L^r)}\psi(g)\left|dx_1\wedge...\wedge dx_{n}\right|\left|d\textbf{a}\right|
\\&=(1-q_L^{-1})^{\left|w\right|-n}E_{L,\mathfrak{D}_{\cO_K}^{n,r},g}(\psi)
\end{align*}for all $L\in\tilde{\cL}_{K,1}$ and all  additive characters $\psi$ of $L$ of conductor $m_\psi\geq 1$. Thus, we have
\begin{equation}\label{relation1}
\left|E_{L,\mathfrak{D}_{\cO_K}^{n,r},g}(\psi)\right|\leq 2^{\left|w\right|-n}\bigg|E_{L,\tilde{Z}\mathfrak{D}_{\cO_K}^{r},G}(\psi)\bigg|
\end{equation}
for all $L\in\tilde{\cL}_{K,1}$ and all  additive characters $\psi$ of $L$ of conductor $m_\psi\geq 1$.

We set $$G_{\textbf{a}_0}(y_1,\dots,y_{\left|w\right|})=\sum_{i\in J, 1\leq j\leq r_i}a_{ij}\hat{f}_{ij}(y_1,\dots,y_{\left|w\right|})$$ for each $\textbf{a}_0=(a_{ij})_{i\in J, 1\leq j\leq r_i}\in (\CC^r\setminus\{0\})$
and denote by  $\tilde{G}_{\textbf{a}_0}$ the homogeneous part of highest degree of $G_{\textbf{a}_0}$.  We also set $$\gamma=\min_{\textbf{a}_0\in\CC^r\setminus\{0\}}\lct(\cJ_{\tilde{G}_{\textbf{a}_0}}).$$ Then we will use Corollary \ref{subspace} for $G$ and suitable pairs $(I_1,I_2)$ of disjoint subsets of $\{1,\dots,|w|\}$ to obtain a constant $C$ and an integer $M$ such that 
\begin{equation}\label{relation2}
\bigg|E_{L,\tilde{Z}\mathfrak{D}_{\cO_K}^{r},G}(\psi)\bigg|\leq Cm_\psi^{r+\left|w\right|-1}q_L^{-\frac{m_\psi\gamma}{2}}
\end{equation} 
for all $L\in\tilde{\cL}_{K,M}$ and all additive characters $\psi$ of $L$ of conductor $m_\psi\geq 1$.

Similarly, we use Proposition \ref{smallp} and Lemma \ref{lowerboundlct} for $G$ to deduce that for each $L\in\cL_{K,1}$ and each $0<\sigma<\gamma/2$, there is a constant $C_{G,\tilde{Z},L,\sigma}$ such that
\begin{equation}\label{relation3}
\bigg|E_{L,\tilde{Z}\mathfrak{D}_{\cO_K}^{r},G}(\psi)\bigg|\leq C_{G,\tilde{Z},L,\sigma}q_L^{-m_\psi\sigma}
\end{equation} 
for all additive characters $\psi$ of $L$.

By (\ref{Cm=1}), (\ref{relation1}), (\ref{relation2}), (\ref{relation3}), it remains to prove that $\gamma\geq 2\tilde{\sigma}_{0w}$. By the proof of Lemma \ref{sigw}, we have 
$$\lct(\cJ_{\tilde{G}_{\textbf{a}_0})}\geq \frac{\left|w\right|-s(\tilde{G}_{\textbf{a}_0})}{\deg(\tilde{G}_{\textbf{a}_0})-1}.$$
But it is easy to verify that $\deg(\tilde{G}_{\textbf{a}_0})=d_w(\tilde{g}_{\textbf{a}_0w})$ and $\tilde{G}_{\textbf{a}_0}=\widehat{\tilde{g}_{\textbf{a}_0w}}$ is obtained from $\tilde{g}_{\textbf{a}_0w}$ by using $|w|-n$ basic torus transformations as in Lemma \ref{torus}. Thus, we have $s(\tilde{G}_{\textbf{a}_0})\leq s(\tilde{g}_{\textbf{a}_0w})+|w|-n$. On the other hand, Lemma \ref{singlocus} implies that $s(\tilde{g}_{\textbf{a}_0w})\leq s_{w\ell}$ if $d_w(\tilde{g}_{\textbf{a}_0w})=\ell$. Therefore, we have
$$\lct(\cJ_{\tilde{G}_{\textbf{a}_0}})\geq \frac{n-s(\tilde{g}_{\textbf{a}_0w})}{d_w(\tilde{g}_{\textbf{a}_0w})-1}\geq \min_{\ell\in J}\frac{n-s_{w\ell}}{\ell-1}=2\tilde{\sigma}_{0w}.$$
Hence, $\gamma\geq 2\tilde{\sigma}_{0w}$ as desired.
\end{proof}
\begin{proof}[Proof of Corollary \ref{countinguni}]We consider the $\ZZ$-subscheme $X$ of $\AA_\ZZ^n\times \cA$ given by polynomials $H_{ij}(x,g)=f_{ij}(x)-g_{ij}(x)$ for $i\in J$ and $1\leq j\leq r_i$, where $g=(g_{ij})_{i\in J, 1\leq j\leq r_i}\in\cA$. Let $\varphi$ be the projection from $X$ to $\cA$. Note that $\tilde{\sigma}_{0w}\big((f_{ij}-g_{ij})_{i\in J, 1\leq j\leq r_i}\big)=\tilde{\sigma}_{0w}\big((f_{ij})_{i\in J, 1\leq j\leq r_i}\big)>r$ for all $g=(g_{ij})_{i\in J, 1\leq j\leq r_i}\in \cA(\overline{\QQ})$. Thus, \ref{wnsd}, Propositions \ref{equcondi} and \ref{delta} imply that every  fiber of $\varphi_{\overline{\QQ}}$ is geometrically irreducible and a complete intersection of dimension $n-r$ having only rational singularities. Therefore, $X_\QQ$ is also a complete intersection variety of dimension $n+\dim(\cA_\QQ)-r$. Thus, $X_\QQ$ is Cohen-Macauley. So $\varphi_\QQ$ is flat by using  the miracle flatness theorem and the fact that $\cA_{\QQ}$ is smooth.  Moreover, for each $g=(g_{ij})_{i\in J, 1\leq j\leq r_i}\in \cA(\overline{\QQ})$,  \ref{wnsd} implies that Conjecture \ref{avelocIgu} holds for $\sigma<\tilde{\sigma}_{0w}\big((f_{ij})_{i\in J, 1\leq j\leq r_i}\big)$ and the ideal generated by polynomials $M(f_{ij}-g_{ij})$ for $i\in J, 1\leq j\leq r_i$, where $M$ is a non-zero integer such that the coefficients of $Mg_{ij}$ are integral over $\ZZ$ for all $i,j$. Thus, Corollary \ref{countinguni} follows from repeating the proof of Theorem \ref{improE} for $\varphi$ and $E=\lceil 2\big(\tilde{\sigma}_{0w}\big((f_{ij})_{i\in J, 1\leq j\leq r_i}\big)-r\big)\rceil>0.$

\end{proof}
\section{Some open questions}\label{OP}
\subsection{} First of all, we will formulate a conjecture to relate the minimal exponent of ideals in Section \ref{BM} and the motivic oscillation index in Definition \ref{moidef}.  By the strong monodromy conjecture, we can expect that the following conjecture holds.
\begin{conj}Let $K$ be a number field. Let $Y$ be the closed subscheme of $\AA_{\cO_K}^n$ associated to a non-zero ideal $\cI$. Suppose that $\cI=(f_1,...,f_r)\subset \cO_K[x_1,...,x_n]$ such that $\cI_K\neq (1)$ and $\dim(Y_K)=n-r$ then 
$\moi_K(\cI)=\tilde{\alpha}_{\cI_K}$.
\end{conj}

On the other hand, a lot of recent research has been successful in characterizing higher rational singularities and higher Du Bois singularities of an arbitrary locally complete intersection in terms of its minimal exponent (see \cite{JKSY-k,FL-k1,FL-k2,Must-Pop-k,CDM-k}). Indeed, let $e$ be a natural number and $Z$ be a locally complete intersection subvariety of a smooth irreducible variety $X$ such that $Z$ is of pure codimension $r$. Then $Z$ has only $e$-rational singularities if and only if the minimal exponent $\tilde{\alpha}_Z$ is strictly larger than $e+r$ (see \cite{CDM-k}). Similarly, $Z$ has only $e$-Du Bois singularities  if and only if $\tilde{\alpha}_Z\geq e+r$ (see \cite{Must-Pop-k}). By the philosophy of the strong monodromy conjecture, it is natural to ask how to characterize higher rational singularities and higher Du Bois singularities by using the local motivic oscillation indexes of ideals. Let $Z$ be a locally complete intersection variety of pure dimension over a number field $K$. We set $\kappa(Z)=\dim(Z)+\min_{P\in Z(\overline{K})}\moi_{K(P),P}^{\textnormal{aloc}}(Z)$. Note that the concept of $0$-rational singularities (resp. $0$-Du Bois singularities) agrees with the concept of rational singularities (resp. Du Bois singularities). On the other hand, if $Z$ is normal and Gorenstein,  then $Z$ has only Du Bois singularities if and only if $Z$ has only log canonical singularities (see \cite{K-DuBois,KK-DuBois,KS-DuBois}). Thus, the argument in Section \ref{property} answered our question for $0$-rational singularities and $0$-Du Bois singularities. Namely, $Z$ has only $0$-rational singularities if and only if $\kappa(Z)>0$. Moreover, if $Z$ is normal, then $Z$ has only $0$-Du Bois singularities if and only if $\kappa(Z)\geq 0$. More generally, we propose the following conjecture for our question.
\begin{conj}\label{k-rati} Let $e$ be a positive integer and $Z$ be a locally complete intersection variety of pure dimension over a number field $K$, then $Z$ has only $e$-rational singularities (resp. $e$-Du Bois singularities) if and only if $\kappa(Z)>e$ (resp. $\kappa(Z)\geq e$). 
\end{conj}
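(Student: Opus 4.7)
The plan is to reduce Conjecture \ref{k-rati} to the identification of the local motivic oscillation index with the minimal exponent of the ideal, and then to invoke the recent characterizations of $e$-rational and $e$-Du Bois singularities via minimal exponents in \cite{CDM-k} and \cite{Must-Pop-k}. First I would localize: given $P\in Z(\overline{K})$, I pick an affine open neighbourhood of $P$ together with a closed embedding realising $Z$ as a complete intersection $\spec(\cO_K[x_1,\dots,x_n]/\cI)$ of codimension $r=n-\dim(Z)$ in $\AA_K^n$, chosen so that it computes $\moi_{K(P),P}^{\textnormal{aloc}}(Z)$ in the sense of Definition \ref{moiloc2}. Using the formula $\moi_{K(P),P}^{\textnormal{aloc}}(Z)=\moi_{K(P),P}^{\textnormal{loc}}(\cI)-n$, one obtains
\begin{equation*}
\kappa(Z)=-r+\min_{P\in Z(\overline{K})}\moi_{K(P),P}^{\textnormal{loc}}(\cI),
\end{equation*}
so that the conjectural inequalities $\kappa(Z)>e$ (resp.\ $\geq e$) translate pointwise into $\moi_{K(P),P}^{\textnormal{loc}}(\cI)>e+r$ (resp.\ $\geq e+r$).

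The central step is then to prove the equality
\begin{equation*}
\moi_{K(P),P}^{\textnormal{loc}}(\cI)=\tilde{\alpha}_{(0,P)}(g), \qquad g=\sum_{i=1}^r y_i f_i,
\end{equation*}
between the local motivic oscillation index and the minimal exponent of $g$ at $(0,P)\in(\AA^r\setminus\{0\})\times\AA^n$. The inequality $\geq$ comes from a local refinement of Proposition \ref{minimalexp}, applied neighbourhood-wise to the log-resolution data of $g$ near $(0,P)$. The reverse inequality is precisely a special instance of Igusa's strong monodromy conjecture for $g$: the maximal real part of a pole of the Igusa zeta functions attached to $g$ near $(0,P)$ should coincide with the largest root of the reduced Bernstein-Sato polynomial $\tilde{b}_{g,(0,P)}(s)$, i.e.\ with $-\tilde{\alpha}_{(0,P)}(g)$. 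Once that equality is granted, I would invoke \cite[Theorem 1.1]{Mustideal} together with its extensions in \cite{CDM-k, Must-Pop-k}, which identify the minimal exponent $\tilde{\alpha}_{(0,P)}(g)$ of the auxiliary function $g$ with the minimal exponent $\tilde{\alpha}_P(Z)$ of the locally complete intersection ideal $\cI$ at $P$, so that $\moi_{K(P),P}^{\textnormal{loc}}(\cI)=\tilde{\alpha}_P(Z)$.

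With this identification in hand, the conjecture follows directly by appealing to the main theorem of \cite{CDM-k}, which asserts that $Z$ has $e$-rational singularities if and only if $\tilde{\alpha}_P(Z)>e+r$ for every $P\in Z(\overline{K})$, and to \cite{Must-Pop-k}, whose counterpart characterises $e$-Du Bois singularities via $\tilde{\alpha}_P(Z)\geq e+r$. The $e=0$ cases are already handled unconditionally in this paper by Proposition \ref{equcondi} and Proposition \ref{equcondilog}: there the identification $\moi=\tilde{\alpha}$ is bypassed by a direct log-resolution and jet-scheme argument combined with the Lang-Weil estimate of Proposition \ref{Lang-Weil}, recovering the rational/log-canonical characterisation without full strong monodromy input.

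The main obstacle is precisely the equality $\moi_{K(P),P}^{\textnormal{loc}}(\cI)=\tilde{\alpha}_{(0,P)}(g)$, which is a genuinely open case of the strong monodromy conjecture and is not known in this generality. A possible route, avoiding the full conjecture, would be to upgrade the jet-scheme and $\ell$-adic point-counting argument underlying Proposition \ref{equcondi} by comparing asymptotic counts $\#X(\cO_L/\varpi_L^m)$ to finer invariants detecting higher rational/Du Bois singularities, in the spirit of the jet-theoretic criteria of \cite{EMY} and \cite{Ein-Must}. However, the cleaner growth inequalities used at the $e=0$ level only see the leading-order behaviour, and isolating terms of order $q_L^{-m(e+r)}$ precisely enough to distinguish $e$-rational from $e$-Du Bois seems to require either new uniform error bounds in Conjecture \ref{avelocIgu} or a partial resolution of the strong monodromy conjecture in this complete intersection setting — this is where the plan meets its principal difficulty.
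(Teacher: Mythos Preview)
The statement you are attempting to prove is Conjecture \ref{k-rati}, which the paper explicitly presents as an \emph{open} conjecture in Section \ref{OP}; there is no proof in the paper to compare against. The paper only establishes the case $e=0$ (via Propositions \ref{equcondi} and \ref{equcondilog}, under a normality hypothesis for the Du Bois direction) and then \emph{proposes} the general statement, motivated precisely by the heuristic you describe: the expected equality between the local motivic oscillation index and the minimal exponent, combined with the characterizations of $e$-rational and $e$-Du Bois singularities from \cite{CDM-k,Must-Pop-k}. So your plan is not a proof but a faithful reconstruction of the paper's own motivation for stating the conjecture, and you correctly identify that the obstruction is an unproved instance of the strong monodromy conjecture.

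Two technical remarks on your sketch. First, the relevant minimal exponent is not $\tilde{\alpha}_{(0,P)}(g)$: the motivic oscillation index $\moi_{K,P}^{(r)}(\cI)$ is defined at $(\AA^r\setminus\{0\})\times\{P\}$, so the comparison should be with $\min_{a\in\overline{K}^r\setminus\{0\}}\tilde{\alpha}_{(a,P)}(g)$, as in Proposition \ref{minimalexp} and in \cite{Mustideal,CDMO}. Second, the directions of the two inequalities are slightly muddled: the strong monodromy conjecture gives $\moi\geq\tilde{\alpha}$ (poles are roots of $\tilde{b}_g$, hence bounded above by $-\tilde{\alpha}$), while the converse inequality $\moi\leq\tilde{\alpha}$ asserts that the largest root is actually realised as a pole for infinitely many $L$ --- a separate and also open statement. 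Proposition \ref{minimalexp} does not supply either inequality; it only bounds $\tilde{\alpha}_{g|_U}$ from below by the explicit quantity $\sigma_0$.
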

\subsection{}For the next question, we would like to ask how to relax the smoothness in the second part of \ref{FRS}. Let us use the notation of \ref{FRS} and the proof of its second part. Clearly, it may not allow us to have too bad singularities for each $X_i$. At least, we may require that $X_i$ is a locally complete intersection of pure dimension and has only rational singularities for all $i$. Looking again at the proof of \ref{FRS}, we see that the equality $I_2(L,m)=0$ is no longer true without the smoothness of $X_i$. If we try to use the strategy for estimating $I_1(L,m_d)$ in the proof of \ref{FRS} to deal with $I_2(m,d)$, the integral
$$\int_{\varpi_L\cO_L^{n}\times (\cO_L^N\setminus \varpi_L\cO_L^{N})}\psi\bigg(\sum_{1\leq i\leq\ell} \sum_{1\leq j\leq N_i} y_{ij}f_{ij}(x)\bigg)\mu_{L^{n+N}},$$
 will appear when we take $y_e=0$ for all $e$. This integral is a linear combination with $\NN$-coefficients of the integrals
$$I_{i,I,J}(L,m_\psi)=\int_{\varpi_L\cO_L^{n}\times (\cO_L^{N_i}\setminus \varpi_L\cO_L^{N_i})\times \cO_L^{\sum_{j\in J}N_j}\times \varpi_L\cO_L^{\sum_{j\in I}N_j}}\psi\bigg(\sum_{1\leq i\leq\ell} \sum_{1\leq j\leq N_i} y_{ij}f_{ij}(x)\bigg)\mu_{L^{n+N}}$$
where $\{i\}\sqcup I\sqcup J=\{1,...,\ell\}$. Recall that $P_i=0\in \AA_{\cO_K}^{n_i}$ for each $i$. By the definition of $\moi_{K, P_i}^{(N_i)}(X_i)$ and the fact that $X_i$ is a locally complete intersection for each $i$, there is a constant $a$ and a strictly increasing sequence $(m_j)_{j\geq 1}$ of positive integers  such that for each $M>0$ and each $j\geq 1$, the size of $\left|I_{i,I,J}(L,m_\psi)\right|$ is at least $$q_L^aq_L^{-(\moi_{K,P_i}^{(N_i)}(X_i)+\sum_{j\neq i}N_j)m_\psi}$$
 for a local field  $L\in\cL_{\QQ,M}$ and an additive character $\psi$ of $L$ of conductor $m_\psi=m_j$. So we may require that $$\moi_{K,P_i}^{(N_i)}(X_i)+\sum_{j\neq i}N_j>r+\sum_{1\leq j\leq \ell}N_j$$
 for all $1\leq i\leq \ell$ to be able to apply  \ref{locpole}. In other words,  we may need the condition that $\kappa(X_i)>r$ for all $i$. In particular, if Conjecture \ref{k-rati} holds, then we may require that $X_i$ has only $r$-rational singularities for all $i$ to relax the smooth condition of \ref{FRS}. We formulate our question as follows.
\begin{Op}\label{non-smooth} Could we have a version of the second part of \ref{FRS} for locally complete intersection varieties of pure dimension and have only $r$-rational singularities instead of smooth varieties? Similarly, we ask the same question for equi-dimensional and locally complete intersection varieties  $(X_i)_{i\geq 1}$ such that $\kappa(X_i)>r$ for all $i\geq 1$.
\end{Op}

\subsection{} For the last question, we would like to know whether we can extend our result for the Waring type problem in \ref{adduni} to the setting of definable sets. Let $\cL_{rings}=\{0,1,+,-, .\}$ be the language of rings. Let $(A_i)_{i\geq 1}$ be $\cL_{rings}$-definable subsets of $\AA_\ZZ^r$ of complexity at most $(N,R,D)$. Here, a $\cL_{rings}$-definable set $X$ is of complexity at most $(N,R,D)$ if the following conditions hold:
\begin{itemize}
\item[(\textit{i}),]$X$ can be defined by a formula $\varphi$ in at most $N$ variables (both free variables and non-free variables).
\item[(\textit{ii}),] There are at most $R$ polynomials appearing in the formula $\varphi$ and each of these polynomials is of degree at most $D$.
\end{itemize}
Suppose that $A_i(\CC)$ is neither finite nor contained in any proper affine subspace of $\AA_\CC^r$ for all $i$. By the work of Chatzidakis, van den Dries and Macintyre in \cite{CDM}, if $p$ is large enough then $A_i(\FF_p)$
 is a finite Boolean combination of $\cL_{rings}$-definable sets of the form $\varphi_{ij}(X_{ij})$ for  algebraic sets $X_{ij}$ and  morphisms $\varphi_{ij}$.  Thus, the first part of \ref{adduni} might probably extend to the situation where 
$(A_i)_{i\geq 1}$ are $\cL_{rings}$-definable sets and $m=1$. We may wonder whether this fact also holds for all $m\geq 1$. More generally, we may ask this question for an arbitrary group scheme $(G,\cdot)$ of finite type over $\ZZ$ instead of $\AA_\ZZ^r$. Let us formulate  precisely an open question about it.
\begin{Op}\label{defi}Let $(G,\cdot)$ be a group scheme of finite type over $\ZZ$. For each triple $(N,R,D)\in\ZZ_{>0}^3$, is there an integer $\ell_0$ depending only on $(N,R,D)$ such that the following sentence is true:
 If $A_1,\dots,A_{\ell_0}$ are $\cL_{rings}$-definable subsets of $G$ of complexity at most $(N,R,D)$ such that $A_i(\CC)$ is neither finite nor contained in any right and left coset of any arbitrary proper subgroup of $G(\CC)$ for all $i$,  then there is an integer $M$ depending only on $A_1,\dots,A_{\ell_0}$ such that $$A_1(\ZZ/p^m\ZZ)\cdot...\cdot A_{\ell_0}(\ZZ/p^m\ZZ)=G(\ZZ/p^m\ZZ)$$ for all $p>M$ and all $m\geq 1$?
\end{Op}

\section*{Acknowledgements}
The author would like to thank T. Browning, R. Cluckers, I. Glazer, F. Loeser, M. Musta\c{t}\u{a} and W. Veys for inspiring discussions during the preparation of this paper. In particular, the author wants to thank Y. Hendel for many constructive comments and his suggestion on the probabilistic Waring type problem. Moreover, the author is grateful to the anonymous referee whose dedicated work improved the quality of the paper especially on its structure together with proposing the second part of Proposition \ref{equcondilog} and its corollaries on the relation between semi-log canonical singularities and (local) motivic oscillation index. The author is partially supported by  Fund for Scientific Research-Flanders (Belgium) (F.W.O.) 1270923N and the Excellence Research Chair “FLCarPA: L-functions in positive characteristic and applications” financed by the Normandy Region.

\bibliographystyle{amsplain}
\bibliography{anbib}
\printindex
\end{document}